\newtheorem{theorem}{Theorem}
\newtheorem{proposition}[theorem]{Proposition}
\newtheorem{lemma}[theorem]{Lemma}
\newtheorem{corollary}[theorem]{Corollary}
\theoremstyle{definition}
\newtheorem{definition}[theorem]{Definition}
\newtheorem*{definition*}{Definition}
\newtheorem{note}[theorem]{Note}
\newtheorem{remark}[theorem]{Remark}
\newtheorem{notation}[theorem]{Notation}
\newtheorem{notes}[theorem]{Notes}
\newtheorem{example}[theorem]{Example}
\newcommand{\newword}[1]{\textbf{#1}}
\newcommand{\Z}{\mathbb{Z}}
\newcommand{\N}{\mathbb{N}}
\newcommand{\F}{\mkern 3.75mu\overline{\mkern-3.75mu F}}
\newcommand{\R}{\mathcal{R}}
\renewcommand{\S}{\mathcal{S}}
\newcommand{\oS}{\mkern 3mu\overline{\mkern-3mu S}}
\newcommand{\A}{\mathcal{A}}
\newcommand{\Ain}{A_{\scriptscriptstyle\mathrm{in}}}
\newcommand{\Aout}{A_{\scriptscriptstyle\mathrm{out}}}
\newcommand{\tast}{\text{\small$*$}}
\newcommand{\emptystring}{\varepsilon}
\newcommand{\od}{\mkern 3mu\overline{\mkern-3mu d\mkern 1.5mu}\mkern-1.5mu}
\newcommand{\of}{\mkern 3.5mu\overline{\mkern-3.5mu f\mkern0.75mu}\mkern-0.75mu}
\newcommand{\og}{\mkern 2mu\overline{\mkern-2mu g\mkern0.5mu}\mkern-0.5mu}
\newcommand{\oh}{\mkern 2mu\overline{\mkern-2mu h\mkern-0.25mu}\mkern 0.25mu}
\newcommand{\hb}{\partial_h}
\definecolor{myred}{rgb}{0.75,0,0}
\definecolor{myblue}{rgb}{0.2,0.4,1}
\tikzset{initial text={}}
\tikzstyle{initial by arrow}=   [after node path=
\begin{document}

\title{Rational Embeddings of Hyperbolic Groups}

\author{James Belk%
\footnote{University of St Andrews, St Andrews, Scotland. \texttt{\href{mailto:jmb42@st-andrews.ac.uk}{jmb42@st-andrews.ac.uk}}}
\and Collin Bleak%
\footnote{University of St Andrews, St Andrews, Scotland. \texttt{\href{mailto:cb211@st-andrews.ac.uk}{cb211@st-andrews.ac.uk}}}
\and
Francesco Matucci\footnote{Universidade Estadual de Campinas (UNICAMP), S\~{a}o Paulo, Brasil. \texttt{\href{mailto:francesco@ime.unicamp.br}{francesco@ime.unicamp.br}}}
\;\footnote{The first and second authors have been partially supported by EPSRC grant EP/R032866/1 during the creation of this paper.  The third author is a member of the Gruppo Nazionale per le Strutture Algebriche, Geometriche e le loro Applicazioni (GNSAGA) of the Istituto Nazionale di Alta Matematica (INdAM) and gratefully acknowledges the support of the 
Funda\c{c}\~ao de Amparo \`a Pesquisa do Estado de S\~ao Paulo 
(FAPESP Jovens Pesquisadores em Centros Emergentes grant 2016/12196-5),
of the Conselho Nacional de Desenvolvimento Cient\'ifico e Tecnol\'ogico (CNPq 
Bolsa de Produtividade em Pesquisa PQ-2 grant 306614/2016-2) and of
the Funda\c{c}\~ao para a Ci\^encia e a Tecnologia  (CEMAT-Ci\^encias FCT project UID/Multi/04621/2013).}
}

\maketitle

\begin{abstract}
We prove that all Gromov hyperbolic groups embed into the asynchronous rational group defined by Grigorchuk, Nekrashevych and Sushchanski\u\i.
The proof involves assigning a system of binary addresses to points in the Gromov boundary of a hyperbolic group~$G$, and proving that elements of $G$ act on these addresses by asynchronous transducers.  These addresses derive from a certain self-similar tree of subsets of~$G$, whose boundary is naturally homeomorphic to the horofunction boundary of~$G$.
\end{abstract}

\tableofcontents 

\section*{Introduction}
 Let $\{0,1\}^\omega$ denote the Cantor set of all infinite binary sequences.  A homeomorphism of $\{0,1\}^\omega$ is said to be \newword{rational} if there exists an asynchronous transducer (i.e.~an asynchronous Mealy machine) that implements the homeomorphism on infinite binary strings.  In~\cite{GNS}, Grigorchuk, Nekrashevych and Sushchanski\u\i\ observe that the set of all rational homeomorphisms of $\{0,1\}^\omega$ forms a group $\mathcal{R}$ under composition, which they refer to as the \newword{rational group}.  They also observe that the group of rational homeomorphisms of $A^\omega$ is isomorphic to~$\R$ for any finite alphabet~$A$ with at least two elements.

Here the word \newword{asynchronous} refers to transducers that can output a finite binary sequence of any length each time they take a digit as input.  This is a generalization of \newword{synchronous} transducers, which are required to output a single binary digit each time they take a digit of input. The asynchronous rational group $\R$ contains the group of synchronous rational homeomorphisms corresponding to any finite alphabet.

Our main focus is on embedding questions for the rational group~$\R$.  We prove:

\begin{theorem}\label{thm:MainTheorem}Every hyperbolic group embeds into~$\R$.
\end{theorem}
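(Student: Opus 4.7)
The plan is to embed a hyperbolic group $G$ into $\R$ by producing a faithful action of $G$ on $\{0,1\}^\omega$ in which every group element is implemented by an asynchronous transducer. Following the blueprint sketched in the abstract, I will build a rooted tree $T$ whose nodes are subsets of $G$, arrange that its boundary (the set of infinite descending chains, together with terminating paths that single out elements of $G$) is naturally homeomorphic to the horofunction compactification $G \sqcup \partial_h G$, and then transport the left-multiplication action of $G$ on itself first to an action on $T$ and finally to $\{0,1\}^\omega$.

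First I would construct the tree. The root is $G$, and I refine recursively: given a node $S$, I partition it into finitely many children using geometric data at a controlled distance from the identity, for instance a shadow-type decomposition cut out by the Cayley-graph vertices encountered as one moves out one more step. The refinement should be fine enough that any strictly nested chain $S_0 \supset S_1 \supset \cdots$ either stabilises at a point of $G$ or determines a unique horofunction, and coarse enough that each $S$ has only finitely many children. Having done this, the boundary of $T$ (in the sense just described) is in bijection with $G \sqcup \partial_h G$, and a fixed-length binary encoding of the local child-alphabet embeds this boundary as a closed $G$-invariant subset of $\{0,1\}^\omega$.

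The next, and central, step is to exploit hyperbolicity to show that $T$ is \emph{self-similar} in a strong sense: up to canonical relabelling of children, only finitely many isomorphism classes of rooted subtrees $T_S$ occur among the nodes $S$ of $T$. This is the same finiteness phenomenon that underlies the regularity of the language of geodesics in a hyperbolic group, driven by the thin-triangle condition, and it is where the bulk of the geometric work sits. With this in hand, left-translation $S \mapsto gS$ induces a map between subtrees that can be described by rewriting rules drawn from a finite set determined by the finitely many subtree types and the pieces of $g$ needed to effect the translation. Reading these rules as state-transitions of an asynchronous Mealy machine produces a transducer with finitely many states implementing $g$, so the action lands in $\R$.

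The main obstacle will be calibrating the refinement procedure so that the self-similar finiteness of subtree types actually holds — choosing the partition sufficiently canonically that the subtree below $S$ is determined, up to finite local data, by bounded-scale geometry inside $S$. A secondary but important point is faithfulness: the $G$-action on the Gromov boundary alone can have a finite kernel, but passing to the horofunction compactification is enough, because $G$ embeds into it via the maps $x \mapsto d(x,\cdot) - d(x,e)$ and acts faithfully on itself by left multiplication. Once both items are in place, the homomorphism $G \to \R$ produced above is automatically injective, giving the theorem.
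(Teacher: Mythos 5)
Your construction is essentially the paper's: their ``atoms'' are exactly your shadow-type cells (points of $\Gamma$ grouped by the restriction of their distance functions to the ball $B_n$, modulo constants), the boundary of the resulting tree is identified with the horofunction compactification, and hyperbolicity enters precisely where you say it must, to show there are only finitely many isomorphism types of rooted subtrees (via Cannon's finiteness of cone types and a bounded set of ``proximal'' points on each sphere). The genuine divergence is the endgame. The paper discards the finite cells, so its tree has boundary $\hb\Gamma$ alone; the induced action of $G$ can then have a finite kernel, and they repair this by embedding $G$ into $G*\mathbb{Z}$, which is non-elementary with no nontrivial finite normal subgroup and hence acts faithfully on its Gromov boundary. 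You instead retain the orbit $G$ inside $G\sqcup\hb G$, so faithfulness comes for free from left multiplication. That is a legitimate and arguably cleaner route, but it carries two costs you should make explicit. First, the finiteness-of-types and finitely-many-restrictions arguments must now also cover the finite cells (the paper's geometric machinery is set up only for the infinite atoms); this looks routine but is not free. Second, the points of $G$ are isolated in $G\sqcup\hb G$, so your encoding lands in a proper closed subset of $\{0,1\}^\omega$, and a rational homeomorphism of a proper closed subset is not yet an element of $\R$: each isolated branch must be expanded (blown up to a copy of $\{0,1\}^\omega$, as in the paper's treatment of non-branching trees) before one obtains an injection $G\to\R_2$. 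With those two repairs in place, your outline yields the theorem without the free-product trick.
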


Here a \newword{hyperbolic group} is a finitely generated group $G$ whose Cayley graph satisfies Gromov's thin triangles condition (see~\cite{BrHa}).  This is a vast class of finitely presented groups: in a precise sense, ``generic'' finitely presented groups are hyperbolic~\cite{Champetier,Ol'shanskiiYu}.

There are compelling practical features of groups realised as groups of homeomorphisms of Cantor spaces realisable as finite-state transducers. For example, one can directly understand how such group elements act on their respective Cantor spaces and one can study the specific combinatorics of the transducers representing these group elements.  An example of the impact that can be created by realising group elements this way is provided by Grigorchuk and Zuk.  By realising the lamplighter group as a group of synchronous automata, they are able to compute the spectrum of the resulting group~\cite{grigorchuk2001lamplighter}.  This yields a counterexample to a strong form of Atiyah's Conjecture about the range of values of the spectrum of $L^2$-Betti numbers for closed manifolds~\cite{grigorchuk2000question}.

If we consider the case of the groups Aut$(\{0,1,\ldots,n-1\}^\Z,\sigma)$ of automorphisms of (full) shift spaces, then Grigorchuk, Nekrashevych, and Suschanski\u\i\, in \cite{GNS} answer a request by Kitchens for a new combinatorial realisation of elements of those automorphism groups.  Kitchens in \cite{Kitchens} states that a major obstacle in the progression of understanding groups of automorphisms of shift spaces has been a lack of a practical combinatorial description for elements of these groups.  Grigorchuk, Nekrashevych, and Suschanski\u\i\, give an embedding of Aut$(\{0,1,\ldots,n-1\}^\Z,\sigma)$ in $\R$.  There is now a second realisation arising from the recent description of the group Aut$(G_{n,r})$ of automorphisms of the Higman-Thompson group $G_{n,r}$ (for $1\leq r<n$ natural numbers) as a group of transducers acting on a specific Cantor space $\mathfrak{C}_{n,r}$ \cite{BCMNO}, which also exposed an unexpected connection between subgroups of the outer automorphism group of the Higman-Thompson group $G_{n,r}$ and Aut$(\{0,1,\ldots,n-1\}^\Z,\sigma)$. This connection arose through the study in \cite{BCMNO,BCO} of the special combinatorial properties of the transducers representing the group elements of Aut$(G_{n,r})$ for such $n$ and $r$, and leads to an explicit combinatorial realisation of elements of Aut$(\{0,1,\ldots,n-1\}^\Z,\sigma)$ which exposes these groups' structures as non-split extensions over a central $\Z$.

Groups of synchronous transducers have received much attention in the literature, primarily as this class of groups contain numerous `exotic' groups providing examples of unusual or unexpected behaviour (e.g., \cite{GrigBurnside,sidkiBurnside,GrigQuestions,NekrashevychIMG,BartholdiKaimonovichNekrashevychAmenabilty,NekrashevychSidkiTreeAutGrps} provides a very incomplete list of papers).  While these groups do provide counterexamples to various forms of the Burnside conjecture and Milnor's conjecture, they also remain natural in many ways.  Indeed, this class houses well known foundational groups which arise in other circumstances, including free groups~\cite{Vorobets-Vorobets}, $\mathrm{GL}_n(\mathbb{Z})$ and its subgroups~\cite{Brunner-Sidki-1}, the solvable Baumslag-Solitar groups $BS(1, m)$~\cite{Bartholdi-Sunik-1}, and the generalized lamplighter groups~$(\mathbb{Z}/n\mathbb{Z}) \wr \mathbb{Z}$~\cite{Silva-Steinberg-1}.

On the other hand, comparatively little attention has been paid to the more complex class of groups generated by asynchronous transducers, and the full asynchronous rational group~$\mathcal{R}$ of Grigorchuk, Nekrashevych, and Sushchanski\u\i. It is known that $\R$ is simple and not finitely generated~\cite{Belk-Hyde-Matucci-1}.  Also, while the word problem is solvable in finitely generated subgroups of~$\mathcal{R}$~\cite{GNS}, the periodicity problem for elements of $\mathcal{R}$ has no solution~\cite{Belk-Bleak-1}.  Finally, the group $\R$ houses `exotic' groups of another type: the R.\ Thompson groups $F$, $T$, and $V$ all embed into~$\mathcal{R}$~\cite{GNS}, as do the Brin-Thompson groups~$nV$ (see~\cite{Belk-Bleak-1} for the embedding of the group $2V$) and groups such as the R\"over group $V_\Gamma$ (a finitely presented simple group which is marriage of Grigorchuk's group $\Gamma$ with the R.\ Thompson group $V$, see \cite{Roever}).  Any group of synchronous automata embeds into~$\R$, so $\R$ also contains the groups mentioned earlier.

The proof  of Theorem \ref{thm:MainTheorem} is dynamical as opposed to algebraic. Indeed, there is a general dynamical procedure for showing that a group embeds into~$\R$, which can be defined as follows.

\begin{definition}\label{def:RationalAction}Let $G$ be a group acting by homeomorphisms on a compact metrizable space~$X$.  We say that the action of $G$ on $X$ is \newword{rational} if there exists a quotient map $q\colon \{0,1\}^\omega \to X$ and a homomorphism $\varphi\colon G\to\R$ such that the diagram
\[
\xymatrix@R=0.5in{
\{0,1\}^\omega \ar_{q}[d] \ar^{\varphi(g)}[r] & 
\{0,1\}^\omega \ar^{q}[d] \\ 
X\ar_{g}[r] & X }
\]
commutes for all $g\in G$.
\end{definition}

Note that every compact metrizable space is a quotient of the Cantor set~$\{0,1\}^\omega$, so it makes sense to ask whether any action of a countable group on such a space is rational.  A group $G$ that acts faithfully and rationally on a compact metrizable space must embed into~$\R$. The converse holds as well, since any subgroup of $\R$ acts faithfully and rationally on the Cantor set.

Now, every hyperbolic group $G$ has a \newword{Gromov boundary} $\partial G$, which is a compact metrizable space (see \cite{Gromov1987}, and more generally the survey~\cite{KaBe}) on which $G$ acts by homeomorphisms.  Our main theorem is the following:

\begin{theorem}\label{thm:MainTheoremAction}For any hyperbolic group $G$, the action of $G$ on $\partial G$ is rational.
\end{theorem}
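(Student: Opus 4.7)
The overall strategy is to build a locally finite, self-similar tree $T$ of nested subsets of $G$, assign binary addresses to the nodes of $T$ in a systematic way, identify the boundary $\partial T$ with the horofunction boundary $\partial_h G$, and then compose with the standard continuous $G$-equivariant surjection $\partial_h G \to \partial G$ to obtain the required quotient map $q\colon\{0,1\}^\omega \to \partial G$. It then remains to show that for every $g\in G$ left multiplication by $g$ acts on these binary addresses through an asynchronous finite-state transducer, which will give the homomorphism $\varphi\colon G\to\R$ required by Definition~\ref{def:RationalAction}.

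Fix a finite symmetric generating set $S$ for $G$. The natural first candidate for the tree is the \emph{cone tree}: the root is all of $G$, and each cone $C_g = \{h\in G : d(e,h) = d(e,g) + d(g,h)\}$ has as children the cones $C_{gs}$ with $s\in S$ satisfying $d(e,gs)=d(e,g)+1$. Each node has at most $|S|$ children, and a fixed labeling of children by strings from $\{0,1\}^{\lceil\log_2|S|\rceil}$ assigns binary addresses, yielding a continuous surjection $\{0,1\}^\omega\to\partial T$. Points of $\partial T$ correspond to geodesic rays from $e$; the identification with $\partial_h G$ then reduces to checking that the Busemann-type function $h\mapsto d(h,g_n)-n$ of any ray $(g_n)$ stabilizes on each finite ball once $n$ is large, which is a standard consequence of the thin-triangles condition.

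To produce the transducer $\varphi(g_0)$ for an element $g_0\in G$, the idea is to track how the input cone $C_g$ relates to its image $g_0 C_g$ in the cone tree. The key fact is \textbf{Cannon's theorem}: a hyperbolic group admits only finitely many cone types. Combined with the observation that in a $\delta$-hyperbolic Cayley graph the geodesic from $e$ to $g_0 g$ fellow-travels the one from $g_0$ to $g_0 g$ up to a bounded distance, this should imply that the pair (cone type at $g$, bounded discrepancy between $g_0g$ and the nearest vertex on the geodesic from $e$ to $g_0g$) takes only finitely many values as $g$ varies. These pairs serve as the states of the transducer; the transition function reads the next binary symbol along the input ray and writes the corresponding binary symbols along the output ray, updating the state using the local hyperbolic geometry.

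The main obstacle is the matching between the two tree structures in step three: left multiplication by $g_0$ does not preserve the base point $e$, so $g_0 C_g$ is typically not itself a cone based at $e$, but a shifted shadow that agrees with a genuine cone only beyond a bounded initial segment. Handling this mismatch is where the \emph{asynchronous} nature of the machine becomes essential, since the transducer needs to read a controlled number of input symbols before committing to output, and to output strings of varying length. Proving that only finitely many states are needed will require a careful synchronization argument combining the finiteness of cone types with the thin-triangles condition to bound the size of the discrepancy uniformly in $g$. A second, more technical obstacle is arranging the binary labelings of children compatibly across cone types, so that the output alphabet of the transducer remains globally consistent and finite and so that the quotient map $q$ factors through a well-defined map on $\partial_h G$ rather than merely on the set of geodesic rays.
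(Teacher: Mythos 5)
Your high-level strategy — factor through the horofunction boundary, build a self-similar tree with finitely many node types using Cannon's theorem, and implement the $G$-action by asynchronous transducers — matches the paper's architecture, and the choice to make the machines asynchronous for exactly the reason you give (the base point moves) is the right instinct. But the specific tree you propose does not work, and the difficulty you relegate to "a more technical obstacle" is in fact where the entire construction succeeds or fails.

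The \emph{cone tree} is the wrong object, for two reasons. First, the claimed identification of its boundary (geodesic rays from $e$) with $\partial_h G$ is not correct: the Busemann map from rays to horofunctions is generally not injective, and even the surjection requires an argument. Second, and more fundamentally, the cone tree is not $G$-equivariant. You write that $g_0 C_g$ is a shifted set that ``agrees with a genuine cone only beyond a bounded initial segment.'' This is false in a general $\delta$-hyperbolic graph: $g_0 C_g$ is the cone based at $g_0$ through $g_0g$, and its symmetric difference with any cone $C(g')$ based at $e$ is typically infinite — the two cones fellow-travel along a central geodesic, but their lateral edges diverge, so the symmetric difference escapes every ball. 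Consequently there is no induced action of $G$ on the cone tree or its boundary, only a non-canonical, non-associative ``approximate'' action, and the transducer construction has nothing to act on. This is not a synchronization detail but the central obstacle, and the proposal contains no mechanism for resolving it.

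The paper's essential idea, which is missing from your proposal, is to replace cones by \emph{atoms}: the partition of $\Gamma$ at level $n$ into sets on which the distance function $x\mapsto \od_x|_{B_n}\in\F(B_n,\Z)$ is constant. Unlike cones, atoms are defined purely in terms of the (left-invariant) metric, so the $G$-action is automatically compatible with the tree: since $gB_n\supseteq B_{n-|g|}$ when $|g|\le n$, each $g$ sends level-$n$ atoms into level-$(n-|g|)$ atoms (Proposition~\ref{prop:LipschitzAction}), and the boundary of the tree of atoms \emph{is} the horofunction boundary $\hb\Gamma$ (Theorem~\ref{thm:BoundaryIsHoroboundary}), not merely a set surjecting onto it. Finiteness of types still ultimately comes from Cannon's theorem together with properness of the action, but it is extracted through the geometry of visible and proximal points in spheres $S_n$ rather than through cone types alone. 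Finally, the paper sidesteps your second worry about globally consistent binary labelings by not labeling at all: it develops a theory of rational homeomorphisms of the boundary of an abstract self-similar tree and proves (Theorem~\ref{thm:RationalGroupIsomorphism}, Theorem~\ref{thm:RationalGroupEmbedsInR2}) that such rational groups act rationally in the sense of Definition~\ref{def:RationalAction}, so no explicit binary encoding ever needs to be carried through the dynamics. You would need both of these ideas — the atom tree in place of the cone tree, and a coding-free notion of rationality for self-similar trees — to close the gaps in your outline.
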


This theorem can be generalized as follows.

\begin{corollary}Let $X$ be a geodesic, hyperbolic metric space, and let $G$ be a group acting properly and cocompactly by isometries on~$X$.  Then the action of $G$ on $\partial X$ is rational.
\end{corollary}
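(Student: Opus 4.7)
The plan is to reduce the corollary to Theorem~\ref{thm:MainTheoremAction} via the Milnor--\v{S}varc lemma and the functoriality of the Gromov boundary under quasi-isometries. First, I would invoke the Milnor--\v{S}varc lemma: since $G$ acts properly and cocompactly by isometries on the geodesic metric space $X$, it follows that $G$ is finitely generated and any orbit map $g\mapsto g\cdot x_0$ (for a choice of basepoint $x_0\in X$) is a $G$-equivariant quasi-isometry from the Cayley graph of $G$, equipped with a word metric, to $X$. Because hyperbolicity of a geodesic metric space is a quasi-isometry invariant, $G$ is itself a hyperbolic group, so Theorem~\ref{thm:MainTheoremAction} applies and furnishes a quotient map $q\colon\{0,1\}^\omega\to\partial G$ and a homomorphism $\varphi\colon G\to\R$ making the diagram in Definition~\ref{def:RationalAction} commute.

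Next I would transport this data from $\partial G$ to $\partial X$. A standard result on hyperbolic spaces (see e.g.~\cite{BrHa}) asserts that any quasi-isometry between proper geodesic hyperbolic spaces induces a canonical homeomorphism of Gromov boundaries. Applying this to the orbit map above yields a homeomorphism $h\colon\partial G\to\partial X$; moreover, since the orbit map is $G$-equivariant (with $G$ acting on its Cayley graph by left multiplication and on $X$ by the given isometric action), the induced boundary homeomorphism $h$ is $G$-equivariant as well.

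Finally, I would assemble the rational action on $\partial X$. Define $q'\coloneqq h\circ q\colon\{0,1\}^\omega\to\partial X$; this is a composition of a continuous surjection from a compact space with a homeomorphism, so it is itself a quotient map. Using $G$-equivariance of $h$ and the commuting square for $\partial G$, one checks immediately that for every $g\in G$,
\[
q'\circ\varphi(g) \;=\; h\circ q\circ\varphi(g) \;=\; h\circ g\circ q \;=\; g\circ h\circ q \;=\; g\circ q',
\]
so the same homomorphism $\varphi\colon G\to\R$ witnesses rationality of the $G$-action on $\partial X$.

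There is no real obstacle here: every step is a direct application of well-established facts, and the only thing to verify carefully is that rationality in the sense of Definition~\ref{def:RationalAction} is preserved under post-composition of the quotient map with an equivariant homeomorphism of the target, which is immediate. The substantive content of the corollary is entirely concentrated in Theorem~\ref{thm:MainTheoremAction}.
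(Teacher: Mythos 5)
Your proposal is correct and follows essentially the same route as the paper: invoke the \v{S}varc--Milnor lemma to get that $G$ is hyperbolic with a $G$-equivariant boundary homeomorphism, then transport the rational structure from $\partial G$ to $\partial X$. The paper leaves the final transport step (post-composing the quotient map with the equivariant homeomorphism) implicit, whereas you spell it out; this is a harmless and arguably welcome addition.
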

\begin{proof}
By the \v{S}varc-Milnor lemma \cite[Proposition~I.8.19]{BrHa}, we know that~$X$ is quasi-isometric to $G$.  It follows that $G$ is hyperbolic, and there exists a $G$-equivariant homeomorphism from $\partial X$ to~$\partial G$ \cite[Propositions III.1.9 and III.1.10]{BrHa}.
\end{proof}

These statements can be viewed as assigning a certain kind of symbolic dynamics to the action of the group $G$ on~$\partial G$ (or~$\partial X$).  Specifically, the quotient map $q\colon \{0,1\}^\omega \to \partial G$ assigns a binary address to each point of~$\partial G$, and elements of $G$ act on $\partial G$ by asynchronous transducers.  Symbolic dynamics for actions of hyperbolic groups on their boundaries have been studied extensively (see~\cite{Coornaert-Papadopoulos-1}), but this particular assignment of binary addresses seems to be new, as is the action by asynchronous transducers.

It follows immediately from Theorem~\ref{thm:MainTheoremAction} that any hyperbolic group $G$ that acts faithfully on $\partial G$ embeds into~$\R$.  Unfortunately, it is possible for the action of $G$ on $\partial G$ to have nontrivial kernel, which is always a finite normal subgroup of~$G$ as long as $G$ is non-elementary (see Proposition~\ref{prop:KernelBoundaryAction} below).  However, as long as $G$ is nontrivial, it is easy to show that the free product $G*\mathbb{Z}$ is a non-elementary hyperbolic group with no finite normal subgroups.  It follows that $G*\mathbb{Z}$ embeds into~$\R$, and hence $G$ does as well, which proves Theorem~\ref{thm:MainTheorem} from Theorem~\ref{thm:MainTheoremAction}.

Our proof of Theorem~\ref{thm:MainTheoremAction} begins by defining a very broad class of trees which have a notion of rational homeomorphisms on their boundaries.  We refer to these as \newword{self-similar trees}, and we prove in Section~\ref{sec:SelfSimilarTrees} that the group of rational homeomorphisms of the boundary of a self-similar tree acts on the boundary in a rational way in the sense of Definition~\ref{def:RationalAction}.  This seems to be a very general tool for proving that actions are rational, and we hope that it will be helpful in other contexts.

Next we define a tree of subsets of any hyperbolic graph~$\Gamma$, which we refer to as \newword{atoms}.  Assuming a group $G$ acts properly and cocompactly on~$\Gamma$, we prove in Section~\ref{sec:HyperbolicRational} that this tree is self-similar and its boundary is naturally homeomorphic to the well-known \newword{horofunction boundary} (or~\newword{metric boundary}) $\hb\Gamma$ of~$\Gamma$. The horofunction boundary is compact and totally disconnected, and has the Gromov boundary $\partial\Gamma$ as a quotient~\cite{WeWi}.  Our construction computes the horofunction boundary of~$\Gamma$ explicitly, and describes the action of $G$ on~$\hb\Gamma$ by asynchronous transducers.  See Section~\ref{sec:AnExample} for an example of this construction.

Perhaps as evidence of the naturality or importance of the general construction, we learned from the authors that the article \cite{LacaRaeburnRamaggeWhittaker} gives a similar construction, with the goal of extending self-similar groups to act on the path space of a graph.  While the constructions given for the self-similar trees (in their language, self-similar groupoids) are similar, the automata groups arising in \cite{LacaRaeburnRamaggeWhittaker} are quite different in nature from ours (e.g., they are synchronous). 

The transducers that arise in our construction appear to have a special flavour: in all examples that we have computed, they act as prefix-exchange maps on a dense open subset of the boundary.  As such, the embeddings we construct are ``almost'' embeddings of hyperbolic groups into Thompson's group~$V$.  In \cite{LehnertSchweitzer}, Lehnert and Schweitzer use a push-down automaton that implements prefix exchanges on a finite set of test points to prove that all finitely-generated subgroups of Thompson's group~$V$ are in the class of CoCF groups introduced by Holt, R\"over, Rees, and Thomas~\cite{HRRT}. If this method can be extended, it may be possible to use our embedding to shed some light on the question of whether hyperbolic groups are~CoCF.

Finally, note that while synchronous automata groups are always residually finite, the same does not hold true for asynchronous automata groups.  In particular, our result does not yield any immediate information about the question of whether all hyperbolic groups are residually finite (see~\cite{GerstenProbs,Lysionok,NibloProbs,KapovichWise}).

\bigskip
\noindent \textbf{Acknowledgements.} We would like to thank 
Michael Whittaker for discussions where we learned that our papers have somewhat similar constructions of the self-similar tree and Volodymyr Nekrashevych 
for interesting discussions about our results in general.

\numberwithin{theorem}{section}
\section{Background}

\subsection{The Rational Group~\texorpdfstring{$\R$}{R}}
\label{sec:BackgroundRational}

In this section we briefly recall the definitions of transducers and rational homeomorphisms from~\cite{GNS}.  We have modified some of the definitions slightly to simplify the terminology.

Throughout this paper, if $S$ is a set, we let $S^\omega$ denote the set of all infinite sequences of elements of~$S$, and we let $S^{\tast}$ denote the set of all finite sequences of elements of~$S$, including the empty sequence~$\emptystring$.

\begin{definition}A \newword{transducer} consists of the following data:
\begin{enumerate}
\item Two finite sets $\Ain$ and $\Aout$ called the \newword{input alphabet} and \newword{output alphabet},
\item A finite set $Q$ whose elements are called \newword{states},
\item An \newword{initial state} $q_0\in Q$,
\item A \newword{transition function} $t\colon Q \times \Ain \to Q$, and
\item An \newword{output function} $o\colon Q\times \Ain \to \Aout^\tast$.
\end{enumerate}
\end{definition}

A transducer is \newword{synchronous} if $o(q,a)$ is a single symbol in $\Aout$ for each $q\in Q$ and $a\in\Ain$, and \newword{asynchronous} otherwise.  We allow both synchronous and asynchronous transducers.

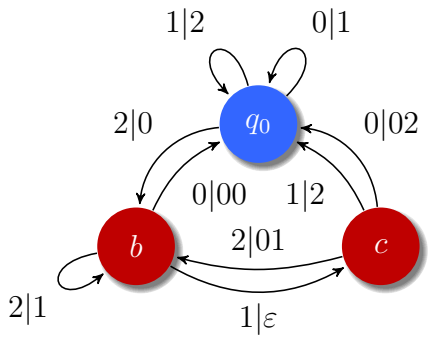
\begin{figure}
\centering
 \begin{tikzpicture}[->,>=stealth',shorten >=1pt,auto,node distance=2.3cm,on grid,semithick,
                    every state/.style={fill=myred,draw=none,circular drop shadow,text=white}]
   \node[state,fill=myblue] (a)   {$q_0$}; 
   \node[state] (b) [below left=of a] {$b$}; 
   \node[state] (c) [below right=of a] {$c$}; 
    \path[->] 
    (a) edge [out=105,in=135,loop] node [swap]{$1|2$} ()
        edge [out=45,in=75,loop] node [swap]{$0|1$} ()
        edge  [out=185,in=85]node [swap]{$2|0$} (b)
    (b) edge [out=190,in=220, loop] node [swap] {$2|1$} ()
        edge[out=65,in=205]  node  [swap]{$0|00$} (a)
        edge [out=330,in=210] node [swap]{$1|\varepsilon$} (c)
    (c) edge[out=95,in=355]  node [swap]{$0|02$} (a)
        edge [out=115,in=335]  node {$1|2$} (a) 
        edge [in=345,out=195] node [swap] {$2|01$} (b);
\end{tikzpicture}
\caption{An asynchronous transducer on a three-letter alphabet $\{0,1,2\}$ with initial state $q_0$.\label{fig-small-trans}}
\end{figure}
We can draw a transducer as a finite directed graph, as shown in Figure~\ref{fig-small-trans}. Each state of this transducer is a node of the graph, and the directed edges indicate the transitions and output.  Specifically, for each $q\in Q$ and $a\in\Ain$, there is a directed edge in from $q$ to $t(q,a)$ in the graph with label $a\mid o(q,a)$.

If $T=(\Ain,\Aout,Q,q_0,t,o)$ is a transducer, an \newword{input string} for $T$ is any infinite string $a_1a_2\cdots\in\Ain^\omega$.  The corresponding \newword{output string} is the concatenation
\[
o(q_0,a_1)\,o(q_1,a_2)\,o(q_2,a_3)\cdots
\]
where $\{q_n\}$ is the sequence of states starting at the initial state $q_0$ defined recursively by $q_n = t(q_{n-1},a_n)$.

Note that the output string may be finite if $o(q_{n-1},a_n) = \emptystring$ for all but finitely many~$n$, but we are interested in transducers whose output strings are always infinite.  Such transducers are called \newword{nondegenerate}.  A nondegenerate transducer defines a function $\Ain^\omega\to\Aout^\omega$ from infinite input strings to infinite output strings.

\begin{definition}\label{def:RationalGNS}Let $\Ain$ and $\Aout$ be finite sets.  We say that a function $f\colon\Ain^\omega\to\Aout^\omega$ is \newword{rational} if there exists a nondegenerate transducer with input alphabet $\Ain$ and output alphabet $\Aout$ whose output string is $f(\psi)$ for each input string $\psi\in\Ain^\omega$.
\end{definition}

The following properties of rational functions are proven in~\cite{GNS}.  We will prove (2) and (3) in a more general setting in Section~\ref{sec:RationalFunctions}.

\begin{proposition}\quad\begin{enumerate}
\item Any rational function $f\colon A^\omega\to B^\omega$ is continuous with respect to the product topologies on $A^\omega$ and $B^\omega$.
\item If $f\colon A^\omega\to B^\omega$ and $g\colon B^\omega\to C^\omega$ are rational, then so is the composition~$g\circ f$.
\item If $f\colon A^\omega\to B^\omega$ is a rational bijection, then the inverse $f^{-1}\colon B^\omega\to A^\omega$ is rational.\hfill\qedsymbol
\end{enumerate}
\end{proposition}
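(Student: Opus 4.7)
The plan is to handle the three assertions in order of increasing difficulty, with (3) being the genuine obstacle.

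For (1), I would fix a transducer $T = (\Ain,\Aout,Q,q_0,t,o)$ realising $f$ and a string $\psi = a_1 a_2 \cdots \in \Ain^\omega$, producing the run $q_0,q_1,q_2,\ldots$ defined by $q_n = t(q_{n-1},a_n)$. Given $k \in \N$, nondegeneracy guarantees that there is some $n$ for which the concatenation $o(q_0,a_1)\,o(q_1,a_2)\cdots o(q_{n-1},a_n)$ already has length at least $k$. Any string $\psi'$ agreeing with $\psi$ on its first $n$ symbols traces out the same first $n$ states, produces the same first $n$ output blocks, and therefore agrees with $f(\psi)$ on its first $k$ symbols. This is exactly continuity with respect to the product topologies.

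For (2), given transducers $T_1 = (A,B,Q_1,q_0,t_1,o_1)$ and $T_2 = (B,C,Q_2,r_0,t_2,o_2)$ realising $f$ and $g$, I would take $Q_1 \times Q_2$ as state set and $(q_0,r_0)$ as initial state. For $(q,r) \in Q_1 \times Q_2$ and $a \in A$, let $q' = t_1(q,a)$ and then feed the (finite) string $o_1(q,a)$ through $T_2$ starting in state $r$, one symbol at a time, recording the resulting terminal $T_2$-state $r'$ and the concatenated output $w \in C^\tast$; define the transition to be $(q',r')$ and the output to be $w$. A straightforward induction on input length shows the composite transducer outputs $g(f(\psi))$ on input $\psi$, and nondegeneracy is inherited from $T_1$ and $T_2$.

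The main obstacle is (3). Starting from a nondegenerate transducer $T$ for $f$, I want a transducer that, given an output string $b_1 b_2 \cdots \in \Aout^\omega$, recovers the unique preimage in $\Ain^\omega$. The natural construction uses states of the form $(q, w)$, where $q$ is a current $T$-state and $w \in \Aout^\tast$ is a buffer of output symbols consumed but not yet matched. On input $b$, one forms $wb$ and checks, for each $a \in \Ain$, whether $o(q,a)$ is a prefix of $wb$; if so, commit to producing $a$, transition to state $(t(q,a), \text{suffix of } wb)$, and repeat; otherwise retain the buffer and output $\emptystring$. Injectivity of $f$ makes commitment unambiguous once enough symbols are read, and surjectivity guarantees some committed prefix appears in the limit. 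The real difficulty is proving that the buffer length stays bounded, so that the state set is finite. I would argue this via a pigeonhole on the state graph of $T$: if unboundedly long buffers were reachable, then two distinct input strings would eventually produce the same output prefix from matching internal configurations, contradicting injectivity of $f$ on $\Ain^\omega$. Once finiteness of states is secured, nondegeneracy of the inverse transducer follows because $f^{-1}$ is total and so must output an infinite string on every input.
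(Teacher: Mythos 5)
Parts (1) and (2) are correct and take the direct automata-theoretic route: (1) is the standard continuity argument, and (2) is the familiar ``cascade'' product construction in which the composite state tracks a pair $(q,r)$ and, on each input letter, pushes the finite block $o_1(q,a)$ through $T_2$. The paper itself cites~\cite{GNS} for (1) and proves (2) and (3) in the more general self-similar-tree setting (Propositions~\ref{prop:CompositionsRational} and~\ref{prop:InversesRational}), where everything is phrased via the ``finitely many equivalence classes of restrictions'' criterion of Theorem~\ref{thm:RestrictionTest} rather than by exhibiting a machine. Your approach to (2) is the concrete alternative and is perfectly valid; the paper's abstract route pays off only because it must also cover trees with no canonical prefix-replacement morphisms.

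Part (3) has a genuine gap, in two places. First, the commitment rule as stated is not well defined: you commit to $a$ as soon as $o(q,a)$ is a prefix of the buffer $wb$, but several letters $a$ can satisfy this simultaneously (in particular any $a$ with $o(q,a)=\emptystring$ always does), and a greedy choice can be \emph{wrong} --- the chosen $a$ may admit no infinite continuation consistent with the eventual stream, even though a different $a$ does. The correct construction must defer commitment until every input letter still compatible with the observed output prefix forces the same next symbol of the preimage; this is a nondeterministic / subset construction, not a one-letter lookahead. Second, and more seriously, the claim that ``unbounded buffers would contradict injectivity by pigeonhole'' is not an argument: injectivity of $f$ on $A^\omega$ is a statement about infinite strings and gives no information about finite configurations without more work. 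What actually controls the buffer is a combination of (i) $f$ having finitely many restrictions and (ii) $f^{-1}$ being \emph{uniformly continuous} on the compact space $B^\omega$, so that the depth of the deepest common prefix of $f^{-1}(\partial B^\tast_{v})$ tends to infinity with $|v|$. These two facts, combined with a prefix-replacement argument, yield boundedness --- and this is essentially what the paper's proof of Proposition~\ref{prop:InversesRational} does, just organized around restrictions instead of buffers. As written, your sketch of (3) names the right obstacle but does not supply the mechanism that resolves it.
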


\begin{definition}If $A$ is a finite set with at least two elements, the \newword{rational group} $\R_A$ is the group of all rational homeomorphisms of~$A^\omega$.
\end{definition}

In particular, the \newword{binary rational group} $\R_2$ is the group of all rational homeomorphisms of the Cantor set~$\{0,1\}^\omega$.

\begin{proposition}\label{prop:AlphabetDoesntMatter}For any two finite sets $A$ and $B$ with at least two elements, there exists a rational homeomorphism $A^\omega\to B^\omega$, and therefore the rational groups $\R_A$ and $\R_B$ are isomorphic.
\end{proposition}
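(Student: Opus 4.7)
The plan is to reduce the problem to the case $B=\{0,1\}$ and then exhibit an explicit rational homeomorphism $\phi_A\colon A^\omega\to\{0,1\}^\omega$ for every finite alphabet $A$ with $|A|\geq 2$, using a complete prefix code. Composing two such maps (and their inverses) yields a rational homeomorphism $A^\omega\to B^\omega$ via $\phi_B^{-1}\circ\phi_A$; that this composition is again rational follows from parts (2) and (3) of the previous proposition.

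For the construction of $\phi_A$, set $n=|A|$ and enumerate $A=\{a_0,\ldots,a_{n-1}\}$. I would use the complete prefix code $c(a_i)=1^i0$ for $0\le i\le n-2$ and $c(a_{n-1})=1^{n-1}$; the Kraft identity
\[
\sum_{i=0}^{n-2}2^{-(i+1)}+2^{-(n-1)}=1
\]
holds, so the codewords form a maximal antichain in $\{0,1\}^\tast$ and every infinite binary string factors uniquely as a concatenation of codewords. Define $\phi_A$ by $\phi_A(a_{i_1}a_{i_2}\cdots)=c(a_{i_1})c(a_{i_2})\cdots$; continuity and bijectivity are immediate from the prefix-code property.

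Next I would exhibit transducers. The encoder $\phi_A$ is realised by a one-state transducer with input alphabet $A$, output alphabet $\{0,1\}$, and output function $o(q_0,a_i)=c(a_i)$; this is nondegenerate because $c(a_i)\neq\emptystring$ for every $i$. For the decoder $\phi_A^{-1}$, I would take the state set $Q=\{q_0,q_1,\ldots,q_{n-1}\}$, where $q_k$ records that the last $k$ symbols read were all $1$s and no codeword has yet been emitted; from $q_k$ with $k<n-1$, reading $0$ outputs $a_k$ and returns to $q_0$ while reading $1$ outputs $\emptystring$ and moves to $q_{k+1}$; from $q_{n-1}$, reading $0$ outputs $a_{n-2}$ and reading $1$ outputs $a_{n-1}$, both returning to $q_0$. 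Nondegeneracy of this transducer is the one place that requires an argument: on any infinite binary input, the states cannot stay in $\{q_1,\ldots,q_{n-1}\}$ forever, because the path visits $q_k$ with strictly increasing $k$ only while $1$s are being read, and the transitions out of $q_{n-1}$ on either input symbol emit a nonempty output; hence infinitely many codewords, and therefore infinitely many output symbols, are produced.

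Finally, given any rational homeomorphism $\phi\colon A^\omega\to B^\omega$, conjugation $f\mapsto \phi\circ f\circ \phi^{-1}$ is a group isomorphism $\R_A\to\R_B$: it is a homomorphism by associativity of composition, and it lands in $\R_B$ since compositions and inverses of rational bijections are rational. The main obstacle, as noted, is verifying nondegeneracy of the decoder — everything else is either a routine transducer construction or an immediate consequence of the previous proposition.
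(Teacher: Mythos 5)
Your strategy --- reduce to $B=\{0,1\}$ via a complete binary prefix code, exhibit encoder/decoder transducers, and conjugate --- is the right one. The paper's proof of this proposition is simply a citation to \cite{GNS}, but the same prefix-code encoding reappears in the paper's own Proposition~\ref{prop:RationalHomeomorphismToT2}, so your construction is consistent with what the authors actually use. The encoder, the Kraft verification, the unique factorisation, the nondegeneracy argument, and the final conjugation step are all correct.

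There is, however, a concrete bug in your decoder. After reading $1^{n-1}$ the machine sits in $q_{n-1}$, but $1^{n-1}$ is already the complete codeword $c(a_{n-1})$, so the decoder owes an output of $a_{n-1}$ \emph{before} looking at the next symbol. With your transitions out of $q_{n-1}$, the input $1^{n-1}0\cdots$ produces $a_{n-2}\cdots$ (it should produce $a_{n-1}a_0\cdots$, since the trailing $0$ begins a new codeword $c(a_0)$), and the input $1^n\cdots$ produces $a_{n-1}$ and returns to $q_0$ (it should return to $q_1$, because the $n$-th $1$ is the first symbol of the next codeword). Concretely: already at $n=3$ with codewords $0,10,11$, your machine reads $110\cdots$ and emits $a_1\cdots$ instead of $a_2a_0\cdots$. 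The clean fix is to drop $q_{n-1}$ entirely and use states $q_0,\dots,q_{n-2}$ only, with the special transitions placed at $q_{n-2}$: from $q_{n-2}$, reading $0$ outputs $a_{n-2}$ and returns to $q_0$, while reading $1$ outputs $a_{n-1}$ and returns to $q_0$ (the codeword $1^{n-1}$ is recognised on the transition, not at a state). If you prefer to keep $q_{n-1}$, the correct transitions out of it are: reading $0$ outputs $a_{n-1}a_0$ and goes to $q_0$; reading $1$ outputs $a_{n-1}$ and goes to $q_1$. Either way your nondegeneracy argument survives unchanged, since after at most $n-1$ consecutive empty outputs the machine must emit a symbol.
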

\begin{proof}See \cite[Corollary~2.12]{GNS}.
\end{proof}

Thus up to isomorphism there is only one rational group~$\R$, whose simplest form is the binary rational group~$\R_2$.  Other rational groups $\R_A$ are just other manifestations of this group.  We will henceforth use the notation $\R$ for the rational group in cases where the alphabet is unimportant.

\begin{note}In fact, Proposition~\ref{prop:AlphabetDoesntMatter} tells us that the rational groups $\R_A$ and~$\R_B$ corresponding to two alphabets $A$ and~$B$ are actually \newword{conjugate}, in the sense that the action of $\R_A$ on $A^\omega$ is conjugate to the action of $\R_B$ on $B^\omega$ by a homeomorphism~$A^\omega\to B^\omega$.  Being conjugate is the natural geometric notion of equivalence for groups of homeomorphisms, and is stronger than saying that the two groups are algebraically isomorphic.

It follows from this conjugacy that the definition of a rational action given in Definition~\ref{def:RationalAction} does not depend on the alphabet.  That is, if $G$ is any group acting on a compact metrizable space~$X$ and there exists a finite alphabet~$A$, a quotient map $q\colon A^\omega\to X$, and a homomorphism $\varphi\colon G\to \R_A$ such that $q\circ \varphi(g) = g\circ q$ for all $g\in G$, then the action of $G$ on $X$ is rational.
\end{note}

When working with a rational group $\R_A$, it often helps to consider the infinite directed tree $A^\tast$ of all finite strings over~$A$.  The root of $A^\tast$ is the empty string~$\emptystring$, and there is an edge from a string $w_1$ to another string $w_2$ whenever $w_2=w_1 a$ for some letter $a\in A$. The Gromov boundary $\partial A^\tast$ of $A^\tast$ is naturally homeomorphic to~$A^\omega$.

If $\alpha\in A^\tast$ is a finite string, we will let $A^\tast_\alpha$ denote the rooted subtree of~$A^\tast$ with root~$\alpha$, i.e.~the set of all finite strings that have $\alpha$ as a prefix. The boundary $\partial A^\tast_\alpha$ is naturally a subset of~$A^\omega$, consisting of all infinite strings in $A^\omega$ that have $\alpha$ as a prefix.

If $S\subseteq A^\omega$ is nonempty, the \newword{greatest common prefix} of $S$ is the longest string $\alpha$ that is a prefix of all strings in~$S$. If $S$ has at least two points then $\alpha$ must be a finite string. In this case, $\alpha$~is the deepest vertex (i.e.~farthest vertex from the root) in $A^\tast$ with the property that $S\subseteq \partial A^\tast_\alpha$.

\begin{definition}\label{def:Restriction}Let $f\colon A^\omega \to B^\omega$ be a rational map. Let $\alpha\in A^\tast$, and suppose that $f(\partial A^\tast_\alpha)$ has at least two points.  Then the \newword{restriction} of $f$ to~$\alpha$ is the function $f|_\alpha\colon A^\omega\to B^\omega$ defined by
\[
f(\alpha\psi) = \beta\,f|_\alpha(\psi)
\]
for all $\psi\in A^\omega$, where $\beta$ denotes the greatest common prefix of~$f(\partial A^\tast_\alpha)$.
\end{definition}

There is a useful characterization of rational functions based on their restrictions.

\begin{theorem}\label{thm:RestrictionTest}Let $A$ and $B$ be finite sets and let $f\colon A^\omega\to B^\omega$ be a continuous function.  Then $f$ is rational if and only if the following conditions are satisfied:
\begin{enumerate}
\item $f$ has only finitely many different restrictions.
\item For each $\alpha\in A^\tast$ such that $f(\partial A^\tast_\alpha)$ is a one-point set~$\{\psi\}$, the string $\psi \in B^\omega$ is eventually periodic.
\end{enumerate}
\end{theorem}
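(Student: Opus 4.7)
My plan is to prove both implications by explicitly translating between states of a transducer and restrictions of $f$.

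For the forward direction, suppose $f$ is computed by a transducer $T = (A, B, Q, q_0, t, o)$. For each $q \in Q$, let $f_q \colon A^\omega \to B^\omega$ denote the function computed by $T$ with $q$ as the initial state. After reading any prefix $\alpha \in A^\tast$, the transducer reaches a state $q_\alpha$ having emitted a partial output $\beta_\alpha \in B^\tast$, so that $f(\alpha \psi) = \beta_\alpha\, f_{q_\alpha}(\psi)$ for every $\psi \in A^\omega$. A short calculation then shows that the greatest common prefix of $f(\partial A^\tast_\alpha)$ equals $\beta_\alpha \gamma_{q_\alpha}$, where $\gamma_{q_\alpha}$ is the greatest common prefix of $f_{q_\alpha}(A^\omega)$, so $f|_\alpha$ depends only on $q_\alpha$; this bounds the number of restrictions by $|Q|$. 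For condition~(2), if $f(\partial A^\tast_\alpha)$ is a singleton $\{\psi\}$ then $f_{q_\alpha}$ is constant, and feeding $T$ the input $aaa\cdots$ from $q_\alpha$ visits a sequence of states that is eventually periodic by pigeonhole, forcing the concatenated output $\psi$ to be eventually periodic as well.

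For the reverse direction, I would build a transducer $T_f$ with state set consisting of an initial state $q_0$, one state for each distinct restriction of $f$, and one auxiliary ``periodic'' state $p_v$ for each period word $v$ arising in a canonical decomposition $\psi = u v^\omega$ (with $|v|$ minimal, then $|u|$ minimal) of the eventually periodic singleton images of condition~(2). From a restriction state $h$ on input letter $a$, if $h(\partial A^\tast_a)$ has at least two points then the transition outputs the greatest common prefix of $h(\partial A^\tast_a)$ and moves to $h|_a$; otherwise, if $h(\partial A^\tast_a) = \{u v^\omega\}$, the transition outputs $u$ and moves to $p_v$. Each $p_v$ on any input letter outputs $v$ and stays at $p_v$. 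The initial state $q_0$ is handled analogously, prepending the greatest common prefix of $f(A^\omega)$ to its first output.

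The state set of $T_f$ is finite because condition~(1) bounds the number of restriction states, there are only finitely many pairs $(h, a)$ producing singleton images, and each singleton admits a unique canonical decomposition. The main technical obstacle will be verifying that $T_f$ actually computes $f$ on every input. The plan is to proceed by induction on prefix length: after reading any finite prefix $\alpha$ of $\psi$, the cumulative output of $T_f$ is a prefix of $f(\psi)$, by unfolding the definition of restriction along the sequence of states visited and accounting correctly for any transitions into periodic states. Since each periodic-state transition emits at least one letter and the restriction-state transitions collectively emit unboundedly much along the run (as $f$ itself has infinite output), these output prefixes grow without bound; continuity of $f$ then forces the transducer's infinite output to coincide with $f(\psi)$, establishing that $f$ is rational.
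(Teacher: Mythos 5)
Your proposal is correct. Note that the paper itself does not prove this statement: its ``proof'' is the citation to Theorem~2.5 of the Grigorchuk--Nekrashevych--Sushchanski\u\i\ paper, and your two-way dictionary between states and restrictions is essentially the standard argument from that source (states give at most $|Q|$ restrictions and force eventually periodic singleton images; conversely the finitely many restrictions, plus one looping state per period word, assemble into a finite nondegenerate transducer whose cumulative output after reading $a_1\cdots a_n$ is exactly the greatest common prefix of $f(\partial A^\tast_{a_1\cdots a_n})$). Two small points to tighten in a full write-up. In the forward direction, it is nondegeneracy that guarantees the word emitted over one cycle of states on input $aaa\cdots$ is nonempty, so that the singleton image really is eventually periodic rather than finite. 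In the reverse direction, the reason the outputs $\mathrm{gcp}\bigl(f(\partial A^\tast_{a_1\cdots a_n})\bigr)$ grow without bound is not that ``$f$ has infinite output'' but that the nested compact sets $f(\partial A^\tast_{a_1\cdots a_n})$ intersect in the single point $f(\psi)$ (by continuity), so their diameters tend to $0$ and the lengths of their common prefixes tend to infinity.
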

\begin{proof}See \cite[Theorem~2.5]{GNS}.
\end{proof}

We will use this theorem in Section~\ref{sec:RationalFunctions} to generalize the notion of rational functions to the boundary of arbitrary self-similar trees.

\subsection{Hyperbolic Groups}

In this section we briefly recall relevant facts about hyperbolic graphs and hyperbolic groups.

If $\Gamma$ is a connected graph, a \newword{path} in $\Gamma$ is a sequence $v_0,v_1,\ldots,v_n$ of vertices such that $v_{i-1}$ and $v_i$ are connected by an edge for all $1\leq i\leq n$.  The number $n$ is called the \newword{length} of the path.  A path between two vertices $v$ and $w$ is called a \newword{geodesic} if it has the minimum possible length, and the length of such a path is the \newword{distance} between $v$ and $w$, denoted~$d(v,w)$.  This notion of distance defines a metric on the vertex set of~$\Gamma$, sometimes called the \newword{path metric}.

Throughout this paper, we will regard a graph $\Gamma$ as being the same as its vertex set endowed with the path metric. In particular, we will write $v\in\Gamma$ to mean that $v$ is a vertex of~$\Gamma$.  Note that two such graphs are isomorphic if and only if they are isometric.

For the following definition, a \newword{geodesic triangle} in $\Gamma$ with vertices $v_1,v_2,v_3$ is a triple $\bigl([v_1,v_2],[v_1,v_3],[v_2,v_3]\bigr)$, where each $[v_i,v_j]$ is a geodesic from $v_i$ to~$v_j$.

\begin{definition}Let $\delta\geq 0$.  A connected graph $\Gamma$ is \newword{$\boldsymbol{\delta}$-hyperbolic} if for every geodesic triangle
$\bigl([a,b],[a,c],[b,c]\bigr)$ in~$\Gamma$ and every vertex $v\in[a,b]$, there exists a vertex $w\in[a,c]\cup [b,c]$ so that $d(p,q)\leq \delta$.
\end{definition}

We say that a graph $\Gamma$ is \newword{hyperbolic} if it is $\delta$-hyperbolic for some $\delta\geq 0$.  This definition is due to Gromov~\cite{Gromov1987}, and can be generalized to arbitrary metric spaces.  See~\cite{BrHa} for a general introduction.

There is a natural notion of boundary for a hyperbolic graph, also introduced by Gromov.  If $\Gamma$ is a connected graph, a \newword{geodesic ray} in $\Gamma$ is a sequence $\{v_n\}_{n\geq 0}$ of vertices such that each initial subpath $v_0,\ldots,v_n$ is a geodesic.  Two geodesic rays $R =\{v_n\}$ and $R' = \{v_n'\}$ are said to \newword{fellow travel} if the sequence $\{d(v_n,v_n')\}$ of distances is bounded.  This is clearly an equivalence relation on geodesic rays, and we denote the equivalence class of a geodesic ray $R$ by~$[R]$.

\begin{definition}The \newword{Gromov boundary} of $\Gamma$ is the set
\[
\partial\Gamma  = \{[R] \mid R\text{ is a geodesic ray in }\Gamma\}.
\]
\end{definition}

There is a natural topology on $\partial\Gamma$ which gives it the structure of a compact metrizable space (see~\cite{BrHa}).

\begin{example}Any tree $T$ is $0$-hyperbolic, for if $\bigl([a,b],[a,c],[b,c]\bigr)$ is a geodesic triangle in~$T$ then $[a,b]\subseteq [a,c]\cup [b,c]$.

If we fix a point $p\in T$, then every infinite path of distinct vertices starting at $p$ is a geodesic ray.  No two such rays fellow travel, and the Gromov boundary~$\partial T$ can be identified with the set of all such rays.\qed
\end{example}

\begin{example}\label{ex:SquareTiling}Any graph that is quasi-isometric to the hyperbolic plane $\mathbb{H}^2$ is hyperbolic.  For example, the $1$-skeleton of any tiling of $\mathbb{H}^2$ by congruent polygons is hyperbolic.  The Gromov boundary of such a graph is homeomorphic to a circle, namely the circle of boundary points for the hyperbolic plane.

One such graph is shown in Figure~\ref{fig:SquareTilingVertices}.  This is the $1$-skeleton of the \newword{order five square tiling} of the hyperbolic plane, i.e.~the tiling by congruent regular quadrilaterals (hyperbolic squares) with five squares meeting at every vertex.  We will be using this hyperbolic graph throughout this paper to illustrate our definitions and methods.\qed
\begin{figure}
\centering
\includegraphics{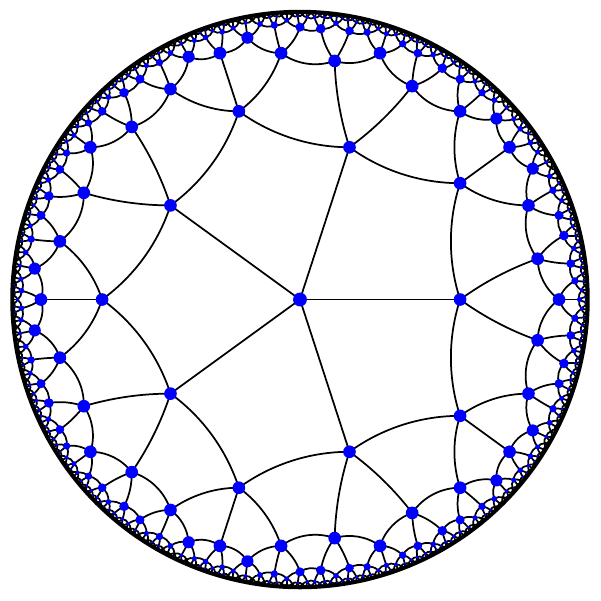}
\caption{The $1$-skeleton of the order five square tiling of the hyperbolic plane.}
\label{fig:SquareTilingVertices}
\end{figure}
\end{example}

An isomorphism between two hyperbolic graphs induces a homeomorphism between their boundaries, and in particular any automorphism of a hyperbolic graph $\Gamma$ induces a self-homeomorphism of~$\partial\Gamma$.  Thus any action of a group~$G$ on~$\Gamma$ by isometries induces an action of~$G$ on~$\partial\Gamma$ by homeomorphisms.

More generally, if $\Gamma$ and $\Gamma'$ are quasi-isometric graphs, then $\Gamma$ is hyperbolic if and only if $\Gamma'$ is hyperbolic, in which case $\partial\Gamma$ is homeomorphic to~$\partial\Gamma'$.

\begin{definition}A \newword{hyperbolic group} is a finitely generated group whose Cayley graph is hyperbolic.
\end{definition}

Of course, a finitely-generated group has many possible Cayley graphs, corresponding to the different finite generating sets.  However, any two such Cayley graphs are quasi-isometric, and hence they are all hyperbolic if any one of them is hyperbolic.  Moreover, the homeomorphism type of the boundary does not depend on the generating set, so it makes sense to talk about \textit{the} Gromov boundary $\partial G$ of a hyperbolic group as a compact metrizable space.

\begin{example}Any finite group or more generally any virtually cyclic group is hyperbolic.  These are the \newword{elementary hyperbolic groups}.\qed
\end{example}

\begin{example}Any finitely generated free group is hyperbolic, since the corresponding Cayley graph is a tree.  More generally, any virtually free group, such as any free product of finite groups, is hyperbolic.\qed
\end{example}

\begin{example}A \newword{hyperbolic surface group} is the fundamental group of a closed surface with Euler characteristic~$\chi<0$.  Such a group is hyperbolic, since its Cayley graph can be viewed as the $1$-skeleton of a tiling of the hyperbolic plane $\mathbb{H}^2$ by congruent $(4-2\chi)$-gons.\qed
\end{example}

\begin{example}Let $\Gamma$ be a hyperbolic graph, and let $G$ be a group acting on $\Gamma$ by isometries.  We say that the action of $G$ on $\Gamma$ is \newword{proper} if every vertex in~$\Gamma$ has finite stabilizer, and \newword{cocompact} if there are finitely many orbits of vertices in~$\Gamma$. If the action of $G$ is proper and cocompact, then the Cayley graph of~$G$ must be quasi-isometric to~$\Gamma$, and therefore $G$ is hyperbolic.  For example, the isometry group of the \mbox{$1$-skeleton} of the order five square tiling of the hyperbolic plane is hyperbolic (see~Example~\ref{ex:SquareTiling}).\qed
\end{example}

We do need a few nontrivial facts from the theory of hyperbolic groups.  The first is the following proposition, which we used in the introduction to prove Theorem~\ref{thm:MainTheorem} from Theorem~\ref{thm:MainTheoremAction}.

\begin{proposition}\label{prop:KernelBoundaryAction}Let $G$ be a non-elementary hyperbolic group.  Then the kernel of the action of $G$ on $\partial G$ is a finite normal subgroup of~$G$.
\end{proposition}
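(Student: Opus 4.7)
My plan is to use the fact that non-elementary hyperbolic groups contain loxodromic elements with only two fixed points on the boundary, and pin the kernel down inside the virtually cyclic stabilizer of such a fixed-point pair.

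First I would pick a loxodromic element $g \in G$. Since $G$ is non-elementary, it contains an element of infinite order, and every such element in a hyperbolic group is loxodromic, with exactly two fixed points $g^+, g^- \in \partial G$ (attracting and repelling). Every other point of $\partial G$ is moved by $g$, and moreover $\partial G$ is uncountable (a non-elementary hyperbolic group has a perfect Gromov boundary). Let $N$ denote the kernel of the action of $G$ on $\partial G$. Since $N$ is the kernel of a homomorphism, it is automatically a normal subgroup of $G$; the task is to show it is finite.

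Next I would invoke the standard fact that the setwise stabilizer $E(g) := \{h \in G : \{h g^+, h g^-\} = \{g^+, g^-\}\}$ is virtually cyclic in a hyperbolic group (this is the ``elementary closure'' of the loxodromic $g$; see e.g.\ \cite{BrHa}). Because every element of $N$ fixes $g^+$ and $g^-$ pointwise, we have $N \subseteq E(g)$, and therefore $N$ is itself virtually cyclic, being a subgroup of a virtually cyclic group.

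Finally I would rule out $N$ being infinite. A virtually cyclic group is either finite or contains a $\mathbb{Z}$-subgroup of finite index; if the latter held for $N$, there would be an infinite order element $n \in N$. But then $n$, as an element of the hyperbolic group $G$, would be loxodromic and would fix only its own two boundary points $n^\pm \in \partial G$. Since $\partial G$ contains more than two points (as $G$ is non-elementary), this contradicts $n \in N$ acting trivially on $\partial G$. Hence $N$ is finite. The only subtle step is step two, invoking the virtual cyclicity of $E(g)$; the rest is bookkeeping. I would cite \cite{BrHa} (or \cite{Gromov1987}) for both the existence of a loxodromic element in a non-elementary hyperbolic group and the virtual cyclicity of $E(g)$, rather than reprove these standard results.
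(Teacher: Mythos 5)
Your proof is correct, and it reaches the conclusion by a genuinely different route from the paper's. Both arguments share the same opening move: since $G$ is non-elementary, $\partial G$ is infinite, and an infinite-order element of $G$ is loxodromic with exactly two fixed points on $\partial G$, so the kernel $N$ contains no infinite-order elements. They diverge in how they upgrade ``$N$ is torsion'' to ``$N$ is finite.'' The paper invokes normality together with the fact that a hyperbolic group has only finitely many conjugacy classes of finite-order elements (Proposition~6.3(6) of the survey \cite{KaBe}), so that $N$, being a normal torsion subgroup, is covered by finitely many conjugacy classes. You instead trap $N$ inside the virtually cyclic subgroup $E(g)$ stabilizing the fixed-point pair $\{g^+,g^-\}$ of a single loxodromic $g$, and then note that an infinite virtually cyclic group must contain an infinite-order element, which you have already excluded. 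The trade is one standard black box for another: finiteness of the set of conjugacy classes of torsion elements versus virtual cyclicity of $E(g)$ (both available in \cite{BrHa}). Your version has the modest advantages of not needing normality of $N$ anywhere in the finiteness step (normality being automatic for a kernel) and of localizing the whole argument at one loxodromic element; the paper's version avoids introducing $E(g)$ and stays entirely within statements about conjugacy classes already quoted from \cite{KaBe}. Either way the result follows, and your write-up has no gaps.
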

\begin{proof}We will use results from~\cite{KaBe}.  Since $G$ is non-elementary, Theorem~2.28 of \cite{KaBe} ensures that the Gromov boundary $\partial G$ is infinite. By Proposition~4.2(1) of~\cite{KaBe} we have that any element of infinite order has exactly two fixed points in $\partial G$, so no infinite order element is contained in the kernel $K$ of the action of $G$ on $\partial G$. Hence, $K$ is a torsion subgroup of $G$ and, being normal, it is the disjoint union of the conjugacy classes of all of its elements.  By Proposition 6.3(6) in \cite{KaBe}, the number of conjugacy classes of finite order elements is finite and therefore $K$ itself is finite. 
\end{proof}

Next, we will need the fact that hyperbolic groups have finitely many cone types. Recall the following definition.

\begin{definition}Let $\Gamma$ be a graph, let $G$ be a group acting by isometries on~$\Gamma$, and fix a vertex $x_0\in\Gamma$.  For each $x\in \Gamma$, the \newword{cone} on $x$ is the set
\[
C(x) \,=\, \{y\in \Gamma \mid d(x_0,y) = d(x_0,x) + d(x,y)\}.
\]
Two points $x,x'\in\Gamma$ have the \newword{same cone type} if there exists a $g\in G$ so that $gx=x'$ and $g\,C(x) = C(x')$.
\end{definition}

Note that a point $y$ lies in a cone $C(x)$ if and only if there exists a geodesic from $x_0$ to $y$ that goes through~$x$.  Figure~\ref{fig:ConeTypes} shows the cone types for the \mbox{$1$-skeleton} of the order five square tiling of the hyperbolic plane, where $G$ is the full isometry group.
\begin{figure}
\centering
$\underset{\textstyle\text{(a)}}{\includegraphics{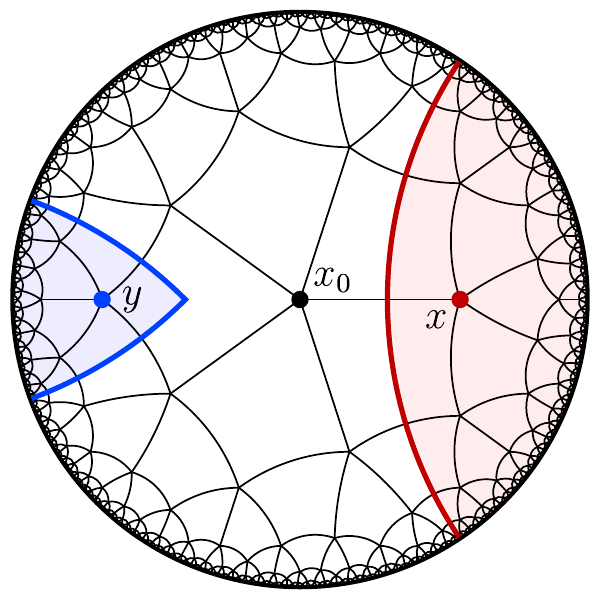}}$\hfill
$\underset{\textstyle\text{(b)}}{\includegraphics{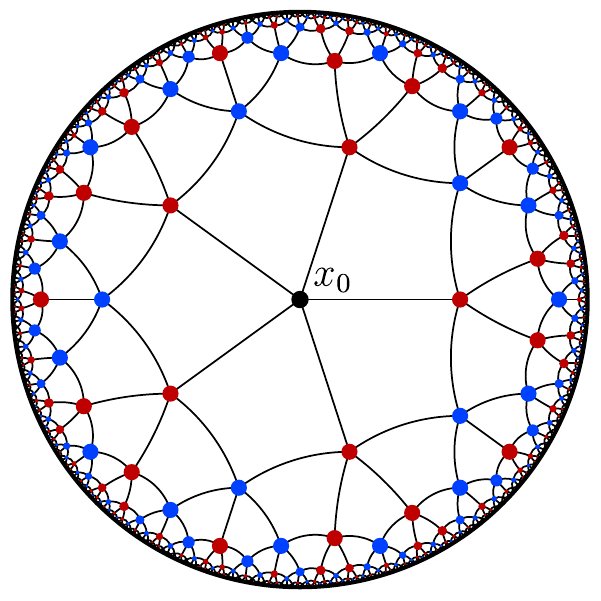}}$
\caption{(a)~Cones $C(x)$ and $C(y)$ for two points $x$ and $y$ in a hyperbolic graph~$\Gamma$, where the cone consists of all vertices in the shaded region.  (b)~Cone types in $\Gamma$ if $G$ is the full isometry group.  Red vertices have the same cone type as~$x$ and blue vertices have the same cone type as~$y$.}
\label{fig:ConeTypes}
\end{figure}

The following theorem is due to Cannon~\cite{Cannon1} (see \cite[Theorem~2.18]{BrHa}).

\begin{theorem}[Cannon]\label{thm:FinitelyManyConeTypes} Let\/ $\Gamma$ be a hyperbolic graph, let $x_0\in\Gamma$, and let~$G$ be a group acting properly and cocompactly by isometries on\/~$\Gamma$.  Then\/~$\Gamma$ has only finitely many cone types with respect to $G$ and~$x_0$.\qed
\end{theorem}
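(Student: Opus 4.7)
The plan is to show that for some radius $K = K(\delta)$, the cone $C(x)$ is determined up to the $G$-action by the ``local cone data'' consisting of the ball $B(x,K)$ together with the function $y \mapsto d(x_0,y) - d(x_0,x)$ restricted to $B(x,K)$. Since the action of $G$ on $\Gamma$ is proper and cocompact, $\Gamma$ has bounded valence and finitely many $G$-orbits of vertices; combined with the fact that $B(x,K)$ has uniformly bounded cardinality and the labeling takes integer values in $[-K,K]$, this forces only finitely many $G$-equivalence classes of local cone data, and hence only finitely many cone types.

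The heart of the argument is the step showing that local data determines the global cone. I would first establish the following local extension lemma: there is a constant $K$ depending only on $\delta$ such that whether a vertex $z \in \Gamma$ with $d(x_0,z) = n+1$ lies in $C(x)$ is determined by which of its neighbors at distance $n$ from $x_0$ lie in $C(x)$, together with the $K$-ball around $z$ equipped with its distances to~$x_0$. The geometric content, coming from $\delta$-hyperbolicity, is that as seen from a point $z$ far from $x$, the ``direction toward $x_0$'' is visible within a bounded radius: any two geodesics from $x_0$ into $B(z,1)$ must $\delta$-fellow-travel on their last $K$ or so steps, so the local pattern of distances to $x_0$ faithfully encodes how a geodesic from $x_0$ would reach~$z$.

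Once this local criterion is in hand, I would prove by induction on $d(x,y)$ that if $g \in G$ satisfies $gx = x'$ and carries the $K$-cone data of $x$ to that of $x'$, then $gy \in C(x')$ if and only if $y \in C(x)$. The base case $y \in B(x,K)$ is given by hypothesis, and the inductive step applies the local extension lemma to the shorter neighbors of $y$. Propagating this through the whole cone yields $g\,C(x) = C(x')$, so points with the same $K$-cone data up to $G$ have the same cone type, and the finiteness count then follows from cocompactness as described.

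The main obstacle is the local extension lemma. Thin triangles alone do not directly yield a single radius $K$ that works uniformly at every $z$; one has to combine the thin-triangles property with bounds on the Hausdorff distance between geodesics sharing an endpoint and between geodesics to nearby endpoints, in order to obtain a uniform ``visual radius'' within which the $x_0$-direction at $z$ is encoded. Choosing $K$ correctly (of order a small multiple of $\delta$) and packaging the local data so that the induction closes cleanly is the technical crux of Cannon's original argument; everything else is bookkeeping.
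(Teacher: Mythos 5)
The paper does not actually prove this statement: it is quoted as Cannon's theorem with a pointer to \cite{Cannon1} and \cite[Theorem~2.18]{BrHa}, so the only question is whether your sketch of the classical argument is sound. Your architecture is the right one and matches Cannon's proof: the cone type of $x$ should be determined, up to the $G$-action, by the ``$K$-level data'' $y\mapsto d(x_0,y)-d(x_0,x)$ on $B(x,K)$ for a constant $K$ depending only on $\delta$, one proves $g\,C(x)=C(x')$ by induction on $d(x,y)$, and the finiteness count from properness, cocompactness and bounded valence at the end is correct and is all that remains once the key lemma is in place.

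The gap is in the key lemma itself, and in where you locate the hyperbolicity. As stated, your ``local extension lemma'' is true for trivial reasons in any connected graph and does not use the $K$-ball at all: $z$ lies in $C(x)$ if and only if some neighbour $y$ of $z$ with $d(x_0,y)=d(x_0,z)-1$ lies in $C(x)$ (one direction is the penultimate vertex of a geodesic through $x$; the other is the chain $d(x_0,z)\le d(x_0,x)+d(x,z)\le d(x_0,x)+d(x,y)+1=d(x_0,z)$, forcing equality throughout). Consequently the induction does not close as described: to apply the lemma at $gz$ you must know which neighbours of $gz$ at level $d(x_0,gz)-1$ lie in $C(x')$, and while the induction gives $gy\in C(x')$, you do not yet know that $d(x_0,gy)=d(x_0,gz)-1$ --- i.e.\ that $g$ preserves the level function away from $B(x,K)$ --- which is essentially the statement being proved. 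Moreover, fellow-travelling ``on the last $K$ steps'' near $z$ cannot help, since whether a geodesic from $x_0$ to $z$ passes through $x$ is not visible from $B(z,K)$ when $d(x,z)\gg K$. The hyperbolicity must be applied at the other end: supposing $z\in C(x)$, $gy\in C(x')$ but $gz\notin C(x')$, a geodesic $\gamma$ from $x_0$ to $gz$ has length at most $d(x_0,x')+d(x,z)-1$, and since the geodesic from $x_0$ through $x'$ to $gy$ ends within distance $1$ of $gz$, thin triangles give a point $p\in\gamma$ with $d(p,x')\le\delta$ (for $d(x,z)$ larger than roughly $\delta+2$; smaller distances are absorbed into the base case). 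Pulling $p$ back by $g^{-1}$ and using the matched level data on $B(x,K)$ with $K\ge\delta+2$ yields
\[
d(x_0,z)\;\le\; d\bigl(x_0,g^{-1}p\bigr)+d(p,gz)\;=\;d(x_0,x)+d(x,z)-1,
\]
contradicting $z\in C(x)$. With the induction hypothesis strengthened to record preservation of levels and the thin-triangles input deployed there, your outline becomes a complete proof.
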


\subsection{The Horofunction Boundary}

Here we review the basic definition and properties of the horofunction boundary (or metric boundary) of a locally finite, connected graph.  The horofunction boundary can actually be defined for any complete metric space---see \cite{BrHa} for a general introduction.

Let $\Gamma$ be a locally finite, connected graph.  As before, we put the path metric on $\Gamma$, and we identify $\Gamma$ with is vertex set. Let $F(\Gamma,\mathbb{Z})$ be the abelian group of all integer-valued functions on~$\Gamma$, and let $\F(\Gamma,\mathbb{Z})$ be the quotient of $F(\Gamma,\mathbb{Z})$ by the subgroup of constant functions. That is, two functions $f,g\in F(\Gamma,\mathbb{Z})$ are identified in $\F(\Gamma,\mathbb{Z})$ if $f-g$ is a constant function. 

\begin{notation}
If $f \in F(\Gamma,\mathbb{Z})$, we will let $\of$ denote the corresponding element of~$\F(\Gamma,\mathbb{Z})$.
\end{notation}

Note that $F(\Gamma,\mathbb{Z}) = \mathbb{Z}^\Gamma$ forms a topological space under the product topology, from which $\F(\Gamma,\mathbb{Z})$ inherits a quotient topology.

\begin{definition}\label{def:CanonicalEmbedding}\quad
\begin{enumerate}
\item If $x\in \Gamma$, the associated \newword{distance function} is the function $d_x\colon \Gamma\to \mathbb{Z}$  defined by
\[
d_x(y) = d(x,y)
\]
for all $y\in\Gamma$.
\item The \newword{canonical embedding} $i\colon \Gamma\to \F(\Gamma,\mathbb{Z})$ is the map defined by
\[
i(x) = \od_x
\]
for all $x\in\Gamma$.
\end{enumerate}
\end{definition}

Note that each $\od_x$ is an isolated point in~$i(\Gamma)$, since $\od_x$ has a global minimum at~$x$.  Thus $i$ really is an embedding.

\begin{definition}\label{def:HorofunctionBoundary}\quad
\begin{enumerate}
\item The \newword{horofunction boundary} of $\Gamma$, denoted $\hb\Gamma$, is the set of limit points of $i(\Gamma)$ in $\F(\Gamma,\mathbb{Z})$.
\item A function $f\colon \Gamma\to\mathbb{Z}$ is called a \newword{horofunction} if $\of\in\hb \Gamma$.
\end{enumerate}
\end{definition}

Note that if $f$ is a horofunction then so is $f+C$ for any constant~$C\in\mathbb{Z}$, and these correspond to the same point in the horofunction boundary.

Note also that $\hb \Gamma$ is empty when $\Gamma$ is finite, since $i(\Gamma)$ cannot have any limit points.  Thus a finite graph has no horofunctions.

\begin{example}If $\Gamma$ is an infinite path with vertex set $\mathbb{N}$, then $\hb \Gamma$ is a single point.  In particular, the only horofunctions on $\Gamma$ are
\[
f(n) = -n + C
\]
for $C\in\mathbb{Z}$ a constant.  Note that $\od_m\to \of$ in $\F(\Gamma,\mathbb{Z})$ as $m\to\infty$.  In particular,
\[
d_m(n) = |m-n| = \begin{cases}-n + m & \text{if }n\leq m, \\ n-m & \text{if }n > m,\end{cases}
\]
for all $m\in\mathbb{N}$, so $\od_m$ agrees with $\of$ on $\mathbb{N}\cap [0,m]$.\qed
\end{example}

\begin{example}If $\Gamma$ is a bi-infinite path with vertex set $\mathbb{Z}$, then $\hb\Gamma$ has two points, corresponding to the horofunctions
\[
f_{-\infty}(n) = n + C\qquad\text{and}\qquad f_\infty(n) = -n+C.
\]
Again $\od_m\to \of_{\mkern-3mu \infty}$ as $m\to\infty$, and $\od_m\to \of_{\mkern-3mu-\infty}$ as~$m\to-\infty$.\qed
\end{example}

\begin{example}Let $\Gamma$ be the infinite square grid in the plane, with vertex set~$\mathbb{Z}^2$.  Then $\hb\Gamma$ is homeomorphic to the union
\[
\bigl(\mathbb{Z}\times \{\pm\infty\}\bigr) \cup \bigl(\{\pm\infty\} \times \mathbb{Z}\bigr) \cup \bigl(\{\pm\infty\}\times\{\pm\infty\}\bigr)
\]
with the obvious topology~\cite{Develin}.  For example, the horofunction
\[
f(x,y) = -x-y+C
\]
corresponds to the point $(\infty,\infty)$, and the horofunction
\[
f(x,y) = |x-5| - y + C
\]
corresponds to the point $(5,\infty)$.\qed
\end{example}

\begin{proposition}\label{prop:HorofunctionTest}Let $f\colon \Gamma\to\mathbb{Z}$.  Then the following are equivalent:
\begin{enumerate}
\item $f$ is a horofunction on\/~$\Gamma$.
\item For every finite set $B\subseteq\Gamma$ there exist infinitely many $x\in \Gamma\setminus B$ for which~$\od_x$ agrees with $\of$ on~$B$.
\end{enumerate}
\end{proposition}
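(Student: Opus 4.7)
The plan is to unpack the quotient topology on $\F(\Gamma,\Z)$ explicitly and then translate the limit-point condition into the combinatorial statement in~(2). Write $\pi\colon F(\Gamma,\Z)\to\F(\Gamma,\Z)$ for the quotient map. The first step is to exhibit a convenient neighborhood basis at $\of$: for each finite $B\subseteq\Gamma$, let
\[
V_B \;=\; \{\og\in\F(\Gamma,\Z) : g-f \text{ is constant on } B\}.
\]
I would verify that $V_B$ is open in the quotient topology, since its preimage $\pi^{-1}(V_B)$ is the union over $C\in\Z$ of the cylinder sets $\{h: h|_B=(f+C)|_B\}$, each of which is open in the product topology on $F(\Gamma,\Z)=\Z^\Gamma$. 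I would then check that $\{V_B\}_B$ forms a neighborhood basis at $\of$: any saturated open set in $F(\Gamma,\Z)$ containing $f$ must contain some basic cylinder around $f$, and hence its full saturation under translation by constants, which is exactly $\pi^{-1}(V_B)$.

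With this basis in hand, the key observation is that $\od_x\in V_B$ if and only if $\od_x$ agrees with $\of$ on $B$ in the sense of the proposition. Thus statement~(2) is essentially by construction the assertion that every $V_B$ contains infinitely many elements of $i(\Gamma)$. Because $i$ is injective (as noted right after Definition~\ref{def:CanonicalEmbedding}) and $B$ is finite, this is equivalent to the statement that every $V_B$ contains infinitely many points of $i(\Gamma)\setminus\{\of\}$, which in turn is exactly the statement that $\of$ is a limit point of $i(\Gamma)$, that is, $\of\in\hb\Gamma$.

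For the implication (1)$\Rightarrow$(2) I need to upgrade ``every neighborhood of $\of$ contains a point of $i(\Gamma)$ other than $\of$'' to ``every neighborhood contains infinitely many such points''. The standard way is to observe that $\F(\Gamma,\Z)$ is Hausdorff: if $\of\neq\og$, pick $x,y\in\Gamma$ witnessing $f(x)-f(y)\neq g(x)-g(y)$, and use the well-defined continuous map $\oh\mapsto h(x)-h(y)$ to separate them. In particular $\F(\Gamma,\Z)$ is $T_1$, so any limit point of a set has infinitely many points of the set in every neighborhood. A minor bookkeeping point is that the proposition asks for infinitely many $x\in\Gamma\setminus B$ rather than in $\Gamma$, but since $B$ is finite and $i$ is injective these two conditions differ by only finitely many $x$ and are therefore equivalent.

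I do not expect a serious obstacle here: once the correct neighborhood basis at $\of$ has been identified, the proposition reduces to standard point-set topology. The step demanding the most care is the verification that the family $\{V_B\}_B$ is a neighborhood basis at $\of$ in the quotient topology, which amounts to checking that any open saturated set in $F(\Gamma,\Z)$ containing $f$ contains the full constant-shift saturation of some basic cylinder through $f$.
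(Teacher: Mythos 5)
Your proposal is correct and follows the same approach as the paper: you construct the same neighborhood basis (your $V_B$ is the paper's $U_B$, since ``$\og$ agrees with $\of$ on $B$'' means precisely that $g-f$ is constant on $B$) and reduce condition~(2) to the statement that $\of$ is a limit point of $i(\Gamma)$. The extra care you take --- verifying the basis property in the quotient topology, using Hausdorffness to upgrade ``a point'' to ``infinitely many points,'' and reconciling $\Gamma$ with $\Gamma\setminus B$ --- is exactly the detail the paper compresses into ``it is easy to check.''
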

\begin{proof}For any finite set $B\subseteq\Gamma$, let
\[
U_B = \{\og\in \F(\Gamma,\Z) \mid \og\text{ agrees with }\of\text{ on }B\}.
\]
It is easy to check that the sets $U_B$ form a neighborhood base for $\F(\Gamma,\Z)$ at~$\of$, so $f$ satisfies condition (2) if and only if $\of$ is a limit point of~$i(\Gamma)$.\end{proof}

\begin{proposition}If\/ $\Gamma$ is a locally finite, connected graph, then $\hb\Gamma$ is compact and totally disconnected.
\end{proposition}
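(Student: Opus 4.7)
The plan is to fix a basepoint $x_0 \in \Gamma$ and use it to obtain a concrete realization of $\F(\Gamma,\Z)$. Every equivalence class $\of \in \F(\Gamma,\Z)$ contains a unique representative vanishing at $x_0$, giving a homeomorphism $\F(\Gamma,\Z) \cong X_0 := \{f \in \Z^\Gamma \mid f(x_0) = 0\}$, where $X_0$ carries the subspace topology inherited from the product topology on $\Z^\Gamma$ (with $\Z$ discrete). Under this identification, the canonical embedding sends $x \in \Gamma$ to the function $d_x - d(x_0,x)$.

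For compactness, the key observation is that the triangle inequality yields
\[
|d(x,y) - d(x,x_0)| \leq d(x_0,y),
\]
so the value of $i(x)$ at each $y \in \Gamma$ lies in the finite set $\{-d(x_0,y), -d(x_0,y)+1, \ldots, d(x_0,y)\}$ (finite because $\Gamma$ is locally finite and connected). Hence $i(\Gamma)$ is contained in the Tychonoff-compact product
\[
K \;=\; \prod_{y \in \Gamma}\, \bigl\{-d(x_0,y), \ldots, d(x_0,y)\bigr\} \;\subseteq\; X_0,
\]
so its closure $\overline{i(\Gamma)}$ is compact. Since $\F(\Gamma,\Z)$ is Hausdorff (being a product of Hausdorff factors under the above identification), it is $T_1$, and in any $T_1$ space the set of limit points of a subset is closed. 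Applied to $i(\Gamma)$, this shows $\hb\Gamma$ is closed in $\overline{i(\Gamma)}$, and therefore compact.

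For total disconnectedness, note that the product topology on $\Z^\Gamma$ has a sub-basis of clopen sets $\{f \mid f(y) = n\}$ (preimages of clopen singletons in discrete $\Z$). Finite intersections of these yield a basis of clopen sets for $X_0$, and hence for $\F(\Gamma,\Z)$; this clopen basis restricts to a clopen basis on the subspace $\hb\Gamma$. Any Hausdorff space with a clopen basis is totally disconnected, which finishes the argument. The only real work is verifying that the ``normalize at $x_0$'' map genuinely induces a homeomorphism $\F(\Gamma,\Z) \to X_0$, which is a routine check from the definition of the product topology; the main idea driving the proposition is the triangle-inequality bound that traps $i(\Gamma)$ inside a countable product of finite sets.
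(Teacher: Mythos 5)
Your proof is correct and follows essentially the same route as the paper: normalize representatives at a basepoint $x_0$, use the reverse triangle inequality to trap $i(\Gamma)$ in a product of finite sets (compact and totally disconnected), and observe that $\hb\Gamma$, being the derived set of $i(\Gamma)$, is closed in that product. Your version is slightly more explicit about why the derived set is closed and why the normalization map is a homeomorphism, but the underlying argument is the same.
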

\begin{proof}\newcommand{\dpx}{{{d_x}\hspace{0.08em}\!\!\!'\,\hspace{0.02em}}}%
\newcommand{\odpx}{{\od_x}\hspace{0.08em}\!\!\!'\,\hspace{0.02em}}%
Fix a point $x_0\in\Gamma$.  For each $x\in\Gamma$, let $\dpx\colon \Gamma\to\Z$ be the function
\[
\dpx(y) = d_x(y) - d_x(x_0).
\]
Note then that $\dpx(x_0) = 0$ and $\odpx = \od_x$.  By the triangle inequality, we know that
\[
|\dpx(y)| \leq d(x_0,y)
\]
for all $y\in\Gamma$, so each $\dpx$ lies in the infinite product
\[
S = \prod_{y\in\Gamma} \bigl(\Z\cap [-d(x_0,y),d(x_0,y)]\bigr).
\]
This is a product of finite sets, which means that $S$ is totally disconnected and compact.  Since $f(x_0) = 0$ for all $f\in S$, the quotient map $F(\Gamma,\Z)\to\F(\Gamma,\Z)$ is one-to-one on~$S$, so the image $\oS$ of $S$ in $\F(\Gamma,\Z)$ is homeomorphic to~$S$.  But $i(\Gamma)\subseteq \S$, so $\hb \Gamma\subseteq \oS$ since $\oS$ is closed, and the result follows.
\end{proof}

Now suppose that $G$ is a group acting by isometries on~$\Gamma$.  There is natural left action of $G$ on $F(\Gamma,\mathbb{Z})$ defined by
\[
(gf)(p) = f\bigl(g^{-1}p\bigr)
\]
for all $g\in G$, $f\in F(\Gamma,\mathbb{Z})$, and $p\in \Gamma$, and this descends to a left action of~$G$ on $\F(\Gamma,\mathbb{Z})$.  It is easy to check that the canonical embedding $i\colon\Gamma\to \F(\Gamma,\mathbb{Z})$ is equivariant with respect to this action.  In particular, $gf$~is a horofunction on~$\Gamma$ for any $g\in G$ and any horofunction $f\in F(\Gamma,\mathbb{Z})$, and this gives us a left action of $G$ on~$\hb\Gamma$.

\begin{theorem}[Webster and Winchester]Let\/ $\Gamma$ be a $\delta$-hyperbolic graph with Gromov boundary $\partial\Gamma$ and horofunction boundary $\hb\Gamma$, and let $G$ be a group acting by isometries on\/~$\Gamma$.  Then there exists a $G$-equivariant quotient map $q\colon \hb\Gamma\to \partial\Gamma$.\hfill\qedsymbol
\end{theorem}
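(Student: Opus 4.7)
The plan is to send each horofunction $\of \in \hb\Gamma$ to the limit point in $\partial\Gamma$ of any sequence of vertices whose distance functions approximate $\of$, and then to verify that this assignment is well defined, continuous, surjective and $G$-equivariant. Fix a basepoint $x_0 \in \Gamma$. Given $\of$, I would choose a sequence $(x_n) \subseteq \Gamma$ with $\od_{x_n} \to \of$ in $\F(\Gamma,\Z)$ and work with the normalized representatives $f_n(y) := d(x_n,y) - d(x_n,x_0)$, so that $f_n \to f$ pointwise for the unique $f \in \of$ with $f(x_0) = 0$. Since each $\od_x$ is isolated in $i(\Gamma)$ and $\Gamma$ is locally finite, a bounded subsequence of $(x_n)$ would force $\of \in i(\Gamma)$, contradicting $\of \in \hb\Gamma$, so $d(x_0,x_n) \to \infty$. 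Properness of $\Gamma$ then yields a subsequence $(x_{n_k})$ converging to some $\xi \in \partial \Gamma$.

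The heart of the argument is to show that the full sequence $(x_n)$ converges to $\xi$ in the Gromov sense, i.e.\ that $(x_n \cdot x_m)_{x_0} \to \infty$ as $n,m \to \infty$; granted this, we may set $q(\of) := \xi$. The key input is the identity
\[
f_n(y) \;=\; d(x_0,y) \,-\, 2\,(x_n \cdot y)_{x_0},
\]
which makes the limit $h(y) := \lim_n (x_n \cdot y)_{x_0} = \tfrac12\bigl(d(x_0,y) - f(y)\bigr)$ exist for every $y \in \Gamma$. The four-point $\delta$-hyperbolicity condition $(x_n \cdot x_m)_{x_0} \geq \min\bigl((x_n \cdot y)_{x_0}, (x_m \cdot y)_{x_0}\bigr) - \delta$ then passes to the limit to give $\liminf_{n,m}(x_n \cdot x_m)_{x_0} \geq \sup_y h(y) - \delta$, so it suffices to prove that $\sup h = \infty$. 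For this I would fix a geodesic ray $(v_0, v_1, \ldots)$ from $x_0$ to $\xi$ and note that $(x_{n_l} \cdot \xi)_{x_0} \to \infty$ since $x_{n_l} \to \xi$, while $(v_k \cdot \xi)_{x_0} \geq k - O(\delta)$; the four-point condition again gives $(x_{n_l} \cdot v_k)_{x_0} \geq k - O(\delta)$ for large $l$, and hence $h(v_k) \geq k - O(\delta)$ as required.

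With $q$ defined, independence of the approximating sequence follows by applying the same argument to the interleaving of two candidates. For surjectivity, given $\xi \in \partial \Gamma$ with a geodesic ray $(v_n)$ to $\xi$, the sequence $(\od_{v_n})$ admits a convergent subsequence in the compact set $\oS$ from the proof of the previous proposition, producing a horofunction $\of$ with $q(\of) = \xi$. The $G$-equivariance is immediate from the naturality of the canonical embedding $i$ and of the Gromov boundary construction under the isometric $G$-action. Continuity of $q$ is a short exercise using the basic neighborhoods in $\F(\Gamma,\Z)$ together with a diagonal argument, and a continuous surjection between compact Hausdorff spaces is automatically a quotient map.

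The main obstacle is the supremum estimate $\sup_y h(y) = \infty$: this is exactly where $\delta$-hyperbolicity is indispensable, since in a non-hyperbolic setting such as the Euclidean grid $\mathbb{Z}^2$ a horofunction may encode directional information at infinity that does not reduce to a single Gromov-type boundary point, and without this estimate the pointwise convergence $f_n \to f$ does not upgrade to Gromov convergence of the $x_n$.
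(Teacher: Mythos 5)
Your proof is essentially correct, but it cannot be compared to a proof in the paper because the paper does not actually supply one: the authors simply cite Webster and Winchester \cite{WeWi} for the construction of the map and then observe that any continuous surjection between compact Hausdorff spaces is a quotient map. So what you have written is a genuine self-contained replacement argument, not a rediscovery of the paper's proof.

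Your approach---representing horofunctions as pointwise limits of normalized distance functions, identifying $(x_n\cdot y)_{x_0} = \tfrac12\bigl(d(x_0,y)-f_n(y)\bigr)$, passing the four-point condition to the limit, and then showing $\sup_y h(y)=\infty$ by testing against a geodesic ray towards the accumulation point $\xi$ of some subsequence---is a clean and standard way to see that pointwise convergence of normalized distance functions forces Gromov-convergence in the hyperbolic setting. The interleaving trick for well-definedness and the use of the compact set $\oS$ for surjectivity are both sound, and you close with the same compact-Hausdorff observation the paper makes. Two places where you wave your hands that deserve attention: (i) the passage from ``$(x_n)$ is a Gromov sequence'' to ``$(x_n)$ converges to the same $\xi$ as the extracted subsequence'' is fine but should be stated explicitly as the fact that a Gromov sequence has a unique limit in $\partial\Gamma$ up to the $2\delta$-fellow-travelling equivalence; and (ii) continuity of $q$, which you dismiss as a short exercise, is the step most likely to hide subtleties---the diagonal argument works precisely because the topology on $\F(\Gamma,\Z)$ restricted to $\oS$ is local eventual agreement of integer-valued functions, and you should say so. Beyond those small gaps, the plan is correct, and since the paper outsources the entire statement to \cite{WeWi}, your version is strictly more informative than what the paper records.
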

\begin{proof}See \cite{WeWi}.  Note that $\partial\Gamma$ and $\hb\Gamma$ are both compact Hausdorff spaces in this case, so the surjection $\hb\Gamma\to\partial\Gamma$ defined in~\cite{WeWi} is indeed a quotient map.
\end{proof}

Thus, if we wish to show that $G$ acts rationally on $\partial\Gamma$, it suffices to show that $G$ acts rationally on~$\hb\Gamma$.  That is, if we wish to prove Theorem~\ref{thm:MainTheoremAction} (and hence Theorem~\ref{thm:MainTheorem}), it suffices to prove the following:

\begin{theorem}\label{thm:MainTechnicalTheorem}Let\/ $\Gamma$ be a $\delta$-hyperbolic graph with horofunction boundary~$\hb\Gamma$, and let $G$ be a group acting properly and cocompactly by isometries on\/~$\Gamma$.  Then the induced action of $G$ on $\hb\Gamma$ is rational.
\end{theorem}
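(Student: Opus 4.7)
The plan is to construct a locally finite rooted tree $\T$ whose vertices are certain subsets of $\Gamma$ (which I will call \emph{atoms}), to identify the boundary $\partial\T$ with $\hb\Gamma$, and to show that $\T$ is self-similar in the sense of Section~\ref{sec:SelfSimilarTrees} and that each $g\in G$ acts on~$\partial\T$ by a rational homeomorphism. The machinery developed in Section~\ref{sec:SelfSimilarTrees} would then automatically upgrade this pointwise rationality into a rational action of $G$ on $\hb\Gamma$ in the sense of Definition~\ref{def:RationalAction}.

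First I would fix a basepoint $x_0\in\Gamma$ and define the level-$n$ atoms as follows: two vertices $x,y\in\Gamma$ with $\min\{d(x_0,x),d(x_0,y)\}\geq n$ are declared equivalent when $\od_x$ and $\od_y$ coincide on the ball of radius $n$ about~$x_0$, and the atoms at level~$n$ are the resulting equivalence classes. The unique atom at level~$0$ is all of~$\Gamma$, and each level-$n$ atom is contained in a unique level-$(n-1)$ atom, which serves as its parent in~$\T$. Local finiteness of $\Gamma$ forces each atom to have only finitely many children. An infinite descending chain in $\T$ produces a coherent element of~$\F(\Gamma,\Z)$ which, by Proposition~\ref{prop:HorofunctionTest}, is a horofunction; conversely, each horofunction $\of$ picks out the chain consisting, at level~$n$, of the atom of those $x$ whose $\od_x$ agrees with $\of$ on that ball. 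A straightforward check shows that the resulting bijection $\partial\T\to\hb\Gamma$ is a homeomorphism.

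Next I would establish self-similarity via Cannon's Theorem~\ref{thm:FinitelyManyConeTypes}. Given an atom $\alpha$ at level~$n$, the structure of the rooted subtree $\T_\alpha$ of descendants of~$\alpha$ depends only on how the distance functions to points of $\alpha$ behave on balls centred at $x_0$, which in turn is controlled by the $G$-orbit of the cone type of a representative of~$\alpha$ together with a bounded amount of additive-constant data whose finiteness is forced by $\delta$-hyperbolicity. Cannon's theorem bounds the number of such types, yielding only finitely many isomorphism classes of rooted subtrees of~$\T$; this is exactly the hypothesis needed by Section~\ref{sec:SelfSimilarTrees} to speak of rational homeomorphisms on~$\partial\T$.

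Finally, for any fixed $g\in G$, the map $x\mapsto gx$ carries atoms to atoms and induces a homeomorphism of $\partial\T$ which, under the identification $\partial\T\cong\hb\Gamma$, agrees with the action of~$g$. Its restrictions to descendant subtrees again fall into finitely many classes indexed by cone types and by $g$, so the analogue for self-similar trees of Theorem~\ref{thm:RestrictionTest} certifies rationality. I expect the hardest step to be establishing self-similarity: turning the informal statement ``the subtree below $\alpha$ is determined by the cone type of a representative of $\alpha$'' into a clean isomorphism of labelled rooted subtrees requires pinning down the correct local data and controlling additive constants uniformly, and this is where $\delta$-hyperbolicity will do the real geometric work.
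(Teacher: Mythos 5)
Your plan follows the paper's architecture closely: build a tree of subsets of $\Gamma$ defined by agreement of distance functions on balls, identify its boundary with $\hb\Gamma$, use $\delta$-hyperbolicity and cone types to show the tree is self-similar, and then appeal to the machinery of Section~\ref{sec:SelfSimilarTrees}. But there are two specific places where the plan, as stated, would not go through and where the paper makes nontrivial choices.

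First, the tree you construct has dead ends. You partition $\{x: d(x_0,x)\geq n\}$ into classes by agreement of $\od_x$ on~$B_n$, but some of these classes are finite --- for instance a singleton $\{x\}$ with $\ell(x)=n$ whose distance function is not matched by any other vertex on~$B_n$. Such a class has no level-$(n+1)$ descendant, so it is a dead end, and the theory of Section~\ref{sec:SelfSimilarTrees} applies only to trees without dead ends. The paper resolves this by taking the vertices of the tree at level~$n$ to be precisely the \emph{infinite} atoms of $\A(B_n)$; Proposition~\ref{prop:ShadowsNonempty} then shows these are exactly the atoms with nonempty shadow in $\hb\Gamma$, Proposition~\ref{prop:OnePointAtoms} shows they all lie outside~$B_{n-1}$, and the resulting tree $\A(\Gamma)$ has no dead ends.

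Second, your criterion ``the subtree below $\alpha$ is controlled by the $G$-orbit of the cone type of a representative of~$\alpha$ together with additive-constant data'' undersells what is needed, and as written it does not give a morphism. An atom of $\A(B_n)$ sees the sphere~$S_n$ through more than one vertex, and you must match not a single cone but the whole configuration of cones $C(p)$ for every $p$ in a suitable patch of~$S_n$, together with the restriction of $\od_A$ to that patch, and you must ensure a group element sending one atom to another respects all of this at every deeper level. The paper isolates the right patch as the set $P(A,S_n)$ of \emph{proximal} points (Definition~\ref{def:Proximal}), proves via the thin-triangles condition that $P(A,S_n)$ has diameter at most $8\delta+4$ (Corollary~\ref{prop:DiameterOfP}), and packages the needed matching condition as a \emph{geometric equivalence} (Definition~\ref{def:GeometricEquivalence}). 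Showing that a geometric equivalence actually induces a tree morphism requires an inductive argument (Lemma~\ref{lem:InductionStep} and Proposition~\ref{prop:MainMorphismTest}), built on a membership test for atoms in terms of visible and proximal points (Propositions~\ref{prop:VisibilityTest} and~\ref{prop:MembershipAtom}). Cannon's theorem together with properness and the $8\delta+4$ diameter bound then gives finitely many equivalence classes (Corollary~\ref{cor:FinitelyManyTypes}), and the same apparatus, applied to pairs $(A,A')$ with $gA\subseteq A'$, gives finitely many restriction classes for each $g\in G$ (Proposition~\ref{prop:DisksAreNear} and the proof of Theorem~\ref{thm:ActionIsRational}). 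You correctly flagged this as the hard step; the missing idea is the bounded proximal annulus, without which ``cone type of a representative'' is not the right invariant.
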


Sections~\ref{sec:SelfSimilarTrees} and~\ref{sec:HyperbolicRational} are devoted to a proof of this theorem.

\section{Rational Groups for Self-Similar Trees}
\label{sec:SelfSimilarTrees}

As we have seen, there is one version $\R_A$ of the rational group $\R$ for each finite set~$A$.  In this section, we generalize the definition of $\R_A$ to allow for rational homeomorphisms between the boundaries of arbitrary self-similar trees.  We will construct such a tree in Section~\ref{sec:HyperbolicRational} for an arbitrary hyperbolic group.

As defined below, self-similar trees do not necessarily have canonical isomorphisms between subtrees, and therefore infinite descending paths in self-similar trees do not correspond to infinite strings of symbols in a natural way.  As a result, the theory of rational homeomorphisms defined by transducers does not directly apply to self-similar trees.  In this section, we develop the theory of rational homeomorphisms of the boundaries of self-similar trees using a generalization of the notion of having finitely many restrictions, and then prove that the corresponding rational groups embed in~$\R$.

Readers interested primarily in hyperbolic groups may want to skip over some of the technical development in this section.  For such readers, we recommend reading the definition of a self-similar tree (Definition~\ref{def:SelfSimilarTree}), the definition of a rational homeomorphism (Definition~\ref{def:RationalHomeomorphism}), and our most general result on rational actions (Corollary~\ref{cor:MainCorollaryIGuess}) before continuing to Section~\ref{sec:HyperbolicRational}.

We will use the following notation and terminology for trees throughout this section:
\begin{itemize}
\item We will identify each tree $T$ with its vertex set.  In particular, the notation $v\in T$ will mean that $v$ is a vertex of~$T$.
\item The \newword{depth} of a vertex $v\in T$, denoted $|v|$, is the distance from $v$ to the root of $T$.
\item If $v\in T$, we let $T_v$ denote the subtree of $T$ consisting of $v$ and all of its descendants.
\item If $T$ is a locally finite rooted tree, the \newword{boundary} of $T$ is the space $\partial T$ of all infinite descending paths in $T$ starting with the root.
\item If $v\in T$, we will think of $\partial T_v$ as a subset of $\partial T$, namely the set of all infinite descending paths that go through~$v$.  Such subsets are clopen in~$\partial T$, and form a basis for the topology on~$\partial T$.
\item If $S\subseteq \partial T$ has at least two points, the \newword{deepest parent} of $S$ is the deepest vertex~$v$ for which~$S\subseteq T_v$.
\item The \newword{standard ultrametric} on $\partial T$ is the metric $d\colon \partial T\times \partial T\to\mathbb{R}$ defined by
\[
d(p,q) = 2^{-|v|}
\]
for $p\ne q$, where $v$ is the deepest parent of $\{p,q\}$.
\item If $\varphi\colon T_v\to T_w$ is a rooted isomorphism between subtrees of $T$, we will let $\varphi_*$ denote the induced homeomorphism $\partial T_v\to\partial T_w$.  Note that~$\varphi_*$ is a similarity transformation with respect to the standard ultrametric, with
\[
d\bigl(\varphi_*(p),\varphi_*(q)\bigr) = 2^{|v|-|w|}d(p,q)
\]
for all $p,q\in \partial T_v$.
\end{itemize}

\subsection{Self-Similar Trees}

\begin{definition}\label{def:SelfSimilarTree}Let $T$ be a locally finite rooted tree.  A \newword{self-similar structure} on $T$ consists of the following data:
\begin{enumerate}
\item A partition of the vertices of $T$ into finitely many \newword{types}.
\item For every pair $u,v$ of vertices of $T$ of the same type, a nonempty, finite set $\mathrm{Mor}(u,v)$ of (rooted) tree isomorphisms $T_u\to T_v$, each of which maps vertices of $T_u$ to vertices of $T_v$ of the same type.
\end{enumerate}
Elements of $\mathrm{Mor}(u,v)$ are called \newword{morphisms}, and are required to satisfy the following conditions:
\begin{enumerate}
\item[(a)] If $\varphi\in \mathrm{Mor}(u,v)$, then $\varphi^{-1}\in \mathrm{Mor}(v,u)$.
\item[(b)] If $\varphi\in \mathrm{Mor}(u,v)$ and $\psi\in\mathrm{Mor}(v,w)$, then $\psi\varphi\in \mathrm{Mor}(u,w)$.
\item[(c)] If $\varphi \in \mathrm{Mor}(v,w)$ and $u\in T_v$, then the restriction $\varphi|_{T_u}\colon T_u\to T_{\varphi(u)}$ is in $\mathrm{Mor}\bigl(u,\varphi(u)\bigr)$.
\end{enumerate}
A \newword{self-similar tree} is a locally finite rooted tree $T$ together with a self-similar structure on~$T$.
\end{definition}

\begin{note}\label{note:PartialComposition}We will have some use for partial compositions of morphisms.  If $\varphi\in\mathrm{Mor}(v,w)$, $\psi\in\mathrm{Mor}(x,y)$, and $T_w\subseteq T_x$, we will let $\psi\varphi$ denote the composition
\[
T_v \xrightarrow{\varphi} T_w \xrightarrow{\psi'} T_{\psi(w)}
\]
where $\psi'$ is the restriction of~$\psi$.  Similarly, if $\varphi\in\mathrm{Mor}(v,w)$, $\psi\in\mathrm{Mor}(x,y)$, and $T_x\subseteq T_w$, we will let $\psi\varphi$ denote the composition
\[
T_{\varphi^{-1}(x)} \xrightarrow{\varphi'} T_x \xrightarrow{\psi} T_y
\]
where $\varphi'$ is the restriction of~$\varphi$.

Using this notion of composition, conditions (a) through (c) can be summarized by saying that the collection of all morphisms forms an inverse semigroup of isomorphisms between subtrees of~$T$.\qed
\end{note}

\begin{example}\label{ex:BasicTree}If $A$ is a finite alphabet, then we can give $A^\tast$ the structure of a self-similar tree as follows:
\begin{enumerate}
\item There is only one type of vertex in~$A^\tast$.
\item For each $\alpha,\beta\in A^\tast$, there is only one morphism $\varphi_{\alpha\beta}\colon A^\tast_\alpha\to A^\tast_\beta$, namely the \newword{prefix replacement}
\[
\varphi_{\alpha\beta}(\alpha \gamma) = \beta\gamma\qquad(\gamma\in A^\omega).
\]
\end{enumerate}
It is easy to check that this satisfies the required axioms.\qed
\end{example}

In general, we say that a self-similar structure on a tree $T$ is \newword{rigid} if there is at most one morphism between any two vertices of~$T$.  A self-similar tree whose self-similar structure is rigid is said to be a \newword{rigid tree}.

The previous example put a rigid self-similar structure on $A^\tast$, but other self-similar structures are possible.

\begin{example}Let $A$ be a finite alphabet, and let $G$ be a group of automorphisms of $A^\tast$.  Following Nekrashevych~\cite{Nek}, we say that $G$ is \newword{self-similar} if it is closed under restrictions, i.e.~for each $g\in G$ and $\alpha\in A^\tast$ the automorphism $g|_\alpha \colon A^\tast\to A^\tast$ defined by
\[
g(\alpha\gamma) = g(\alpha)\, g|_{\alpha}(\gamma)\qquad(\gamma\in A^\tast)
\]
is again an element of~$G$.

Now suppose that $G$ is a \textit{finite} self-similar subgroup of~$\mathrm{Aut}(A^\tast)$.  For example, if $A$ has $n$ elements, then the symmetric group $S_n$ acts on $A^\tast$ by permuting symbols, and the image of $S_n$ in $\mathrm{Aut}(A^\tast)$ is a finite self-similar group.  Then $G$ induces a self-similar structure on $A^\tast$ as follows:
\begin{enumerate}
\item There is only one type of vertex in~$A^\tast$.
\item For each $g\in G$ and $\alpha,\beta\in A^\tast$, there is a morphism $g_{\alpha\beta}\colon A^\tast_\alpha \to A^\tast_\beta$ defined by
\[
g_{\alpha\beta}(\alpha \gamma) = \beta\,g(\gamma) \qquad (\gamma\in A^\tast).
\]
\end{enumerate}
It is easy to check that this satisfies the required axioms.\qed
\end{example}

Of course, there are also self-similar trees with more than one type of vertex.

\begin{example}Let $\Gamma=(V,E)$ be a finite directed multigraph with edge set~$E$, and fix an \newword{initial vertex} $v_0\in V$.  The corresponding \newword{path language} $\mathcal{L}(\Gamma,v_0) \subseteq E^\tast$ is the set of all finite directed paths $e_1e_2\cdots e_n$ in~$\Gamma$ that begin at~$v_0$.  This set has the natural structure of a locally finite tree, whose root is the empty path~$\emptystring$, and whose boundary $\partial\mathcal{L}(\Gamma,v_0)$ is the set of all infinite directed paths in $\Gamma$ starting at~$v_0$. An example of such a tree is shown in  Figure~\ref{fig:PathLanguage}.
\begin{figure}[t]
\centering
\raisebox{-0.5\height}{\includegraphics{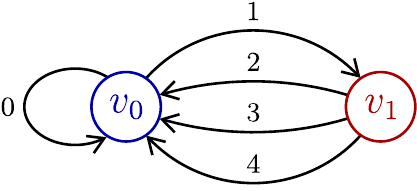}}
\hfill
\raisebox{-0.5\height}{\includegraphics{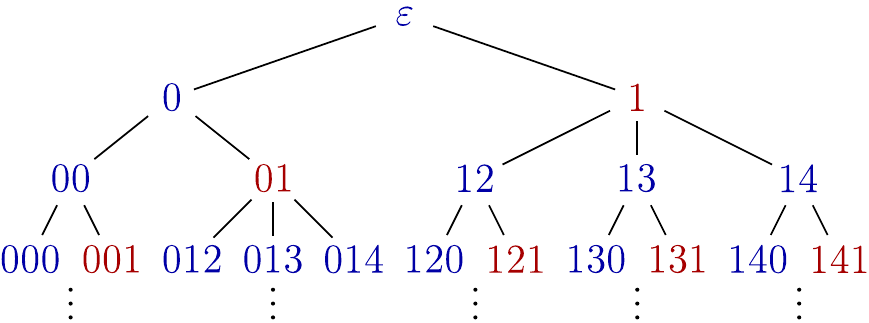}}
\caption{A directed multigraph $\Gamma$ and the corresponding path language tree~$\mathcal{L}(\Gamma,v_0)$.  Vertices of type~$v_0$ are shown in blue, and vertices of type~$v_1$ are shown in red.}
\label{fig:PathLanguage}
\end{figure}

We can define a self-similar structure on $\mathcal{L}(\Gamma,v_0)$ as follows:
\begin{enumerate}
\item Two paths $p,q\in \mathcal{L}(\Gamma,v_0)$ have the same type if and only if $p$ and $q$ end at the same vertex of $\Gamma$.
\item For each pair $p,q$ of paths ending at the same vertex~$w$, we define a single morphism $\varphi_{pq}\colon\mathcal{L}(\Gamma,v_0)_p\to \mathcal{L}(\Gamma,v_0)_q$ by
\[
\varphi_{pq}(pr) = qr
\]
for every finite directed path $r$ in $\Gamma$ starting at~$w$.  We will refer to $\varphi_{pq}$ as a \newword{prefix replacement morphism}.
\end{enumerate}
It is easy to check that this satisfies the required axioms, and gives $\mathcal{L}(\Gamma,v_0)$ the structure of a rigid tree.\qed
\end{example}

It turns out that, for any self-similar tree $T$, the underlying tree is isomorphic to a path language as described above. In particular, define the \newword{type graph} $\Gamma$ of a self-similar tree $T$ as follows:
\begin{enumerate}
\item There is one vertex in $\Gamma$ for each vertex type in~$T$.
\item Given a pair $t_1,t_2$ of vertices in $\Gamma$, the number of directed edges in $\Gamma$ from $t_1$ to $t_2$ is equal to the number of children of type~$t_2$ that each vertex of type~$t_1$ has in~$T$.
\end{enumerate}
Then it is easy to construct an isomorphism of trees $\mathcal{L}(\Gamma,t_0)\to T$, where $t_0$ is the type of the root vertex of~$T$, though the morphisms of $\mathcal{L}(\Gamma,t_0)$ need not be the same as the morphisms of $T$.  However, we will show in Proposition~\ref{prop:IsomorphicToPathLanguage} that, in the case of a rigid tree, it is always possible to construct an isomorphism from a path language that respects the self-similar structure.

\subsection{Rational Homeomorphisms}\label{sec:RationalFunctions}

Our next goal is to define the group of rational homeomorphisms of $\partial T$.

\begin{definition}\label{def:EquivalentRestrictions}Let $T$ and $T'$ be self-similar trees, let $f\colon \partial T\to\partial T'$ be a homeomorphism, and let $v$ and $w$ be vertices of $T$ of the same type.  We say that $f$ has \newword{equivalent restrictions} at $v$ and $w$ if there exist vertices $x$ and $y$ of $T'$ of the same type and morphisms $\varphi\colon T_v\to T_w$ and $\psi\colon T'_x\to T'_y$ such that $f(\partial T_v) \subseteq \partial T'_x$, $f(\partial T_w)\subseteq \partial T'_y$, and the diagram
\[
\xymatrix{
\partial T_v \ar_{f}[d] \ar^{\varphi_\ast}[r] & 
\partial T_w \ar^{f}[d] \\ 
\partial T'_x\ar_{\psi_\ast}[r] & \partial T'_y }
\]
commutes, where $\varphi_*\colon\partial T_v\to\partial T_w$ and $\psi_*\colon \partial T'_x\to\partial T'_y$ are the induced homeomorphisms.
\end{definition}

\begin{proposition}Let $f\colon \partial T\to \partial T'$ be a function.  Then ``$f$ has equivalent restrictions at $v$ and $w$'' is an equivalence relation on vertices of~$T$.
\end{proposition}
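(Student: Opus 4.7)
The plan is to verify the three defining properties of an equivalence relation: reflexivity, symmetry, and transitivity. The first two will be essentially immediate from the inverse semigroup axioms (a) and (b) in Definition~\ref{def:SelfSimilarTree}, and transitivity will carry the actual content of the proof.

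For reflexivity, I would take $v = w$, observe that $\mathrm{Mor}(v,v)$ is nonempty by hypothesis, and note that for any $\varphi\in\mathrm{Mor}(v,v)$ the composition $\varphi\varphi^{-1}$ is the identity and lies in $\mathrm{Mor}(v,v)$ by axioms (a) and (b); the root of $T'$ furnishes a legitimate $x = y$, and the identity on $\partial T'$ completes a commuting square. For symmetry, given $\varphi\in\mathrm{Mor}(v,w)$ and $\psi\in\mathrm{Mor}(x,y)$ witnessing equivalence at $v$ and $w$, the inverses $\varphi^{-1}\in\mathrm{Mor}(w,v)$ and $\psi^{-1}\in\mathrm{Mor}(y,x)$ (again via axiom~(a)) witness equivalence at $w$ and $v$, since inverting a commuting square of homeomorphisms yields a commuting square.

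Transitivity is where the real issue lies. Suppose the relation holds for the pair $v_1,v_2$, witnessed by $\varphi_{12}\in\mathrm{Mor}(v_1,v_2)$ and $\psi_{12}\in\mathrm{Mor}(x_1,x_2)$, and for the pair $v_2,v_3$, witnessed by $\varphi_{23}\in\mathrm{Mor}(v_2,v_3)$ and $\psi_{23}\in\mathrm{Mor}(x_2',x_3)$. The natural candidate on the $T$-side is $\varphi := \varphi_{23}\varphi_{12}\in\mathrm{Mor}(v_1,v_3)$, delivered by axiom (b). The obstacle is on the $T'$-side: a priori $x_2\neq x_2'$, so $\psi_{12}$ and $\psi_{23}$ need not compose. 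However, both $\partial T'_{x_2}$ and $\partial T'_{x_2'}$ contain $f(\partial T_{v_2})$, which is nonempty; since subtree boundaries in a tree are either nested or disjoint, $x_2$ and $x_2'$ must be comparable in $T'$.

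From here I would split into two symmetric cases. If $x_2'$ is a descendant of $x_2$ (or equal), let $x_1'' := \psi_{12}^{-1}(x_2')$; then $\psi_{12}$ restricts to a morphism $T'_{x_1''}\to T'_{x_2'}$ by axiom (c), and we may compose with $\psi_{23}$ (in the sense of Note~\ref{note:PartialComposition}) to obtain $\psi := \psi_{23}\psi_{12}\in\mathrm{Mor}(x_1'',x_3)$. The containment $f(\partial T_{v_1})\subseteq\partial T'_{x_1''}$ follows by applying $\psi_{12}^{-1}$ to the identity $\psi_{12}(f(\partial T_{v_1})) = f(\varphi_{12}(\partial T_{v_1})) = f(\partial T_{v_2})\subseteq \partial T'_{x_2'}$, itself a consequence of the commuting square for $(v_1,v_2)$. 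If instead $x_2$ is a proper descendant of $x_2'$, we dually restrict $\psi_{23}$ to $T'_{x_2}$ by axiom (c) to get $\psi := \psi_{23}\psi_{12}\in\mathrm{Mor}(x_1,\psi_{23}(x_2))$, and verify $f(\partial T_{v_3}) = \psi_{23}(f(\partial T_{v_2}))\subseteq \partial T'_{\psi_{23}(x_2)}$ from the commuting square for $(v_2,v_3)$. In either case, the diagram for the pair $(v_1,v_3)$ commutes by pasting the two given squares, since $\varphi_*$ and $\psi_*$ were built as the induced compositions of the relevant restrictions. The single nontrivial ingredient in the entire argument is thus the comparability of $x_2$ and $x_2'$ in $T'$, together with the closure of morphisms under restriction (axiom (c)).
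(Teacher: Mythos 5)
Your proof is correct and follows essentially the same route as the paper's: reflexivity via the identity morphisms and the root of $T'$, symmetry by inversion, and transitivity by observing that the two middle vertices of $T'$ are comparable and then restricting one of the $\psi$-morphisms (via axiom (c) and the partial composition of Note~\ref{note:PartialComposition}) to reduce to the case where they coincide. Your explicit justification of comparability (both subtree boundaries contain the nonempty set $f(\partial T_{v_2})$) and of the containment $f(\partial T_{v_1})\subseteq\partial T'_{x_1''}$ is slightly more detailed than the paper's, but the argument is the same.
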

\begin{proof}For the reflexive property, if $v$ is a vertex of~$T$, then it suffices to let $\varphi = \mathrm{id}_v$, $x$ and $y$ be the root of $T'$, and $\psi = \mathrm{id}_x=\mathrm{id}_y$.  The symmetric property is clear, by inversion of $\varphi$ and~$\psi$.

For the transitive property, let $u$, $v$, and $w$ be vertices of $T$ of the same type, and suppose $f$ has equivalent restrictions at $u$ and $v$ and also at $v$ and~$w$.  Then there exist vertices $x,y$ of $T'$ of the same type and $y',z$ of $T'$ of the same type with
\[
f(\partial T_u)\subseteq \partial T'_x,\quad
f(\partial T_v)\subseteq \partial T'_y \cap \partial T'_{y'},\quad
f(\partial T_w)\subseteq \partial T'_z
\]
and morphisms $\varphi\in\mathrm{Mor}(u,v)$, $\varphi'\in\mathrm{Mor}(v,w)$, $\psi\in\mathrm{Mor}(x,y)$, and $\psi'\in\mathrm{Mor}(y',z)$ making the following diagrams commute:
\[
\xymatrix{
\partial T_u \ar_{f}[d] \ar^{\varphi_\ast}[r] & 
\partial T_v \ar^{f}[d] \\ 
\partial T'_x\ar_{\psi_\ast}[r] & \partial T'_y }
\qquad
\xymatrix{
\partial T_v \ar_{f}[d] \ar^{\varphi'_\ast}[r] & 
\partial T_w \ar^{f}[d] \\ 
\partial T'_{y'}\ar_{\psi'_\ast}[r] & \partial T'_z }
\]
Note that either $y=y'$, or $y$ is a descendant of $y'$, or $y'$ is a descendant of~$y$.  If $y=y'$ then we are done, since it suffices to use $x$, $z$, and the compositions $\varphi'\circ \varphi\in\mathrm{Mor}(u,w)$ and $\psi'\circ \psi\in\mathrm{Mor}(x,z)$. If $y$ is a descendant of~$y'$, then we can replace $y'$  by $y$, $z$ by $\psi'(y)$, and $\psi'$ by its restriction to arrive in the case where $y=y'$.  Similarly, if $y'$ is a descendant of~$y$, then we can replace $y$ by $y'$, $x$ by $\psi^{-1}(y')$, and $\psi$ by its restriction to arrive in the case where $y=y'$.
\end{proof}

\begin{definition}\label{def:RationalHomeomorphism}Let $T$ and $T'$ be self-similar trees. A homeomorphism $f\colon\partial T\to\partial T'$ is \newword{rational} if it has only finitely many different equivalence classes of restrictions.
\end{definition}

We will show in Proposition~\ref{prop:SameDefinitionRational} that this definition of a rational homeomorphism is a generalization of the definition given in~\cite{GNS}.  First we need the following fundamental proposition.

\begin{proposition}\label{prop:DeepestParentsWork}Let $T$ and $T'$ be self-similar trees, let $f\colon \partial T\to\partial T'$ be a homeomorphism, and let $v$ and $w$ be vertices of $T$ of the same type.  Suppose that $f(\partial T_v)$ and $f(\partial T_w)$ each have at least two points, let $x$ be the deepest parent of $f(\partial T_v)$ in~$T'$, and let $y$ be the deepest parent of $f(\partial T_w)$ in~$T'$.  Then $f$ has equivalent restrictions at $v$ and $w$ if and only if $x$ and $y$ have the same type and there exist morphisms $\varphi\colon T_v\to T_w$ and $\psi\colon T'_x\to T'_y$ making the following diagram commute:
\[
\xymatrix{
\partial T_v \ar_{f}[d] \ar^{\varphi_\ast}[r] & 
\partial T_w \ar^{f}[d] \\ 
\partial T'_x\ar_{\psi_\ast}[r] & \partial T'_y }
\]
\end{proposition}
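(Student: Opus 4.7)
The plan is to split the proof into its two directions. The ``if'' direction is immediate from Definition~\ref{def:EquivalentRestrictions}: given morphisms $\varphi\in \mathrm{Mor}(v,w)$ and $\psi\in\mathrm{Mor}(x,y)$ making the square commute, the inclusions $f(\partial T_v)\subseteq \partial T'_x$ and $f(\partial T_w)\subseteq \partial T'_y$ hold by the very choice of $x$ and $y$ as deepest parents, so all the data needed to witness equivalent restrictions at $v$ and $w$ is already present.

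For the ``only if'' direction, suppose $f$ has equivalent restrictions at $v$ and $w$, witnessed by vertices $x',y'\in T'$ of common type and morphisms $\varphi\in\mathrm{Mor}(v,w)$ and $\psi'\in\mathrm{Mor}(x',y')$. My first step is to establish a small auxiliary observation: if $S\subseteq \partial T'_{x'}$ has at least two points, then any vertex $u\in T'$ with $S\subseteq \partial T'_u$ is comparable to $x'$ in the ancestor order (since $u$ lies on every infinite path in $S$, as does $x'$), and consequently the deepest such $u$ lies inside $T'_{x'}$ and agrees with the deepest parent of $S$ computed within the subtree $T'_{x'}$. Applying this to $S=f(\partial T_v)$ places $x$ inside $T'_{x'}$, and similarly $y$ lies inside $T'_{y'}$.

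The crucial step is then to identify $\psi'(x)$ with $y$. The induced homeomorphism $\psi'_\ast\colon \partial T'_{x'}\to \partial T'_{y'}$ carries $f(\partial T_v)$ bijectively onto $f(\partial T_w)$ by commutativity of the given square. Since $\psi'$ is a rooted tree isomorphism, it preserves the ancestor relation, and in particular sends the deepest parent of $f(\partial T_v)$ inside $T'_{x'}$ to the deepest parent of $f(\partial T_w)$ inside $T'_{y'}$. By the preceding paragraph these are precisely $x$ and $y$, so $\psi'(x)=y$; in particular $x$ and $y$ have the same type, since morphisms are required to send vertices to vertices of the same type. Setting $\psi$ to be the restriction $\psi'|_{T'_x}$ and applying axiom~(c) of Definition~\ref{def:SelfSimilarTree} exhibits $\psi\in\mathrm{Mor}(x,y)$, and the smaller diagram commutes because $\psi_\ast$ agrees with $\psi'_\ast$ on $\partial T'_x\supseteq f(\partial T_v)$.

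The main obstacle I anticipate is the bookkeeping in the auxiliary observation: carefully arguing that the deepest parent of a subset of $\partial T'_{x'}$ computed globally in $T'$ actually lies inside $T'_{x'}$ and coincides with the deepest parent computed relative to the subtree. Once this comparability fact is in hand, everything else is a routine diagram chase using the closure of morphisms under restriction.
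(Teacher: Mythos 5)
Your proposal is correct and follows essentially the same route as the paper: both directions reduce to showing that the witnessing morphism $\psi'\colon T'_{x'}\to T'_{y'}$ must carry $x$ to $y$, after which one restricts via axiom~(c). The only cosmetic difference is that the paper establishes $\psi'(x)=y$ by a two-sided descendant argument (showing $y$ descends from $\psi'(x)$ and $x$ descends from $\psi'^{-1}(y)$), whereas you invoke the fact that a rooted tree isomorphism preserves deepest parents; these amount to the same observation.
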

\begin{proof}Clearly the given condition implies that $f$ has equivalent restrictions at $v$ and~$w$.  For the converse, suppose that $f$ has equivalent restrictions at $v$ and~$w$.  Then there exist vertices $r$ and $s$ of $T'$ and morphisms $\varphi\colon T_v\to T_w$ and $\chi\colon T'_r\to T'_s$ such that $f(\partial T_v) \subseteq \partial T'_r$, $f(\partial T_w)\subseteq \partial T'_s$, and the diagram
\[
\xymatrix{
\partial T_v \ar_{f}[d] \ar^{\varphi_\ast}[r] & 
\partial T_w \ar^{f}[d] \\ 
\partial T'_r\ar_{\chi_\ast}[r] & \partial T'_s }
\]
commutes.  Since $f(\partial T_v) \subseteq \partial T'_r$ and $x$ is the deepest parent of $f(\partial T_v)$, we know that $x$ is a descendant of~$r$, and similarly $y$ is a descendant of~$s$.  We claim that~$\chi(x) = y$.

Since $\varphi_*$ is a homeomorphism, we know that $\varphi_*(\partial T_v) = \partial T_w$, so
\[
f(\partial T_w) = f\bigl(\varphi_*(\partial T_v)\bigr) = \chi_*\bigl(f(\partial T_v)\bigr) \subseteq \chi_*(\partial T_x') = \partial T_{\chi(x)}'.
\]
Since $y$ is the deepest parent of $f(\partial T_w)$, it follows that $y$ is a descendant of~$\chi(x)$.  But similarly
\[
f(\partial T_v) = f\bigl(\varphi_*^{-1}(\partial T_w)\bigr) = \chi_*^{-1}\bigl(f(\partial T_w)\bigr) \subseteq \chi_*^{-1}(\partial T_y) = \partial T_{\chi^{-1}(y)}
\]
so $x$ must be a descendant of $\chi^{-1}(y)$, and therefore~$\chi(x)=y$.  Since $\chi$ is a morphism, it follows that $x$ and $y$ have the same type, and the restriction $\psi\colon T_x'\to T_y'$ of~$\chi$ is a morphism with the desired properties.
\end{proof}

For the following proposition, recall from Example~\ref{ex:BasicTree} that if $A$ is a finite alphabet then $A^\tast$ has the natural structure of a self-similar tree.  The boundary $\partial A^\tast$ of this tree is homeomorphic to~$A^\omega$.

\begin{proposition}\label{prop:SameDefinitionRational}Let $A_1$ and $A_2$ be finite alphabets with at least two symbols.  Then a homeomorphism $f\colon A_1^\omega \to A_2^\omega$ is rational in the sense of Definition~\ref{def:RationalHomeomorphism} if and only if it is rational in the sense of Definition~\ref{def:RationalGNS}.
\end{proposition}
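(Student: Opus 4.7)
The plan is to identify the equivalence relation of Definition~\ref{def:EquivalentRestrictions} with equality of the classical restrictions $f|_\alpha$ from Definition~\ref{def:Restriction}, and then apply Theorem~\ref{thm:RestrictionTest}. Throughout, I write $T_i = A_i^\tast$ for $i=1,2$ for brevity.

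First I would observe that each $T_i$ is a rigid self-similar tree: it has a single vertex type, and the unique morphism $(T_i)_\alpha \to (T_i)_\beta$ is the prefix-replacement $\varphi_{\alpha\beta}$ sending $\alpha\gamma \mapsto \beta\gamma$. Since $|A_1|,|A_2|\geq 2$, each boundary $\partial (T_1)_\alpha$ is a Cantor set, so its image under the homeomorphism $f$ is uncountable; in particular, $f\bigl(\partial (T_1)_\alpha\bigr)$ always has at least two points and admits a well-defined deepest parent $x\in T_2$.

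Next I would invoke Proposition~\ref{prop:DeepestParentsWork} to show that, for any $\alpha,\beta\in T_1$ with respective deepest parents $x,y\in T_2$, $f$ has equivalent restrictions at $\alpha$ and $\beta$ if and only if $f|_\alpha = f|_\beta$. Rigidity forces the witnessing morphisms of Proposition~\ref{prop:DeepestParentsWork} to be $\varphi = \varphi_{\alpha\beta}$ and $\psi = \psi_{xy}$, so their induced boundary maps are $\varphi_*(\alpha\gamma) = \beta\gamma$ and $\psi_*(x\eta) = y\eta$. By Definition~\ref{def:Restriction} we have $f(\alpha\gamma) = x\cdot f|_\alpha(\gamma)$ and $f(\beta\gamma) = y\cdot f|_\beta(\gamma)$, so commutativity of the diagram
\[
\xymatrix@R=0.3in{
\partial (T_1)_\alpha \ar_{f}[d] \ar^{\varphi_\ast}[r] & \partial (T_1)_\beta \ar^{f}[d] \\
\partial (T_2)_x \ar_{\psi_\ast}[r] & \partial (T_2)_y
}
\]
reduces, for every $\gamma\in A_1^\omega$, to the identity $y\cdot f|_\alpha(\gamma) = y\cdot f|_\beta(\gamma)$, i.e., to $f|_\alpha = f|_\beta$ as functions $A_1^\omega \to A_2^\omega$.

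Finally, the previous step sets up a bijection between the equivalence classes of restrictions in the sense of Definition~\ref{def:EquivalentRestrictions} and the distinct classical restrictions $f|_\alpha$. Since $f$ is continuous and no $f\bigl(\partial (T_1)_\alpha\bigr)$ is a singleton, condition~(2) of Theorem~\ref{thm:RestrictionTest} is vacuous. Hence $f$ is rational in the sense of Definition~\ref{def:RationalGNS} iff it has finitely many distinct restrictions iff it has finitely many equivalence classes of restrictions iff it is rational in the sense of Definition~\ref{def:RationalHomeomorphism}. The one delicate point is confirming that rigidity of $A^\tast$ pins down $\varphi$ and $\psi$ in Proposition~\ref{prop:DeepestParentsWork} to be the unique prefix-replacement morphisms; this is exactly what allows the two notions of restriction to be compared on the nose.
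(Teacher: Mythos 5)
Your argument is correct and follows essentially the same route as the paper: both use Proposition~\ref{prop:DeepestParentsWork} together with rigidity of $A_i^\tast$ to identify ``equivalent restrictions at $\alpha$ and $\beta$'' with the commutativity of the prefix-replacement diagram, which in turn is exactly the equation $f|_\alpha = f|_\beta$, and then conclude via Theorem~\ref{thm:RestrictionTest}. Your explicit remark that condition~(2) of that theorem is vacuous for a homeomorphism is a small point the paper leaves implicit, but otherwise the two proofs coincide.
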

\begin{proof}Let $\alpha,\beta\in A_1^\tast$.  Let $\gamma,\delta\in A_2^\tast$ be the greatest common prefixes (i.e.~deepest parents) of $f\bigl(\partial(A_1^\tast)_\alpha\bigr)$ and $f\bigl(\partial(A_1^\tast)_\beta\bigr)$, respectively. Then the restrictions $f|_\alpha$ and $f|_\beta$ defined in Definition~\ref{def:Restriction} are equal if and only if the diagram
\[
\xymatrix@C=0.5in{
\partial (A_1^\tast)_\alpha \ar_{f}[d] \ar^{(\varphi_{\alpha\beta})_\ast}[r] & 
\partial (A_1^\tast)_\beta \ar^{f}[d] \\ 
\partial (A_2^\tast)_\gamma \ar_{(\varphi_{\gamma\delta})_*}[r] & \partial (A_2^\tast)_\delta }
\]
commutes, where $\varphi_{\alpha\beta}\colon (A_1^\tast)_\alpha\to (A_1^\tast)_\beta$ and $\varphi_{\gamma\delta}\colon (A_2^\tast)_\gamma\to (A_2^\tast)_\delta$ are the prefix replacement morphisms.   By Proposition~\ref{prop:DeepestParentsWork}, the diagram above commutes if and only if $f$ has equivalent restrictions at $\alpha$ and~$\beta$ in the sense of Definition~\ref{def:EquivalentRestrictions}, and therefore $f|_\alpha = f|_\beta$ if and only if $f$ has equivalent restrictions at $\alpha$ and~$\beta$ in the sense of Definition~\ref{def:EquivalentRestrictions}.  The result now follows immediately from Definition~\ref{def:RationalHomeomorphism} and Theorem~\ref{thm:RestrictionTest}.
\end{proof}

\begin{remark}In an effort to simplify the exposition, we are only considering rational homeomorphisms between the boundaries of self-similar trees.  It would be possible to develop a more general theory of rational functions by including a requirement similar to condition~(2) in Theorem~\ref{thm:RestrictionTest}.  In particular, if $T'$ is a self-similar tree, we say that a point $p\in\partial T'$ is a \newword{rational point} if there exist distinct $x,y\in T'$ with $p\in \partial T'_y\subseteq \partial T'_x$ and a morphism $\varphi\colon T'_x\to T'_y$ such that $\varphi_*(p)=p$.  These are the analogs in $\partial T'$ of eventually periodic points in~$A^\tast$ for a finite alphabet~$A$.  We could then define a continuous function $f\colon \partial T\to\partial T'$ to be rational if it satisfies the following conditions:
\begin{enumerate}
\item $f$ has finitely many equivalence classes of restrictions, and
\item For each $v\in T$, if $f(\partial T_v)$ is a single point, then this must be a rational point in $\partial T'$.
\end{enumerate}
This would agree with the existing definition of rational in the case of a continuous function $A_1^\omega\to A_2^\omega$, where $A_1$ and $A_2$ are finite alphabets.\qed
\end{remark}

We now wish to prove that the rational homeomorphisms of $\partial T$ form a group.  To simplify the initial development of our theory, we will restrict our class of self-similar trees.

\begin{definition}A self-similar tree $T$ is \newword{branching} if every vertex in $T$ has at least two children.
\end{definition}

If $T$ is branching, then $\partial T$ has no isolated points, and is therefore homeomorphic to the Cantor set.  Each basic clopen set $\partial T_v$ is also 
homeomorphic to a Cantor set, and has $v$ as its deepest parent.  Moreover, if $T$ and $T'$ are branching and $f\colon \partial T\to\partial T'$ is a rational homeomorphism, then $\partial T_v$ has at least two points for every $v\in V$, so $f(\partial T_v)$ always has a deepest parent.

For now, we will develop our theory only for branching self-similar trees, though we will extend to a larger class of self-similar trees in Section~\ref{sec:PathologicalCases}.

\begin{proposition}\label{prop:CompositionsRational}Let $T$, $T'$, and $T''$ be branching self-similar trees, and let $f\colon \partial T \to \partial T'$ and $g\colon \partial T'\to\partial T''$ be rational homeomorphisms. Then the composition $g\circ f\colon \partial T\to\partial T''$ is rational.
\end{proposition}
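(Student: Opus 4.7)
My plan is to adapt the classical ``product of transducers'' argument to this self-similar tree setting. Fix representatives $v_1,\ldots,v_m\in T$ for the restriction equivalence classes of $f$ and $u_1,\ldots,u_n\in T'$ for those of $g$.  For each $i$ let $x_i$ be the deepest parent of $f(\partial T_{v_i})$ in $T'$, and for each $j$ let $z_j$ be the deepest parent of $g(\partial T'_{u_j})$ in $T''$.  To every vertex $v\in T$ I will attach a ``state'' triple consisting of: (i) the index $i(v)\in\{1,\ldots,m\}$ of the $f$-equivalence class of $v$; (ii) the index $j(v)\in\{1,\ldots,n\}$ of the $g$-equivalence class of the deepest parent $x_v$ of $f(\partial T_v)$; and (iii) an additional morphism $\sigma(v)$ gluing chosen witnesses of these two equivalences.

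More precisely, for each $v$ I choose, once and for all, morphisms $\alpha_v\in\mathrm{Mor}(v_{i(v)},v)$ and $\beta_v\in\mathrm{Mor}(x_{i(v)},x_v)$ realizing the $f$-equivalence between $v_{i(v)}$ and $v$ (which exist by Proposition~\ref{prop:DeepestParentsWork}), together with morphisms $\gamma_v\in\mathrm{Mor}(u_{j(v)},x_v)$ and $\delta_v\in\mathrm{Mor}(z_{j(v)},z_{x_v})$ realizing the $g$-equivalence between $u_{j(v)}$ and $x_v$, where $z_{x_v}$ denotes the deepest parent of $g(\partial T'_{x_v})$.  Then I set $\sigma(v):=\gamma_v^{-1}\circ\beta_v\in\mathrm{Mor}(x_{i(v)},u_{j(v)})$.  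Since each $\mathrm{Mor}$-set is finite by the definition of a self-similar structure, only finitely many triples $(i(v),j(v),\sigma(v))$ arise in total.

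The heart of the argument is to show that any two vertices $v,w$ with the same triple have equivalent $g\circ f$-restrictions, which immediately bounds the number of equivalence classes of $g\circ f$.  Writing $\sigma:=\sigma(v)=\sigma(w)$, we get $\beta_\bullet=\gamma_\bullet\sigma$ for $\bullet\in\{v,w\}$.  Using the commuting diagrams witnessing the $f$- and $g$-equivalences, I check that both $(g\circ f)|_{\partial T_v}$ and $(g\circ f)|_{\partial T_w}$ factor through the common auxiliary map $h:=g\circ\sigma_*\circ f$ defined on $\partial T_{v_{i(v)}}$, yielding $(g\circ f)|_{\partial T_v}=(\delta_v)_*\circ h\circ(\alpha_v)_*^{-1}$ and an analogous identity for $w$.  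Setting $\varphi:=\alpha_w\alpha_v^{-1}\in\mathrm{Mor}(v,w)$ and $\rho_0:=\delta_w\delta_v^{-1}\in\mathrm{Mor}(z_{x_v},z_{x_w})$, a short diagram chase then gives $(\rho_0)_*\circ (g\circ f)|_{\partial T_v}=(g\circ f)|_{\partial T_w}\circ\varphi_*$.  Finally, repeating the deepest-parent argument from the proof of Proposition~\ref{prop:DeepestParentsWork} shows that $\rho_0$ sends the deepest parent of $(g\circ f)(\partial T_v)$ to that of $(g\circ f)(\partial T_w)$, so the appropriate restriction of $\rho_0$ together with $\varphi$ witnesses the required equivalence via Proposition~\ref{prop:DeepestParentsWork}.

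The main obstacle is pinning down the right notion of state.  A naive attempt based only on the pair $(i(v),j(v))$ fails because the morphism $\beta_v$ realizing the $f$-equivalence on the $T'$-side need not coincide with the morphism $\gamma_v$ realizing the $g$-equivalence on the $T'$-side.  The extra gluing datum $\sigma(v)\in\mathrm{Mor}(x_{i(v)},u_{j(v)})$ records exactly this discrepancy inside a finite set, and is the piece of information that allows the output of $f$ and the input of $g$ to be matched up coherently across the intermediate tree $T'$.
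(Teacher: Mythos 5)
Your proposal is correct and follows essentially the same line of argument as the paper: both bound the restriction classes of $g\circ f$ by tracking the $f$-class of $v$, the $g$-class of the deepest parent of $f(\partial T_v)$, and a ``gluing'' morphism on the $T'$-side recording the mismatch between the two commuting squares (your $\sigma(v)=\gamma_v^{-1}\beta_v$ plays precisely the role of the paper's $\pi_v=\psi_{uv}^{-1}\mu_{uv}$). The only cosmetic difference is that you use global representatives to form a ``state triple,'' whereas the paper stratifies class-by-class with a local reference vertex; and your closing appeal to the deepest-parent argument is unnecessary, since the commutative square you produce already satisfies Definition~\ref{def:EquivalentRestrictions} directly.
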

\begin{proof}Let $E$ be an equivalence class of vertices of $T$ under the equivalence relation ``$f$ has equivalent restrictions at $v$ and $w$''.  Note that there are only finitely many such $E$, since $f$ is rational.  Therefore, it suffices to prove that $g\circ f$ has finitely many equivalence classes of restrictions on~$E$.

For each $v\in E$, let $r(v)$ denote the deepest parent of $f(\partial T_v)$ in~$T'$, which exists since $T$ and $T'$ are branching.  By Proposition~\ref{prop:DeepestParentsWork}, for every pair of vertices $v,w\in E$, there exist a pair of morphisms $\varphi_{vw}\colon T_v\to T_w$ and $\psi_{vw}\colon T'_{r(v)}\to T'_{r(w)}$ making the following diagram commute:
\[
\xymatrix{
\partial T_v \ar_{f}[d] \ar^{(\varphi_{vw})_\ast}[r] & 
\partial T_w \ar^{f}[d] \\ 
\partial T'_{r(v)}\ar_{(\psi_{vw})_\ast}[r] & \partial T'_{r(w)} }
\]
Put an equivalence on $E$ by $v\sim w$ if $g$ has equivalent restrictions at $r(v)$ and~$r(w)$.  Since $g$ is rational, there are only finitely many equivalence classes.  Let $E'$ be such an equivalence class.  It suffices to show that $g\circ f$ has finitely many equivalence classes of restrictions on~$E'$.

For each $v\in E'$, let $s(v)$ denote the deepest parent of $g\bigl(\partial T'_{r(v)}\bigr)$ in $T''$, which exists since $T'$ and $T''$ are branching. By Proposition~\ref{prop:DeepestParentsWork}, for each $v,w\in E'$, there exist morphisms $\mu_{vw}\colon T'_{r(v)}\to T'_{r(w)}$ and $\nu_{vw}\colon T''_{s(v)} \to T''_{s(w)}$ making the following diagram commute:
\[
\xymatrix{
\partial T'_{r(v)} \ar_{g}[d] \ar^{(\mu_{vw})_\ast}[r] & 
\partial T'_{r(w)} \ar^{g}[d] \\ 
\partial T''_{s(v)}\ar_{(\nu_{vw})_\ast}[r] & \partial T''_{s(w)} }
\]
Fix a vertex $u\in E'$. For each $v\in E'$ let $\pi_v\colon T_{r(u)} \to T_{r(u)}$ be the morphism $\pi_v = \psi_{uv}^{-1} \mu_{uv}$.  We claim that, for $v,w\in E'$, the composition $g\circ f$ has equivalent restrictions at $v$ and $w$ whenever $\pi_v=\pi_w$.  Since $\mathrm{Mor}(r(u),r(u))$ is finite, it will follow immediately from this that $g\circ f$ has only finitely many different restrictions on~$E'$.

Let $v,w\in E'$ and suppose that $\pi_v=\pi_w$.  Then $\psi_{uv}^{-1}\mu_{uv} = \psi_{uw}^{-1}\mu_{uw}$, so the following diagram commutes, including the central square if we allow inverse arrows:
\[
\xymatrix@R=0.25in{
& \partial T_u \ar_{f}[d] \ar^{(\varphi_{uw})_\ast}[rd]\ar_{(\varphi_{uv})^\ast}[ld] & \\ 
\partial T_v\ar_{f}[d]  & \partial T'_{r(u)}\ar_{(\psi_{uw})_\ast}[rd]\ar^{(\psi_{uv})_\ast}[ld] & \partial T_{w}\ar^{f}[d] \\
\partial T'_{r(v)}\ar_{g}[d] & & \partial T'_{r(w)}\ar^{g}[d] \\
\partial T''_{s(v)} & \partial T'_{r(u)}\ar_{g}[d]\ar_{(\mu_{uv})_*}[ul]\ar^{(\mu_{uw})_*}[ur] & \partial T''_{s(w)} \\
& \partial T''_{s(u)}\ar^{(\nu_{uv})_*}[ul]\ar_{(\nu_{uw})_*}[ur] &
}
\]
In particular, the outer octagon gives us the commutative square
\[
\xymatrix@C=0.65in{
\partial T_v \ar_{g\circ f}[d] \ar^{\bigl(\varphi_{uw}\varphi_{uv}^{-1}\bigr)_{\!\ast}}[r] & 
\partial T_w \ar^{g\circ f}[d] \\ 
\partial T''_{s(v)}\ar_{\bigl(\nu_{uw}\nu_{uv}^{-1}\bigr)_{\!\ast}}[r] & \partial T''_{s(w)} }
\]
and therefore $g\circ f$ has equivalent restrictions at $v$ and~$w$.
\end{proof}

\begin{proposition}\label{prop:InversesRational}Let $T$ and $T'$ be branching self-similar trees, and let $f\colon \partial T\to\partial T'$ be a rational homeomorphism. Then the inverse $f^{-1}\colon \partial T'\to\partial T$ is rational.
\end{proposition}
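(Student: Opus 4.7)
The plan is to mirror the structure of the proof of Proposition~\ref{prop:CompositionsRational}, exploiting the symmetry in the definition of equivalent restrictions. I will write $\sim_f$ for the equivalence relation ``$f$ has equivalent restrictions at $v$ and $w$'' on $T$, and $\sim_{f^{-1}}$ for the analogous relation on $T'$. For $v\in T$, let $r(v)\in T'$ denote the deepest parent of $f(\partial T_v)$, and for $x\in T'$, let $s(x)\in T$ denote the deepest parent of $f^{-1}(\partial T'_x)$; both exist because $T$ and $T'$ are branching. By Proposition~\ref{prop:DeepestParentsWork} applied to $f^{-1}$, it suffices to show that $\sim_{f^{-1}}$ has only finitely many classes on $T'$, given that $\sim_f$ has only finitely many classes on $T$.

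The first main step is a transfer lemma: whenever $v\sim_f w$ is witnessed by morphisms $\varphi\colon T_v\to T_w$ and $\psi\colon T'_{r(v)}\to T'_{r(w)}$, and $x$ is a descendant of $r(v)$ with $s(x)=v$, then the vertex $y:=\psi(x)$ satisfies $s(y)=w$ and $x\sim_{f^{-1}} y$, witnessed by $\mu:=\psi|_{T'_x}$ and $\nu:=\varphi$. This is a direct diagram chase: the identity $f\circ \varphi_* = \psi_*\circ f$ on $\partial T_v$ rearranges to $f^{-1}\circ \psi_* = \varphi_*\circ f^{-1}$ on $f(\partial T_v)$, and the inclusion $\partial T'_x\subseteq f(\partial T_v)$ (which follows from $s(x)=v$) lets one propagate the diagram through to $y$.

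Next, fix representatives $v_1,\ldots,v_n$ for the finitely many $\sim_f$-classes, and for every $v$ in the class of $v_i$, fix a witness pair $(\varphi_v,\psi_v)$ for $v_i\sim_f v$. The transfer lemma then shows that every $x\in T'$ is $\sim_{f^{-1}}$-equivalent to $\psi_{s(x)}^{-1}(x)$, which lies in the set $V_i:=\{z\in T'_{r(v_i)}:s(z)=v_i\}$ (where $v_i$ is the chosen representative of the $\sim_f$-class of $s(x)$). Thus every $\sim_{f^{-1}}$-equivalence class meets the union $V_1\cup\cdots\cup V_n$.

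The main obstacle is then to argue that each $V_i$ itself meets only finitely many $\sim_{f^{-1}}$-classes. For $x_0,y_0\in V_i$, any witness to $x_0\sim_{f^{-1}} y_0$ involves a self-morphism $\nu\in\mathrm{Mor}(v_i,v_i)$, and this set is finite by the definition of a self-similar tree. For each fixed $\nu$, the commutative-square condition forces the target $y_0$ to be the deepest parent of $f(\nu_*(f^{-1}(\partial T'_{x_0})))$ and forces $\mu_*$ to equal $f\circ \nu_*\circ f^{-1}$ on $\partial T'_{x_0}$. To control the resulting classes I would use that the composite $f\circ \nu_*\colon \partial T_{v_i}\to \partial T'$ is itself rational by Proposition~\ref{prop:CompositionsRational}, so together with $f$ it has only finitely many restriction types on $T_{v_i}$; combining these with the constraint $s(x_0)=v_i$ and the finitely many choices of $\nu$ should yield the finiteness of $\sim_{f^{-1}}$-classes on $V_i$, and hence on $T'$.
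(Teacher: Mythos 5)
Your steps 1 and 2 are correct and in fact track the paper's own argument closely: the paper likewise partitions $T'$ according to the $\sim_f$-class of the deepest parent of $f^{-1}(\partial T'_x)$, transports each $x$ by the witnessing morphism $\psi$ into a subtree sitting over a fixed representative, and checks via the same diagram chase that this transport preserves $\sim_{f^{-1}}$-classes. So the reduction to the sets $V_i=\{z\in T'_{r(v_i)} : s(z)=v_i\}$ is sound.

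The gap is in step 3, and it is a genuine one: as written, your argument constrains what a witness to $x_0\sim_{f^{-1}}y_0$ must look like, but that only tells you when two vertices \emph{are} equivalent; it does not produce a finite invariant whose equality \emph{forces} equivalence, which is what bounding the number of classes requires. The appeal to Proposition~\ref{prop:CompositionsRational} does not close this: the finitely many restriction classes of $f\circ\nu_*$ are indexed by vertices of $T_{v_i}$, whereas you need to control the vertices $x_0\in V_i\subseteq T'$, and no mechanism is given for converting one into the other ("should yield the finiteness" is where the proof stops). The missing ingredient --- and the one step of the paper's proof you did not reproduce --- is that each $V_i$ is actually a \emph{finite} set, which makes your step 3 unnecessary: since $f^{-1}$ is uniformly continuous for the standard ultrametrics, there is a $k$ such that every $z\in T'$ with $|z|\geq k$ has $f^{-1}(\partial T'_z)$ of diameter less than $2^{-|v_i|}$, hence has $|s(z)|>|v_i|$; so only finitely many $z$ satisfy $s(z)=v_i$. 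With that observation, every $\sim_{f^{-1}}$-class meets the finite set $V_1\cup\cdots\cup V_n$ and the proposition follows immediately. You should replace your step 3 with this compactness/uniform-continuity argument.
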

\begin{proof}For each $v$ in $T'$, let $r(v)$ be the deepest parent of $f^{-1}(T'_v)$ in~$T$, which exists since $T$ and $T'$ are branching. Put an equivalence relation $\sim$ on the vertices of $T'$ by $v\sim w$ if $f$ has equivalent restrictions at $\partial T_{r(v)}$ and $\partial T_{r(w)}$.  Since $f$ is rational, there are only finitely many such equivalence classes.  Let $E$ be such an equivalence class.  It suffices to prove that $f^{-1}$ has only finitely many different restrictions on~$E$.

Fix a vertex $u\in E$, and for each $v\in E$ let $s(v)$ be the deepest parent of $f(\partial T_{r(v)})$ in~$T'$, which exists since $T$ and $T'$ are branching. By Proposition~\ref{prop:DeepestParentsWork}, for each $v\in E$ there exist morphisms $\varphi_v\colon T_{r(v)}\to T_{r(u)}$ and $\psi_v\colon T'_{s(v)}\to T'_{s(u)}$ making the following diagram commute:
\[
\xymatrix{
\partial T_{r(v)} \ar_{f}[d] \ar^{(\varphi_v)_\ast}[r] & 
\partial T_{r(u)} \ar^{f}[d] \\ 
\partial T'_{s(v)}\ar_{(\psi_v)_\ast}[r] & \partial T'_{s(u)} }
\]
Now since $f^{-1}(\partial T'_v)\subseteq \partial T_{r(v)}$, we know that
\[
\partial T'_v \subseteq f(\partial T_{r(v)}) \subseteq \partial T'_{s(v)}
\]
for each $v$. Since $T'$ is branching, it follows that $v$ is a descendant of~$s(v)$. Then $\psi_v(v)$ is some descendant of~$s(u)$.  We claim that the set
\[
\{\psi_v(v) \mid v\in E\}
\]
is finite and that $f^{-1}$ has equivalent restrictions at $v,w\in E$ whenever $\psi_v(v)=\psi_w(w)$. This will prove that $f^{-1}$ has only finitely many different restrictions on $E$.

To prove that there are only finitely many possibilities for $\psi_v(v)$, recall that $r(v)$ is the deepest parent of $f^{-1}(\partial T'_v)$ in~$T$.  But
\[
\varphi_v(r(v)) = r(u)
\]
and
\[
(\varphi_v)_*\bigl(f^{-1}(\partial T'_v)\bigr) = f^{-1}\bigl((\psi_v)_*(\partial T'_v)\bigr) =  f^{-1}\bigl(\partial T'_{\psi_v(v)}\bigr)
\]
so $r(u)$ must be the deepest parent of~$f^{-1}\bigl(\partial T'_{\psi_v(v)}\bigr)$ in~$T$, i.e.~$r\bigl(\psi_v(v)\bigr) = r(u)$.  But since $f^{-1}$ is continuous, it is uniformly continuous with respect to the standard ultrametrics on $\partial T'$ and $\partial T$.  In particular, there exists a $k > 0$ such that
\[
d(v,w) \leq \frac{1}{2^k} \quad\Rightarrow\quad d\bigl(f^{-1}(v),f^{-1}(w)\bigr) < \frac{1}{2^{|r(u)|}}
\]
for all $v,w$ in $T'$.  It follows that $|r(v)| > |r(u)|$ whenever $|v|\geq k$, so there are only finitely many vertices $v$ for which $r(v)= r(u)$.  Thus there are only finitely many possibilities for~$\psi_v(v)$.

Now suppose that $v,w\in E$ and $\psi_v(v) = \psi_w(w)$.  Then $\psi_w^{-1}\psi_v$ maps $v$ to~$w$, so let $\chi\colon T_v\to T_w$ be the restriction of this morphism.  Then $\chi_*$ agrees with $(\psi_w)_*^{-1}(\psi_v)_*$ on $\partial T'_v$, and in particular the following diagram commutes:
\[
\xymatrix@C=0.6in{
\partial T_{r(v)} \ar^{(\varphi_w^{-1}\varphi_v)_\ast}[r] & 
\partial T_{r(u)} \\ 
\partial T'_v\ar^{f^{-1}}[u]\ar_{\chi_\ast}[r] & \partial T'_w\ar_{f^{-1}}[u] }
\]
We conclude that $f^{-1}$ has equivalent restrictions at $v$ and~$w$.
\end{proof}

\begin{corollary}\label{cor:RationalGroup}If $T$ is a branching self-similar tree, then the set of all rational homeomorphisms of $\partial T$ forms a group under composition.\hfill\qedsymbol
\end{corollary}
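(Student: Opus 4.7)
The plan is to assemble this corollary directly from the two preceding propositions, since the hard analytic content (closure under composition and closure under inversion) has already been established in Propositions~\ref{prop:CompositionsRational} and~\ref{prop:InversesRational}. All that remains is to verify the group axioms in a straightforward way.

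First I would observe that the set $\mathrm{Homeo}(\partial T)$ of all homeomorphisms of $\partial T$ is a group under composition, so it suffices to show that the subset of rational homeomorphisms is closed under composition, closed under inversion, and contains the identity. Closure under composition is the special case $T = T' = T''$ of Proposition~\ref{prop:CompositionsRational}, and closure under inversion is the special case $T = T'$ of Proposition~\ref{prop:InversesRational}. Associativity is inherited from composition of functions.

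To handle the identity, note that for any two vertices $v,w$ of $T$ of the same type we may pick any morphism $\varphi \in \mathrm{Mor}(v,w)$ (which is nonempty by Definition~\ref{def:SelfSimilarTree}), take $x = v$, $y = w$, and $\psi = \varphi$, and the corresponding diagram in Definition~\ref{def:EquivalentRestrictions} commutes trivially because both vertical arrows are the identity. Hence the identity has equivalent restrictions at any pair of vertices of the same type, giving only finitely many equivalence classes (one per vertex type, of which there are finitely many by Definition~\ref{def:SelfSimilarTree}). Thus the identity is rational.

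There is no main obstacle here: the conceptual work was done in proving Propositions~\ref{prop:CompositionsRational} and~\ref{prop:InversesRational}, and the corollary is essentially a bookkeeping observation. The only minor subtlety worth flagging is that both of those propositions were stated for homeomorphisms between boundaries of possibly distinct branching self-similar trees, so one should explicitly specialize to a single tree $T$ to conclude that the rational self-homeomorphisms form a subgroup of $\mathrm{Homeo}(\partial T)$.
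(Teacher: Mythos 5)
Your proposal is correct and matches the paper's approach: the corollary is stated there with no written proof precisely because it is immediate from Propositions~\ref{prop:CompositionsRational} and~\ref{prop:InversesRational} specialized to $T = T' = T''$. Your explicit check that the identity is rational (all vertices of a common type form a single equivalence class of restrictions, and there are finitely many types) is a correct filling-in of the one detail the paper leaves implicit.
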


This is the \newword{rational group} associated with $T$, denoted $\R_T$. The next two subsections are devoted to proving the following theorem.

\begin{theorem}\label{thm:RationalGroupIsomorphism}Let $T$ be a branching self-similar tree.  Then the associated rational group $\R_T$ is isomorphic to the binary rational group~$\R_2$. Indeed, the action of $\R_T$ on $\partial T$ is conjugate to the action of $\R_2$ on $\{0,1\}^\omega$.
\end{theorem}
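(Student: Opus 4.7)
The plan is to construct an explicit rational homeomorphism $h\colon \partial T\to\{0,1\}^\omega$, where $\{0,1\}^\omega$ is the boundary of $\{0,1\}^\tast$ equipped with its standard rigid self-similar structure. Once such an $h$ is in hand, Propositions~\ref{prop:CompositionsRational} and~\ref{prop:InversesRational} imply that conjugation by $h$ carries rational homeomorphisms to rational homeomorphisms in both directions, yielding a group isomorphism $\R_T\to\R_{\{0,1\}^\tast}$ that, by construction, conjugates the actions on the respective boundaries. Since $\R_{\{0,1\}^\tast}=\R_2$ by Proposition~\ref{prop:SameDefinitionRational}, this gives the theorem.

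To build $h$, for each vertex type $t$ of $T$ let $n_t$ be the common number of children of a type-$t$ vertex and fix: (i)~a maximal binary prefix code $C_t\subseteq\{0,1\}^\tast$ of size $n_t$; (ii)~a reference vertex $v_t\in T$ of type $t$, with $v_{t_0}$ equal to the root $r$; (iii)~a bijection $i\mapsto\ell_t^{(i)}$ from $\{1,\ldots,n_t\}$ onto $C_t$, writing $c_t^{(i)}$ for the $i$-th child of $v_t$; and (iv)~for each pair $(t,i)$ a morphism $\rho_t^{(i)}\in\mathrm{Mor}\bigl(v_{t(c_t^{(i)})},c_t^{(i)}\bigr)$, which exists because $\mathrm{Mor}(u,v)$ is nonempty whenever $u,v$ share a type. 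Then recursively define, for every $v\in T$, an isomorphism $\iota_v\in\mathrm{Mor}(v_{t(v)},v)$: set $\iota_r=\mathrm{id}$, and for $v$ a child of a vertex $u$ with $\iota_u(c_{t(u)}^{(i)})=v$, put
\[
\iota_v \,:=\, \iota_u|_{T_{c_{t(u)}^{(i)}}}\circ \rho_{t(u)}^{(i)},
\]
which lies in $\mathrm{Mor}(v_{t(v)},v)$ by axioms (b) and (c). Label the children of each $v$ so that $\iota_v(c_t^{(i)})$ receives the codeword $\ell_t^{(i)}$, and define $h\colon \partial T\to\{0,1\}^\omega$ by concatenating labels along each infinite descending path from $r$.

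The map $h$ is a homeomorphism by maximality of each $C_t$. To prove rationality, the goal is to show that any two same-type vertices $v,w\in T$ yield equivalent restrictions. The engine is the compatibility identity
\[
(\iota_w\iota_v^{-1})|_{T_{u}} \,=\, \iota_{(\iota_w\iota_v^{-1})(u)}\circ\iota_u^{-1}\qquad(u\in T_v),
\]
proven by induction on the depth of $u$ below $v$; unwinding the recursive definition shows that since $\iota_v$ and $\iota_w$ use exactly the same local data $\rho_t^{(i)}$, the restriction of $\iota_w\iota_v^{-1}$ to each subtree coincides with the $\iota$-ratio independently produced at that subtree. This forces $\iota_w\iota_v^{-1}$ to preserve labels at every depth. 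Writing $\alpha_v\in\{0,1\}^\tast$ for the concatenation of labels along the path from $r$ to $v$, which by maximality of the codes is the deepest parent of $h(\partial T_v)$, the diagram in Definition~\ref{def:EquivalentRestrictions} then commutes with $\varphi=\iota_w\iota_v^{-1}\in\mathrm{Mor}(v,w)$ and $\psi$ the prefix replacement $\alpha_v\mapsto\alpha_w$. Hence all vertices of a fixed type lie in a single equivalence class of restrictions, so there are only finitely many classes and $h$ is rational.

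The main obstacle is the global consistency of the $\iota_v$: an independent choice of some morphism $\mu_v\in\mathrm{Mor}(v_{t(v)},v)$ for each $v$ would preserve labels between same-type $v,w$ at the first level below them but generally fail at deeper levels. The recursive definition using the fixed local data $\rho_t^{(i)}$ is precisely what forces restrictions of $\iota_v$ to descendants' subtrees to coincide with the $\iota$-isomorphisms independently produced at those descendants, propagating label preservation through all depths and ultimately yielding rationality.
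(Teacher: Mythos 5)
Your proposal is correct and is essentially the paper's argument, telescoped into a single construction: your recursively defined $\iota_v$ are the inverses of the markings $\psi_v$ used to build a rigid structure (Proposition~\ref{prop:RigidStructureExists}), your compatibility identity $(\iota_w\iota_v^{-1})|_{T_u}=\iota_{(\iota_w\iota_v^{-1})(u)}\circ\iota_u^{-1}$ is exactly the restriction axiom verified there, and your complete-prefix-code encoding is the content of Propositions~\ref{prop:IsomorphicToPathLanguage} and~\ref{prop:RationalHomeomorphismToT2}. The paper merely factors the resulting homeomorphism $h$ through the intermediate notions of rigid tree and path language before conjugating, exactly as you do directly.
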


\subsection{Rigid Structures}

\begin{definition}Let $T$ be a self-similar tree.  A \newword{rigid structure} for $T$ is a family $\{\varphi_{vw}\}$ of morphisms, with one morphism $\varphi_{vw}\colon T_v\to T_w$ for each pair $(v,w)$ of vertices of $T$ of the same type, satisfying the following conditions:
\begin{enumerate}
\item $\varphi_{vw}\,\varphi_{uv} = \varphi_{uw}$ for all triples $(u,v,w)$ of the same type.
\[
\xymatrix{
 T_u \ar_{\varphi_{uw}}[dr] \ar^{\varphi_{uv}}[r] & 
 T_v \ar^{\varphi_{vw}}[d] \\ 
 &
 T_w }
\]
\item If $v'$ is a descendant of $v$ and $w' = \varphi_{vw}(v')$, then $\varphi_{v'w'}$ is the restriction of $\varphi_{vw}$ to~$T_{v'}$.
\end{enumerate}
\end{definition}

Note that if $\{\varphi_{vw}\}$ is a rigid structure for $T$ then $\varphi_{vv}\,\varphi_{vv}=\varphi_{vv}$ for any vertex $v$ of~$T$, and hence $\varphi_{vv}$ is the identity isomorphism of~$T_v$.  It follows that $\varphi_{wv}=\varphi_{vw}^{-1}$ for all pairs~$(v,w)$.

\begin{proposition}\label{prop:RigidStructureExists}Every self-similar tree has a rigid structure.
\end{proposition}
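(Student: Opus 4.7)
The plan is to reduce the proposition to a cocycle-type compatibility problem and then solve it using Zorn's lemma. Fix a representative $r_t$ for each type $t$; axiom~(1) applied to the triple $(r_t,r_t,r_t)$ forces $\varphi_{r_t,r_t}=\mathrm{id}$. Setting $\alpha_v:=\varphi_{r_t,v}\in\mathrm{Mor}(r_t,v)$ and $\varphi_{vw}:=\alpha_w\alpha_v^{-1}$ makes axiom~(1) automatic, and a direct computation using the restriction axiom~(c) of morphisms shows that axiom~(2) is equivalent to the compatibility condition
\[
\alpha_{\alpha_v(u)} \;=\; \alpha_v|_{T_u}\,\alpha_u \tag{$\star$}
\]
for every type $t$, every vertex $v$ of type $t$, and every descendant $u$ of $r_t$ in $T$. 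So it suffices to produce a family $\{\alpha_v\}$, one per vertex, with $\alpha_v\in\mathrm{Mor}(r_{\mathrm{type}(v)},v)$, $\alpha_{r_t}=\mathrm{id}$, and~($\star$) satisfied.

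I would construct such a family by Zorn's lemma on the poset $\mathcal{P}$ of partial families: pairs $(D,\{\alpha_v\}_{v\in D})$ where $D$ contains each $r_t$, $\alpha_{r_t}=\mathrm{id}$, $\alpha_v\in\mathrm{Mor}(r_{\mathrm{type}(v)},v)$, and~($\star$) holds for every $(v,u)\in D\times D$ with $u$ a descendant of $r_{\mathrm{type}(v)}$ and $\alpha_v(u)\in D$; order by extension. The union of any chain lies in $\mathcal{P}$, yielding a maximal $F=(D,\{\alpha_v\})$. The crux is the extension step: given $v_0\notin D$, if any pair $(v,u)\in D\times D$ with $\alpha_v(u)=v_0$ exists, use the forced value $\alpha_{v_0}:=\alpha_v|_{T_u}\,\alpha_u$; otherwise pick $\alpha_{v_0}\in\mathrm{Mor}(r_{\mathrm{type}(v_0)},v_0)$ arbitrarily. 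Then iteratively close $D$ under ($\star$)-propagation, assigning a single well-defined $\alpha_w$ at each newly forced vertex $w$.

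The main obstacle is verifying consistency of this extension: that independent forcing chains assign the same value to each new vertex, and that any value newly assigned in the closure does not conflict with an existing element of $D$ reached through another chain. The key structural tool is the rooted-tree dichotomy — any two subtrees $T_a,T_b$ are either disjoint or nested — combined with the inverse-semigroup associativity from Note~\ref{note:PartialComposition}. In the disjoint case no conflict is possible; in the nested case, any potential conflict reduces to an equality of compositions of morphism restrictions, which follows from axioms~(a)–(c) of Definition~\ref{def:SelfSimilarTree} together with the ($\star$)-compatibility already assumed on $D$. A careful case analysis carries out this verification, producing a strict extension of $F$ that contradicts maximality unless $D$ is already the entire vertex set of $T$, in which case $F$ furnishes the desired rigid structure.
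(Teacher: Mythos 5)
Your opening reduction is sound and in fact mirrors the paper's setup: writing $\varphi_{vw}=\alpha_w\alpha_v^{-1}$ for a family of morphisms $\alpha_v\in\mathrm{Mor}(r_{\mathrm{type}(v)},v)$ makes condition (1) automatic, and condition (2) is indeed equivalent to your compatibility condition $(\star)$ (the paper's markings $\psi_v$ are exactly the $\alpha_v^{-1}$). The problem is that everything after that point --- the construction of a family satisfying $(\star)$ --- is where the entire content of the proposition lives, and your argument asserts rather than proves it. Two concrete failures. First, a maximal element of your poset $\mathcal{P}$ need not have $D$ equal to all of $T$: if $\alpha_{v_0}$ was chosen arbitrarily at a stage when nothing forced it, a vertex $v_1$ added later with $\alpha_{v_1}(u_1)=v_0$ for some $u_1\in D$ imposes the constraint $\alpha_{v_0}=\alpha_{v_1}|_{T_{u_1}}\alpha_{u_1}$, and there is no reason any admissible choice of $\alpha_{v_1}$ satisfies it; the extension step can simply be impossible, so maximality gives you nothing. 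Second, when two pairs $(v_1,u_1)$ and $(v_2,u_2)$ in $D$ both force a value at a new vertex $w$, the ancestors $v_1,v_2$ of $w$ are comparable, say $v_2\in T_{v_1}$, but the vertex $\alpha_{v_1}^{-1}(v_2)$ through which you would need to factor the comparison need not lie in $D$, so the ``$(\star)$-compatibility already assumed on $D$'' cannot be invoked; the ``careful case analysis'' you defer to is precisely the unproved step.

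The fix is to control the order in which the $\alpha_v$ are defined, which is what the paper does: fix the representatives, make an arbitrary choice of morphism only at each \emph{child} of a representative (finitely many choices), and then define $\alpha_v$ for every $v$ by a single top-down recursion through its parent. Each vertex is assigned exactly once, so no consistency question ever arises, and condition (2) reduces to a one-line computation for a parent--child pair followed by induction on depth. In particular Zorn's lemma is unnecessary ($T$ is a locally finite rooted tree, hence countable, and the recursion is well-founded). If you want to keep your $(\star)$ formulation, replace the greedy/maximality argument with this parent-to-child propagation and verify $(\star)$ by induction on the depth of $u$; as written, your proof has a genuine gap at its central step.
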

\begin{proof}Let $T$ be a self-similar tree. Choose a set $\mathcal{A}$ of vertices of $T$ that contains the root vertex and has exactly one vertex of each type. If $v$ is a vertex of $T$, a \newword{marking} of $v$ will be an element of $\mathrm{Mor}(v,a)$, where $a$ is the vertex in $\mathcal{A}$ having the same type as~$v$.

Let $\mathcal{B}$ be the set of all vertices that are children of vertices in~$\mathcal{A}$. Choose a marking $\tau_b$ for each vertex $b\in\mathcal{B}$.  We now define a marking $\psi_v$ for each vertex $v\in T$ inductively as follows:
\begin{enumerate}
\item If $r$ is the root of $T$, then $\psi_r$ is the identity isomorphism of~$T$.
\item If $v$ is a vertex of $T$ with marking $\psi_v$ and $w$ is a child of~$v$, let $b=\psi_v(w)$ denote the corresponding child of $\psi_v(v)$, and let
\[
\psi_w = \tau_b\psi_v,
\]
where the composition on the right is partial, as described in Note~\ref{note:PartialComposition}
\end{enumerate}
For each pair of vertices $(v,w)$ of $T$ of the same type, let $\varphi_{vw} = \psi_w^{-1}\psi_v$.  We claim that $\{\varphi_{vw}\}$ is a rigid structure for~$T$.

Clearly $\varphi_{vw}\,\varphi_{uv}=\varphi_{uw}$ for every triple $(u,v,w)$ of vertices of the same type.  For restrictions, suppose that $v$ and $w$ have the same type and $v'$ is a child of~$v$.  Let $w'=\varphi_{vw}(v')$.  Let $a = \psi_v(v)=\psi_w(w)$, and let $b=\psi_v(v')=\psi_w(w')$.  Then
\[
\varphi_{v'w'} = \psi_{w'}^{-1}\psi_{v'} = \psi_w^{-1}\tau_b^{-1}\tau_b\psi_v = \psi_w^{-1} i_b\,\psi_v
= \psi_w^{-1}\psi_v\, i_{v'} = \varphi_{vw}\,i_{v'},
\]
where each $i_x$ denotes the identity map on~$T_x$, and we are again using partial compositions. We conclude that $\varphi_{v'w'}$ is the restriction of $\varphi_{vw}$ to~$T_{v'}$, as desired.
\end{proof}

Note that a rigid structure is itself a self-similar structure on~$T$.  If $T$ is a self-similar tree and $\{\varphi_{vw}\}$ is a rigid structure on~$T$, the \newword{corresponding rigid tree} is the self-similar tree having the same underlying graph as $T$ but with $\{\varphi_{vw}\}$ as its self-similar structure.

\begin{proposition}\label{prop:RigidStructureSameRationalHomeomorphisms}Let $T$ be a branching self-similar tree, let $\{\varphi_{vw}\}$ be a rigid structure on~$T$, and let $T'$ be the corresponding rigid tree.  Then the rational homeomorphisms of $\partial T$ are the same as the rational homeomorphisms of $\partial T'$.
\end{proposition}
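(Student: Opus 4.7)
The plan is to show that the equivalence relations on vertices defining rationality for the two self-similar structures differ only in a controlled way, so that a homeomorphism of $\partial T=\partial T'$ has finitely many equivalence classes of restrictions with respect to one structure if and only if it does with respect to the other. Since every rigid morphism $\varphi_{vw}$ is a morphism of $T$ by construction, any commutative square of the form required in Definition~\ref{def:EquivalentRestrictions} for $T'$ is also such a square for $T$. Hence the $T'$-equivalence relation refines the $T$-equivalence relation, and in particular every rational homeomorphism of $\partial T'$ is rational as a homeomorphism of $\partial T$. The real work is to prove the reverse containment.

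Let $f\colon\partial T\to\partial T$ be a rational homeomorphism with respect to the original structure, and let $E$ be one of its finitely many $T$-equivalence classes of vertices. I will show that $E$ splits into only finitely many $T'$-equivalence classes. Fix $u\in E$. Because $T$ is branching, every $f(\partial T_v)$ has a deepest parent $r(v)\in T$, so Proposition~\ref{prop:DeepestParentsWork} yields, for each $v\in E$, morphisms $\alpha_v\in\mathrm{Mor}(u,v)$ and $\beta_v\in\mathrm{Mor}(r(u),r(v))$ with $f\circ(\alpha_v)_*=(\beta_v)_*\circ f$ on $\partial T_u$. Using the free transitive action of $\mathrm{Mor}(u,u)$ on $\mathrm{Mor}(u,v)$, define
\[
\rho_v=\alpha_v^{-1}\varphi_{uv}\in\mathrm{Mor}(u,u),\qquad \sigma_v=\beta_v^{-1}\varphi_{r(u)r(v)}\in\mathrm{Mor}(r(u),r(u)).
\]
By the definition of a self-similar tree these two sets are finite, so the pair $(\rho_v,\sigma_v)$ ranges over a finite collection as $v$ varies over $E$.

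The crux of the argument is to verify that the assignment $v\mapsto(\rho_v,\sigma_v)$ refines the partition of $E$ into $T'$-equivalence classes. The rigid composition law gives $\varphi_{vw}=\varphi_{uw}\varphi_{uv}^{-1}=\alpha_w\rho_w\rho_v^{-1}\alpha_v^{-1}$ and, analogously, $\varphi_{r(v)r(w)}=\beta_w\sigma_w\sigma_v^{-1}\beta_v^{-1}$. When $(\rho_v,\sigma_v)=(\rho_w,\sigma_w)$ these collapse to $\alpha_w\alpha_v^{-1}$ and $\beta_w\beta_v^{-1}$, and a short diagram chase combining the identities $f\circ(\alpha_v)_*=(\beta_v)_*\circ f$ and $f\circ(\alpha_w)_*=(\beta_w)_*\circ f$ then yields
\[
f\circ(\varphi_{vw})_*=(\varphi_{r(v)r(w)})_*\circ f
\]
on $\partial T_v$, which is precisely the condition, via Proposition~\ref{prop:DeepestParentsWork} applied inside the rigid structure, for $f$ to have equivalent restrictions at $v$ and $w$ in $T'$. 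Consequently $E$ splits into at most $|\mathrm{Mor}(u,u)|\cdot|\mathrm{Mor}(r(u),r(u))|$ many $T'$-classes, and summing over the finitely many $T$-classes shows that $f$ is rational with respect to $T'$.

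The main obstacle is purely bookkeeping: the only extra flexibility that $T$ has over $T'$ sits inside the finite stabilizer semigroups $\mathrm{Mor}(u,u)$ and $\mathrm{Mor}(r(u),r(u))$, and once one identifies the correct finite invariant $(\rho_v,\sigma_v)$ separating the refined classes, the rest reduces to the computation above; no delicate use of the branching hypothesis beyond the existence of deepest parents is required.
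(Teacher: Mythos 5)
Your proof is correct, but it takes a genuinely different route from the paper's. The paper observes that the identity map $i\colon\partial T'\to\partial T$ is itself a rational homeomorphism (the commutative square with $(\varphi_{vw})_*$ on both horizontal arrows shows $i$ has at most one restriction class per vertex type), and then simply conjugates: since compositions and inverses of rational homeomorphisms between branching self-similar trees are rational (Propositions~\ref{prop:CompositionsRational} and~\ref{prop:InversesRational}), $i^{-1}\circ f\circ i=f$ is rational over $T'$ whenever $f$ is rational over $T$, and vice versa. This is a two-line argument once those propositions are in place. Your argument instead reproves the needed finiteness directly: you fix a $T$-restriction class $E$ with base point $u$, extract morphisms $\alpha_v,\beta_v$ from Proposition~\ref{prop:DeepestParentsWork}, and measure the discrepancy between these and the rigid morphisms $\varphi_{uv},\varphi_{r(u)r(v)}$ by the pair $(\rho_v,\sigma_v)\in\mathrm{Mor}(u,u)\times\mathrm{Mor}(r(u),r(u))$, a finite set; the diagram chase then shows equal pairs yield equivalent $T'$-restrictions. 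This is exactly the bookkeeping strategy the paper uses in the proofs of Propositions~\ref{prop:CompositionsRational} and~\ref{prop:InversesRational} themselves, so while correct and self-contained, your proof essentially re-derives machinery the paper has already packaged. The paper's version is shorter given that machinery; yours is more explicit about where the finiteness comes from and avoids invoking the composition and inverse propositions. Both are valid.
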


\begin{proof}Note that the identity map $i\colon \partial T'\to\partial T$ is rational.  In particular, for any vertices $v,w\in T'$ of the same type, we have a commutative diagram
\[
\xymatrix@C=0.5in@R=0.5in{
\partial T'_v \ar_{i}[d] \ar^{(\varphi_{vw})_\ast}[r] & 
\partial T'_w \ar^{i}[d] \\ 
\partial T_v\ar_{(\varphi_{vw})_\ast}[r] & \partial T_w }
\]
and therefore $i$ has equivalent restrictions at $v$ and $w$.  Conjugating by~$i$, we deduce that any rational homeomorphism of $\partial T$ is also a rational homeomorphism of $\partial T'$, and vice versa.
\end{proof}

\subsection{Rigid Trees and \texorpdfstring{$\R_2$}{R2}}

In this subsection we complete the proof of Theorem~\ref{thm:RationalGroupIsomorphism}.

\begin{definition}Let $T$ and $T'$ be self-similar trees and let $\Phi\colon T\to T'$ be an isomorphism of rooted trees.  We say that $\Phi$ is an \newword{isomorphism of self-similar trees} if the following conditions are satisfied:
\begin{enumerate}
\item Two vertices $v,w\in T$ have the same type if and only $\Phi(v)$ and $\Phi(w)$ have the same type in~$T'$.
\item For every pair $v,w$ of vertices of $T$ of the same type, $\Phi$~conjugates $\mathrm{Mor}(v,w)$ to $\mathrm{Mor}\bigl(\Phi(v),\Phi(w)\bigr)$.
\end{enumerate}
\end{definition}

For the following theorem, we say that a directed multigraph $\Gamma$ is \newword{branching} if each vertex in $\Gamma$ has at least two outgoing edges.

\begin{proposition}\label{prop:IsomorphicToPathLanguage}Let $T$ be a branching, rigid self-similar tree. Then the type graph $\Gamma$ for $T$ is branching, and there exists an isomorphism of self-similar trees $\Phi\colon T\to\mathcal{L}(\Gamma,t_0)$, where $t_0$ is the type of the root vertex in~$T$
\end{proposition}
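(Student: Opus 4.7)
The plan is to build $\Phi$ by encoding each vertex of $T$ as a path in the type graph $\Gamma$, using the rigid structure on $T$ and a chosen system of representative vertices $r_t$, one of each type $t$, with $r_{t_0}$ the root of $T$. Rigidity of $T$ furnishes, for every pair $(u,v)$ of same-type vertices, a unique morphism $\varphi_{uv}$; write $\psi_u = \varphi_{u,\, r_{\mathrm{type}(u)}}$, so that $\psi_u$ sends the children of $u$ bijectively onto the children of $r_{\mathrm{type}(u)}$ in a type-preserving way.

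The first step is to check that $\Gamma$ is well-defined and branching. Because any morphism in $\mathrm{Mor}(v,w)$ is a tree isomorphism preserving the type of children, the number of children of type $t'$ that a vertex of type $t$ has is independent of the chosen vertex, making $\Gamma$ well-defined. Since $T$ is branching, so is $\Gamma$. I would then identify the edges of $\Gamma$ from $t$ to $t'$ with the children of $r_t$ of type $t'$.

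Next I would define $\Phi\colon T \to \mathcal{L}(\Gamma, t_0)$ inductively on depth: $\Phi(\text{root}) = \emptystring$, and if $v$ is a child of $u$, then $\Phi(v) = \Phi(u) \cdot e$, where $e$ is the edge of $\Gamma$ corresponding to the child $\psi_u(v)$ of $r_{\mathrm{type}(u)}$. A routine induction on depth shows that $\Phi$ is a depth- and type-preserving rooted tree isomorphism: at each vertex $u$ the children correspond, via $\psi_u$, to the children of $r_{\mathrm{type}(u)}$ and hence to the outgoing edges of $\mathrm{type}(u)$ in $\Gamma$.

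The principal step, and the one I expect to require the most care, is to verify that $\Phi$ conjugates morphisms. Since both $T$ and $\mathcal{L}(\Gamma, t_0)$ are rigid, both $\mathrm{Mor}(v,w)$ and $\mathrm{Mor}\bigl(\Phi(v), \Phi(w)\bigr)$ are singletons for any same-type vertices $v, w$, so it suffices to show that the unique morphism $\varphi_{vw}$ is conjugated by $\Phi$ to the prefix-replacement morphism. Given a descendant $v'$ of $v$ along a path $v = v_0, v_1, \ldots, v_n = v'$, set $w_i = \varphi_{vw}(v_i)$; by the restriction axiom, $\varphi_{v_{i-1} w_{i-1}}$ is the restriction of $\varphi_{vw}$, and the composition law $\psi_{w_{i-1}} \circ \varphi_{v_{i-1} w_{i-1}} = \psi_{v_{i-1}}$ for the rigid structure forces $\psi_{w_{i-1}}(w_i) = \psi_{v_{i-1}}(v_i)$. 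Hence the edge of $\Gamma$ appended at step $i$ is the same whether we are building $\Phi(v')$ or $\Phi(w')$, so $\Phi(w')$ is obtained from $\Phi(v')$ by replacing its prefix $\Phi(v)$ with $\Phi(w)$, as required. The main obstacle is precisely this compatibility calculation, where the restriction axiom of the rigid structure is doing the essential work.
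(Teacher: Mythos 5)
Your proposal is correct and follows essentially the same route as the paper's proof: both choose one representative vertex per type, use rigidity to obtain canonical morphisms $\psi_u$ to the representatives, define the tree isomorphism inductively by appending the edge corresponding to $\psi_u(v)$, and verify compatibility with morphisms via the composition and restriction axioms (the paper merely builds the map in the opposite direction, $\mathcal{L}(\Gamma,t_0)\to T$, and runs the same induction along the path $q$). No gaps.
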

\begin{proof}Since $T$ is branching, $\Gamma$ must be branching as well.  Let $v_0$ be the root of $T$, and let $V$ be a set of vertices in $T$ that contains the root and has exactly one vertex of each type.  Let $\tau\colon T\to V$ be the function that assigns to each vertex $x\in T$ the vertex $\tau(x)\in V$ having the same type as~$x$.  Then we can think of the elements of $V$ as the vertices of $\Gamma$, with one directed edge $e_w$ in $\Gamma$ from $v$ to $\tau(w)$ for each $v\in V$ and each child $w$ of~$v$.

For any two vertices $x,y\in T$ of the same type, let $\psi_{x,y}$ denote the morphism from $x$ to~$y$.  Define a tree isomorphism $\Phi\colon \mathcal{L}(\Gamma,v_0) \to T$ inductively by $\Phi(\emptystring) = v_0$ and
\[
\Phi(pe_w) = \psi_{v,\Phi(p)}(w)
\]
for each path $p$ in $\Gamma$ from $v_0$ to $v$ and each edge $e_w$ starting at~$v$. Note that $\tau(\Phi(p))$ is always the endpoint of~$p$, and therefore two paths $p,q$ in~$\mathcal{L}(\Gamma,v_0)$ have the same type if and only if $\Phi(p)$ and $\Phi(q)$ have the same type in~$T$.

Now let $p$ and $p'$ be paths in $\Gamma$ from $v_0$ to some vertex~$v\in V$, and let $\varphi_{p,p'}\in\mathrm{Mor}(p,p')$ be the prefix replacement, i.e.
\[
\varphi_{p,p'}(pq) = p'q.
\]
We claim that
\[
\psi_{\Phi(p),\Phi(p')}\bigl(\Phi(pq)\bigr) = \Phi\bigl(\varphi_{p,p'}(pq)\bigr)
\]
for every path $q$ in $\Gamma$ starting at~$v$. We proceed by induction on~$q$. Note that the statement is trivially true for $q=\emptystring$.  Now suppose it is true for some path $q$ from $v$ to~$v'$, and let $e_w$ be an edge in $\Gamma$ starting at~$v'$.  Then
\[
\psi_{\Phi(p),\Phi(p')}\bigl(\Phi(pqe_w)\bigr) = \psi_{\Phi(p),\Phi(p')}\bigl(\psi_{v',\Phi(pq)}(w)\bigr)
\]
But $\psi_{\Phi(p),\Phi(p')}\bigl(\Phi(pq) \bigr) = \Phi\bigl(\varphi_{p,p'}(pq)\bigr) = \Phi(p'q)$, so since $T$ is rigid \[
\psi_{\Phi(p),\Phi(p')}\circ \psi_{v',\Phi(pq)}= \psi_{v',\Phi(p'q)}.
\]
Now,
\[
\psi_{\Phi(p),\Phi(p')}\bigl(\Phi(pqe_w)\bigr) = \psi_{v',\Phi(p'q)}(w) = \Phi(p'qe_w) = \Phi\bigl(\varphi_{p,p'}(pqe_w)\bigr).
\]
We conclude that $\Phi$ is an isomorphism of self-similar trees.
\end{proof}

\begin{proposition}\label{prop:RationalHomeomorphismToT2}Let\/ $\Gamma=(V,E)$ be a finite, directed, branching multigraph, and let $v_0\in V$.  Then there exists a rational homeomorphism from $\mathcal{L}(\Gamma,v_0)$ to the Cantor set~$\{0,1\}^\omega$.
\end{proposition}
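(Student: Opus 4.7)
The plan is to encode each path in $\mathcal{L}(\Gamma,v_0)$ as a binary string by assigning a binary prefix code to the outgoing edges at each vertex of $\Gamma$. Concretely, since $\Gamma$ is branching, every $v\in V$ has some $k_v\geq 2$ outgoing edges. For each such $v$, I would fix a maximal prefix-free binary code $C_v\subseteq\{0,1\}^\tast$ with $|C_v|=k_v$ (for instance, $\{0,1\}$ for $k_v=2$, $\{0,10,11\}$ for $k_v=3$, and so on, splitting leaves as needed), so that the cones $\{\partial(\{0,1\}^\tast)_c\}_{c\in C_v}$ partition $\{0,1\}^\omega$. Choose a bijection $c_v$ between the $k_v$ outgoing edges at $v$ and $C_v$, and for an edge $e$ out of $v$ write simply $c(e)=c_v(e)$.

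Next, I would define $\Phi\colon\partial\mathcal{L}(\Gamma,v_0)\to\{0,1\}^\omega$ by concatenation,
\[
\Phi(e_1e_2e_3\cdots)=c(e_1)\,c(e_2)\,c(e_3)\cdots,
\]
which is well defined and lies in $\{0,1\}^\omega$ because each $c(e_i)$ is nonempty. To see that $\Phi$ is a homeomorphism, I would argue inductively on prefixes: for each finite path $p=e_1\cdots e_n$ ending at some $v\in V$, $\Phi$ restricts to a bijection between $\partial\mathcal{L}(\Gamma,v_0)_p$ and $\partial(\{0,1\}^\tast)_{c(p)}$, where $c(p)=c(e_1)\cdots c(e_n)$, because the cones $\{\partial(\{0,1\}^\tast)_{c(p)c(e)}\}_{e\text{ out of }v}$ partition $\partial(\{0,1\}^\tast)_{c(p)}$. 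Thus $\Phi$ is a continuous bijection between compact Hausdorff spaces, hence a homeomorphism.

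Finally, to verify rationality, I would apply Definition~\ref{def:RationalHomeomorphism} via Proposition~\ref{prop:DeepestParentsWork}. For any two paths $p,q$ ending at the same vertex $v\in V$, the deepest parent of $\Phi(\partial\mathcal{L}(\Gamma,v_0)_p)$ is exactly $c(p)$ (and similarly $c(q)$ for $q$), because $v$ has at least two outgoing edges whose codes differ in their first symbol after the common prefix. Let $\varphi_{pq}\colon\mathcal{L}(\Gamma,v_0)_p\to\mathcal{L}(\Gamma,v_0)_q$ be the prefix-replacement morphism of the path-language tree, and let $\psi_{c(p),c(q)}$ be the prefix-replacement morphism of $\{0,1\}^\tast$. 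By construction of $\Phi$ as edgewise concatenation, the square
\[
\xymatrix@C=0.5in{
\partial\mathcal{L}(\Gamma,v_0)_p\ar_{\Phi}[d]\ar^{(\varphi_{pq})_\ast}[r] & \partial\mathcal{L}(\Gamma,v_0)_q\ar^{\Phi}[d]\\
\partial(\{0,1\}^\tast)_{c(p)}\ar_{(\psi_{c(p),c(q)})_\ast}[r] & \partial(\{0,1\}^\tast)_{c(q)}
}
\]
commutes. Hence $\Phi$ has equivalent restrictions at $p$ and $q$ whenever they share a terminal vertex, and since $V$ is finite there are at most $|V|$ equivalence classes of restrictions, so $\Phi$ is rational.

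The only genuine obstacle is the codewise bookkeeping: one must check that the deepest parents in $\{0,1\}^\tast$ really are the encoded strings $c(p)$, which requires that the chosen codes $C_v$ be nontrivial at the vertex where $p$ ends. This is guaranteed by the branching hypothesis ($k_v\geq 2$), so the construction goes through uniformly in $\Gamma$.
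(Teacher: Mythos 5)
Your proposal is correct and follows essentially the same route as the paper: assign complete binary prefix codes to the outgoing edges at each vertex, encode infinite paths by concatenation, and verify rationality by observing that the prefix-replacement square commutes, with finitely many restriction classes because $V$ is finite. The only (harmless) difference is that you additionally check that $c(p)$ is the deepest parent so as to invoke Proposition~\ref{prop:DeepestParentsWork}, whereas the paper relies directly on Definition~\ref{def:EquivalentRestrictions}, which requires only the containment $\Phi(\partial\mathcal{L}(\Gamma,v_0)_p)\subseteq\partial(\{0,1\}^\tast)_{c(p)}$.
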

\begin{proof}For each vertex $v\in V$, choose a complete binary prefix code for the edges of $E$ with initial vertex~$v$. Together, these codes define an encoding function $c\colon E\to\{0,1\}^\tast$, which we can extend to a function $c\colon E^\tast\to \{0,1\}^\tast$ by
\[
c(e_1 e_2\cdots e_n) = c(e_1)\cdot c(e_2) \cdot \cdots \cdot c(e_n).
\]
Let $f\colon \partial\mathcal{L}(\Gamma,v_0)\to \{0,1\}^\omega$ be the function
\[
f(e_1e_2\cdots) = c(e_1)\cdot c(e_2) \cdot \cdots.
\]
By construction $f$ is bijective.  We claim that $f$ is rational.

Let $p$ and $q$ be any two directed paths in $\Gamma$ from $v_0$ to the same vertex~$w$, and let $\varphi_{pq}\colon \mathcal{L}(\Gamma,v_0)_p \to \mathcal{L}(\Gamma,v_0)_q$ be the prefix replacement morphism. Let $\varphi_{c(\alpha),c(\beta)} \colon \{0,1\}^\tast_{c(p)}\to \{0,1\}^\tast_{c(q)}$ be the prefix replacement morphism between the corresponding subtrees of~$\{0,1\}^\tast$.  Then it is easy to check that the diagram
\[
\xymatrix@R=0.5in{
\partial \mathcal{L}(\Gamma,v_0)_p\ar_{f}[d] \ar^{\varphi_*}[r] & 
\partial \mathcal{L}(\Gamma,v_0)_q \ar^{f}[d] \\ 
\partial\{0,1\}^\tast_{c(p)}\ar_{\psi_*}[r] & \partial\{0,1\}^\tast_{c(q)}
}
\]
commutes.  Since $\mathcal{L}(\Gamma,v_0)$ has only finitely many different types, this proves that $f$ is rational.
\end{proof}

\begin{proof}[Proof of Theorem~\ref{thm:RationalGroupIsomorphism}] Let $T$ be a branching self-similar tree.  By Proposition~\ref{prop:RigidStructureExists}, there exists a rigid structure on~$T$, giving us a rigid tree~$T'$. By Proposition~\ref{prop:RigidStructureSameRationalHomeomorphisms}, we know that $\R_T = \R_T'$.  By Proposition~\ref{prop:IsomorphicToPathLanguage}, there exists a finite, directed, branching multigraph $\Gamma = (V,E)$ and a vertex $v_0\in V$ so that~$T'$ is isomorphic to $\mathcal{L}(\Gamma,v_0)$.  Let $\Phi\colon T'\to \mathcal{L}(\Gamma,v_0)$ an isomorphism of self-similar trees, and let $\Phi_*\colon \partial T'\to \partial \mathcal{L}(\Gamma,v_0)$ be the associated rational homeomorphism.  By Proposition~\ref{prop:RationalHomeomorphismToT2}, there exists a rational homeomorphism $h\colon \mathcal{L}(\Gamma,v_0)\to \{0,1\}^\omega$.  Thus, the mapping
\[
f \mapsto h\circ \Phi\circ f\circ \Phi^{-1}\circ h^{-1}
\]
is an isomorphism from $\R_T$ to~$\R_2$.  Indeed, $h\circ\Phi$ conjugates the action of $\R_T$ on $\partial T$ to the action of $\R_2$ on $\{0,1\}^\omega$.
\end{proof}

\subsection{Non-Branching Trees}\label{sec:PathologicalCases}

In this section we deal with the case of non-branching self-similar trees.  Note that such trees may have isolated points in their boundaries.

\begin{definition}A self-similar tree $T$ is \newword{without dead ends} if every vertex of $T$ has at least one child.
\end{definition}

We wish to extend our theory of rational homeomorphisms to self-similar trees without dead ends.  Specifically, we wish to prove that the set $\R_T$ of rational homeomorphisms for such a tree forms a group, and that this group \textit{embeds} into the binary rational group~$\R_2$.

\begin{definition}Let $T$ be a tree without dead ends.  A subtree $T_v\subseteq V$ is called an \newword{isolated branch} if
\begin{enumerate}
\item $\partial T_v$ is a single point, and
\item There does not exist an ancestor $w$ of $v$ such that $\partial T_v=\partial T_w$.
\end{enumerate}
\end{definition}

As long as $v$ is not the root, condition (2) is equivalent to saying that the parent of $v$ has at least two children.  Note that isolated branches of $T$ are in one-to-one correspondence with isolated points in~$\partial T$.

\begin{definition}Let $T$ be a tree without dead ends.  The \newword{expansion} of $T$ is the tree $E[T]$ obtained by replacing each isolated branch $T_v$ of $T$ by a copy of the infinite binary tree $\{0,1\}^\tast$.
\end{definition}

\noindent More formally, each vertex of $E[T]$ is either
\begin{enumerate}
\item A vertex $v\in T$ that does not lie in an isolated branch, or
\item A pair $(v,\alpha)$, where $T_v$ is an isolated branch of $T$ and $\alpha\in \{0,1\}^\tast$.
\end{enumerate}
Vertices of the first type are called \newword{old vertices}, and vertices of the second type are \newword{new vertices}.
Descendants in $E[T]$ are defined as follows:
\begin{enumerate}
\item If $v$ is an old vertex, then the descendants of $v$ in $E[T]$ consist of all old vertices that are descendants of $v$ in~$T$, together with all new vertices $(w,\alpha)$ for which $w$ is a descendant of $v$ in~$T$.
\item The descendants of a new vertex $(v,\alpha)$ are all pairs $(v,\beta)$ for which $\alpha$ is a prefix of $\beta$.
\end{enumerate}
Note then that each isolated branch $T_v$ in $T$ has a corresponding infinite binary tree $E[T]_{(v,\emptystring)}$ of new vertices in~$E[T]$.

We place a self-similar structure on $E[T]$ as follows:
\begin{enumerate}
\item If $v$ and $w$ are old vertices in $E[T]$, then $v$ and $w$ have the same type in $E[T]$ if and only if they have the same type in~$T$.  For each morphism $\varphi\colon T_v\to T_w$, there is a corresponding morphism $E[\varphi]\colon E[T]_v\to E[T]_w$, which maps each old vertex $v'\in E[T]_v$ to~$\varphi(v')$, and maps each new vertex $(v',\alpha)\in E[T]_v$ to $\bigl(\varphi(v'),\alpha\bigr)$.
\item Any two new vertices $(v,\alpha)$ and $(w\beta)$ in $E[T]$ have the same type, with a unique morphism $\varphi_{(v,\alpha),(w,\beta)}\colon E[T]_{(v,\alpha)}\to E[T]_{(w,\beta)}$ defined by
\[
\varphi_{(v,\alpha),(w,\beta)}(v,\alpha\psi) = (w,\beta\psi)
\]
for all $\psi\in\{0,1\}^\tast$.
\end{enumerate}
It is easy to check that this satisfies the axioms for a self-similar structure.

Now, observe that each point in $\partial E[T]$ consists of either:
\begin{enumerate}
\item A non-isolated point in $\partial T$, or
\item A pair $(p,\psi)$, where $p$ is an isolated point in $\partial T$ and $\psi\in\{0,1\}^\omega$.
\end{enumerate}
If $f\colon \partial T\to\partial T'$ is a homeomorphism, then $f$ must map isolated points of $\partial T$ to isolated points of $\partial T'$, and therefore $f$ induces a homeomorphism $E[f]\colon \partial E[T]\to\partial E[T']$ defined by $E[f](p) = f(p)$ if $p$ is a non-isolated point in~$\partial T$, and $E[f](p,\psi) = (f(p),\psi)$ if $p$ is an isolated point in $\partial T$ and $\psi\in\{0,1\}^\omega$.

Note that this operation satisfies $E[f\circ g]=E[f]\circ E[g]$ for any homeomorphisms $g\colon \partial T\to \partial T'$ and $f\colon \partial T'\to\partial T''$.  Similarly, $E[f^{-1}]=E[f]^{-1}$ for any homeomorphism $f\colon\partial T\to\partial T'$.

\begin{lemma}\label{lem:Expanding}Let $T$ and $T'$ be self-similar trees without dead ends, and let $f\colon\partial T\to\partial T'$ be a homeomorphism.  Then $f$ is rational if and only if the induced homeomorphism $E[f]\colon E[T]\to E[T']$ is rational.
\end{lemma}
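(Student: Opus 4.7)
My plan is to compare the equivalence relation of having equivalent restrictions under $f$ on vertices of $T$ with the analogous relation for $E[f]$ on vertices of $E[T]$. The proof uses the partition of $E[T]$ into \emph{old} vertices (in natural bijection with vertices of $T$ not lying in any isolated branch) and \emph{new} vertices $(v,\alpha)$, with $T_v$ an isolated branch of $T$ and $\alpha \in \{0,1\}^\tast$. I write $p_u$ for the unique boundary point of an isolated branch $T_u$.

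First I show that all new vertices of $E[T]$ lie in a single equivalence class under $E[f]$: for any new $(v,\alpha)$ and $(w,\beta)$, the prefix-replacement morphism in $E[T]$ between $E[T]_{(v,\alpha)}$ and $E[T]_{(w,\beta)}$ and the prefix-replacement morphism in $E[T']$ between the new subtrees corresponding to the isolated branches of $T'$ containing $f(p_v)$ and $f(p_w)$ are automatically compatible with $E[f]$, since $E[f]$ acts as the identity on the second coordinate of a point $(p,\psi_0)$. Next, I show that on old vertices the two equivalence relations coincide. If $\varphi \colon T_v \to T_w$ and $\psi \colon T'_x \to T'_y$ witness $v \sim_f w$, then $E[\varphi]$ and $E[\psi]$ witness $v \sim_{E[f]} w$, verified by a direct check on non-isolated and isolated boundary points of $\partial E[T]_v$. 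Conversely, any witness $\Phi, \Psi$ for $v \sim_{E[f]} w$ must use auxiliary vertices $x, y$ of $E[T']$ that are old (see below), giving $\Phi = E[\varphi]$ and $\Psi = E[\psi]$, which descend to witnesses for $f$.

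Finally, vertices of $T$ lying in isolated branches always fall into finitely many equivalence classes under $f$, independently of any rationality hypothesis. If $v$ lies in an isolated branch $T_u$, then $\partial T_v = \{p_u\}$, and $f(p_u)$ lies in a unique maximal isolated branch $T'_{u^*}$ of $T'$. Two such vertices $v_1, v_2$ with the same type in $T$ and with $u_1^*, u_2^*$ of the same type in $T'$ are $f$-equivalent, since the commutativity diagram holds trivially on the singletons $\{p_{u_i}\}$ for any choice of $\varphi \in \mathrm{Mor}(v_1, v_2)$ and $\psi \in \mathrm{Mor}(u_1^*, u_2^*)$. Since $T, T'$ have finitely many types, only finitely many classes arise. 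Combining the three parts yields the biconditional.

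The main obstacle is justifying that, in the converse direction above, the auxiliary vertices $x, y$ of $E[T']$ in a witness for $E[f]$ must be old. This uses the fact that $v$ being old implies $|\partial T_v| \geq 2$: if $\partial T_v$ contains a non-isolated point, then $E[f](\partial E[T]_v)$ contains a non-isolated point of $\partial T'$, which cannot lie in any new subtree of $E[T']$; if instead $\partial T_v$ consists of at least two isolated points, their images under $E[f]$ span distinct new subtrees of $E[T']$, again preventing $x$ from being new.
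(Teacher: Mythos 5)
Your proposal follows essentially the same decomposition as the paper: compare the restriction-equivalence relations for $f$ and for $E[f]$ separately on old vertices of $E[T]$ (where they coincide) and on new vertices (which form a single $E[f]$-class). The one genuine difference is that you explicitly handle the vertices of $T$ that lie inside isolated branches. These are vertices of $T$ but \emph{not} vertices of $E[T]$, so showing that old vertices have finitely many $f$-classes does not by itself give that $f$ is rational; one must also check that the isolated-branch vertices of $T$ fall into finitely many $f$-classes. The paper's own proof ends with ``It follows that $f$ is rational if and only if $E[f]$ is rational'' without addressing this case, so you have identified and filled a small but real gap. Your argument there is correct: for $v$ in an isolated branch $T_u$, the shadow $\partial T_v = \{p_u\}$ is a singleton, and choosing $x=u_1^{*}$, $y=u_2^{*}$ (the roots of the isolated branches of $T'$ containing $f(p_{u_1})$, $f(p_{u_2})$) with any morphism $\psi\in\mathrm{Mor}(u_1^*,u_2^*)$ makes the defining square commute trivially, giving at most (number of types in $T$) $\times$ (number of types in $T'$) classes. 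A minor stylistic difference: on old vertices you argue directly that any witness pair $(\Phi,\Psi)$ for $E[f]$-equivalence must use old auxiliary vertices $x,y$ of $E[T']$, rather than appealing to Proposition~\ref{prop:DeepestParentsWork} to reduce to deepest parents as the paper does; both routes work, and your observation about how the two cases ($\partial T_v$ contains a non-isolated point vs.\ at least two isolated points) each force $x$ to be old is sound.
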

\begin{proof}Let $v$ and $w$ be old vertices of $E[T]$ of the same type. Since $\partial T_v$ and $\partial T_w$ each have at least two points, the images $f(\partial T_v)$ and $f(\partial T_w)$ each have at least two points, so they have deepest parents $x$ and $y$, respectively.  These must be old vertices of~$E[T']$, and indeed are the deepest parents of $E[f](\partial E[T]_v)$ and $E[f](\partial E[T]_w)$, respectively.  Then for any two morphisms $\varphi\colon T_v\to T_w$ and $\psi\colon T'_x\to T'_y$, the diagram
\[
\xymatrix{
\partial T_v \ar_{f}[d] \ar^{\varphi_\ast}[r] & 
\partial T_w \ar^{f}[d] \\ 
\partial T'_x\ar_{\psi_\ast}[r] & \partial T'_y }
\]
commutes if and only if the diagram
\[
\xymatrix{
\partial E[T]_v \ar_{E[f]}[d] \ar^{E[\varphi]_\ast}[r] & 
\partial E[T]_w \ar^{E[f]}[d] \\ 
\partial E[T']_x\ar_{E[\psi]_\ast}[r] & \partial E[T']_y }
\]
commutes, so $f$ has equivalent restrictions at $v$ and $w$ if and only if $E[f]$ has equivalent restrictions at $v$ and~$w$.

Finally, if $(v,\alpha)$ and $(w,\beta)$ are new vertices of $E[T]$, then the diagram
\[
\xymatrix@C=1.1in@R=0.5in{
\partial E[T]_{(v,\alpha)} \ar_{E[f]}[d] \ar^{(\varphi_{(v,\alpha),(w\beta)})_\ast}[r] & 
\partial E[T]_w \ar^{E[f]}[d] \\ 
\partial E[T']_{(f(v),\alpha)}\ar_{(\varphi_{(f(v),\alpha),(f(w)\beta)})_\ast}[r] & \partial E[T']_{(f(w),\beta)} }
\]
commutes, so $E[f]$ has equivalent restrictions at $(v,\alpha)$ and $(w,\beta)$.  It follows that $f$ is rational if and only if $E[f]$ is rational.
\end{proof}

Lemma~\ref{lem:Expanding} allows us to eliminate isolated branches from any tree without dead ends.  That is, we can now assume that our tree $T$ without dead ends also has no isolated branches.  For such a tree, any vertex with only one child must eventually have a descendant with two or more children.

\begin{definition}Let $T$ be a self-similar tree without dead ends or isolated branches.
\begin{enumerate}
\item A vertex $v\in T$ is \newword{essential} if $v$ has at least two children.
\item The \newword{simplification} of $T$ is the tree $S[T]$ consisting of all essential vertices of~$T$.
\end{enumerate}
\end{definition}

That is, $S[T]$ is the tree of all essential vertices of $T$, where $v$ is a descendant of $w$ in $S[T]$ if and only if $v$ is a descendant of $w$ in~$T$.  Note then that an essential vertex $w$ is a child in of an essential vertex~$v$ in $S[T]$ if and only if $w$ is a descendant of $v$ in $T$ and each intermediate vertex on the path from $v$ to $w$ in $T$ has only one child.  Since each vertex in $S[T]$ has the same number of children as in~$T$, the tree $S[T]$ is branching.

We can put a self similar structure on $S[T]$ by simply restricting morphisms of~$T$.  That is, if $v$ and $w$ are essential vertices in~$T$, then $v$ and $w$ have the same type in $S[T]$ if and only if they have the same type in~$T$.  Given any morphism $\varphi\colon T_v\to T_w$, we define a morphism $S[\varphi]\colon S[T]_v\to S[T]_w$ obtained from $\varphi$ by restricting to the essential vertices.

It is not hard to see that $\partial S[T]$ is naturally homeomorphic to $\partial T$, since an infinite descending path in~$T$ is completely determined by which essential vertices it passes through.  If $f\colon \partial T\to\partial T'$ is a homeomorphism, we let $S[f]\colon \partial S[T]\to\partial S[T']$ be the induced homeomorphism.

\begin{lemma}\label{lem:Simplifying}Let $T$ and $T'$ be self-similar trees without isolated branches, and let $f\colon \partial T\to\partial T'$.  Then $f$ is rational if and only if the induced homeomorphism $S[f]\colon \partial S[T]\to\partial S[T']$ is rational.
\end{lemma}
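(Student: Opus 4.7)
The plan is to exploit the natural identification $\partial T \cong \partial S[T]$ and track how equivalence classes of restrictions transform under it. Because $T$ has no dead ends or isolated branches, every vertex $v \in T$ has a unique \emph{first essential descendant} $v^* \in T$, reached by descending the forced unique-child path of non-essential vertices (with $v^* = v$ when $v$ is already essential). Under the identification, $\partial T_v = \partial S[T]_{v^*}$; moreover, if $v, w \in T$ share a type then so do $v^*, w^* \in S[T]$, since the type sequence along a unique-child path is determined by the starting type.

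For the forward direction, suppose $f$ is rational and let $v, w \in T$ be essential vertices of the same type. Any $f$-equivalence witness consisting of morphisms $\varphi \in \mathrm{Mor}(v, w)$ and $\psi \in \mathrm{Mor}(x, y)$ projects to an $S[f]$-equivalence witness. Since $\psi$ preserves types and acts bijectively on children, it sends the path $x \to x^*$ onto $y \to y^*$, and hence restricts to some $\psi' \in \mathrm{Mor}(x^*, y^*)$. The associated morphisms $S[\varphi]$ and $S[\psi']$ then witness the same boundary equivalence for $S[f]$ at the corresponding vertices of $S[T]$, so $S[f]$ inherits only finitely many equivalence classes of restrictions from the restriction of $f$-equivalence to essential vertices of $T$.

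For the backward direction, suppose $S[f]$ is rational. Define an auxiliary equivalence $\approx$ on vertices of $T$ by $v \approx w$ iff $v, w$ share a type in $T$ and $S[f]$ has equivalent restrictions at $v^*, w^*$; this has finitely many classes, as $T$ has finitely many types and $S[f]$ has finitely many restriction-classes. I will show that $f$-equivalence is a refinement of $\approx$ with only finitely many sub-classes per $\approx$-class. Given $v \approx w$ with $S[f]$-witness morphisms $\tilde\alpha \in \mathrm{Mor}(v^*, w^*)$ and $\tilde\beta \in \mathrm{Mor}(x^*, y^*)$, the choice $x = x^*$, $y = y^*$, $\psi = \tilde\beta$ satisfies the needed inclusions (since $f(\partial T_v) = f(\partial T_{v^*}) \subseteq \partial T'_{x^*}$), so the existence of an $f$-witness at $v, w$ reduces to finding $\varphi \in \mathrm{Mor}(v, w)$ whose restriction to $T_{v^*}$ equals $\tilde\alpha$; the diagram then commutes on $\partial T_v = \partial T_{v^*}$ by construction. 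The $f$-equivalence sub-classes within a given $\approx$-class are therefore indexed by the finite image of the restriction map $\mathrm{Mor}(v, w) \to \mathrm{Mor}(v^*, w^*)$, whose structure depends only on the type pair of $(v, w)$ up to conjugation.

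The main obstacle is precisely this morphism-extension step in the backward direction. The restriction map $\mathrm{Mor}(v, w) \to \mathrm{Mor}(v^*, w^*)$ need not be surjective, so not every $S[f]$-witness lifts to a morphism of $T$. The resolution is structural: the image of the restriction map is a finite subset of $\mathrm{Mor}(v^*, w^*)$ whose isomorphism type is controlled by the types $t_v, t_w$ (via conjugation by morphisms between type-matching representatives), so tracking which element of this finite image is realised splits each $\approx$-class into only finitely many $f$-equivalence sub-classes, completing the proof.
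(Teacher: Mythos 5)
You correctly reduce the problem to the essential-vertex subtree and you correctly identify the crux: in the backward direction, the restriction map $\mathrm{Mor}(v,w)\to\mathrm{Mor}(v^*,w^*)$ (writing $v^*$ for the first essential descendant, what the paper calls $r(v)$) need not be surjective, so a chosen $S[f]$-witness at $v^*,w^*$ need not lift to a morphism of $T$. The forward direction and the setup of the auxiliary relation $\approx$ are both fine. But the final paragraph does not actually close the gap it names. Saying that the image of the restriction map is ``controlled by the types up to conjugation'' and that ``tracking which element of this finite image is realised'' splits $\approx$-classes into finitely many $f$-equivalence subclasses is an assertion, not an argument: you never produce a concrete finite invariant attached to each $v$, never say what ``realised'' means, and never verify that agreement of the invariant implies $f$-equivalence. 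In particular, as $v$ ranges over a $\approx$-class $E$, the sets $\mathrm{Mor}(v^*,w^*)$ themselves vary, so to compare them you must conjugate by reference morphisms — and once you do, you are tracking exactly the data you have not defined.

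The paper resolves this exactly where you stopped. Fix $u\in E$. For each $v\in E$ choose an arbitrary morphism $\chi_v\in\mathrm{Mor}(v,u)$ (nonempty since $v$ and $u$ have the same type; it automatically carries $r(v)$ to $r(u)$) and choose $\varphi_v\in\mathrm{Mor}\bigl(r(u),r(v)\bigr)$ and $\psi_v\in\mathrm{Mor}\bigl(s(u),s(v)\bigr)$ witnessing $S[f]$-equivalence at $r(u),r(v)$ via Proposition~\ref{prop:DeepestParentsWork}. The composite $\chi_v\varphi_v$ lies in the single finite set $\mathrm{Mor}\bigl(r(u),r(u)\bigr)$, so it is the finite invariant you were groping for. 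The paper then verifies by a diagram chase that $\chi_v\varphi_v=\chi_w\varphi_w$ implies the morphism $\chi_w^{-1}\chi_v\in\mathrm{Mor}(v,w)$ restricts on $T_{r(v)}$ to $\varphi_w\varphi_v^{-1}$, which paired with $\psi_w\psi_v^{-1}$ makes the $f$-square commute — i.e.~the needed lift exists precisely because the invariant pins down both the choice of $\chi_v$ and the $S[f]$-witness simultaneously. Your write-up ends with a promissory note where this construction and verification must appear; as written, the backward direction is incomplete.
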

\begin{proof}We claim first that, if $v$ and $w$ are essential vertices of $T$ of the same type, then $f$ has equivalent restrictions at $v$ and $w$ if and only if $S[f]$ has equivalent restrictions at $v$ and $w$.  To see this, let $x$ and $y$ be the deepest parents of $f(\partial T_v)$ and $f(\partial T_w)$, respectively, which exist since $T$ and $T'$ have no isolated branches.  Note that $x$ and $y$ must be essential, since otherwise they would not be deepest parents.  Moreover, $x$ and $y$ are the deepest parents of $S[f](\partial S[T]_v)$ and $S[f](\partial S[T]_w)$, respectively.  Then for any two morphisms $\varphi\colon T_v\to T_w$ and $\psi\colon T'_x\to T'_y$, the diagram
\[
\xymatrix{
\partial T_v \ar_{f}[d] \ar^{\varphi_\ast}[r] & 
\partial T_w \ar^{f}[d] \\ 
\partial T'_x\ar_{\psi_\ast}[r] & \partial T'_y }
\]
commutes if and only if the diagram
\[
\xymatrix{
\partial S[T]_v \ar_{S[f]}[d] \ar^{S[\varphi]_\ast}[r] & 
\partial S[T]_w \ar^{S[f]}[d] \\ 
\partial S[T']_x\ar_{S[\psi]_\ast}[r] & \partial S[T']_y }
\]
commutes, which proves the claim.

It follows immediately that $S[f]$ is rational whenever $f$ is rational.  For the converse, suppose that $S[f]$ is rational.  For each $v\in T$, let $r(v)$ denote the deepest parent of $\partial T_v$, i.e.~the first descendant of $v$ that is essential.  Note then that $\partial T_{r(v)} = \partial T_v$ for each~$v$, since every infinite descending path in $T$ that passes through $v$ must pass through $r(v)$ as well.  Put an equivalence relation $\sim$ on the vertices of $T$ by $v\sim w$ if $v$ and $w$ have the same type and $f$ has equivalent restrictions at $r(v)$ and $r(w)$.  Since $S[f]$ is rational, we know from the previous paragraph that $f$ has only finitely many equivalence classes of restrictions at essential vertices, which means that $\sim$ has only finitely many equivalence classes.  Let $E$ be such an equivalence class.  It suffices to prove that $f$ has only finitely many equivalence classes of restrictions on~$E$.

Fix a vertex $u\in E$.  For each $v\in E$, let $s(v)$ denote the deepest parent of $f(\partial T_v)$, and let $\varphi_v\colon T_{r(u)}\to T_{r(v)}$ and $\psi_v\colon T_{s(u)}\to T_{s(v)}$ be morphisms that make the following diagram commute:
\[
\xymatrix{
\partial T_{r(u)} \ar_{f}[d] \ar^{(\varphi_v)_*}[r] & 
\partial T_{r(v)} \ar^{f}[d] \\ 
\partial T'_{s(u)}\ar_{(\psi_v)_*}[r] & \partial T'_{s(v)} }
\]
Choose also for each $v\in E$ a morphism $\chi_v\colon T_v\to T_u$.  Since $\chi_v$ is a tree isomorphism, it must map $r(v)$ to~$r(u)$ and hence $T_{r(v)}$ to~$T_{r(u)}$.  Then the composition $\chi_v\varphi_v$ is a morphism $T_{r(u)}\to T_{r(u)}$.  We claim that $f$ has equivalent restrictions at two vertices $v,w\in E$ whenever $\chi_v\varphi_v=\chi_w\varphi_w$.  Since there are only finitely many morphisms $T_{r(u)}\to T_{r(u)}$ in~$T$, it will follow from this that $f$ has only finitely many equivalence classes of restrictions on~$E$, and therefore $f$ is rational.

Let $v,w\in E$ and suppose that $\chi_v\varphi_v=\chi_w\varphi_w$.  Then we have a commutative diagram
\[
\xymatrix{
\partial T_v\ar^{(\chi_v)_*}[r] & \partial T_u & \partial T_w \ar_{(\chi_w)_*}[l] \\
\partial T_{r(v)}\ar^{\mathrm{id}}[u] \ar_{f}[d]&
\partial T_{r(u)} \ar_{f}[d] \ar^{(\varphi_w)_*}[r] \ar_{(\varphi_v)_*}[l] & 
\partial T_{r(w)} \ar^{f}[d] \ar_{\mathrm{id}}[u] \\ 
\partial T'_{s(v)}  &
\partial T'_{s(u)}\ar^{(\psi_v)_*}[l] \ar_{(\psi_w)_*}[r] &
\partial T'_{s(w)} }
\]
In particular, the outer square
\[
\xymatrix@C=0.55in{
\partial T_{v} \ar_{f}[d] \ar^{(\chi_w^{-1}\chi_v)_*}[r] & 
\partial T_{w} \ar^{f}[d] \\ 
\partial T'_{s(v)}\ar_{(\psi_w\psi_v^{-1})_*}[r] & \partial T'_{s(w)} }
\]
commutes, so $f$ has equivalent restrictions at $v$ and~$w$.
\end{proof}

This immediately gives us the following result in the case that there are no isolated branches.

\begin{corollary}\label{cor:NoIsolatedBranchesCase}Let $T$ be a self-similar tree without dead ends or isolated branches.  Then the set $\R_T$ of rational homeomorphisms of~$T$ forms a group under composition, and this group isomorphic to~$\R_2$.  Indeed, the action of $\R_T$ on $\partial T$ is conjugate to the action of $\R_2$ on $\{0,1\}^\omega$.
\end{corollary}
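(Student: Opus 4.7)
The plan is to reduce Corollary~\ref{cor:NoIsolatedBranchesCase} directly to Theorem~\ref{thm:RationalGroupIsomorphism} via the simplification operation. Since $T$ has no dead ends and no isolated branches, the simplification $S[T]$ is well defined, and by construction every vertex of $S[T]$ has the same (positive) number of children as its associated essential vertex of $T$ has in $T$. In particular every vertex of $S[T]$ has at least two children, so $S[T]$ is branching. Thus Theorem~\ref{thm:RationalGroupIsomorphism} applies to $S[T]$ and furnishes a homeomorphism $\Psi\colon\partial S[T]\to\{0,1\}^\omega$ conjugating $\R_{S[T]}$ to~$\R_2$.

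The key step is to show that the assignment $f\mapsto S[f]$ is a group isomorphism $\R_T\to\R_{S[T]}$ realized as conjugation by a homeomorphism. First I would observe that the natural map $\iota\colon\partial T\to\partial S[T]$, which sends an infinite descending path in~$T$ to the subsequence of essential vertices that it visits, is a homeomorphism: the inverse is obtained by filling in the forced one-child intermediate vertices, and both directions are continuous on the basic clopen sets. By the explicit definition of $S[f]$, we have $S[f]=\iota\circ f\circ\iota^{-1}$ for every homeomorphism $f\colon\partial T\to\partial T$, so $f\mapsto S[f]$ is a group isomorphism $\mathrm{Homeo}(\partial T)\to\mathrm{Homeo}(\partial S[T])$. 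By Lemma~\ref{lem:Simplifying}, this isomorphism carries $\R_T$ bijectively onto~$\R_{S[T]}$; in particular $\R_T$ is closed under composition and inversion and forms a group, and $\R_T\cong \R_{S[T]}$.

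To conclude, I would compose the two conjugacies: the homeomorphism $\Psi\circ\iota\colon\partial T\to\{0,1\}^\omega$ conjugates the action of $\R_T$ on $\partial T$ to the action of $\R_2$ on $\{0,1\}^\omega$, which is the stronger statement in the corollary. I do not anticipate any genuine obstacle here, as the essential content has already been packaged into Lemma~\ref{lem:Simplifying} and Theorem~\ref{thm:RationalGroupIsomorphism}; the only care needed is to verify that the simplification operation commutes with composition and inversion of homeomorphisms, and this is immediate from the identity $S[f]=\iota\circ f\circ\iota^{-1}$.
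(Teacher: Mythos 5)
Your proposal is correct and follows essentially the same route as the paper: the paper's proof also consists of invoking Lemma~\ref{lem:Simplifying} to identify $\R_T$ with $\R_{S[T]}$ (via the natural homeomorphism $\partial T\cong\partial S[T]$) and then applying Theorem~\ref{thm:RationalGroupIsomorphism} to the branching tree $S[T]$. You have merely made explicit the conjugating homeomorphism $\iota$ and the identity $S[f]=\iota\circ f\circ\iota^{-1}$, which the paper leaves implicit.
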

\begin{proof}By Lemma~\ref{lem:Simplifying}, the rational homeomorphisms of $T$ are the same as the rational homeomorphisms of~$S[T]$.  But $S[T]$ is branching, so the conclusion follows from Theorem~\ref{thm:RationalGroupIsomorphism}.
\end{proof}

For a tree that does have isolated branches, we get the following result.

\begin{theorem}\label{thm:RationalGroupEmbedsInR2}Let $T$ be a self-similar tree without dead ends.  Then the set $\R_T$ of rational homeomorphisms of~$T$ forms a group under composition.  Moreover, the action of $\R_T$ on $\partial T$ is rational in the sense of Definition~\ref{def:RationalAction}, and in particular $\R_T$ embeds into~$\R_2$.
\end{theorem}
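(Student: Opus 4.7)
The plan is to reduce the theorem to Corollary~\ref{cor:NoIsolatedBranchesCase} via the expansion construction~$E[T]$. The first thing I would check is that $E[T]$ has no dead ends and no isolated branches, so that Corollary~\ref{cor:NoIsolatedBranchesCase} applies to $E[T]$. Dead ends are absent because old vertices keep all their $T$-children in $E[T]$ while new vertices $(v,\alpha)$ each have two new children by construction. For isolated branches, the key observation is that a vertex $u \in T$ lies in an isolated branch of $T$ if and only if $\partial T_u$ is a single point, so the old vertices $v$ of $E[T]$ are precisely those with $|\partial T_v| \geq 2$, and for such $v$ one has $|\partial E[T]_v| \geq 2$ as well; meanwhile each new vertex $(v,\alpha)$ has Cantor boundary, so cannot lie in an isolated branch of $E[T]$ either.

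Next I would argue that $f \mapsto E[f]$ defines an injective group homomorphism $E \colon \R_T \hookrightarrow \R_{E[T]}$, establishing along the way that $\R_T$ is a group. Injectivity and the identities $E[f\circ g] = E[f]\circ E[g]$ and $E[f^{-1}] = E[f]^{-1}$ are recorded just before Lemma~\ref{lem:Expanding}. By that lemma, $E$ sends $\R_T$ into the rational homeomorphisms of $\partial E[T]$, and by Corollary~\ref{cor:NoIsolatedBranchesCase} applied to $E[T]$, the set $\R_{E[T]}$ is a group isomorphic to $\R_2$. Then for any $f, g \in \R_T$, the equality $E[f \circ g] = E[f] \circ E[g]$ puts $E[f \circ g]$ in $\R_{E[T]}$, so Lemma~\ref{lem:Expanding} forces $f \circ g \in \R_T$, and similarly $f^{-1} \in \R_T$. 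This shows $\R_T$ is a group and $E$ embeds it into $\R_{E[T]} \cong \R_2$.

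For the rationality of the action of $\R_T$ on $\partial T$, I would use the conjugating homeomorphism $h \colon \{0,1\}^\omega \to \partial E[T]$ supplied by Corollary~\ref{cor:NoIsolatedBranchesCase} together with the natural continuous surjection $\pi \colon \partial E[T] \to \partial T$ that fixes each non-isolated point $p \in \partial T$ and sends each new point $(p,\psi)$ to~$p$. Since both spaces are compact Hausdorff, $\pi$ is a quotient map, so $q := \pi \circ h \colon \{0,1\}^\omega \to \partial T$ is a quotient map as well; and the homomorphism $\varphi \colon \R_T \to \R_2$ obtained by conjugating $E(\cdot)$ through $h$ will satisfy $q \circ \varphi(g) = g \circ q$ because $\pi \circ E[g] = g \circ \pi$ by the very definition of~$E[g]$.

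I do not anticipate any real obstacle: the structural work is already contained in Lemma~\ref{lem:Expanding} and Corollary~\ref{cor:NoIsolatedBranchesCase}. The only point requiring care is the verification that $E[T]$ has neither dead ends nor isolated branches, and this follows directly from the construction and the characterization of isolated branches as subtrees with singleton boundary.
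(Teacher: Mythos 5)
Your proof is correct and follows essentially the same route as the paper: reduce to $E[T]$ via Lemma~\ref{lem:Expanding}, apply Corollary~\ref{cor:NoIsolatedBranchesCase} to $E[T]$, and push the action down through the quotient $\partial E[T]\to\partial T$. Your explicit verification that $E[T]$ has no dead ends and no isolated branches is a detail the paper leaves implicit, and is a worthwhile addition.
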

\begin{proof}From Lemma~\ref{lem:Expanding}, we know that a homeomorphism $f\colon \partial T\to\partial T$ is rational if and only if the induced homeomorphism $E[f]\colon \partial E[T]\to \partial E[T]$ is rational.  Moreover, note that $E[fg]= E[f]\,E[g]$ for all $f,g\in\R_T$ and $E[f^{-1}]=E[f]^{-1}$ for all $f\in\R_T$.  Since the rational homeomorphisms of $E[T]$ form a group by Corollary~\ref{cor:NoIsolatedBranchesCase}, it follows that the rational homeomorphisms of~$T$ form a group as well.

Now clearly the homomorphism $f\mapsto E[f]$ is an embedding of $\R_{E[T]}$ into~$\R_T$, and since $\R_{E[T]}$ is isomorphic to $\R_2$ by Corollary~\ref{cor:NoIsolatedBranchesCase} it follows that $\R_T$ embeds into $\R_2$.  Moreover, Corollary~\ref{cor:NoIsolatedBranchesCase} tells us that the action of $\R_{E[T]}$ on $\partial E[T]$ is conjugate to the action of $\R_2$ on $\{0,1\}^\omega$, so we can use $\R_{E[T]}$ instead of $\R_2$ to prove that the action of $\R_T$ on $\partial T$ is rational.  Let $q\colon \partial E[T]\to \partial T$ be the quotient map which is the identity on the non-isolated points of~$\partial T$ and maps $\{p\}\times\{0,1\}^\omega$ to $p$ for each isolated point $p$ of~$T$.  Then clearly the diagram
\[
\xymatrix{
\partial E[T] \ar_{q}[d] \ar^{E[f]}[r] & 
\partial E[T] \ar^{q}[d] \\ 
\partial T\ar_{f}[r] & \partial T}
\]
commutes for each $f\in \R_T$, and this proves that the action of $\R_T$ on $\partial T$ is rational in the sense of Definition~\ref{def:RationalAction}.
\end{proof}

Note that $\R_T$ need not be isomorphic to $\R_2$ if $T$ has isolated branches.  For example, if $T$ is a self-similar tree whose boundary $\partial T$ consists of finitely many isolated points, then $\R_T$ is a finite symmetric group.

Theorem~\ref{thm:RationalGroupEmbedsInR2} gives us the following test for whether an action is rational, which we use in Section~\ref{sec:HyperbolicRational} to prove Theorem~\ref{thm:MainTechnicalTheorem}, and hence Theorems~\ref{thm:MainTheorem} and~\ref{thm:MainTheoremAction}.

\begin{corollary}\label{cor:MainCorollaryIGuess}Let $G$ be a group acting on a compact metrizable space~$X$, and let $T$ be a self-similar tree without dead ends.  Suppose there exists a quotient map $q\colon \partial T\to X$ and a homomorphism $\varphi\colon G\to\R_T$ such that the diagram
\[
\xymatrix{
\partial T \ar_{q}[d] \ar^{\varphi(g)}[r] & 
\partial T \ar^{q}[d] \\ 
X\ar_{g}[r] & X}
\]
commutes for all $g\in G$.  Then the action of $G$ on $X$ is rational.\qed
\end{corollary}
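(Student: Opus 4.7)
The plan is to reduce the problem to an application of Theorem~\ref{thm:RationalGroupEmbedsInR2} by composing quotient maps. Concretely, Theorem~\ref{thm:RationalGroupEmbedsInR2} gives a quotient map $p\colon \{0,1\}^\omega \to \partial T$ and an embedding $\iota\colon \R_T \hookrightarrow \R_2$ such that $p\circ \iota(f) = f\circ p$ for every $f\in\R_T$. I would then verify that the composite $q\circ p\colon \{0,1\}^\omega \to X$ together with the composite homomorphism $\iota\circ\varphi\colon G\to \R_2$ witnesses that the action of $G$ on $X$ is rational in the sense of Definition~\ref{def:RationalAction}.

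Step by step, I would first apply Theorem~\ref{thm:RationalGroupEmbedsInR2} to the self-similar tree $T$ to produce the pair $(p,\iota)$ above. Second, I would argue that $q\circ p\colon\{0,1\}^\omega\to X$ is genuinely a quotient map. Since $\{0,1\}^\omega$ is compact and $X$ is a compact metrizable (hence Hausdorff) space, any continuous surjection from $\{0,1\}^\omega$ to $X$ is automatically a quotient map (closed images of compact sets in Hausdorff spaces are closed). Continuity and surjectivity of $q\circ p$ follow immediately from the corresponding properties of $q$ and $p$. Third, I would assemble the commutative diagram: for each $g\in G$,
\[
(q\circ p)\circ (\iota\circ\varphi)(g) \;=\; q\circ\bigl(p\circ \iota(\varphi(g))\bigr) \;=\; q\circ\bigl(\varphi(g)\circ p\bigr) \;=\; \bigl(q\circ\varphi(g)\bigr)\circ p \;=\; \bigl(g\circ q\bigr)\circ p \;=\; g\circ (q\circ p),
\]
using the commutativity supplied by Theorem~\ref{thm:RationalGroupEmbedsInR2} for the second equality and the hypothesized commuting diagram for the fourth.

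This exhibits a quotient map $\{0,1\}^\omega \to X$ and a homomorphism $G\to \R_2$ intertwining the actions, which by the remark following Definition~\ref{def:RationalAction} (see the Note after Proposition~\ref{prop:AlphabetDoesntMatter}) is equivalent to the action being rational. There is really no obstacle here: the work has all been done in building up Theorem~\ref{thm:RationalGroupEmbedsInR2}, and the corollary is just the observation that rational actions compose through quotients. The only point requiring minimal care is the verification that composing two quotient maps into a Hausdorff space yields a quotient map, which is standard for compact spaces.
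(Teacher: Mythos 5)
Your proposal is correct and follows essentially the same route as the paper, which states the corollary with no separate proof because it is an immediate consequence of Theorem~\ref{thm:RationalGroupEmbedsInR2}: one composes the quotient map and intertwining homomorphism supplied by that theorem with the given $q$ and $\varphi$. Your verification that the composite is a quotient map and that the diagram chases through is exactly the implicit argument, and is correct (one could also note more simply that the composition of two quotient maps is always a quotient map, though the compactness argument works fine).
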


\section{Rational Actions of Hyperbolic Groups}\label{sec:HyperbolicRational}

Let $\Gamma$ be a locally finite, connected hyperbolic graph, and let $G$ be a group acting by properly and cocompactly by isometries on~$\Gamma$.  For example, $G$ could be any hyperbolic group and $\Gamma$ could be its Cayley graph.  In this section, we construct a self-similar tree $T$ whose boundary $\partial T$ is naturally homeomorphic to the horofunction boundary~$\hb \Gamma$, and we show that the action of $G$ on $\hb\Gamma$ induces a rational action of $G$ on $\partial T$.

As in Section~1, we will identify $\Gamma$ with its set of vertices under the path metric~$d$. We fix a \newword{base vertex} $x_0\in \Gamma$, and define the \newword{length} $\ell(x)$ of any vertex $x\in\Gamma$ to be its distance from the base vertex. Let
\[
B_n = \{x\in\Gamma \mid \ell(x) \leq n\}
\qquad\text{and}\qquad
S_n = \{x\in\Gamma \mid \ell(x) = n\}
\]
denote the $n$-ball and $n$-sphere, respectively, centered at $x_0$.

If $x\in S_n$, a \newword{successor} to $x$ is any element of $S_{n+1}\cap C(x)$, where $C(x)$ denotes the cone on~$x$, and a \newword{predecessor} for $x$ is any element of $S_{n-1}$ for which $x$ is a successor.

\subsection{The Tree of Atoms}

\begin{definition}Let $B$ be a finite set of vertices in~$\Gamma$.
\begin{enumerate}
\item Given any point $x\in \Gamma$, the \newword{corresponding atom} is the set
\[
A = \bigl\{y \in \Gamma \;\bigl|\; \od_y\text{ agrees with }\od_x\text{ on }B\bigr\}.
\]
Let $\A(B)$ be the collection of all such atoms.
\item The \newword{shape algebra} for~$B$, denoted~$\S(B)$, is the algebra of sets generated by~$\A(B)$.
\end{enumerate}
\end{definition}

\begin{notes}\quad
\begin{enumerate}
\item The atoms in $\A(B)$ form a partition of~$\Gamma$, and therefore each element of $\S(B)$ is a disjoint union of atoms.  Thus the elements of $\A(B)$ are precisely the atoms in the Boolean algebra~$\S(B)$.
\item If $B\subseteq B'$ are finite sets, then each atom for $B'$ is a subset of some atom for~$B$. It follows that $\S(B) \subseteq \S(B')$.
\end{enumerate}
\end{notes}

Each atom $A$ comes with a function $\od_A\in \F(B,\mathbb{Z})$, which agrees with $\od_x$ on $B$ for each~$x\in A$.  We refer to this as the \newword{distance function for~$\boldsymbol{A}$.}

For example, Figure~\ref{fig:AtomsExample}(a) shows the four atoms derived from a certain three-point subset of~a hyperbolic graph.  The corresponding shape algebra has $16$~different sets, namely all possible disjoint unions of these four atoms.  Figure~\ref{fig:AtomsExample}(b) shows the distance function for each of the four atoms.
\begin{figure}
$\underset{\textstyle\text{(a)}}{\includegraphics{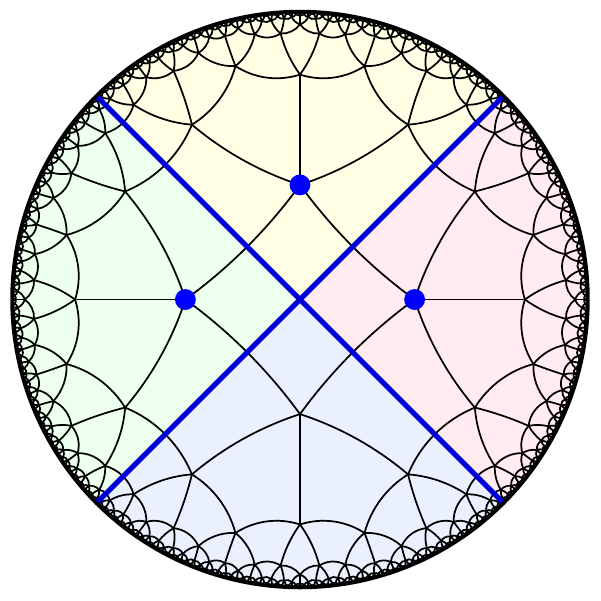}}$
\hfill
$\underset{\textstyle\text{(b)}}{\includegraphics{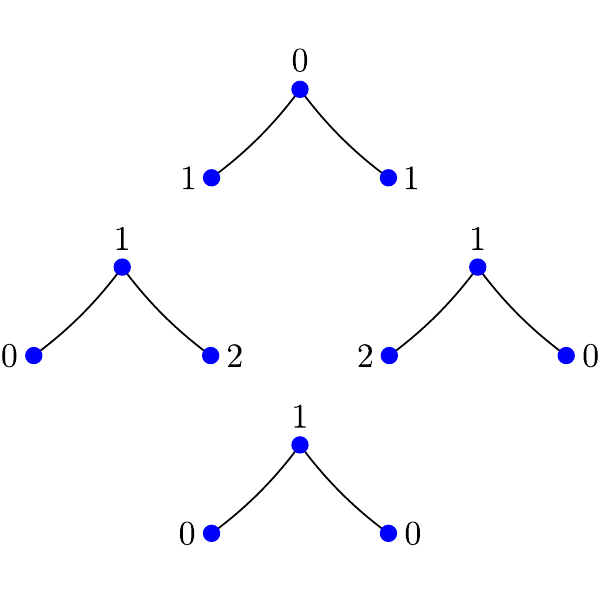}}$
\caption{(a)~Four atoms derived from a three-point subset $B$ of a hyperbolic graph~$\Gamma$. Each atom consists of the vertices of~$\Gamma$ that lie in the shown region. (b)~The distance functions $\od_A$ for~$A\in\A(B)$, with additive constants chosen so that the minimum value is~$0$ for each.}
\label{fig:AtomsExample}
\end{figure}

\begin{proposition}Each finite set $B\subseteq\Gamma$ has only finitely many different atoms.
\end{proposition}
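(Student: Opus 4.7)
The plan is to reduce the question to counting equivalence classes of restricted distance functions modulo constants, and then to bound that count using the triangle inequality.

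First I would observe, straight from the definition, that the atoms in $\A(B)$ are in bijection with the distinct elements of the set $\{\od_y|_B : y \in \Gamma\} \subseteq \F(B,\Z)$. Indeed, two points $y,y' \in \Gamma$ lie in the same atom precisely when $\od_y|_B = \od_{y'}|_B$ in $\F(B,\Z)$, i.e.~when $d_y - d_{y'}$ is constant on $B$. So it suffices to prove that this subset of $\F(B,\Z)$ is finite.

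Next I would fix an auxiliary basepoint $b_0 \in B$ (assuming $B$ is nonempty; the case $B = \emptyset$ is trivial since then there is a unique atom, namely all of $\Gamma$). For each $y \in \Gamma$, I would choose the unique representative $f_y \colon B \to \Z$ of $\od_y|_B$ normalised by $f_y(b_0) = 0$, namely
\[
f_y(b) \;=\; d(y,b) - d(y,b_0) \qquad (b \in B).
\]
By the triangle inequality, $|f_y(b)| \leq d(b,b_0)$ for every $b \in B$. Hence each $f_y$ lies in the finite product
\[
\prod_{b \in B} \bigl(\Z \cap [-d(b,b_0),\, d(b,b_0)]\bigr),
\]
which is a finite set because $B$ is finite and each factor is finite. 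Therefore $\{\od_y|_B : y \in \Gamma\}$ is finite, and so is $\A(B)$.

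There is no real obstacle here: this is essentially the same triangle-inequality bound that was used earlier in the proof that the horofunction boundary lies inside a product of finite sets, now applied locally to the finite set $B$ rather than to all of $\Gamma$. The only minor point to be careful about is to distinguish between functions and their classes modulo constants, which is handled cleanly by choosing the normalisation $f_y(b_0)=0$.
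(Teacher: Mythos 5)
Your proof is correct and follows essentially the same route as the paper: fix a basepoint in $B$, normalise the restricted distance functions to vanish there, and use the triangle inequality to confine them to a finite product of integer intervals. The only additions are the explicit treatment of $B=\emptyset$ and the remark distinguishing functions from their classes modulo constants, both of which are fine but not needed beyond what the paper does.
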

\begin{proof}Fix a point $p\in B$.  By the triangle inequality, we know that
\[
\bigl|\od_x(q) - \od_x(p)\bigr| \leq d(p,q)
\]
for all $x\in\Gamma$ and $q\in B$.  But there are only finitely many different functions $f\colon B\to\mathbb{Z}$ satisfying $f(p) = 0$ and
\[
\bigl|f(q)\bigr| \leq d(p,q)
\]
for all $q\in B$, and therefore there can be only finitely many atoms
\end{proof}

Now if we fix a base vertex~$x_0\in\Gamma$, we get a sequence of balls in~$\Gamma$:
\[
\{x_0\} = B_0 \subseteq B_1 \subseteq B_2 \subseteq \cdots
\]
Taking the corresponding atoms gives us a sequence of partitions of~$\Gamma$:
\[
\A(B_0),\quad \A(B_1),\quad \A(B_2),\quad \ldots
\]
Each of these partitions is a refinement of the previous one, with $\A(B_0)$ having only only one atom, namely the whole graph~$\Gamma$.  For example, Figure~\ref{fig:SquareTilingAtomsUnlabelled} shows the atoms of $\A(B_1)$ and $\A(B_2)$ for the $1$-skeleton of the order five square tiling of the hyperbolic plane.
\begin{figure}
\centering
\includegraphics{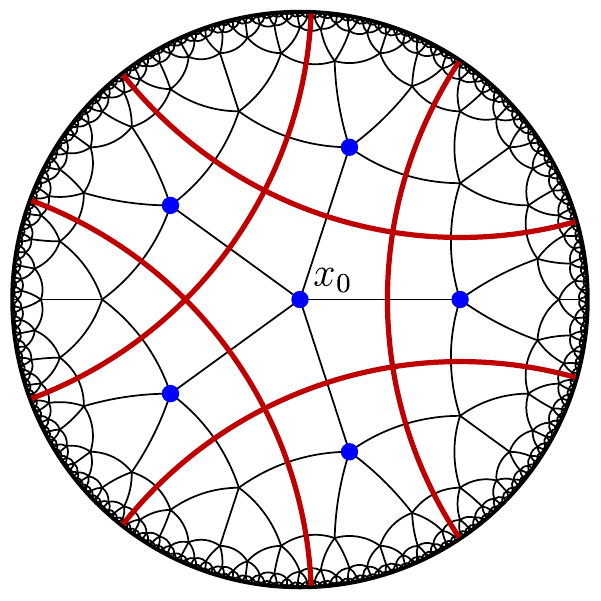}
\hfill
\includegraphics{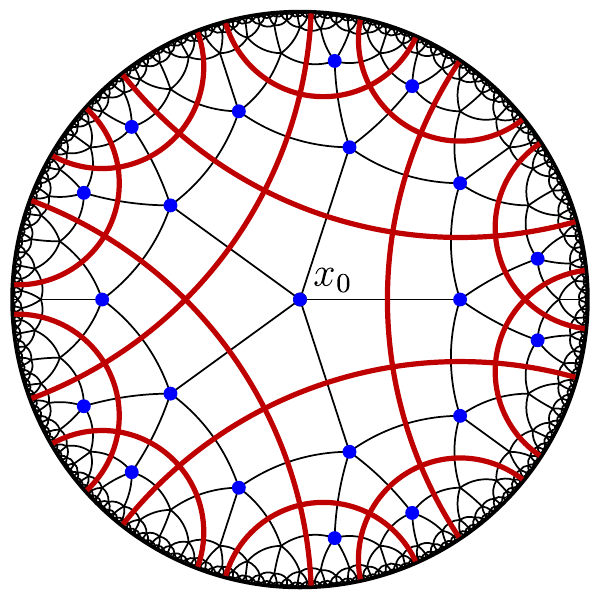}
\caption{The atoms of $\A(B_1)$ and $\A(B_2)$ for the order five square tiling of the hyperbolic plane.  There are $11$~atoms in $\A(B_1)$ and $36$~atoms in~$\A(B_2)$.}
\label{fig:SquareTilingAtomsUnlabelled}
\end{figure}

Because each $\A(B_n)$ is a refinement of the previous, the disjoint union
\[
\coprod_{n=0}^\infty \A(B_n)
\]
has the structure of a rooted tree.  It turns out that the boundary of this tree is naturally homeomorphic to~$\Gamma\cup \hb \Gamma$, where $\hb\Gamma$ is the horofunction boundary of~$\Gamma$.  That is, the boundary is homeomorphic to the closure of~$i(\Gamma)$ in~$\F(\Gamma,\Z)$ (see Definitions~\ref{def:CanonicalEmbedding} and~\ref{def:HorofunctionBoundary}).

Because we are interested in $\hb\Gamma$ specifically, we would like to restrict to a subtree whose boundary is precisely~$\hb\Gamma$.  As we will see, it suffices to consider only the atoms in each~$\A(B_n)$ that have infinite cardinality.  This motivates the following definition.

\begin{definition}For each $n\geq 0$, let $\A_n(\Gamma)$ be the set of infinite atoms in~$\A(B_n)$.  The \newword{tree of atoms} for~$\Gamma$ is the disjoint union.
\[
\A(\Gamma) = \coprod_{n=0}^\infty \A_n(\Gamma).
\]
\end{definition}

Note that, since $\A(\Gamma)$ is defined as a disjoint union, an element of $\A(\Gamma)$ is technically an ordered pair $(n,A)$, where $n\geq 0$ and~$A\in\A_n(\Gamma)$.  This distinction is sometimes relevant, for it is possible for the same set $A$ to be an atom in $\A_n(\Gamma)$ for two different values of~$n$.  However, we will often abuse notation and treat elements of $\A(\Gamma)$ as subsets of~$\Gamma$, with the understanding that each atom $A\in\A(\Gamma)$ knows which $\A_n(\Gamma)$ it comes from.

Note also that the tree $\A(\Gamma)$ has no dead ends.  In particular, since the union of the atoms in $\A_n(\Gamma)$ is the complement of a finite set in~$\Gamma$ and every atom in~$\A_n(\Gamma)$ is infinite, each atom in $\A_n(\Gamma)$ must contain at least one atom from~$\A_{n+1}(\Gamma)$.

The following proposition tells us that the atoms of $\A_n(\Gamma)$ move away from any finite set as~$n\to\infty$.

\begin{proposition}\label{prop:OnePointAtoms}If $n\geq 1$, then each one-point subset of $B_{n-1}$ is an atom in~$\A(B_n)$.  Thus every atom of $\A_n(\Gamma)$ is contained in the complement of~$B_{n-1}$.
\end{proposition}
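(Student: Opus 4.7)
The plan is to prove the first sentence directly, by showing that for any $x \in B_{n-1}$ the atom in $\A(B_n)$ containing $x$ reduces to $\{x\}$; the second sentence is then an immediate consequence of the fact that atoms in $\A_n(\Gamma)$ are infinite.

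Fix $x \in B_{n-1}$ and suppose $y \in \Gamma$ satisfies $\od_y = \od_x$ in $\F(B_n,\Z)$, so that there is a constant $C \in \Z$ with $d(y,b) = d(x,b) + C$ for every $b \in B_n$. Since $x \in B_{n-1} \subseteq B_n$, plugging $b = x$ forces $C = d(y,x)$. I would then argue by contradiction: if $y \neq x$, take any neighbor $z$ of $x$ lying on a geodesic from $x$ to $y$. Because $d(x_0,z) \leq d(x_0,x)+1 \leq n$, the vertex $z$ lies in $B_n$, yet $d(y,z) = d(y,x)-1 = C-1$ while $d(x,z)+C = 1+C$. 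These are unequal, contradicting our hypothesis. Hence $y = x$, and $\{x\}$ is the atom containing $x$.

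For the second sentence, suppose some atom $A \in \A_n(\Gamma)$ met $B_{n-1}$, say $x \in A \cap B_{n-1}$. By the first part $\{x\}$ is itself an atom of $\A(B_n)$, and since atoms partition $\Gamma$ we would have $A = \{x\}$, contradicting the definition of $\A_n(\Gamma)$ as the set of \emph{infinite} atoms of $\A(B_n)$. Therefore $A \subseteq \Gamma \setminus B_{n-1}$.

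The only subtlety is the choice of the neighbor $z$: one must make sure that $z$ lies in $B_n$ (so that it is a legitimate test point for the agreement of $\od_y$ and $\od_x$), and this is precisely where the hypothesis $x \in B_{n-1}$ is used — the bound $\ell(x) \leq n-1$ gives the one unit of slack needed so that a neighbor of $x$ still lies in $B_n$. No hyperbolicity or cocompactness is needed; the argument is purely metric and uses only that $\Gamma$ is a connected graph.
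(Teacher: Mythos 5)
Your proof is correct and is essentially the same argument as the paper's. The paper phrases it in terms of local minima: $\od_x$ has a strict local minimum at $x$ (and nowhere else), so it cannot agree with $\od_y$ for $y\neq x$ on any set containing $x$ together with all its neighbors, and $B_n$ is such a set when $x\in B_{n-1}$. Your version makes the same point concretely by exhibiting a neighbor $z$ of $x$ along a geodesic toward $y$ at which the two functions must differ. Both proofs hinge on exactly the same observation, and your handling of the second sentence matches the paper's.
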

\begin{proof}We say that a function $f\in F(\Gamma,\Z)$ has a \newword{local minimum} at a point $p\in\Gamma$ if $f(p) < f(q)$ for every vertex~$q$ that is adjacent to~$p$.  Note that adding an arbitrary constant to $f$ does not change the positions of the local minima, so it makes sense to talk about local minima for an element $\of\in\F(\Gamma,\Z)$.

Now, for any $x\in\Gamma$ it is not hard to see that the distance function $\od_x$ has a local minimum of~$x$, and this is the only local minimum for~$\od_x$ on~$\Gamma$.  If $x,y\in\Gamma$ and $x\ne y$, it follows that $\od_x$ and $\od_y$ cannot agree on any subset of~$\Gamma$ that contains~$x$ and all of its neighbors.  In particular, if $x\in B_{n-1}$, then $B_n$ contains $x$ and all of its neighbors, so $\od_x$ does not agree with $\od_y$ on $B_n$ for any $y\ne x$, which proves that $\{x\}$ is an atom in~$\A(B_n)$.
\end{proof}

We will prove the following theorem in the next section.

\begin{theorem}\label{thm:BoundaryIsHoroboundary}The boundary of the tree of atoms $\A(\Gamma)$ is naturally homeomorphic to the horofunction boundary~$\hb\Gamma$ of\/~$\Gamma$.
\end{theorem}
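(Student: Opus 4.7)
The strategy is to construct mutually inverse continuous maps $\Phi \colon \partial\A(\Gamma) \to \hb\Gamma$ and $\Psi \colon \hb\Gamma \to \partial\A(\Gamma)$ and then to invoke the fact that a continuous bijection between compact Hausdorff spaces is automatically a homeomorphism. Both spaces are compact Hausdorff: $\A(\Gamma)$ is locally finite because each $\A_n(\Gamma) \subseteq \A(B_n)$ is finite, so $\partial\A(\Gamma)$ is compact; and $\hb\Gamma$ is compact by the proposition established earlier in the paper.

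To build $\Phi$, I would take a descending path $(A_0 \supseteq A_1 \supseteq A_2 \supseteq \cdots)$ with $A_n \in \A_n(\Gamma)$ and, for each $n$, normalize the class $\od_{A_n} \in \F(B_n,\Z)$ to an honest function $f_n \colon B_n \to \Z$ by demanding $f_n(x_0) = 0$. The nestedness $A_{n+1} \subseteq A_n$ forces $f_{n+1}|_{B_n} = f_n$, since for any $x \in A_{n+1}$ the function $d_x - d(x,x_0)$ represents both $\od_{A_n}$ on $B_n$ and $\od_{A_{n+1}}$ on $B_{n+1}$. Because $\bigcup_n B_n = \Gamma$, these assemble into a single function $f \colon \Gamma \to \Z$, and I would verify that $\of$ is a horofunction via Proposition~\ref{prop:HorofunctionTest}: given a finite $B \subseteq \Gamma$, pick $n$ with $B \subseteq B_n$; then each of the infinitely many $x \in A_n$ satisfies $\od_x|_B = \of|_B$, and Proposition~\ref{prop:OnePointAtoms} places $A_n$ outside $B_{n-1}$, so this produces infinitely many such $x$ outside any prescribed finite set.

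To build $\Psi$, given $\of \in \hb\Gamma$ I would use Proposition~\ref{prop:HorofunctionTest} with $B = B_n$ to produce infinitely many $x \in \Gamma$ with $\od_x$ agreeing with $\of$ on $B_n$ in $\F(B_n,\Z)$. All such $x$ lie in a common atom $A_n$ of $\A(B_n)$, which is therefore infinite and so belongs to $\A_n(\Gamma)$. The inclusion $A_{n+1} \subseteq A_n$ is automatic because the defining condition at level $n+1$ is strictly stronger. A short check then shows $\Phi$ and $\Psi$ are mutually inverse: $\Psi(\Phi(A_0,A_1,\ldots))$ reads off the same atoms, and $\Phi(\Psi(\of))$ reconstructs $\of$ level by level.

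Continuity in both directions reduces to observing that the clopen set $\partial\A(\Gamma)_{A_n}$ and the basic neighborhood $\{\og \in \hb\Gamma \mid \og|_{B_n} = \of|_{B_n}\}$ carry the same information, namely the distance-function class on $B_n$. The main conceptual obstacle is handling the additive-constant ambiguity in $\F(\Gamma,\Z)$ coherently across all levels of the tree; once one commits to the global normalization $f_n(x_0) = 0$, the restriction maps match up on the nose and every subsequent verification is routine. Compactness then upgrades the continuous bijection into the desired homeomorphism, and the construction is visibly equivariant under any isometric group action on $\Gamma$, which is the naturality asserted in the statement.
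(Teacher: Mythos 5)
Your proposal is correct and takes essentially the same approach as the paper: both rely on the partition of $\hb\Gamma$ into atom shadows (Proposition~\ref{prop:AtomsPartition}), the characterization of horofunctions (Proposition~\ref{prop:HorofunctionTest}), and the compact-Hausdorff upgrade, with your construction of $\Phi$ reproducing the content of the paper's Proposition~\ref{prop:ShadowsNonempty}. The only organizational difference is that you build both directions of the bijection explicitly, whereas the paper deduces surjectivity of $h\colon\hb\Gamma\to\partial\A(\Gamma)$ via a nested-closed-sets compactness argument inside $\hb\Gamma$.
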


After this is done, the next task is to endow the tree $\A(\Gamma)$ with a self-similar structure.  First, if $A\in\A_n(\Gamma)$ and $m\geq n$, let
\[
\A_m(A) = \{A'\in \A_m(\Gamma) \mid A'\subseteq A\}.
\]
The union
\[
\A(\Gamma)_A = \coprod_{m\geq n} \A_m(A)
\]
is the subtree of atoms rooted at~$A$.

\begin{definition}Let $A_1\in \A_m(\Gamma)$ and $A_2\in\A_n(\Gamma)$ be atoms.  We say that an element $g\in G$ \newword{induces a morphism} from $A_1$ to $A_2$ if the following conditions are satisfied:
\begin{enumerate}
\item $gA_1 = A_2$.
\item $g(A_1\cap B_{m+k}) = A_2\cap B_{n+k}$ for all $k\geq 0$.
\item For each $k>0$ and each $A_1'\in\A_{m+k}(A_1)$, there exists an $A_2'\in \A_{n+k}(A_2)$ such that $gA_1'=A_2'$.
\end{enumerate}
The \newword{corresponding morphism} is the isomorphism $\varphi\colon\A(\Gamma)_{A_1}\to\A(\Gamma)_{A_2}$ of subtrees defined by condition~(3).  We say that $A_1$ and~$A_2$ have \newword{the same type} if there exists a morphism from $\A(\Gamma)_{A_1}$ to $\A(\Gamma)_{A_2}$.
\end{definition}

\begin{notes}\quad
\begin{enumerate}
\item Condition (2) is equivalent to saying that
\[
\ell(gp) - n = \ell(p) - m
\]
for all $p\in A_1$.
\item Clearly the composition of two morphisms is a morphism, and the inverse of a morphism is a morphism.  
\end{enumerate}
\end{notes}

\begin{proposition}For all $A_1,A_2\in \A(\Gamma)$, there are only finitely many morphisms $\A(\Gamma)_{A_1}\to \A(\Gamma)_{A_2}$.
\end{proposition}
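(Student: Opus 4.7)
The plan is to bound the number of group elements $g \in G$ that can induce a morphism $\A(\Gamma)_{A_1} \to \A(\Gamma)_{A_2}$; since every morphism arises from such a $g$, this will bound the number of morphisms. The two ingredients I will use are: (i) $\Gamma$ is locally finite, so every sphere $S_k$ is finite; (ii) the action of $G$ on $\Gamma$ is proper, so the stabilizer $\mathrm{Stab}_G(p)$ of any vertex $p\in \Gamma$ is finite.

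The first step is to fix any point $p\in A_1$ (for instance a point of minimal length in $A_1$, though any point works). Write $A_1\in\A_m(\Gamma)$ and $A_2\in\A_n(\Gamma)$. If $g\in G$ induces a morphism from $A_1$ to $A_2$, then condition (2) of the morphism definition gives
\[
\ell(gp) - n \;=\; \ell(p) - m,
\]
so $gp$ lies in $A_2 \cap S_{\ell(p)+n-m}$. Since $\Gamma$ is locally finite, each sphere $S_k$ is finite, hence this intersection is a finite set. So there are only finitely many possible values for $gp$.

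Next, for each admissible target $q \in A_2 \cap S_{\ell(p)+n-m}$, the set of $g\in G$ with $gp = q$ is either empty or a left coset of $\mathrm{Stab}_G(p)$, which is finite by properness of the action. Combining with the previous step, the total set of $g\in G$ that can induce a morphism from $A_1$ to $A_2$ is finite. Each morphism arises from at least one such $g$, so there are only finitely many morphisms $\A(\Gamma)_{A_1}\to\A(\Gamma)_{A_2}$.

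There is no serious obstacle here: the argument is a direct consequence of local finiteness plus properness of the $G$-action, once one uses condition (2) of the morphism definition to pin $gp$ into a finite sphere slice of $A_2$. The only mild point to check is that $A_1$ is nonempty (so we can pick $p$), which holds because atoms in $\A(\Gamma)$ are by definition infinite, and in particular nonempty.
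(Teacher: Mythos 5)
Your proof is correct and is essentially the paper's argument: both use condition (2) of the morphism definition to confine the image of a chosen point (or finite set of points) of $A_1$ to a finite subset of $\Gamma$, and then invoke properness of the action to conclude that only finitely many $g\in G$ can induce a morphism. The only cosmetic difference is that the paper first reduces to self-morphisms of a single atom and tracks the set of minimal-length elements, whereas you work directly with $A_1\to A_2$ and a single point; both routes are equally valid.
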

\begin{proof}It suffices to prove that there are only finitely many morphisms from an atom $A\in\A(\Gamma)$ to itself.  Let $S$ be the set of elements of $A$ of minimum length (i.e.~minimum distance to the base vertex).  If $\varphi\colon \A(\Gamma)_A\to \A(\Gamma)_A$ is a morphism corresponding to an element~$g\in G$, then it follows from condition~(2) that $gS=S$.  Since $S$ is finite and $G$ acts properly on~$\Gamma$, there are only finitely many such~$g$, and therefore only finitely many morphisms from $A$ to~$A$.
\end{proof}

We will say that two atoms $A_1,A_2\in \A(\Gamma)$ have \newword{the same type} if there exists a morphism $\A(\Gamma)_{A_1}\to \A(\Gamma)_{A_2}$.  Unfortunately, it is not easy to prove that there are only finitely many types of atoms in~$\A(\Gamma)$, and indeed this is the first part of our theory that requires~$\Gamma$ to be hyperbolic.  After developing some geometric machinery in Sections~\ref{subsec:Neighbors and Visibility} and~\ref{subsec:MembershipTest}, we prove in Section~\ref{subsec:FinitelyManyTypes} that $\A(\Gamma)$ has only finitely many types, thereby endowing~$\A(\Gamma)$ with a self-similar structure.

Finally, we prove the following theorem in Section~\ref{subsec:ProofOfRationality}.

\begin{theorem}\label{thm:ActionIsRational}The group $G$ acts on the boundary of $\A(\Gamma)$ by rational homeomorphisms.
\end{theorem}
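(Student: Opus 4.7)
The plan is to show that each $g\in G$ induces a rational self-homeomorphism $\tilde g$ of $\partial\A(\Gamma)$ by exhibiting only finitely many equivalence classes of restrictions. Via Theorem~\ref{thm:BoundaryIsHoroboundary}, identify $\partial\A(\Gamma)$ with $\hb\Gamma$: each atom $A\in\A_m(\Gamma)$ corresponds to the clopen set $U_A\subseteq\hb\Gamma$ of horofunctions agreeing with $\od_A$ on $B_m$ modulo constants, and $\tilde g$ becomes the natural action $f\mapsto gf$ given by $(gf)(p)=f(g^{-1}p)$.

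Fix $g\in G$ and set $c:=d(x_0,gx_0)$. For each atom $A\in\A_m(\Gamma)$ with $m\ge c$, define the \emph{image atom} $A^\dagger\in\A(\Gamma)$ to be the deepest atom such that $\tilde g\bigl(\partial\A(\Gamma)_A\bigr)\subseteq \partial\A(\Gamma)_{A^\dagger}$. This is well-defined: since $B_{m-c}\subseteq gB_m$, the restriction $(gf)|_{B_{m-c}}$ depends only on $f|_{B_m}$, so $\tilde g(U_A)$ is contained in the atom at depth $m-c$ whose distance function is $\od_A\circ g^{-1}$ on $B_{m-c}$ modulo constants, and a deepest containing atom then exists in the tree. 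Only finitely many atoms have depth less than $c$, and these contribute only finitely many equivalence classes of restrictions.

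The key reduction is this: whenever $A_1\in\A_{m_1}(\Gamma)$ and $A_2\in\A_{m_2}(\Gamma)$ have the same type with a morphism $\varphi\colon\A(\Gamma)_{A_1}\to\A(\Gamma)_{A_2}$ induced by some $h\in G$, the restrictions of $\tilde g$ at $A_1$ and $A_2$ are equivalent. As the second morphism demanded by Definition~\ref{def:EquivalentRestrictions}, propose $\psi\colon\A(\Gamma)_{A_1^\dagger}\to\A(\Gamma)_{A_2^\dagger}$ induced by the conjugate $k:=ghg^{-1}\in G$. The required commutativity
\[
\xymatrix{
\partial\A(\Gamma)_{A_1} \ar_{\tilde g}[d] \ar^{\varphi_\ast}[r] &
\partial\A(\Gamma)_{A_2} \ar^{\tilde g}[d] \\
\partial\A(\Gamma)_{A_1^\dagger} \ar_{\psi_\ast}[r] & \partial\A(\Gamma)_{A_2^\dagger}}
\]
reduces on horofunctions to the identity $(gh)f=(kg)f$ for $f\in U_{A_1}$, which holds by construction since $kg=gh$. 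Combining the image-atom description $\od_{A_j^\dagger}=\od_{A_j}\circ g^{-1}$ (modulo constants) with the morphism relation $\od_{A_2}=\od_{A_1}\circ h^{-1}$ yields $k\cdot\od_{A_1^\dagger}=\od_{A_2^\dagger}$ on the appropriate ball, forcing $kA_1^\dagger=A_2^\dagger$, so $k$ does induce the desired morphism $\psi$.

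By the finite-type theorem proved in Section~\ref{subsec:FinitelyManyTypes}, $\A(\Gamma)$ has only finitely many types of atoms, so together with the finitely many shallow atoms this yields only finitely many equivalence classes of restrictions of $\tilde g$; hence $\tilde g$ is rational, completing the proof. The main obstacle will be the last step of the key reduction: showing rigorously that $ghg^{-1}$ induces a morphism between image atoms in the sense required, i.e.\ matching not only the atom equality $kA_1^\dagger=A_2^\dagger$ but also the depth-shift condition $k(A_1^\dagger\cap B_{m_1-c+j})=A_2^\dagger\cap B_{m_2-c+j}$ for all $j\ge 0$. This requires careful bookkeeping of how the depth offset $c$ induced by $g$ interacts with the depth shift $m_2-m_1$ induced by $h$, together with handling of the exceptional cases in which $A^\dagger$ lies strictly below depth $m-c$.
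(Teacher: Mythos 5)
There is a genuine gap, and it sits exactly where you flag it, but it is not a matter of ``careful bookkeeping'': your key reduction --- that whenever $A_1$ and $A_2$ have the same type via a morphism induced by $h$, the restrictions of $L_g$ at $A_1$ and $A_2$ are equivalent with witness $k=ghg^{-1}$ --- is the entire content of the theorem, and the argument you sketch for it does not work. The problem is that the image atom $A_1^\dagger$ strictly contains $gA_1$ in general, so for points $q\in A_1^\dagger\setminus gA_1$ the morphism property of $h$ says nothing about $kq=ghg^{-1}q$. Your attempt to force $kA_1^\dagger=A_2^\dagger$ from $k\cdot\od_{A_1^\dagger}=\od_{A_2^\dagger}$ also breaks down: the function $k\,\od_{A_1^\dagger}$ lives on $kB_{m_1-c}$, which does not contain $B_{m_2-c}$, so you only get agreement of distance functions on $kB_{m_1-c}\cap B_{m_2-c}$, and agreement on a proper subset of the defining ball does not determine membership in an atom. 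Overcoming precisely this difficulty is why the paper develops visibility and proximality (Propositions~\ref{prop:VisibilityTest} and~\ref{prop:MembershipAtom}) and the notion of geometric equivalence (Definition~\ref{def:GeometricEquivalence}); Note~\ref{note:NotGeometricallyEquivalent} even exhibits atoms of the same type that are not geometrically equivalent, which is a warning that ``same type'' alone carries too little information to control how $g$ maps one subtree into another.

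The paper's proof is structured differently in a way that avoids your claim entirely. It never asserts that same type implies equivalent restrictions of $L_g$. Instead it attaches to each mapping triple $(g,A,A')$ a \emph{signature} recording the proximal sets $g\,P(A,S_n)$ and $P(A',S_{n-k})$, the distance functions on them, and the associated cones; Proposition~\ref{prop:EquivalentRestrictions} (your conjugation diagram, which is the one correct ingredient you do have) together with Proposition~\ref{prop:MainMorphismTest} shows that equivalent signatures give equivalent restrictions. Finiteness of the number of signatures is then a separate geometric argument: each proximal set has diameter at most $8\delta+4$, and Proposition~\ref{prop:DisksAreNear} shows $g\,P(A,S_n)$ and $P(A',S_{n-k})$ lie within bounded distance of each other, so up to the $G$-action only finitely many signatures occur. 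In short, finiteness of types is not the engine of the proof; the engine is $\delta$-hyperbolicity applied to the pair $(A,A')$ simultaneously, and your proposal omits this entirely.
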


By Theorem~\ref{thm:BoundaryIsHoroboundary} and Corollary~\ref{cor:MainCorollaryIGuess}, it follows that $G$ acts rationally on the horofunction boundary~$\hb\Gamma$ (Theorem~\ref{thm:MainTechnicalTheorem}) and hence the Gromov boundary~$\partial \Gamma$ (Theorem~\ref{thm:MainTheoremAction}), and if this action is faithful then $G$ embeds in~$\R$ (Theorem~\ref{thm:MainTheorem}).  Therefore, we will have all of our chief results upon completing the proof of Theorem~\ref{thm:ActionIsRational}.

\subsection{Infinite Atoms and the Horofunction Boundary}

We begin by associating to each atom a certain subset of the horofunction boundary.

\begin{definition}Let $B\subseteq\Gamma$ be finite.  Given any atom $A\in\A(B)$, the \newword{shadow} of~$A$ is the set
\[
\partial A = \{\of \in \hb \Gamma \mid \of\text{ agrees with }\od_A\text{ on }B\}.
\]
\end{definition}

\medskip
The following proposition explains our interest in infinite atoms.

\medskip
\begin{proposition}\label{prop:ShadowsNonempty}Let $B\subseteq\Gamma$ be a finite set, and let $A\in\A(B)$.  Then the shadow $\partial A$ is nonempty if and only if $A$ is infinite.
\end{proposition}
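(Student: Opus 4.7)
The plan is to prove the biconditional by treating each direction separately, using Proposition~\ref{prop:HorofunctionTest} for the forward implication and a compactness argument for the reverse.

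For the forward direction, suppose $\of \in \partial A$. By definition $\of$ agrees with $\od_A$ on $B$, and $f$ is a horofunction. Applying Proposition~\ref{prop:HorofunctionTest} with the finite set $B$ itself as the test set, there exist infinitely many vertices $x \in \Gamma \setminus B$ such that $\od_x$ agrees with $\of$, and hence with $\od_A$, on $B$. Each such $x$ lies in $A$ by the definition of an atom, so $A$ is infinite.

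For the reverse direction, suppose $A$ is infinite and pick a sequence $\{x_n\}$ of distinct vertices in~$A$. Each $\od_{x_n}$ agrees with $\od_A$ on $B$. To extract a convergent subsequence I would mimic the normalization used in the proof that $\hb\Gamma$ is compact: replace $d_{x_n}$ by $d'_{x_n}(y) = d_{x_n}(y) - d_{x_n}(x_0)$, so that each $d'_{x_n}$ lies in the compact product $S = \prod_{y\in\Gamma}\bigl(\Z\cap [-d(x_0,y),d(x_0,y)]\bigr)$ while representing the same class $\od_{x_n}\in \F(\Gamma,\Z)$. A (diagonal) subsequence $d'_{x_{n_k}}$ then converges pointwise in $S$ to some $f$, giving $\od_{x_{n_k}}\to\of$ in $\F(\Gamma,\Z)$.

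The only thing left to verify, and where I expect the only real care is needed, is that the limit $\of$ is both a horofunction and lies in $\partial A$. Since $i$ is injective (each $\od_x$ is characterized by its unique global minimum at $x$) and the $x_{n_k}$ are distinct, at most one of the $\od_{x_{n_k}}$ can equal $\of$, so $\of$ is an accumulation point of $i(\Gamma)$ and thus lies in $\hb\Gamma$. Moreover, the restriction map $\F(\Gamma,\Z)\to\F(B,\Z)$ is continuous and its target is discrete (since $B$ is finite, $\Z^B$ is discrete and so is its quotient by the constants), which forces $\of$ to agree with $\od_A$ on $B$. Hence $\of \in \partial A$, completing the argument.
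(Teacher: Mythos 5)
Your proof is correct, and both directions work, but the reverse direction takes a genuinely different route from the paper's.

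Your forward direction is essentially the paper's: apply Proposition~\ref{prop:HorofunctionTest} with $B$ as the test set to get infinitely many $x$ with $\od_x$ agreeing with $\od_A$ on $B$, hence $A$ infinite.

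For the reverse direction, the paper does not invoke sequential compactness at all. Instead it exhausts $\Gamma$ by an increasing chain of finite sets $B = B_0 \subseteq B_1 \subseteq \cdots$ and builds a descending chain of \emph{infinite} atoms $A = A_0 \supseteq A_1 \supseteq \cdots$ with $A_n \in \A(B_n)$ (possible by a pigeonhole argument at each stage, since each $\A(B_{n+1})$ refines $\A(B_n)$ into finitely many pieces and an infinite atom must contain an infinite one), glues the compatible distance functions $\od_{A_n}$ into a single class $\of \in \F(\Gamma,\Z)$, and then checks that $f$ is a horofunction directly from Proposition~\ref{prop:HorofunctionTest} (given a finite $B'$, put it inside some $B_n$ and use infiniteness of $A_n$). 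Your argument instead pulls a sequence of distinct points from $A$, normalizes by $d'_{x_n}(y) = d_{x_n}(y) - d_{x_n}(x_0)$ to land inside the compact set $S$, extracts a convergent subsequence, and then verifies the two requirements for the limit: that it is an accumulation point of $i(\Gamma)$ (using injectivity of $i$ and distinctness of the $x_n$), and that it agrees with $\od_A$ on $B$ (by continuity of the restriction $\F(\Gamma,\Z)\to\F(B,\Z)$ into a discrete target). Both arguments are complete; yours leans on the compactness machinery already proved for $\hb\Gamma$, while the paper's is more constructive and effectively rehearses the nested-chain argument that reappears in the proof of Theorem~\ref{thm:BoundaryIsHoroboundary}, so the explicit chain construction is not wasted effort in context.
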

\begin{proof}Suppose first that $\partial A$ is nonempty, and let $\of\in\partial A$.  Then by Proposition~\ref{prop:HorofunctionTest}, there exist infinitely many points $x\in\Gamma$ such that $\od_x$ agrees with $\of$ on~$B$, and it follows that $A$ is infinite.

Conversely, suppose that $A$ is infinite.  Let
\[
B = B_0\subseteq B_1\subseteq B_2\subseteq \cdots
\]
be an ascending chain of finite sets whose union is~$\Gamma$.  Then we can find a descending chain
\[
A = A_0 \supseteq A_1 \supseteq A_2 \supseteq \cdots,
\]
where each $A_n$ is an infinite atom in~$\A(B_n)$.  Then each $\od_{A_n}$ is a restriction of the next, and their union is a function $\of\in \F(\Gamma,\Z)$.  Clearly $\of$ agrees with $\od_A$ on~$B$.  We claim that $f$ is a horofunction.

Let $B'\subseteq\Gamma$ be any finite set. Since $\bigcup_n B_n=\Gamma$, there exists an $n\in\N$ so that $B'\subseteq B_n$.  Since $A_n$ is infinite, there exist infinitely many points~$x$ for which $\od_x$ agrees with $\of$ on~$B_n$, and it follows from Proposition~\ref{prop:HorofunctionTest} that $f$ is a horofunction.
\end{proof}

\begin{proposition}\label{prop:AtomsPartition}Let $B\subseteq\Gamma$ be finite.  Then the sets
\[
\{\partial A\mid A\in\A(B)\text{ and }A\text{ is infinite}\}
\]
are a partition of $\hb\Gamma$ into clopen sets.
\end{proposition}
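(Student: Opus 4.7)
The plan is to verify the three conditions in order: that the shadows $\partial A$ cover $\hb\Gamma$, that they are pairwise disjoint, and that each one is clopen. The finiteness of $\A(B)$, already established, is what lets the first two facts upgrade to ``clopen partition.''

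For the cover, let $\of\in\hb\Gamma$. By Proposition~\ref{prop:HorofunctionTest}, for the finite set $B$ there exist infinitely many points $x\in \Gamma$ with $\od_x$ agreeing with $\of$ on~$B$. All such $x$ lie in a single atom $A\in\A(B)$, namely the atom of~$\A(B)$ determined by the common restriction $\of|_B$, and this atom is infinite. By construction $\od_A$ agrees with $\of$ on~$B$, so $\of\in\partial A$.

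For disjointness, note that two distinct atoms $A_1,A_2\in\A(B)$ have distinct distance classes $\od_{A_1}\ne \od_{A_2}$ in $\F(B,\Z)$: otherwise $A_1$ and $A_2$ would agree on~$B$ and hence be the same atom by the defining equivalence relation. A horofunction $\of$ can therefore agree with at most one $\od_A$ on~$B$, so $\of$ belongs to at most one shadow $\partial A$.

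For the topological part, observe that, by definition of the quotient product topology on $\F(\Gamma,\Z)$ and the description of a neighborhood base at a point given in the proof of Proposition~\ref{prop:HorofunctionTest}, the set
\[
V_A = \bigl\{\og\in \F(\Gamma,\Z) \;\bigl|\; \og \text{ agrees with } \od_A \text{ on } B\bigr\}
\]
is open in~$\F(\Gamma,\Z)$, and $\partial A = V_A\cap \hb\Gamma$ is therefore open in~$\hb\Gamma$. Since $\A(B)$ is finite, the shadows of the infinite atoms form a finite cover of $\hb\Gamma$ by pairwise disjoint open sets, so each $\partial A$ is also the complement of a finite union of open sets, hence closed. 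Thus they form a partition of $\hb\Gamma$ into clopen sets. The only potentially subtle point is the cover step, where one must explicitly use the horofunction test to produce an infinite atom containing the relevant approximating points; but this follows directly from Proposition~\ref{prop:HorofunctionTest} applied to the finite set~$B$, so there is no real obstacle.
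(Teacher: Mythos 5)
Your proof is correct and follows essentially the same route as the paper's: disjointness from the fact that distinct atoms have distinct distance classes on~$B$, the cover from Proposition~\ref{prop:HorofunctionTest}, openness from the product topology, and closedness from finiteness of the collection. One small difference: for the cover you derive directly that the atom containing the approximating points is infinite, whereas the paper instead gets this from Proposition~\ref{prop:ShadowsNonempty} (since $\of\in\partial A$ forces $\partial A\ne\emptyset$, hence $A$ infinite). Your version is a touch more self-contained there. However, you never check that \emph{every} infinite atom has nonempty shadow, which the paper does via Proposition~\ref{prop:ShadowsNonempty} and which is part of the usual meaning of ``partition''; your argument only shows the union is $\hb\Gamma$ and the shadows are disjoint and clopen, leaving open the possibility that some shadows are empty. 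That is a one-line fix by citing Proposition~\ref{prop:ShadowsNonempty}, but it is worth including.
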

\begin{proof}Since $\od_A$ and $\od_{A'}$ disagree on $B$ for any two $A,A'\in\A(B)$, the corresponding shadows are disjoint, and we know from Proposition~\ref{prop:ShadowsNonempty} that they are all nonempty.  To prove that the union of the shadows is all of $\hb\Gamma$, let $f\colon\Gamma\to\Z$ be a horofunction.  By Proposition~\ref{prop:HorofunctionTest}, there exists an $x\in\Gamma$ such that~$\od_x$ agrees with $\of$ on~$B$.  Let $A\in\A(B)$ be the atom containing~$x$.  Then $\od_x$ agrees with $\od_A$ on~$B$, so $\of$ agrees with $\od_A$ on~$B$, and therefore~$\of\in\partial A$.  Finally, observe that each $\partial A$ is open in~$\hb\Gamma$, since the preimage in $F(\Gamma,\mathbb{Z})$ is open in the product topology.  Since there are only finitely many~$\partial A$, it follows that each~$\partial A$ is also closed.
\end{proof}

\begin{proof}[Proof of Theorem~\ref{thm:BoundaryIsHoroboundary}] To show that $\partial\A(\Gamma)$ is homeomorphic to~$\hb\Gamma$, we begin by defining a function $h\colon\hb\Gamma\to \partial\A(\Gamma)$ as follows.  Given a point $\of\in \hb\Gamma$, we know from Proposition~\ref{prop:AtomsPartition} that there exists for each $n\in\N$ an atom $A_n\in \A_n(\Gamma)$ whose shadow contains~$\of$. Then for each $n$, the distance functions $\od_{A_{n}}$ and $\od_{A_{n+1}}$ must both agree with~$\of$ on~$B_n$, and therefore $\od_{A_n}$ is the restriction to~$B_n$ of~$\od_{A_{n+1}}$. It follows that the sequence $\{A_n\}$ is nested, with $A_0\supseteq A_1\supseteq \cdots$, so it corresponds to an infinite descending path in the tree~$\A(\Gamma)$.  Let $h(\of)$ be this path.  Note that $h$ is continuous since the sets $\{\partial \A(\Gamma)_A \mid A\in\A(\Gamma)\}$ form a basis for the topology on $\partial\A(\Gamma)$, and $h^{-1}(\partial \A(\Gamma)_A) = \partial A$ is open in $\hb\Gamma$ for each~$A$.

To prove that $h$ is bijective, let
\[
A_0 \supseteq A_1 \supseteq A_2 \supseteq \cdots.
\]
be any infinite descending path in~$\A(\Gamma)$, where $A_n\in\A_n(\Gamma)$ for each~$n$.  Then $\od_{A_{n+1}}$ agrees with $\od_{A_n}$ on~$B_n$ for each~$n$, and therefore
\[
\partial A_0 \supseteq \partial A_1 \supseteq \partial A_2 \supseteq \cdots.
\]
Since each $\partial A_n$ is closed and $\hb\Gamma$ is compact, the intersection $\bigcap_{n=0}^\infty \partial A_n$ contains at least one point.  This maps to $\{A_n\}$ under~$h$, and therefore $h$ is surjective.  Moreover, if $f$ and $f'$ are horofunctions such that $\of,\of'\in\bigcap_{n=0}^\infty A_n$,  then for each~$n$ both $\of$ and $\of'$ agree with~$\od_{A_n}$ on~$B_n$, and therefore $\of$ and $\of'$ agree with each other on~$B_n$.  Since this holds for every~$n$, it follows that $\of=\of'$, so the intersection $\bigcap_{n=0}^\infty A_n$ is a single point.  This proves that $h$ is injective and hence bijective.  Since $\hb\Gamma$ and $\partial\A(\Gamma)$ are compact Hausdorff spaces, it follows that $h$ is a homeomorphism.
\end{proof}



\subsection{Nearest Neighbors and Visibility}
\label{subsec:Neighbors and Visibility}

In this section we develop some geometric tools that will be essential in our proofs.

\begin{definition}Let $B\subseteq\Gamma$ be a finite set, and let $x\in\Gamma$.
\begin{enumerate}
\item A point $p\in B$ is called a \newword{nearest neighbor} for~$x$ if
\[
d(p,x)\leq d(q,x)
\]
for all $q\in B$.  We let $N(x,B)$ denote the set of nearest neighbors to $x$ in~$B$.
\item A point $p\in B$ is \newword{visible} from $x$ if
\[
d(p,x) < d(p,q) + d(q,x)
\]
for all $q\in B\setminus\{p\}$.  We let $V(x,B)$ denote the set of points in~$B$ that are visible from~$x$.
\end{enumerate}
\end{definition}

Equivalently, a point $p\in B$ is visible from $x$ if
\[
[p,x]\cap B = \{p\}
\]
for every geodesic $[p,x]$ from $p$ to~$x$.

Clearly $N(x,B) \subseteq V(x,B)$, i.e.~every nearest neighbor to $x$ in $B$ is visible from~$x$.  However, there are sometimes points in $B$ that are visible from $x$ but are not nearest neighbors, as shown in Figure~\ref{fig:NearestNeighborVisible}.
\begin{figure}
\centering
\includegraphics{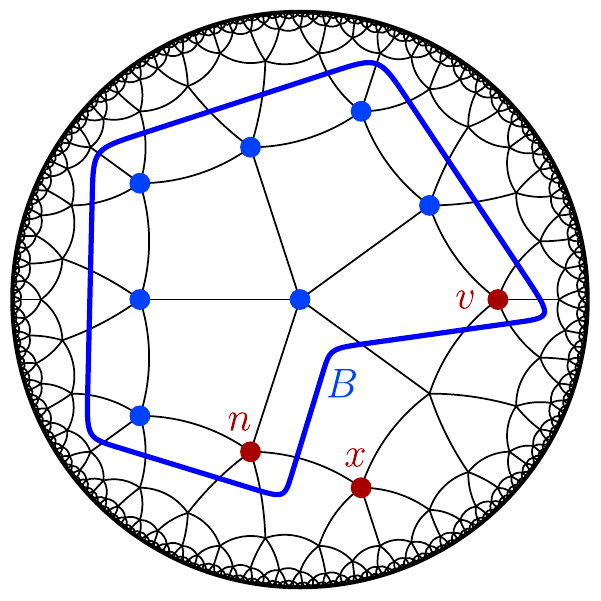}
\caption{A point $x$ and a finite subset $B$ of a hyperbolic graph~$\Gamma$.  Here $N(x,B) = \{n\}$ and $V(x,B) = \{n,v\}$.  In particular, $n$~is a nearest neighbor to~$x$, while $v$ is visible from~$x$ but is not a nearest neighbor.}
\label{fig:NearestNeighborVisible}
\end{figure}

\begin{proposition}\label{prop:SameNearestNeighbors}Let $B\subseteq\Gamma$ be a finite set, and let $x,y\in\Gamma$ be points that lie in the same atom of~$\A(B)$.  Then
\[
N(x,B)=N(y,B)\qquad\text{and}\qquad V(x,B)=V(y,B).
\]
\end{proposition}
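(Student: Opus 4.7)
The plan is to exploit directly the fact that $x$ and $y$ lying in the same atom of $\A(B)$ means precisely that $\od_x = \od_y$ in $\F(B,\Z)$, i.e.\ the restrictions $d_x|_B$ and $d_y|_B$ differ by an additive constant. Both of the conditions defining nearest neighbor and visibility only involve comparing distances from $x$ (resp.\ $y$) to points of $B$, so they should be invariant under a global shift of the function $p \mapsto d(x,p)$ on $B$.

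Concretely, I would begin by recording that there exists an integer $c$ such that
\[
d(x,p) = d(y,p) + c \qquad\text{for all } p \in B.
\]
This is just the unpacking of the definition of atom, using that the quotient $F(\Gamma,\Z)\to\F(\Gamma,\Z)$ identifies two functions iff they differ by a constant, and that $\od_x$ and $\od_y$ agree on $B$ inside $\F(B,\Z)$.

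For the nearest-neighbor claim, a point $p \in B$ lies in $N(x,B)$ iff $d(x,p) \leq d(x,q)$ for every $q \in B$. Subtracting $c$ from both sides, this is equivalent to $d(y,p) \leq d(y,q)$ for every $q \in B$, i.e.\ $p \in N(y,B)$. Hence $N(x,B) = N(y,B)$.

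For the visibility claim, a point $p \in B$ lies in $V(x,B)$ iff $d(p,x) < d(p,q) + d(q,x)$ for every $q \in B\setminus\{p\}$. Rewriting with $d(p,x) = d(p,y) + c$ and $d(q,x) = d(q,y) + c$, the inequality becomes $d(p,y) + c < d(p,q) + d(q,y) + c$, which simplifies to $d(p,y) < d(p,q) + d(q,y)$, i.e.\ $p \in V(y,B)$. Hence $V(x,B) = V(y,B)$. There is no serious obstacle here beyond carefully translating the atom condition into the additive-constant statement; the proof is really just two one-line verifications once that translation is in hand, and in particular no hyperbolicity of $\Gamma$ is needed for this proposition.
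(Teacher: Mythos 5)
Your proof is correct and takes essentially the same approach as the paper: the paper phrases the argument in terms of the differences $d_x(q)-d_x(p)$, which are invariant across the atom, while you make the same point by introducing the explicit additive constant $c$; the two framings are immediately interchangeable.
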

\begin{proof}For the first statement, observe that a point $p\in B$ lies in $N(x,B)$ if and only if
\[
d_x(q) - d_x(p) \geq 0
\]
for all $q\in B$.  But $\od_x$ and $\od_y$ agree on~$B$, so
\[
d_x(q) - d_x(p) = d_y(q) - d_y(p)
\]
for all $x,y\in B$, which implies that $N(x,B) = N(y,B)$.

A similar argument holds for $V(x,B)$.  Specifically, a point $p\in B$ lies in $V(x,B)$ if and only if
\[
d_x(p) - d_x(q) < d(p,q)
\]
for all $q\in B$.  Again, the quantity on the left depends only on~$\od_x$, so it follows that $V(x,B) = V(y,B)$.
\end{proof}

If $B\subseteq\Gamma$ is a finite set and $A\in\A(B)$ we will let
\[
N(A,B)\qquad\text{and}\qquad V(A,B)
\]
denote the set of points in $B$ that are nearest neighbors to points in $A$ and visible from points in~$A$, respectively.  That is, $N(A,B) = N(x,B)$ and $V(A,B) = V(x,B)$ for all $x\in A$.  This is well-defined by Proposition~\ref{prop:SameNearestNeighbors}.

Though nearest neighbors are fairly natural, visibility may see like a strange concept.  The following proposition explains our interest in visibility.

\begin{proposition}\label{prop:VisibleGeodesic}Let $B\subseteq \Gamma$ be a finite set, let $p\in B$, and let $x\in\Gamma$.  Then there exists a geodesic from $p$ to $x$ that contains a point of~$V(x,B)$.
\end{proposition}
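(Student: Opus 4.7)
The plan is to produce the required geodesic by a minimization argument on $B$ itself. Consider the set
\[
S = \bigl\{q \in B \;\bigl|\; q \text{ lies on some geodesic from } p \text{ to } x\bigr\}.
\]
Any geodesic from $p$ to $x$ witnesses that $p \in S$, so $S$ is nonempty, and $S \subseteq B$ is finite. Choose $q \in S$ minimizing $d(q,x)$. My claim is that this $q$ already lies in $V(x,B)$, which suffices because by definition of $S$ some geodesic from $p$ to $x$ passes through $q$.

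To verify the claim I would argue by contradiction. Suppose $q \notin V(x,B)$. Then there is some $r \in B \setminus \{q\}$ violating the visibility inequality, which combined with the triangle inequality forces the equality $d(q,x) = d(q,r) + d(r,x)$; concretely, some geodesic from $q$ to $x$ passes through $r$. Since $q \in S$, we also have $d(p,x) = d(p,q) + d(q,x)$. Chaining these,
\[
d(p,x) = d(p,q) + d(q,r) + d(r,x) \;\geq\; d(p,r) + d(r,x) \;\geq\; d(p,x),
\]
so both triangle inequalities are equalities. This means one can concatenate a geodesic from $p$ to $q$, a geodesic from $q$ to $r$, and a geodesic from $r$ to $x$ into a single geodesic from $p$ to $x$ passing through $r$, showing $r \in S$. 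But $d(r,x) = d(q,x) - d(q,r) < d(q,x)$, contradicting the minimality of $d(q,x)$ among elements of $S$.

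There is no real obstacle here, since everything follows from the triangle inequality and the definitions; the only subtlety is making sure the concatenation produced in the contradiction step genuinely remains a geodesic, which is immediate from the equality case of the triangle inequality and the fact that a path whose length equals the distance between its endpoints is a geodesic. No hyperbolicity is needed for this statement, which is consistent with its intended role as a general geometric lemma about visible points in the upcoming membership test and finite-type arguments.
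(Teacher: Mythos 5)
Your proof is correct and uses essentially the same idea as the paper's: among $B$-points lying on geodesics from $p$ to $x$, pass to one minimizing distance to $x$ and show it must be visible. The paper phrases this as an iterative tail-replacement ("continuing in this fashion, we eventually obtain\ldots"), while your minimization over the finite set $S$ makes the termination argument explicit and is, if anything, slightly cleaner.
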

\begin{proof}Let $(q_1,\ldots,q_n)$ be any geodesic from $q_1=p$ to $q_n=x$.  Let $q_i$ be the last point in this geodesic that lies in~$B$.  If $q_i$ is visible from $x$ we are done.  Otherwise, we can replace $(q_i,q_{i+1},\ldots,q_n)$ with another geodesic from $q_i$ to $x$ that intersects $B$ at a later point.  Continuing in this fashion, we eventually obtain a geodesic that contains a point of~$V(x,B)$.
\end{proof}

The proposition above is actually the defining property of $V(x,B)$.  Specifically, if $B\subseteq \Gamma$ is a finite set, $x\in\Gamma$, and $P\subseteq B$, the following are equivalent:
\begin{enumerate}
\item $V(x,B)\subseteq P$.
\item For every point $p\in B$, every geodesic from $p$ to $x$ contains a point of~$P$.
\end{enumerate}
Condition (1) implies condition (2) by Proposition~\ref{prop:VisibleGeodesic}, while condition~(2) implies condition~(1) since every visible point $p\in B$ has a geodesic $[p,x]$ that intersects $B$ only at~$p$.

We can use Proposition~\ref{prop:VisibleGeodesic} to obtain a useful test for whether a given point lies in a given atom.

\begin{lemma}\label{lem:Reconstructdx}Let $B\subseteq\Gamma$ be finite, let $x\in\Gamma$, and let $P\subseteq B$ be any set that contains $V(x,B)$.  Then for all~$b\in B$,
\[
d(b,x) = \min_{p\in P} \bigl[d(b,p) + d(p,x)\bigr].
\]
\end{lemma}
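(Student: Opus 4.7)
The plan is to establish the two directions of the equality: the $\leq$ direction from the triangle inequality, and the $\geq$ direction (i.e., that the minimum is actually attained) from Proposition~\ref{prop:VisibleGeodesic}.

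First I would observe that, by the triangle inequality,
\[
d(b,x) \;\leq\; d(b,p) + d(p,x)
\]
for every $p \in P \subseteq B$. Taking the minimum over $p \in P$, this gives the inequality
\[
d(b,x) \;\leq\; \min_{p\in P} \bigl[d(b,p) + d(p,x)\bigr].
\]

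For the reverse inequality, I would invoke Proposition~\ref{prop:VisibleGeodesic}, applied with the roles of $p$ and $x$ in that proposition instantiated as follows: since $b \in B$ and $x \in \Gamma$, there exists a geodesic from $b$ to $x$ that passes through some point $p_0 \in V(x,B)$. By hypothesis, $V(x,B) \subseteq P$, so $p_0 \in P$. Since $p_0$ lies on a geodesic from $b$ to $x$, the distance $d(b,x)$ decomposes as
\[
d(b,x) = d(b,p_0) + d(p_0,x),
\]
and therefore
\[
\min_{p\in P} \bigl[d(b,p) + d(p,x)\bigr] \;\leq\; d(b,p_0) + d(p_0,x) \;=\; d(b,x).
\]
Combining the two inequalities yields the desired equality. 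Note the minimum is well-defined since $V(x,B)$ is nonempty whenever $B$ is nonempty (any nearest neighbor to $x$ in $B$ is visible), so $P$ is nonempty as well; the case $B = \emptyset$ is vacuous since there is no $b \in B$ to test.

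The proof is essentially immediate from Proposition~\ref{prop:VisibleGeodesic}, so there is no real obstacle; the only conceptual point to absorb is that the lemma is stated for an arbitrary $P \supseteq V(x,B)$ rather than for $V(x,B)$ itself, but this follows formally from the same argument since enlarging $P$ can only decrease the minimum on the right, while the lower bound $d(b,x)$ still holds by the triangle inequality.
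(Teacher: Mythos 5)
Your proof is correct and follows essentially the same argument as the paper: the $\leq$ direction from the triangle inequality, and the $\geq$ direction by applying Proposition~\ref{prop:VisibleGeodesic} to find a point of $V(x,B)\subseteq P$ on a geodesic from $b$ to $x$. The added remark on non-emptiness of $P$ is a harmless clarification the paper leaves implicit.
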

\begin{proof}Let $b\in B$, and let $m$ be the given minimum. By the triangle in equality, $d(b,x) \leq d(b,p) + d(p,x)$ for all $p\in P$, and therefore $d(b,x)\leq m$.  For the reverse inequality, Proposition~\ref{prop:VisibleGeodesic} tells us that there exists a geodesic $[b,x]$ from $b$ to $x$ that intersects~$V(x,B)$.  Let $p\in [b,x]\cap V(x,B)$.  Then $p\in P$ and $d(b,p) + d(p,x) = d(b,x)$, which proves that $m\leq d(b,x)$.
\end{proof}

\begin{proposition}\label{prop:VisibilityTest}Let $B\subseteq \Gamma$ be a finite set. Let $x\in\Gamma$ and $A\in\A(B)$, and let $P\subseteq B$ be any set containing $V(x,B)\cup V(A,B)$.  Then $x\in A$ if and only if $\od_x$ agrees with $\od_A$ on~$P$.
\end{proposition}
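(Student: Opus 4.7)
The plan is to prove both directions separately, with the forward implication being immediate from the definition of an atom, and the reverse implication relying on the reconstruction formula from Lemma~\ref{lem:Reconstructdx}.

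For the forward direction, if $x\in A$ then by the very definition of $A$ as the atom of $x$ in $\A(B)$, the function $\od_x$ agrees with $\od_A$ on all of $B$, and hence in particular on any subset $P\subseteq B$. This direction requires no hypothesis on $P$.

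For the reverse direction, suppose $\od_x$ agrees with $\od_A$ on $P$. Pick any $y\in A$, so that $\od_y=\od_A$; then there is a constant $C\in\Z$ with $d(p,x)-d(p,y)=C$ for every $p\in P$. I want to promote this identity from $P$ to all of $B$. For an arbitrary $b\in B$, the hypothesis $P\supseteq V(x,B)$ lets me apply Lemma~\ref{lem:Reconstructdx} to $x$ to get
\[
d(b,x) \,=\, \min_{p\in P}\bigl[d(b,p)+d(p,x)\bigr],
\]
and the hypothesis $P\supseteq V(A,B)=V(y,B)$ lets me apply the same lemma to $y$ to get
\[
d(b,y) \,=\, \min_{p\in P}\bigl[d(b,p)+d(p,y)\bigr].
\]
Substituting $d(p,x)=d(p,y)+C$ into the first minimum and pulling the constant $C$ outside yields $d(b,x)=d(b,y)+C$. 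Since this holds for every $b\in B$, the functions $d_x$ and $d_y$ differ by a constant on all of $B$, so $\od_x$ agrees with $\od_y=\od_A$ on $B$, which is exactly the statement that $x$ lies in the atom $A$.

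The proof is essentially mechanical once the two applications of Lemma~\ref{lem:Reconstructdx} are lined up; the only minor subtlety is keeping track of the fact that $\od_x$ and $\od_A$ are elements of $\F(\Gamma,\Z)$, so their equality on $P$ means equality of the representatives $d_x$ and $d_y$ up to a global constant $C$, which then propagates unchanged from $P$ to $B$. No additional hyperbolicity hypothesis or geometric input beyond Proposition~\ref{prop:VisibleGeodesic} (embedded in the lemma) is needed for this step.
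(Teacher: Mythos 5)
Your proof is correct and follows the same route as the paper: the forward direction from the definition of an atom, and the reverse direction by applying Lemma~\ref{lem:Reconstructdx} twice (once for $x$, once for a point $y\in A$) and comparing. You are merely a bit more explicit than the paper about tracking the additive constant $C$ through the minimum, which the paper leaves implicit in the phrase ``Since $\od_x$ and $\od_y$ agree on $P$, it follows that $\od_x$ and $\od_y$ agree on all of $B$.''
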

\begin{proof}If $x\in A$, then $\od_x$ and $\od_A$ must agree on all of~$B$, and therefore $\od_x$ agrees with $\od_A$ on~$P$.  For the converse, suppose that $\od_x$ agrees with $\od_A$ on~$P$.  Let~$y\in A$, and note that $V(y,B) = V(A,B) \subseteq P$ and $\od_x$ and $\od_y$ agree on~$P$.  By Lemma~\ref{lem:Reconstructdx}, we know that
\[
d_x(b) = \min_{p\in P}\bigl[d(b,p) + d_x(p)\bigr]
\qquad\text{and}\qquad d_y(b) = \min_{p\in P}\bigl[d(b,p) + d_y(p)\bigr]
\]
for all $p\in B$. Since $\od_x$ and $\od_y$ agree on $P$, it follows that $\od_x$ and $\od_y$ agree on all of~$B$.  Then $\od_x$ agrees with $\od_A$ on~$B$, so $x\in A$.
\end{proof}

\subsection{A Membership Test for Atoms}
\label{subsec:MembershipTest}

In this section we use the notions of nearest neighbors and visibility to obtain a useful test for whether a given point $x\in\Gamma$ lies in a given atom of~$\A(\Gamma)$.

Recall that $B_n$ denotes the $n$-ball in $\Gamma$ centered at the base vertex~$x_0$.  Note that if $x$ is any point in~$\Gamma$ with $\ell(x)\geq n$, then $N(x,B_n)$ and $V(x,B_n)$ are both subsets of the $n$-sphere~$S_n$.  If particular, $N(x,B_n)$ is precisely the set of points in $S_n$ that lie on geodesics from $x_0$ to~$x$.

The following fundamental proposition helps us pin down the locations of the points in~$V(x,B_n)$.  Though we assumed at the beginning of Section~\ref{sec:HyperbolicRational} that $\Gamma$ is $\delta$-hyperbolic, the following proposition is the first time we make use of this fact.

\begin{proposition}\label{prop:MaxAngularDistance}Let $x\in\Gamma$, let $n\leq \ell(x)$, and let $p\in V(x,B_n)$.  Let $x_0,x_1,\ldots,x_{\ell(x)}$ be a geodesic from $x_0$ to $x$ and let $p_0,p_1,\ldots,p_n$ be a geodesic from $x_0$ to~$p$.  Then
\[
d(x_i,p_i)\leq 4\delta+2
\]
for all\/~$0\leq i\leq n$.
\end{proposition}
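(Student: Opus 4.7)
The plan is to apply the $\delta$-thin triangle condition to the geodesic triangle with vertices $x_0$, $p$, and $x$, using the visibility of $p$ to control the third side $[p,x]$.

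First, I would fix an arbitrary geodesic $p = y_0, y_1, \ldots, y_m = x$ from $p$ to $x$, where $m = d(p,x)$, and show that $\ell(y_j) \geq n+1$ for every $j \geq 1$. Each such $y_j$ is distinct from $p$ and lies on a geodesic from $p$ to $x$, so $d(p, y_j) + d(y_j, x) = d(p,x)$; if $y_j$ were in $B_n$, this would contradict the strict inequality in the definition of visibility. Thus the geodesic $[p,x]$ leaves $B_n$ at the very first step and stays outside.

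For each $i \in \{0, 1, \ldots, n\}$, the thin triangle condition furnishes a vertex $w$ on $[x_0, p] \cup [p, x]$ with $d(x_i, w) \leq \delta$. In Case 1, $w = p_j$ sits on $[x_0, p]$; since $\ell(x_i) = i$ and $\ell(p_j) = j$, the triangle inequality applied to $\ell$ gives $|i - j| \leq \delta$, so
\[
d(x_i, p_i) \;\leq\; d(x_i, p_j) + d(p_j, p_i) \;\leq\; \delta + |i - j| \;\leq\; 2\delta.
\]
In Case 2, $w = y_j$ lies on $[p, x]$; the sub-case $j = 0$ collapses into Case 1 via $y_0 = p = p_n$, while for $j \geq 1$ the previous step gives $\ell(y_j) \geq n+1$, yet $\ell(y_j) \leq \ell(x_i) + \delta = i + \delta$, forcing $i \geq n + 1 - \delta$. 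Consequently, whenever $i \leq n - \delta$ only Case 1 can occur, giving $d(x_i, p_i) \leq 2\delta$.

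The main obstacle is the narrow band $n - \delta \leq i \leq n$, where Case 2 genuinely occurs and the bound $d(x_i, y_j) \leq \delta$ does not directly control $d(x_i, p_i)$ (since $j$ could be large). To close the argument I would chain through the ``safe'' index $n-\delta$, where Case 1 already supplies $d(x_{n-\delta}, p_{n-\delta}) \leq 2\delta$, and then apply the triangle inequality twice along the two geodesics:
\[
d(x_i, p_i) \;\leq\; \bigl(i-(n-\delta)\bigr) + d(x_{n-\delta}, p_{n-\delta}) + \bigl(i-(n-\delta)\bigr) \;\leq\; 2(i-n+\delta) + 2\delta \;\leq\; 4\delta.
\]
When $n < \delta$ the entire range is absorbed by the trivial estimate $d(x_i, p_i) \leq 2i \leq 2\delta$. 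The additional slack of $+2$ in the stated bound accommodates any convention-dependent off-by-one in the thin triangle definition.
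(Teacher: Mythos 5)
Your proof is correct and rests on the same essential ingredients as the paper's: visibility forces the geodesic $[p,x]$ to leave $B_n$ immediately after $p$, the $\delta$-thin triangle condition then forces $[x_0,p]$ and $[x_0,x]$ to travel close together, and a short chaining argument closes the gap for indices near $n$. The paper phrases this by fixing $j = \min(n-1-\delta,i)$, arguing that $p_j$ lies more than $\delta$ from every point of $[p,x]$ and hence within $\delta$ of some $x_k$, and chaining $x_i \to x_k \to p_j \to p_i$; you instead apply thinness to $x_i$, observe that any nearby point on $[p,x]$ other than $p$ has length at least $n+1$ and hence forces $i \geq n+1-\delta$, obtain the clean bound $d(x_i,p_i)\leq 2\delta$ on the safe range $i\leq n-\delta$, and then chain through the safe index. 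These are two organizations of the same argument, and yours has the mild advantage of isolating the clean intermediate estimate. One small correction: the extra $+2$ in the stated bound is not an artifact of conventions in the thin-triangle definition. It is needed because $n-\delta$ need not be an integer, so the safe index must be taken as $\lfloor n-\delta\rfloor$; the chaining distance $i - \lfloor n-\delta\rfloor$ can then be as large as $\delta+1$, and the two occurrences of this quantity in your triangle-inequality chain contribute the $+2$. (This parallels the $-1$ in the paper's choice $j = \min(n-1-\delta,i)$, which yields $|i-j|\leq\delta+1$ and is used twice.) When $\delta$ happens to be an integer your argument in fact gives the sharper bound $4\delta$.
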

\begin{proof}Let $0\leq i \leq n$.  If $i\leq 2\delta+1$ we are done, so suppose that $i> 2\delta+1$.  Let $[p,x]$ be a geodesic from $p$ to~$x$. Since $p$ is visible from~$x$, this intersects $B_n$ only at~$p$. Let $j = \min(n-1-\delta,i)$ and note that $j < n-\delta$, which means that the point~$p_j$ does not lie within $\delta$ of any point on $[p,x]$.  Since $\Gamma$ is $\delta$-hyperbolic, it follows that $d(p_j,x_k)\leq \delta$ for some~$k$.  
Note then that $|j-k|\leq \delta$, so
\[
|i-k| \leq |i-j| + |j-k| \leq (\delta + 1) + \delta = 2\delta+1.
\]
Then
\begin{multline*}
\qquad d(x_i,p_i) \leq d(x_i,x_k) + d(x_k,p_j) + d(p_j,p_i) \leq |i-k| + \delta + |i-j| \\[6pt]
\leq (2\delta+1)+\delta + (\delta+1) = 4\delta+2.\tag*{\qedhere}
\end{multline*}
\end{proof}

This proposition motivates the following definition.

\begin{definition}\label{def:Proximal}Let $x\in\Gamma$, let $n\leq \ell(x)$, and let $p\in S_n$.  We say that $p$ is \newword{proximal} to~$x$ if there exists a geodesic $p_0,p_1,\ldots,p_n$ from $x_0$ to $p$ such that
\[
d(x_i,p_i) \leq 4\delta + 2
\]
for all $0\leq i\leq n$ and every geodesic $x_0,x_1,\ldots,x_m$ from $x_0$ to~$x$.  We let $P(x,S_n)$ be the set of points in $S_n$ that are proximal to~$x$.
\end{definition}

By Proposition~\ref{prop:MaxAngularDistance}, every point in $B_n$ that is visible to~$x$ must be proximal to~$x$, i.e.~$V(x,B_n) \subseteq P(x,S_n)$.  

The advantage of $P(x,S_n)$ over $V(x,B_n)$ is that there is a useful inductive test to check whether a point $p\in S_n$ is proximal to~$x$, as shown in the following proposition.

\begin{proposition}\label{prop:ProximalInductive}Let $n\geq 1$, let $x\in \Gamma$ with $\ell(x)\geq n$, and let $p\in S_n$.  Then $p$ is proximal to $x$ if and only if
\begin{enumerate}
\item $p$ has a predecessor that is proximal to~$x$, and
\item $d(p,q)\leq 4\delta+2$ for all $q\in N(x,B_n)$.
\end{enumerate}
\end{proposition}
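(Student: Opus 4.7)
The plan is to argue directly from Definition~\ref{def:Proximal}, using as the main observation the fact that, for $\ell(x)\ge n$, the nearest neighbor set $N(x,B_n)$ coincides with the set of $n$-th vertices appearing on geodesics $x_0,x_1,\ldots,x_m=x$ from the base vertex to~$x$. Indeed, any such $x_n$ lies in $S_n\subseteq B_n$ at distance $m-n$ from~$x$, which is the minimum distance from $B_n$ to~$x$ by the triangle inequality; conversely, any $q\in N(x,B_n)$ must lie in $S_n$ (otherwise one could step toward $x$ and remain in~$B_n$), and concatenating a geodesic $x_0,\ldots,q$ with a geodesic $q,\ldots,x$ produces a geodesic from $x_0$ to~$x$ with $q$ in position~$n$. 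This identification is what makes condition~(2) of the proposition precisely a statement about the possible positions of~$x_n$.

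For the forward implication, assume $p$ is proximal to~$x$ with witnessing geodesic $p_0,p_1,\ldots,p_n$. The truncation $p_0,\ldots,p_{n-1}$ is a geodesic from $x_0$ to the predecessor $p_{n-1}\in S_{n-1}$ of~$p$, and $d(x_i,p_i)\le 4\delta+2$ continues to hold for $0\le i\le n-1$ and every geodesic to~$x$, which gives~(1). For~(2), given any $q\in N(x,B_n)$, choose a geodesic $x_0,\ldots,x_m=x$ with $x_n=q$ (using the opening observation), and apply proximality at $i=n$ to obtain $d(p,q)=d(p_n,x_n)\le 4\delta+2$.

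For the converse, suppose (1) and~(2) hold. Let $p'\in S_{n-1}$ be a predecessor of~$p$ that is proximal to~$x$, and fix a witnessing geodesic $p_0,\ldots,p_{n-1}=p'$. Set $p_n:=p$; since $p'$ and $p$ are adjacent and $\ell(p)=n$, the extension $p_0,\ldots,p_n$ is itself a geodesic from $x_0$ to~$p$. Now fix any geodesic $x_0,\ldots,x_m=x$. For $0\le i\le n-1$, the inequality $d(x_i,p_i)\le 4\delta+2$ is inherited from the proximality of~$p'$. For $i=n$, the initial segment $x_0,\ldots,x_n$ is a geodesic, so $x_n\in N(x,B_n)$ by the opening observation, and condition~(2) yields $d(x_n,p)\le 4\delta+2$. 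Hence $p$ is proximal to~$x$.

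The proof is essentially bookkeeping; the only substantive point is the identification of $N(x,B_n)$ with the set of possible $n$-th vertices of geodesics to~$x$, which dissolves the apparent asymmetry between the ``$\exists$ geodesic to~$p$, $\forall$ geodesic to~$x$'' quantifier structure in the definition of proximal and the single-distance condition~(2) in the proposition.
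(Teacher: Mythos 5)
Your proof is correct and follows essentially the same route as the paper's: truncate the witnessing geodesic for the forward direction, extend the predecessor's geodesic for the converse, and use the fact that $N(x,B_n)$ is exactly the set of $n$-th vertices of geodesics from $x_0$ to $x$ (a fact the paper states just before the proposition and that you rightly prove rather than assume). No gaps.
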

\begin{proof}Suppose that $p$ is proximal to $x$, and let $p_0,p_1,\ldots,p_n$ be a geodesic from $x_0$ to $p$ satisfying the definition of proximality.  Then $p_{n-1}$ is a predecessor for~$p_n$ and is clearly proximal, with $p_0,p_1,\ldots,p_{n-1}$ being the required geodesic.  Furthermore, if $q$ is a nearest neighbor to $x$ in $B_n$, then there must exist a geodesic $x_0,\ldots,x_{\ell(x)}$ from $x_0$ to $x$ such that $x_n=q$, and it follows that $d(q,p) = d(x_n,p_n) \leq 4\delta+2$.

For the converse, suppose that $p$ satisfies conditions~(1) and~(2).  Let $p_{n-1}$ be a predecessor to $p$ that is proximal to~$x$, and let $p_0,\ldots,p_{n-1}$ be a geodesic from $x_0$ to $p_{n-1}$ satisfying the definition of proximality. Note then that $p_0,\ldots,p_{n-1},p$ is a geodesic from $x_0$ to~$p$.  Let $x_0,\ldots,x_{\ell(x)}$ be a geodesic from $x_0$ to~$x$.  We know that $d(x_i,p_i)\leq 4\delta+2$ for all $0\leq i\leq n-1$.  Moreover, the point $x_n$ must lie in $N(x,B_n)$, and therefore $d(p,x_n)\leq 4\delta+2$, which proves that $p$ is proximal to~$x$.
\end{proof}

\begin{corollary}\label{prop:DiameterOfP}If $x\in\Gamma$ and $n\leq \ell(x)$, then $P(x,S_n)$ has diameter at most\/~$8\delta+4$.\qed
\end{corollary}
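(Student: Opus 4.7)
The plan is to prove this directly from the definition of proximality using a single application of the triangle inequality, with the endpoint $x_n$ of any fixed geodesic from $x_0$ to $x$ serving as the common anchor.

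First, I would fix any geodesic $x_0, x_1, \ldots, x_{\ell(x)}$ from $x_0$ to $x$ (which exists since $\Gamma$ is connected), and single out the vertex $x_n \in S_n$. Then for any two points $p, p' \in P(x, S_n)$, Definition~\ref{def:Proximal} guarantees the existence of geodesics $p_0, p_1, \ldots, p_n$ from $x_0$ to $p$ and $p'_0, p'_1, \ldots, p'_n$ from $x_0$ to $p'$ with the property that, when paired against the chosen geodesic $x_0, x_1, \ldots, x_{\ell(x)}$, they satisfy $d(x_i, p_i) \leq 4\delta+2$ and $d(x_i, p'_i) \leq 4\delta+2$ for every $0 \leq i \leq n$. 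In particular, specialising to $i=n$ yields $d(x_n, p) \leq 4\delta+2$ and $d(x_n, p') \leq 4\delta+2$.

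The triangle inequality then gives
\[
d(p, p') \,\leq\, d(p, x_n) + d(x_n, p') \,\leq\, (4\delta+2) + (4\delta+2) \,=\, 8\delta+4,
\]
and since $p, p' \in P(x, S_n)$ were arbitrary this bounds the diameter as claimed.

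There is no real obstacle here: the whole content of Proposition~\ref{prop:MaxAngularDistance} and the preceding setup has already been absorbed into the definition of proximality, which was crafted precisely so that the ``fellow-travelling'' bound $4\delta+2$ is available at every depth along some pair of geodesics. The corollary is a one-line consequence of that definition via the triangle inequality, and the only choice to make is which vertex on a geodesic to $x$ to use as the anchor; the natural choice $x_n$ works immediately because it lies in $S_n$ at the same depth as $p$ and $p'$.
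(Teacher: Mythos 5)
Your proof is correct and is essentially the paper's own argument: the anchor $x_n$ you chose is precisely a point of $N(x,B_n)$, and the bound $d(p,x_n)\leq 4\delta+2$ you extract from Definition~\ref{def:Proximal} is exactly condition~(2) of Proposition~\ref{prop:ProximalInductive}, from which the paper's \qed\ tacitly derives the corollary via the same triangle-inequality step. Working straight from the definition rather than citing the inductive characterisation is a cosmetic difference only.
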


The proposition above showed that $P(x,S_n)$ is in some sense determined by nearest neighbors.  The following proposition carries this idea further.

\begin{proposition}\label{prop:ContainmentND}Let $n\geq 0$, let $x,y\in\Gamma$ with $\ell(x)\geq n$ and $\ell(y)\geq n$, and suppose that $N(x,B_n)\subseteq N(y,B_n)$. Then $P(y,S_n)\subseteq P(x,S_n)$.
\end{proposition}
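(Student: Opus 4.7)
The plan is to induct on $n$. The base case $n = 0$ is immediate since $S_0 = \{x_0\}$, and $x_0$ is proximal to every $x \in \Gamma$ via the trivial length-$0$ geodesic at $x_0$.

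For the inductive step, the first thing I would do is propagate the hypothesis one level down, proving that $N(x,B_n) \subseteq N(y,B_n)$ forces $N(x,B_{n-1}) \subseteq N(y,B_{n-1})$. The key observation is that $N(x,B_{n-1})$ is exactly the set of predecessors (in the sense of the successor/predecessor terminology from Section~\ref{sec:HyperbolicRational}) of points in $N(x,B_n)$: since $\ell(x) \geq n$, a point $q' \in S_{n-1}$ is a nearest neighbor of $x$ in $B_{n-1}$ exactly when it lies on a geodesic from $x_0$ to $x$, which in turn is equivalent to $q'$ being a predecessor of some $q \in N(x,B_n)$. (Given such a geodesic, its next vertex after $q'$ is a $q \in N(x,B_n)$; conversely, if $q' \in S_{n-1}$ is a predecessor of $q \in N(x,B_n)$, then a geodesic $[x_0,q']$ extended by the edge to $q$ and by any geodesic $[q,x]$ has total length $n + (\ell(x)-n) = \ell(x)$.) Since the predecessor map is inclusion-preserving, the downward containment follows immediately.

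Next I would apply the inductive characterization of Proposition~\ref{prop:ProximalInductive}. Given $p \in P(y,S_n)$, that proposition yields a predecessor $p' \in P(y,S_{n-1})$ together with the bound $d(p,q) \leq 4\delta+2$ for all $q \in N(y,B_n)$. The inductive hypothesis, applied at level $n-1$ using the downward inclusion just established (and noting $\ell(x),\ell(y) \geq n \geq n-1$), places $p'$ in $P(x,S_{n-1})$, so $p$ has a predecessor proximal to $x$. The distance bound restricts from $N(y,B_n)$ to the subset $N(x,B_n)$ for free. Invoking Proposition~\ref{prop:ProximalInductive} now in the reverse direction for $x$ concludes that $p \in P(x,S_n)$.

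I do not anticipate a real obstacle here: once the nearest-neighbor hypothesis is propagated downward, the inductive characterization of proximality mechanically converts the inductive hypothesis into the desired conclusion, and the distance bound transfers trivially from the larger set $N(y,B_n)$ to the smaller set $N(x,B_n)$. The only substantive (but still elementary) content is the characterization of $N(x,B_{n-1})$ as the predecessor set of $N(x,B_n)$.
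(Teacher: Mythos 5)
Your proposal is correct and follows the paper's proof essentially verbatim: induct on $n$, observe that $N(x,B_{n-1})$ is the predecessor set of $N(x,B_n)$ so the hypothesis propagates down a level, and then apply the inductive characterization of proximality (Proposition~\ref{prop:ProximalInductive}) in both directions, with the distance bound transferring from $N(y,B_n)$ to its subset $N(x,B_n)$. Your justification of the predecessor-set characterization is a worthwhile elaboration of a step the paper merely asserts.
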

\begin{proof}We use induction on $n$.  The statement is obvious for $n=0$, since $P(x,S_0)=P(y,S_0) = \{x_0\}$.  For $n>0$, suppose that $N(x,B_n)\subseteq N(y,B_n)$, and let $p\in P(y,S_n)$.  We know from Proposition~\ref{prop:ProximalInductive} that $p$ has a predecessor $p'\in P(y,S_{n-1})$.  But $N(x,B_{n-1})\subseteq N(y,B_{n-1})$, since $N(x,B_{n-1})$ consists of all predecessors of elements of $N(x,B_n)$, and similarly for~$y$.  From our induction hypothesis, it follows that $P(y,S_{n-1})\subseteq P(x,S_{n-1})$, so $p'\in P(x,S_{n-1})$.  But $d(p,q)\leq 4\delta+2$ for all $q\in N(y,B_n)$, and hence $d(p,q)\leq 4\delta+2$ for all $q\in N(x,B_n)$, which proves that $p\in P(x,S_n)$ by Proposition~\ref{prop:ProximalInductive}.
\end{proof}

\begin{corollary}\label{Cor:WellDefinedPn}Let $n\geq 0$, and let $x,y\in \Gamma$ with $\ell(x)\geq n$ and $\ell(y)\geq n$.  If $x$ and $y$ lie in the same atom of $\A(B_n)$, then 
\[
P(x,S_n) = P(y,S_n)
\]
\end{corollary}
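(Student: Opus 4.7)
The plan is to derive this corollary directly from the two immediately preceding results: Proposition~\ref{prop:SameNearestNeighbors} (which says that points in the same atom have the same nearest-neighbor set) and Proposition~\ref{prop:ContainmentND} (which says that containment of nearest-neighbor sets gives reverse containment of proximal sets). The two statements fit together so neatly that the corollary requires essentially no new argument.

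First I would invoke Proposition~\ref{prop:SameNearestNeighbors} applied to the finite set $B = B_n$. Since $x$ and $y$ lie in the same atom of $\A(B_n)$, that proposition yields $N(x,B_n) = N(y,B_n)$. (The hypotheses $\ell(x) \geq n$ and $\ell(y) \geq n$ play no role here, but they do ensure that $N(x,B_n)$ and $N(y,B_n)$ are subsets of the sphere $S_n$, matching the setting of Proposition~\ref{prop:ContainmentND}.)

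Next I would apply Proposition~\ref{prop:ContainmentND} twice. Using the inclusion $N(x,B_n) \subseteq N(y,B_n)$ gives $P(y,S_n) \subseteq P(x,S_n)$, and using the reverse inclusion $N(y,B_n) \subseteq N(x,B_n)$ gives $P(x,S_n) \subseteq P(y,S_n)$. Combining these two containments yields the desired equality $P(x,S_n) = P(y,S_n)$.

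There is no substantive obstacle: all the work has already been absorbed into Propositions~\ref{prop:SameNearestNeighbors} and~\ref{prop:ContainmentND}, and the corollary is essentially just the observation that Proposition~\ref{prop:ContainmentND} was stated as a one-sided containment but its symmetric version is available for free when the nearest-neighbor sets coincide. The upshot is that $P(\cdot,S_n)$ is well-defined on atoms of $\A(B_n)$ (as the notation $P(A,S_n)$ for $A \in \A_n(\Gamma)$, to be used in the sequel, will implicitly assume).
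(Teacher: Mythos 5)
Your proof is correct and follows exactly the same route as the paper: invoke Proposition~\ref{prop:SameNearestNeighbors} to get $N(x,B_n) = N(y,B_n)$, then apply Proposition~\ref{prop:ContainmentND} in both directions to conclude $P(x,S_n) = P(y,S_n)$. Nothing is missing.
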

\begin{proof}By Proposition~\ref{prop:SameNearestNeighbors} we know that $N(x,B_n) = N(y,B_n) = N(A,B_n)$, so the result follows from Proposition~\ref{prop:ContainmentND}.
\end{proof}

If $A\in\A_n(\Gamma)$, we will let $P(A,S_n)$ denote the set of points in $S_n$ that are proximal to points in~$A$, i.e.~$P(A,S_n) = P(x,S_n)$ for any~$x\in A$.  This is well-defined by Corollary~\ref{Cor:WellDefinedPn}.

We now show how to use proximal points to determine whether a given point lies in a given atom.

\begin{proposition}\label{prop:MembershipAtom}Let $x\in\Gamma$ and $A\in\A_n(\Gamma)$. Then $x\in A$ if and only if
\begin{enumerate}
\item $x\in C(p)$ for all $p\in N(A,B_n)$, and
\item $\od_x$ agrees with $\od_A$ on $P(A,S_n)$.
\end{enumerate}
\end{proposition}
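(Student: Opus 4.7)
The plan is to handle the two directions separately, with the forward direction being essentially immediate from the definitions and Proposition~\ref{prop:SameNearestNeighbors}, and the backward direction reducing to a careful application of Proposition~\ref{prop:VisibilityTest} after showing that $P(A,S_n)$ contains both $V(x,B_n)$ and $V(A,B_n)$.

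For the forward direction, suppose $x\in A$. Then $\od_x$ agrees with $\od_A$ on all of $B_n$, in particular on the subset $P(A,S_n)\subseteq S_n\subseteq B_n$, giving~(2). By Proposition~\ref{prop:SameNearestNeighbors}, $N(A,B_n)=N(x,B_n)$, and any $p\in N(x,B_n)$ sits on a geodesic from $x_0$ to $x$ (using Proposition~\ref{prop:OnePointAtoms} to ensure $\ell(x)\geq n$), giving $x\in C(p)$ and hence~(1).

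For the backward direction, I would first exploit~(1) to establish two things. Picking any $p\in N(A,B_n)$ (which exists since $B_n$ is a nonempty finite set and $A$ is nonempty), condition~(1) gives $d(x_0,x)=n+d(p,x)\geq n$, so $\ell(x)\geq n$. Moreover, for any $q\in B_n$, the triangle inequality yields $d(q,x)\geq \ell(x)-n=d(p,x)$, so every $p\in N(A,B_n)$ is in fact a nearest neighbor to $x$ in $B_n$. Thus $N(A,B_n)\subseteq N(x,B_n)$. Fixing $y\in A$, so that $N(y,B_n)=N(A,B_n)$, Proposition~\ref{prop:ContainmentND} (applied with $x$ and $y$ in its roles of "$y$" and "$x$" respectively) yields
\[
P(x,S_n)\subseteq P(y,S_n)=P(A,S_n).
\]
Combining with the inclusion $V(x,B_n)\subseteq P(x,S_n)$ coming from Proposition~\ref{prop:MaxAngularDistance}, and the same inclusion applied to any $y\in A$ giving $V(A,B_n)\subseteq P(A,S_n)$, we obtain
\[
V(x,B_n)\cup V(A,B_n)\subseteq P(A,S_n).
\]
Now Proposition~\ref{prop:VisibilityTest} applied with $B=B_n$ and $P=P(A,S_n)$ says that $x\in A$ if and only if $\od_x$ agrees with $\od_A$ on $P(A,S_n)$, and this agreement is precisely hypothesis~(2), so $x\in A$.

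The only nontrivial step is arranging that the "test set" $P(A,S_n)$ swallows both visibility sets, and this is exactly what the geometric machinery of Section~\ref{subsec:Neighbors and Visibility} was designed to enable: hyperbolicity (via Proposition~\ref{prop:MaxAngularDistance}) puts visible points inside the proximal set, and the monotonicity given by Proposition~\ref{prop:ContainmentND} transfers proximality from $x$ to $A$ once condition~(1) forces $N(A,B_n)\subseteq N(x,B_n)$. No further calculation beyond invoking these earlier propositions is required.
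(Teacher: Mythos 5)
Your proof is correct and follows essentially the same route as the paper's: both directions reduce to Proposition~\ref{prop:VisibilityTest} applied with $P=P(A,S_n)$, after condition~(1) and Proposition~\ref{prop:ContainmentND} give $P(x,S_n)\subseteq P(A,S_n)$ and hence $V(x,B_n)\cup V(A,B_n)\subseteq P(A,S_n)$. Your version is in fact slightly leaner: the paper first upgrades the containment to the equalities $N(x,B_n)=N(A,B_n)$ and $P(x,S_n)=P(A,S_n)$ before invoking Proposition~\ref{prop:VisibilityTest}, but as you observe the single containment already suffices for that application.
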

\begin{proof}Clearly $x$ satisfies the given conditions if it lies in~$A$.  For the converse, suppose that $x$ satisfies the given conditions.  Since
\[
N(x,B_n) = \{p\in S_n \mid x \in C(p)\},
\]
it follows from condition~(1) that $N(A,B_n) \subseteq N(x,B_n)$, so by Proposition~\ref{prop:ContainmentND} we have $P(x,S_n) \subseteq P(A,S_n)$.  Then
\[
N(x,B_n)\subseteq V(x,B_n) \subseteq P(x,S_n) \subseteq P(A,S_n)
\]
so $N(x,B_n) = N\bigl(x,P(A,S_n)\bigr)$.  (In general $N(x,B) = N(x,B')$ whenever $B'\subseteq B\subseteq\Gamma$ and $N(x,B)\subseteq B'$.) But similarly,
\[
N(A,B_n)\subseteq V(A,B_n) \subseteq P(A,S_n)
\]
so $N(A,B_n) = N\bigl(A,P(A,S_n)\bigr)$. But $\od_x$ agrees with $\od_A$ on $P(A,S_n)$ by condition~(2), which implies that $N\bigl(x,P(A,S_n)\bigr)=N\bigl(A,P(A,S_n)\bigr)$, and hence $N(x,B_n) = N(A,B_n)$. Then $P(x,S_n) = P(A,S_n)$ by Proposition~\ref{prop:ContainmentND}.  By Proposition~\ref{prop:MaxAngularDistance}, this set contains $V(x,B_n) \cup V(A,B_n)$, so it follows from Proposition~\ref{prop:VisibilityTest} that~$x\in A$.
\end{proof}

\subsection{Finitely Many Types}
\label{subsec:FinitelyManyTypes}

In this section we exploit our membership test for atoms (Proposition~\ref{prop:MembershipAtom}) to prove that the tree $\A(\Gamma)$ has only finitely many different types of atoms.  The difficult part here is to construct a sufficient number of morphisms between atoms, so the main technical result for this section is a test for whether a given element of~$G$ induces a morphism between two given atoms.

First we introduce a little notation.  Given a nonempty subset $P\subseteq \Gamma$, a function $f\in F(P,\Z)$, and a group element $g\in G$, let $gf \in F(gP,\mathbb{Z})$ be the function defined by
\[
(gf)(p) = f\bigl(g^{-1}p\bigr)
\]
for all $p\in gP$.  Note that if $f$ and $f'$ differ by a constant, then $gf$ and $gf'$ differ by a constant, so for any $\of\in\F(P,\Z)$ we get a well-defined $g\of\in\F(gP,\Z)$.

\begin{definition}\label{def:GeometricEquivalence}Let $A\in\A_m(\Gamma)$ and $A'\in\A_n(\Gamma)$.  We say that an element $g\in G$ induces a \newword{geometric equivalence} from $A$ to $A'$ if
\begin{enumerate}
\item $g\,P(A,S_m) = P(A',S_n)$,
\item $g\,\od_{A}$ agrees with $\od_{A'}$ on $P(A',S_n)$, and
\item $g\,C(p) = C(gp)$ for all $p\in P(A,P_m)$.
\end{enumerate}
\end{definition}

\medskip Here is the main technical result for this section:

\begin{proposition}\label{prop:MainMorphismTest}Let $A\in \A_m(\Gamma)$ and $A'\in \A_n(\Gamma)$, and let $g\in G$.  If $g$ induces a geometric equivalence from $A$ to~$A'$, then $g$ induces a morphism from $A$ to~$A'$.
\end{proposition}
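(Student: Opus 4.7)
The plan is to verify the three defining conditions of ``$g$ induces a morphism'' using Proposition~\ref{prop:MembershipAtom} as the central tool, handling first the base level and then propagating to the rest of the subtree by induction. For $gA = A'$: conditions~(1) and~(2) of geometric equivalence imply $g\,N(A,B_m) = N(A',B_n)$, since the chain $N(A,B_m) \subseteq V(A,B_m) \subseteq P(A,S_m)$ realizes nearest neighbors as the argmin of $d_A$ on the proximal set, and $g$ intertwines these data. For any $x \in A$, both hypotheses of the membership test for $gx \in A'$ then hold: each $p' = gp \in N(A',B_n)$ gives $gx \in g\,C(p) = C(p')$ by condition~(3), and $\od_{gx}$ agrees with $\od_{A'}$ on $P(A',S_n) = g\,P(A,S_m)$ via the isometry identity $d(gx,gp) = d(x,p)$. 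The reverse inclusion comes from $g^{-1}$ being a geometric equivalence from $A'$ to $A$ (conditions~(1)--(3) are manifestly symmetric). The length shift $\ell(gx) = \ell(x) + (n-m)$ is then immediate by taking any $p \in N(A,B_m) \subseteq S_m$ and writing $\ell(x) = m + d(p,x)$, $\ell(gx) = n + d(gp,gx) = n + d(p,x)$.

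For condition~(3) of being a morphism, I would argue by induction on $k$ that for each $A_1 \in \A_{m+k}(A)$ there exists $A_2 \in \A_{n+k}(A')$ such that $g$ is a geometric equivalence from $A_1$ to $A_2$. The base $k=0$ is the given hypothesis, and at each successive level the already-proved arguments (now applied at $(m+k, n+k)$ in place of $(m,n)$) give $gA_1 = A_2$, which is exactly what condition~(3) requires. The inductive step is a one-level-up statement: given $g$ is a geometric equivalence from $A_1$ to $A_2$ at levels $(m+k, n+k)$, for each $A_1' \in \A_{m+k+1}(A_1)$ take $A_2'$ to be the atom of $\A(B_{n+k+1})$ containing $gx$ for some fixed $x \in A_1'$, and verify that $g$ is a geometric equivalence from $A_1'$ to $A_2'$. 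The proximal-set identity $g\,P(A_1',S_{m+k+1}) = P(A_2',S_{n+k+1})$ is proved via Proposition~\ref{prop:ProximalInductive}: any $q \in P(A_1',S_{m+k+1})$ has a predecessor $p \in P(A_1,S_{m+k})$, and $g$ sends $p$ to a proximal predecessor $gp \in P(A_2,S_{n+k})$ of $gq$ (using the level-$k$ hypothesis to handle both proximality and the $(4\delta{+}2)$-bound against nearest neighbors), while the distance-function condition is automatic from the isometry property.

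The main obstacle is the cone condition $g\,C(q) = C(gq)$ at the new level. For the containment $g\,C(q) \subseteq C(gq)$: pick a predecessor $p \in P(A_1,S_{m+k})$ of $q$, so that $C(q) \subseteq C(p)$ and each $y \in C(q)$ satisfies $gy \in g\,C(p) = C(gp)$ by the level-$k$ hypothesis; this forces $\ell(gy) = (n+k) + d(p,y) = \ell(y) + (n-m)$, and combined with $\ell(gq) = n+k+1$ and $d(gq,gy) = d(q,y)$ this yields $\ell(gy) = \ell(gq) + d(gq,gy)$, i.e.\ $gy \in C(gq)$. The reverse inclusion $C(gq) \subseteq g\,C(q)$ rests on the crucial observation that the same argument applies verbatim to $g^{-1}$, which by the symmetry of the geometric-equivalence axioms is a level-$k$ geometric equivalence from $A_2$ to $A_1$; using the predecessor $gp \in P(A_2,S_{n+k})$ of $gq$, one obtains $g^{-1}\,C(gq) \subseteq C(g^{-1}gq) = C(q)$. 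This closes the induction and completes the proof.
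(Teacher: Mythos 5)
Your overall architecture is the paper's: establish $gA = A'$ via the membership test (Proposition~\ref{prop:MembershipAtom}), note the constant length shift, and then do an induction on $k$ showing that $g$ carries each atom of $\A_{m+k}(A)$ to an atom of $\A_{n+k}(A')$ by propagating geometric equivalence one level at a time, using Proposition~\ref{prop:ProximalInductive} to control proximal sets and the isometry property to handle the distance functions. Your hands-on cone argument via length arithmetic is a concrete rewriting of the paper's Lemma~\ref{lem:SuccessorsAndCones}, and your inductive step is, in substance, the paper's Lemma~\ref{lem:InductionStep}.

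There is, however, a genuine gap in the order of operations inside the inductive step. To conclude $gq \in P(A_2', S_{n+k+1})$ via Proposition~\ref{prop:ProximalInductive}, you must verify $d(gq, q') \leq 4\delta + 2$ for all $q' \in N(gx, B_{n+k+1})$. You say this is handled ``using the level-$k$ hypothesis,'' but the level-$k$ hypothesis controls the set $P(A_1, S_{m+k})$ and its cones; it says nothing directly about $N(gx, B_{n+k+1})$, which is a level-$(k+1)$ object. What one actually needs is the identification $g\,N(x, B_{m+k+1}) = N(gx, B_{n+k+1})$, and this requires the cone condition at level $k+1$ (or the paper's device of the auxiliary set $X$ of all successors of $P(A_1, S_{m+k})$, to which $N(x, B_{m+k+1})$ belongs). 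You only establish the level-$(k+1)$ cone condition \emph{after} the proximal-set identity, creating a circularity. The fix is to reorder: first propagate cones to all successors of $P(A_1, S_{m+k})$ (your length-arithmetic argument is self-contained and works here, needing only the level-$k$ hypothesis), then deduce the nearest-neighbor transport, and only then invoke Proposition~\ref{prop:ProximalInductive} for the proximal-set identity. A smaller issue: when you ``take $A_2'$ to be the atom of $\A(B_{n+k+1})$ containing $gx$ for some fixed $x \in A_1'$,'' you should choose $x$ so that $gx$ lands in an \emph{infinite} atom; the paper notes this is possible since the union of the infinite atoms at level $n+k+1$ is cofinite in $A_2$.
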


As we will see in Section~\ref{sec:AnExample} (Note~\ref{note:NotGeometricallyEquivalent}), the converse of this proposition is not true, so this test does not allow us to detect all morphisms of~$\A(\Gamma)$.  However, it does provide us with a sufficient number of morphisms to prove the following result.

\begin{corollary}\label{cor:FinitelyManyTypes}The tree $\A(\Gamma)$ has finitely many types of atoms.
\end{corollary}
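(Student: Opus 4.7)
The plan is to show that the geometric equivalence relation of Definition \ref{def:GeometricEquivalence} partitions $\A(\Gamma)$ into finitely many classes; the corollary then follows at once from Proposition \ref{prop:MainMorphismTest}. To each atom $A \in \A_m(\Gamma)$ I would associate the triple
$$\mathcal{D}(A) = \bigl(P(A,S_m),\ \od_A|_{P(A,S_m)},\ \{C(p)\}_{p \in P(A,S_m)}\bigr),$$
on which $G$ acts componentwise. Two atoms are geometrically equivalent exactly when $\mathcal{D}(A)$ and $\mathcal{D}(A')$ lie in the same $G$-orbit, so the task reduces to bounding the number of such orbits.

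The key inputs are Corollary \ref{prop:DiameterOfP}, which bounds the diameter of $P(A,S_m)$ by $8\delta+4$; local finiteness of $\Gamma$ combined with cocompactness, giving uniformly finite balls around orbit representatives; and Cannon's Theorem \ref{thm:FinitelyManyConeTypes}, giving finitely many cone types. Normalization proceeds as follows: fix a representative $r_\tau$ for each cone type $\tau$; for each atom $A$, pick any $p_0 \in P(A,S_m)$ with cone type $\tau$ and choose $g_A \in G$ with $g_A p_0 = r_\tau$ and $g_A C(p_0) = C(r_\tau)$. The translated set $g_A P(A,S_m)$ sits in the finite ball $B_{8\delta+4}(r_\tau)$ and so takes only finitely many values, while $g_A\od_A$ restricted to this set is integer-valued with bounded variation by the triangle inequality, hence takes finitely many values modulo constants. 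Enriching the normalized datum by recording the cone type of each $p \in P(A,S_m)$ (still finite by Cannon) continues to yield only finitely many possibilities.

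The remaining step is to show that two atoms $A,A'$ with the same enriched normalized datum are geometrically equivalent. The candidate conjugator is $g = g_{A'}^{-1} g_A$; by construction, it automatically realizes conditions (1) and (2) of Definition \ref{def:GeometricEquivalence} and also condition (3) at the distinguished point $p_0$. To secure condition (3) at the remaining points of $P(A,S_m)$, I would exploit the matching cone types together with the action of the finite stabilizer $\mathrm{Stab}_G(r_\tau) \cap \mathrm{Stab}_G(C(r_\tau))$ (finite by properness) to adjust the normalizations: among the finitely many candidate choices of $g_A$ and $g_{A'}$ compatible with the enriched datum, one selects a compatible pair for which $g$ preserves cones at every point of $P(A,S_m)$.

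The main obstacle is precisely this last step: producing a single $g \in G$ that realizes condition (3) simultaneously at every point of $P(A,S_m)$, not just at $p_0$. The naive choice $g = g_{A'}^{-1} g_A$ controls only the cone at $p_0$, and closing the gap amounts to a finite combinatorial selection problem within the finite stabilizer, driven by Cannon's theorem and the proper cocompact action. I expect this to require some care in checking that the matching enriched data force the existence of a compatible pair of normalizations. Once this is established, the finite set of possible enriched normalized data bounds the number of geometric equivalence classes, proving that $\A(\Gamma)$ has only finitely many types.
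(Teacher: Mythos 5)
Your overall strategy is the same as the paper's: associate to each atom the data $\bigl(P(A,S_m),\ \od_A|_{P(A,S_m)},\ \text{cones at the points of }P(A,S_m)\bigr)$, show there are finitely many possibilities modulo $G$ using Corollary~\ref{prop:DiameterOfP}, properness/cocompactness, and Cannon's theorem, and then invoke Proposition~\ref{prop:MainMorphismTest}. The obstacle you flag at the end, however, is a genuine gap in your formulation, and it is created by your choice to record only the \emph{cone type} of each $p\in P(A,S_m)$ in the normalized datum. Two points having the same cone type means only that \emph{some} element of $G$ carries one cone to the other; it does not follow that your particular $g=g_{A'}^{-1}g_A$ does so, and matching cone types pointwise plus control of the cone at the single point $p_0$ does not force condition~(3) of Definition~\ref{def:GeometricEquivalence} at the other points. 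Your proposed repair via the stabilizer of $(r_\tau, C(r_\tau))$ is not carried out, and it is not clear it can be: adjusting by that stabilizer fixes the cone at $p_0$ but moves the other points of $P_0$ around, so you would still have to argue that some adjustment works simultaneously at all points.

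The clean fix is to record more data, not to select normalizations more cleverly: after translating $P(A,S_m)$ onto one of the finitely many orbit representatives $P_0$ of subsets of diameter at most $8\delta+4$ (this uses cocompactness and local finiteness; no distinguished point $p_0$ is needed), record the \emph{actual subsets} $g_A C(p)\subseteq\Gamma$ for $p\in P(A,S_m)$, rather than their cone types. For each fixed target vertex $g_Ap\in P_0$, the set $g_AC(p)$ equals $kC(p')$ for some representative $p'$ of the cone type of $p$ and some $k\in G$ with $kp'=g_Ap$; such $k$ range over a coset of the finite stabilizer of $p'$, so by Cannon's theorem plus properness there are only finitely many possibilities for $g_AC(p)$, hence finitely many for the whole tuple. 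If two atoms admit normalizations with literally equal full data, then $g=g_{A'}^{-1}g_A$ satisfies all three conditions of Definition~\ref{def:GeometricEquivalence} verbatim --- in particular $gC(p)=g_{A'}^{-1}\bigl(g_AC(p)\bigr)=g_{A'}^{-1}\bigl(g_{A'}C(gp)\bigr)=C(gp)$ for every $p$ --- and Proposition~\ref{prop:MainMorphismTest} finishes the proof. This is what the paper's (admittedly terse) phrase ``only finitely many choices for $C(p)$ for each $p\in P(A,S_m)$'' is doing; with that change your argument is complete.
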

\begin{proof}Since $P(A,S_m)$ has diameter at most $8\delta+4$ (Corollary~\ref{cor:FinitelyManyTypes}), there are only finitely many possibilities for $P(A,S_m)$ modulo the action of~$G$.  Moreover, since the action of $G$ is proper and $\Gamma$ has finitely many cone types, there are only finitely many choices for $C(p)$ for each $p\in P(A,S_m)$, and there are only finitely many choices for the restriction of $\od_A$ to $P(A,S_m)$.  By Proposition~\ref{prop:MainMorphismTest}, two atoms with corresponding choices have the same type, and therefore there are only finitely many types of atoms.
\end{proof}

The rest of this section is devoted to a proof of Proposition~\ref{prop:MainMorphismTest}.

\begin{lemma}\label{lem:GeometricEquivalenceMapsAtoms}Let $A\in\A_m(\Gamma)$ and $A'\in\A_n(\Gamma)$, and let $g\in G$.  If $g$ induces a geometric equivalence from $A$ to~$A'$, then:
\begin{enumerate}
\item $g\,N(A,B_m) = N(A',B_n)$,
\item $gA=A'$, and
\item $g(A\cap B_{m+k})=A'\cap B_{n+k}$ for all $k\geq 0$.
\end{enumerate}
\end{lemma}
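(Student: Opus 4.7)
All three statements should follow from the membership test for atoms (Proposition~\ref{prop:MembershipAtom}) together with the tautological identity $g d_x = d_{gx}$, which holds because $g$ is an isometry.

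For~(1), the plan is to shrink $B_m$ and $B_n$ down to the proximal sets. Recall from the proof of Proposition~\ref{prop:MembershipAtom} the identities $N(A,B_m) = N\bigl(A,P(A,S_m)\bigr)$ and $N(A',B_n) = N\bigl(A',P(A',S_n)\bigr)$. Conditions~(1) and~(2) of Definition~\ref{def:GeometricEquivalence} say precisely that $g$ carries $P(A,S_m)$ bijectively onto $P(A',S_n)$ and transports $\od_A$ to $\od_{A'}$ (up to additive constant) on this set, so the points where these functions attain their minima correspond under $g$, giving $g\,N(A,B_m)=N(A',B_n)$.

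For~(2), I will show $gA\subseteq A'$ by verifying the two conditions of Proposition~\ref{prop:MembershipAtom} for an arbitrary $gx$ with $x\in A$; the reverse inclusion will then follow by applying the same argument to $g^{-1}$, which inherits a geometric equivalence $A'\to A$ by inverting Definition~\ref{def:GeometricEquivalence}. The cone condition reduces, via part~(1), to writing each $p\in N(A',B_n)$ as $gq$ with $q\in N(A,B_m)\subseteq P(A,S_m)$; since $x\in C(q)$, condition~(3) of Definition~\ref{def:GeometricEquivalence} gives $gx\in gC(q)=C(gq)=C(p)$. The distance-function condition will use $\od_x=\od_A$ on $B_m$ (so in particular on $P(A,S_m)$), combined with condition~(2) of Definition~\ref{def:GeometricEquivalence} and the identity $g\od_x=\od_{gx}$ coming from isometry, to deduce that $\od_{gx}$ agrees with $\od_{A'}$ on $P(A',S_n)$.

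For~(3), with $gA=A'$ in hand it suffices to show $\ell(gp)=\ell(p)-m+n$ for every $p\in A$. I would pick any $q\in N(A,B_m)$; since $q\in S_m$ lies on a geodesic from $x_0$ to~$p$, we have $d(q,p)=\ell(p)-m$. By part~(1), $gq\in N(A',B_n)\subseteq S_n$, and $d(gq,gp)=d(q,p)$ by isometry, so $\ell(gp)=n+d(gq,gp)=n+\ell(p)-m$, as desired. I expect the main obstacle to be the bookkeeping in step~(2): keeping careful track of which equalities hold on the nose versus modulo an additive constant as one passes between $d_x$, $d_A$, $g\od_A$, and $\od_{gx}$. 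No further geometric input beyond the membership test seems to be required.
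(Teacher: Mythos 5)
Your proposal is correct and follows essentially the same route as the paper: part~(1) by identifying $N(A,B_m)$ with the minima of $\od_A$ on $P(A,S_m)$ and transporting via conditions~(1) and~(2) of Definition~\ref{def:GeometricEquivalence}, part~(2) by checking the two hypotheses of Proposition~\ref{prop:MembershipAtom} for $gx$ and inverting $g$ for the reverse inclusion, and part~(3) by the length relation $\ell(gp)=\ell(p)-m+n$ obtained from a nearest neighbor in $N(A,B_m)$. No substantive differences.
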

\begin{proof}For statement~(1), observe that the points of $N(A,B_m)$ are precisely the points at which $\od_A$ achieves its minimum value on~$B_m$.  Indeed, since $N(A,B_m)\subseteq P(A,S_m)$, the points of $N(A,B_m)$ are precisely the points of $P(A,S_m)$ on which the restriction of $\od_A$ achieves its minimum value.  Since $g\,P(A,S_m) = P(A',S_n)$ and $g\,\od_A$ agrees with $\od_{A'}$ on $P(A',S_n)$, it follows that $g\,N(A,B_m)$ is precisely the set of points of $P(A',S_n)$ on which the restriction of $\od_{A'}$ achieves its minimum value, and therefore $g\,N(A,B_m) = N(A',B_n)$.

Statement (2) uses our membership test for atoms (Proposition~\ref{prop:MembershipAtom}).  Let $x\in A$, and observe that $x\in C(p)$ for all $p\in N(A,B_m)$.  Since $g$ is a geometric equivalence and $N(A,B_m) \subseteq P(A,S_m)$, we know that $g\,C(p) = C(gp)$ for all $p\in N(A,B_m)$, so $gx\in C(gp)$ for all $p\in N(A,B_m)$.  By statement~(1), we know that $g\,N(A,B_m) = N(A',B_n)$, and therefore $gx \in C(q)$ for all $q\in N(A',B_n)$.  Moreover, since $\od_x$ agrees with $\od_A$ on $P(A,S_m)$, we see that $\od_{gx} = g\,\od_x$ agrees with $g\,\od_A = \od_{A'}$ on $P(A',S_n)$, and therefore $gx\in A'$ by Proposition~\ref{prop:MembershipAtom}. This proves that $gA\subseteq A'$, and the reverse inclusion follows from the fact that $g^{-1}$ is a geometric equivalence from $A'$ to~$A$.

For statement~(3), let $k> 0$ and fix any point $p\in N(A,B_m)$.  Then $p\in N(x,B_m)$ for all $x\in A$, which means that for each $x\in A$ there exists a geodesic from $x_0$ to $x$ that goes through~$p$.  Since $\ell(p)=m$, it follows that
\[
A\cap B_{m+k} = \{x\in A \mid d(x,p) =k\}.
\]
Moreover, $gp\in N(A',B_n)$ by statement~(1), so similarly
\[
A'\cap B_{n+k} = \{y\in A' \mid d(y,gp) = k\}.
\]
Since $gA=A'$ by statement~(2), it follows that $g(A\cap B_{m+k}) = A'\cap B_{n+k}$.
\end{proof}

\begin{lemma}\label{lem:SuccessorsAndCones}Let $p\in\Gamma$, let $p'$ be a successor to $p$, and let $g\in G$.  If $g\,C(p) = C(gp)$, then $gp'$ is a successor to~$gp$, and $g\,C(p') = C(gp')$.
\end{lemma}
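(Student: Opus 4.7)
The plan is to use the characterization of cones in terms of geodesics from~$x_0$, together with the fact that $g$ acts as an isometry of~$\Gamma$. The key preliminary observation I would record first is that whenever $q'$ is a successor of~$q$, one has $C(q') \subseteq C(q)$: given $y \in C(q')$, concatenating any geodesic from $x_0$ to~$q$ with the edge~$qq'$ and a geodesic from $q'$ to~$y$ produces a path of total length $\ell(q) + 1 + d(q',y) = \ell(y)$, which is therefore a geodesic from $x_0$ to~$y$ passing through~$q$, so $y \in C(q)$.

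For the first conclusion, since $p'$ is a successor of~$p$ we have $p' \in C(p)$, hence $gp' \in g\,C(p) = C(gp)$. Because $g$ is an isometry, $d(gp,gp') = d(p,p') = 1$, and membership in $C(gp)$ then forces $\ell(gp') = \ell(gp) + d(gp,gp') = \ell(gp) + 1$. Therefore $gp' \in S_{\ell(gp)+1} \cap C(gp)$, i.e.~$gp'$ is a successor of~$gp$.

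For the second conclusion $g\,C(p') = C(gp')$, I would prove both inclusions directly. For the forward inclusion, take $y \in C(p')$; the preliminary observation gives $y \in C(p)$, so $gy \in g\,C(p) = C(gp)$, i.e.~$\ell(gy) = \ell(gp) + d(gp,gy)$. The isometry property and $y \in C(p')$ yield
\[
d(gp,gy) = d(p,y) = 1 + d(p',y) = 1 + d(gp',gy),
\]
saturating the triangle inequality and producing a geodesic from $gp$ to $gy$ through~$gp'$. Splicing this onto a geodesic from $x_0$ to $gp$ gives a geodesic from $x_0$ to $gy$ through $gp'$, so $gy \in C(gp')$. The reverse inclusion $C(gp') \subseteq g\,C(p')$ follows by the symmetric argument applied to $g^{-1}$, using that $g\,C(p) = C(gp)$ is equivalent to $g^{-1}\,C(gp) = C(p)$, and that $gp'$ is a successor of~$gp$ as just established. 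I do not anticipate any substantive obstacle; the only care required is in verifying that each concatenated path is genuinely a geodesic, which in every case reduces to adding up lengths.
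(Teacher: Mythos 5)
Your proof is correct and takes essentially the same approach as the paper: characterize both "successor of $p$" and "membership in $C(p')$" in terms of $C(p)$ together with distance conditions, all of which are preserved by an isometry carrying $C(p)$ to $C(gp)$. The paper compresses this into the identity $C(p') = \{x\in C(p) \mid d(x,p) = d(x,p')+1\}$ and appeals to isometry; you unpack the same identity via geodesic concatenation and handle the reverse inclusion by the $g^{-1}$ symmetry, but the underlying argument is the same.
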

\begin{proof}Note first that, for any point $p\in\Gamma$, the set of successors to $p$ is precisely the set of points in $C(p)$ that are adjacent to~$p$. Moreover, if $p'$ is a successor to $p$, then
\[
C(p') = \{x\in C(p) \mid d(x,p) = d(x,p')+1\}.
\]
Since $G$ acts by isometries, the lemma follows immediately.
\end{proof}

If $x\in \Gamma$ and $n\in\mathbb{N}$, let $A_n(x)$ denote the atom of $\A(B_n)$ that contains~$x$.  Note that
\[
A_0(x) \supseteq A_1(x)\supseteq A_2(x)\supseteq\cdots
\]
and that $A_n(x) \in \A_n(\Gamma)$ if and only if $A_n(x)$ is infinite.

\begin{lemma}\label{lem:InductionStep}Let $x\in\Gamma$, let $m,n\geq 1$, and let $g\in G$.  Suppose that $A_m(x)$ and $A_n(gx)$ are both infinite, and suppose that $g$ induces a geometric equivalence from $A_{m-1}(x)$ to $A_{n-1}(gx)$.  Then $g$ induces a geometric equivalence from $A_m(x)$ to~$A_n(gx)$
\end{lemma}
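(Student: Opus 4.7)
The plan is to verify the three defining conditions of a geometric equivalence (Definition~\ref{def:GeometricEquivalence}) for $A_m(x)$ and $A_n(gx)$ by bootstrapping from the level $(m-1,n-1)$ version supplied by the inductive hypothesis. The two main tools will be the recursive characterization of proximal points (Proposition~\ref{prop:ProximalInductive}) and the propagation of cone preservation along successor relations (Lemma~\ref{lem:SuccessorsAndCones}). As a preliminary, Lemma~\ref{lem:GeometricEquivalenceMapsAtoms}(3) (together with its analog for $g^{-1}$) applied to the inductive GE pins down a uniform length shift $\ell(gx) - n = \ell(x) - m$, so in particular $gq \in S_n$ whenever $q \in S_m$ arises as the image of the relevant kind of point.

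Condition~(3) I would handle first: given $p \in P(A_m(x), S_m)$, Proposition~\ref{prop:ProximalInductive} yields a predecessor $p' \in P(A_{m-1}(x), S_{m-1})$; the inductive GE gives $gC(p') = C(gp')$, and Lemma~\ref{lem:SuccessorsAndCones} applied to $p', p$ then forces $gp$ to be a successor of $gp'$ with $gC(p) = C(gp)$. The central auxiliary step is to establish the nearest-neighbor identity $gN(A_m(x), B_m) = N(A_n(gx), B_n)$. When $\ell(x) = m$ both sides reduce to the single-point sets $\{x\}$ and $\{gx\}$ and the identity is immediate. Otherwise, for $q \in N(x, B_m) \subseteq S_m$ I would fix a geodesic from $x_0$ through $q$ to $x$; its predecessor $p \in S_{m-1}$ lies in $N(x, B_{m-1}) \subseteq P(A_{m-1}(x), S_{m-1})$, so the inductive GE gives $gC(p) = C(gp)$, and Lemma~\ref{lem:SuccessorsAndCones} applied to $p, q$ yields $gC(q) = C(gq)$. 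Since $x \in C(q)$ and $\ell(gq) = n$, we get $gx \in C(gq)$ and hence $gq \in N(gx, B_n)$; the reverse inclusion follows by the same argument applied to $g^{-1}$.

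With these in hand, condition~(1) follows from Proposition~\ref{prop:ProximalInductive} applied to $gp$ for $p \in P(A_m(x), S_m)$: the predecessor requirement is met by $gp'$ from above, and the distance bound $d(gp, q) \leq 4\delta + 2$ for $q \in N(A_n(gx), B_n)$ is obtained by transporting the corresponding bound at $p$ through the isometry $g$, using the nearest-neighbor identity. Condition~(2) is then a direct calculation: for $q = gp \in gP(A_m(x), S_m) = P(A_n(gx), S_n)$, picking representatives $d_x$ and $d_{gx}$ of $\od_{A_m(x)}$ and $\od_{A_n(gx)}$ (using $x \in A_m(x)$ and $gx \in A_n(gx)$) one has $gd_x(q) = d_x(p) = d(x,p) = d(gx, gp) = d_{gx}(q)$. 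The step I expect to be the main obstacle is the nearest-neighbor identity: one might naively hope it follows from $g$ being an isometry, but $g$ does not fix the base vertex $x_0$, so it does not a priori carry $x_0$-to-$x$ geodesics to $x_0$-to-$gx$ geodesics. This is precisely where the cone-preservation clause of the inductive GE does its work, translating ``$q$ lies on an $x_0$-to-$x$ geodesic'' into ``$gq$ lies on an $x_0$-to-$gx$ geodesic'' through a single application of Lemma~\ref{lem:SuccessorsAndCones}.
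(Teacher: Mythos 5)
Your proof is correct and follows essentially the same strategy as the paper's: both bootstrap from the level-$(m-1,n-1)$ geometric equivalence using Proposition~\ref{prop:ProximalInductive} to descend to proximal predecessors and Lemma~\ref{lem:SuccessorsAndCones} to propagate cone preservation to their successors, then deduce the nearest-neighbor identity and hence condition~(1) by transporting the distance bound through the isometry. The paper packages the successor-level bookkeeping into auxiliary sets $X$ and $Y$ (the successors of $P(x,S_{m-1})$ and $P(y,S_{n-1})$) and verifies the conditions in a slightly different order, but the substance is identical.
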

\begin{proof}Let $y=gx$.  Since $A_m(x)$ and $A_n(y)$ are both infinite, we know from Proposition~\ref{prop:OnePointAtoms} that $\ell(x)\geq m$ and~$\ell(y)\geq n$.  Let $X$ be the set of all successors of elements of~$P(x,S_{m-1})$, and let $Y$~be the set of all successors of elements of~$P(y,S_{n-1})$.

We have $g\,P(x,S_{m-1}) = P(y,S_{n-1})$ as $g$ is a geometric equivalence between $A_{m-1}(x)$ and $A_{n-1}(y)$.  Also, $g\,C(p) = C(gp)$ for every point $p\in P(x,S_{m-1})$.  By Lemma~\ref{lem:SuccessorsAndCones}, it follows that $gX=Y$ and $g\,C(p) = C(gp)$ for all $p\in X$.  In particular, since $P(x,S_m)\subseteq X$ by Proposition~\ref{prop:ProximalInductive}, we know that $g\,C(p)=C(gp)$ for all $p\in P(x,S_m)$.

Next observe that $N(x,B_m) \subseteq V(x,B_m) \subseteq P(x,S_m) \subseteq X$, and similarly for~$y$. It follows that $N(x,B_m) = N(x,X)$ and $N(y,B_n) = N(y,Y)$.  Since $gx=y$ and $gX=Y$, it we conclude that $g\,N(x,B_m) = N(y,B_n)$.  But
\[
P(x,S_m) = \{p\in X \mid d(p,q)\leq 4\delta+2\text{ for all }q\in N(x,B_m)\}
\]
by Proposition~\ref{prop:ProximalInductive}, and similarly for~$y$, so $g\,P(x,S_m) = P(y,S_n)$.

Finally, since $g\,\od_x = \od_y$ and $\od_x$ agrees with $\od_{A_m(x)}$ on $P(x,S_m)$ and $\od_y$ agrees with $\od_{A_n(y)}$ on $P(y,S_n)$, it follows that $g\,\od_{A_m(x)}$ agrees with $\od_{A_n(y)}$ on $P(y,S_n)$, and therefore $g$ is a geometric equivalence from $A_m(x)$ to~$A_n(y)$.
\end{proof}

\begin{proof}[Proof of Proposition~\ref{prop:MainMorphismTest}] Let $A\in\A_m(\Gamma)$, let $A'\in\A_n(\Gamma)$, and let $g\in G$ be a geometric equivalence from $A$ to $A'$.  We must show that $g$ induces a morphism from $\A(\Gamma)_A$ to $\A(\Gamma)_A'$.

Note first that $gA=A'$ and $g(A\cap B_{m+k}) = A'\cap B_{n+k}$ for each $k\geq 0$ by Lemma~\ref{lem:GeometricEquivalenceMapsAtoms}.  We claim that for $k\geq 0$ and each $A_k\in \A_{m+k}(A)$, there exists an $A_k'\in \A_{n+k}(A')$ such that $g$ induces a geometric equivalence from $A_k$ to~$A_k'$.  By Lemma~\ref{lem:GeometricEquivalenceMapsAtoms}, it will follow that $gA_k=A_k'$, which will prove that $g$ induces a morphism.

We proceed by induction on~$k$.  We know the statement holds for $k=0$, since $A_k=A$ and $A_k'=A'$ in this case.  For $k>0$, since $A_k$ is infinite and the union of the atoms in $\A_{n+k}(A')$ is the complement of a finite set in~$A'$, we can choose a point $x\in A_k$ such that $gx$ lies in some infinite atom $A_k'\in\A_{n+k}(A')$.  Let $y=gx$, and let $A_{k-1}=A_{m+k-1}(x)$.  By our induction hypothesis, there exists an atom $A'_{k-1}\in\A_{n+k-1}(A')$ such that $g$ induces a geometric equivalence from $A_{k-1}$ to $A_{k-1}'$.  By Lemma~\ref{lem:GeometricEquivalenceMapsAtoms}, we know that $gA_{k-1}=A_{k-1}'$, so $A_{k-1}'=A_{n+k-1}(y)$.  By Lemma~\ref{lem:InductionStep}, we conclude that that $g$ induces a geometric equivalence from $A_{m+k}(x) = A_k$ to $A_{n+k}(y) = A_k'$, which completes the induction.
\end{proof}

\subsection{Proof of Rationality}
\label{subsec:ProofOfRationality}

Our goal in this section is to prove Theorem~\ref{thm:ActionIsRational}, i.e.~that the action of $G$ on $\partial\A(\Gamma)$ is rational.

If $g\in G$, we define the \newword{magnitude} of $g$, denoted $|g|$, to be the distance from the base vertex $x_0$ to $gx_0$.  For example, if $\Gamma$ is the Cayley graph of~$G$, then $|g|$ is the word length of~$g$.  Note that $|g| = |g^{-1}|$ for all $g\in G$.

\begin{proposition}\label{prop:LipschitzAction}Let $g\in G$ with $|g|=k$, and let $A\in\A_n(\Gamma)$ for some $n\geq k$.  Then there exists a unique $A'\in\A_{n-k}(\Gamma)$ so that~$gA\subseteq A'$.
\end{proposition}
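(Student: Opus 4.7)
The plan is to verify this directly from the definition of atoms, exploiting the fact that $G$ acts by isometries on $\Gamma$.

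First I would translate the action of $g$ on distance functions. Since $G$ acts by isometries and $(g\of)(p) = f(g^{-1}p)$, a routine calculation gives $g\,\od_x = \od_{gx}$ for all $x \in \Gamma$. Next I would observe the ``Lipschitz'' inclusion on balls: if $|g| = k$ then $|g^{-1}| = k$, so for any $q \in B_{n-k}$ the triangle inequality gives
\[
\ell(g^{-1}q) = d(x_0, g^{-1}q) \leq d(x_0, g^{-1}x_0) + d(g^{-1}x_0, g^{-1}q) = k + \ell(q) \leq n,
\]
i.e.\ $g^{-1}B_{n-k} \subseteq B_n$.

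Now I would show that $gA$ is contained in a single atom of $\A(B_{n-k})$. Pick any $x, y \in A$. Since $\od_x$ agrees with $\od_y$ on $B_n$, and $g^{-1}B_{n-k}\subseteq B_n$, the functions $\od_x$ and $\od_y$ agree on $g^{-1}B_{n-k}$. Applying the isometry identity $\od_{gx}(q) = d(gx,q) = d(x,g^{-1}q) = \od_x(g^{-1}q)$ (and similarly for $y$), we conclude that $\od_{gx}$ agrees with $\od_{gy}$ on $B_{n-k}$. Hence $gx$ and $gy$ belong to the same atom of $\A(B_{n-k})$. As $x$ and $y$ were arbitrary, $gA$ is contained in a single atom $A' \in \A(B_{n-k})$.

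Finally I would verify that $A'$ is infinite and unique. Since $g$ is a bijection on $\Gamma$ and $A$ is infinite (being in $\A_n(\Gamma)$), the image $gA$ is infinite, so the atom $A' \supseteq gA$ is infinite and therefore belongs to $\A_{n-k}(\Gamma)$. Uniqueness is immediate: the atoms of $\A(B_{n-k})$ form a partition of $\Gamma$, so at most one of them can contain any given nonempty subset of $\Gamma$. This completes the proof; there is no real obstacle here, only an unwinding of the definitions combined with the two basic facts that $G$ acts by isometries and that $|g| = |g^{-1}|$.
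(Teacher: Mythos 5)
Your proof is correct and follows essentially the same route as the paper's: the key inclusion $g^{-1}B_{n-k}\subseteq B_n$ (equivalently $B_{n-k}\subseteq gB_n$), combined with equivariance of distance functions under the isometric action, shows $gA$ lies in a single atom of $\A(B_{n-k})$, and uniqueness follows because the atoms partition $\Gamma$. You are in fact slightly more complete than the paper, which leaves implicit the check that $A'$ is infinite and therefore belongs to $\A_{n-k}(\Gamma)$.
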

\begin{proof}Observe that
\[
|\ell(gx)-\ell(x)| \leq |g| = k
\]
for all $x\in\Gamma$.  It follows easily that $B_{n-k} \subseteq gB_n$.  Since $A\in\A(B_n)$, we know that $gA\in\A(gB_n)$, and therefore $gA\subseteq A'$ for some $A'\in \A(B_{n-k})$.  This $A'$ is unique since the atoms of $\A(B_{n-k})$ are disjoint.
\end{proof}

\begin{note}It follows immediately from this proposition that the homeomorphism of $\partial\A(\Gamma)$ induced by an element $g\in G$ is Lipschitz with respect to the standard ultrametric, with Lipschitz constant $2^{|g|}$.  Indeed, since $g^{-1}$ is Lipschitz as well, it follows that the homeomorphism induced by $g$ is bilipschitz.\qed
\end{note}

Proposition~\ref{prop:LipschitzAction} prompts the following definition.

\begin{definition}A \newword{mapping triple} is an ordered triple $(g,A,A')$, where
\begin{enumerate}
\item $g\in G$,
\item $A\in\A_n(\Gamma)$ for some $n\geq |g|$, and
\item $A'$ is the atom from $\A_{n-|g|}(\Gamma)$ that contains~$gA$.
\end{enumerate}
\end{definition}

By Proposition~\ref{prop:LipschitzAction}, for every $g\in G$ and every $A\in \A_n(\Gamma)$ where $n\geq |g|$, there exists a unique $A'\in \A_{n-|g|}(\Gamma)$ such that $(g,A,A')$ is a mapping triple.

\begin{proposition}\label{prop:DisksAreNear}Let $(g,A,A')$ be a mapping triple, where $|g|=k$ and $A\in\A_n(\Gamma)$. Then there exist points $p\in P(A,S_n)$ and $q\in P(A',S_{n-k})$ so that $d(gp,q)\leq 2k$.
\end{proposition}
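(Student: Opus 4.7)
The plan is to pick $p$ to be a nearest neighbor of some $x\in A$ inside $B_n$ (which automatically puts $p\in P(A,S_n)$), apply $g$ to land near the direction of $y:=gx$, and then walk back along a geodesic from $x_0$ toward $gp$ to produce $q$.

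Concretely, fix any $x\in A$ and set $y=gx$; since $gA\subseteq A'$ we have $y\in A'$. Choose any geodesic $(x_0,x_1,\ldots,x_{\ell(x)}=x)$ from $x_0$ to $x$ --- note $\ell(x)\geq n$ by Proposition~\ref{prop:OnePointAtoms} --- and set $p=x_n$. Then $p\in N(x,B_n)\subseteq V(x,B_n)\subseteq P(x,S_n)=P(A,S_n)$, the last inclusion being Proposition~\ref{prop:MaxAngularDistance}. The $g$-image $(gx_0,gx_1,\ldots,gx_{\ell(x)}=y)$ is a geodesic from $gx_0$ to $y$ of length $\ell(x)$, so $d(gx_0,gp)=n$ and $d(gp,y)=\ell(x)-n$; combined with $d(x_0,gx_0)=k$, the triangle inequality yields $n-k\leq d(x_0,gp)\leq n+k$. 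Pick any geodesic $\gamma$ from $x_0$ to $gp$ and set $q=\gamma(n-k)$. Then $q\in S_{n-k}$ and
\[
d(gp,q)\;=\;d(x_0,gp)-(n-k)\;\leq\;(n+k)-(n-k)\;=\;2k,
\]
which is the desired bound.

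What remains is to show $q\in P(A',S_{n-k})=P(y,S_{n-k})$, and this is where $\delta$-hyperbolicity enters. Using $\ell(y)=d(x_0,gx)\geq d(gx_0,gx)-d(x_0,gx_0)=\ell(x)-k$ together with $d(x_0,gp)\geq n-k$, the Gromov product satisfies
\[
\langle gp,\,y\rangle_{x_0}\;=\;\tfrac{1}{2}\bigl(d(x_0,gp)+\ell(y)-(\ell(x)-n)\bigr)\;\geq\;\tfrac{1}{2}\bigl((n-k)+(n-k)\bigr)\;=\;n-k.
\]
By the standard thin-triangle estimate in a $\delta$-hyperbolic geodesic space, for every geodesic $(y_0,\ldots,y_{\ell(y)}=y)$ from $x_0$ to $y$ one has $d(\gamma(i),y_i)\leq 2\delta\leq 4\delta+2$ for every $0\leq i\leq n-k$. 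Consequently the initial segment $(\gamma(0),\ldots,\gamma(n-k)=q)$ of $\gamma$ serves as the geodesic witnessing $q\in P(y,S_{n-k})$ via Definition~\ref{def:Proximal}. The only genuine obstacle is invoking this thin-triangle estimate in the correct uniform form --- the bound $d(\gamma(i),y_i)\leq 2\delta$ must hold simultaneously for all geodesics ending at $y$, not just one --- but this is standard, and the cushion between $2\delta$ and the proximality tolerance $4\delta+2$ is what makes the estimate slot in comfortably.
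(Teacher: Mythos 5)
Your proof is correct, but it takes a genuinely different route from the paper's. The paper avoids Gromov products and fellow-travelling entirely: it applies Proposition~\ref{prop:VisibleGeodesic} to get a geodesic from $g^{-1}x_0$ (which lies in $B_n$ since $n\geq k$) to $x$ passing through a point $p\in V(A,B_n)$, then translates by $g$ to obtain a single geodesic from $x_0$ to $gx$ that contains both $gp$ and the point $q$ where it crosses $S_{n-k}$; that $q$ is automatically in $N(A',B_{n-k})\subseteq V(A',B_{n-k})\subseteq P(A',S_{n-k})$, and the bound $d(gp,q)\leq 2k$ is just a difference of positions along one geodesic. Hyperbolicity enters only through the already-proved inclusion $V\subseteq P$. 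Your version instead takes $p$ to be a nearest neighbour of $x$, builds $q$ on a separate geodesic toward $gp$, and must then work to certify $q\in P(A',S_{n-k})$ via the estimate $\langle gp,y\rangle_{x_0}\geq n-k$ and a fellow-travelling lemma. That computation is right, and the choice of witnessing geodesic (the initial segment of $\gamma$) does handle the ``every geodesic to $y$'' quantifier in Definition~\ref{def:Proximal}, since the Gromov product is independent of the geodesics chosen. One caveat: with the paper's slim-triangles definition of $\delta$-hyperbolicity, the clean uniform bound for $i\leq\langle gp,y\rangle_{x_0}$ is $d(\gamma(i),y_i)\leq 4\delta$ rather than the $2\delta$ you quote (the extra $2\delta$ arises when the slimness witness lands on the side $[gp,y]$ rather than on the other side through $x_0$); since $4\delta\leq 4\delta+2$ your argument survives, but you should either prove the lemma with the correct constant or cite it precisely, as this is exactly the kind of constant-chasing that the paper's one-geodesic trick was designed to sidestep.
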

\begin{proof}Fix a point $x\in A$, and note that $gx\in A'$.  Since $n\geq k$, we know that $g^{-1}x_0\in B_n$.  By Proposition~\ref{prop:VisibleGeodesic}, there exists a geodesic $[g^{-1}x_0,x]$ from $g^{-1}x_0$ to $x$ that goes through a point $p\in V(A,B_n)$. Let $[x_0,gx]$ be the image of this geodesic under~$g$, and let $q\in N(A',B_{n-k})$ be the point at which this geodesic crosses~$S_n$. Note that $p\in P(A,S_n)$ and $q\in P(A',S_{n-k})$.  Since $\ell(p) = n$, we know that $n-k \leq \ell(gp) \leq n+k$.  Since $\ell(q)=n-k$ and $gp,q\in [x_0,gx]$, it follows that $d(gp,q)\leq 2k$.
\end{proof}

If $g\in G$, let $L_g\colon \partial\A(\Gamma)\to\partial\A(\Gamma)$ be the homeomorphism induced by~$g$.

\begin{proposition}\label{prop:EquivalentRestrictions}Let $(g,A_1,A_1')$ and $(g,A_2,A_2')$ be mapping triples, and suppose there exists an $h\in G$ so that
\begin{enumerate}
\item $h$ induces a morphism from $A_1'$ to $A_2'$, and
\item $g^{-1}hg$ induces a morphism from $A_1$ to $A_2$.
\end{enumerate}
Then $L_g$ has equivalent restrictions at $A_1$ and $A_2$.
\end{proposition}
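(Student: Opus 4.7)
The plan is to use $\varphi$, the morphism $\A(\Gamma)_{A_1}\to\A(\Gamma)_{A_2}$ induced by $g^{-1}hg$, and $\psi$, the morphism $\A(\Gamma)_{A_1'}\to\A(\Gamma)_{A_2'}$ induced by $h$, as the pair of morphisms demanded by Definition~\ref{def:EquivalentRestrictions}. The existence of $\psi$ already certifies that $A_1'$ and $A_2'$ are of the same type. Writing $k=|g|$ and $A_i\in\A_{n_i}(\Gamma)$, the mapping-triple hypothesis $gA_i\subseteq A_i'$ propagates down the tree: any atom $C\in\A_{n_i+j}(\Gamma)$ contained in $A_i$ satisfies $gC\subseteq A_i'$ and, by Proposition~\ref{prop:LipschitzAction}, is contained in a unique atom of $\A_{n_i-k+j}(\Gamma)$ inside $A_i'$; in particular $L_g(\partial\A(\Gamma)_{A_i})\subseteq\partial\A(\Gamma)_{A_i'}$. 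What remains is to verify that the diagram
\[
\xymatrix@R=0.4in{
\partial\A(\Gamma)_{A_1}\ar_{L_g}[d]\ar^{\varphi_*}[r] & \partial\A(\Gamma)_{A_2}\ar^{L_g}[d]\\
\partial\A(\Gamma)_{A_1'}\ar_{\psi_*}[r] & \partial\A(\Gamma)_{A_2'}
}
\]
commutes on boundaries.

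To check commutativity I would encode a point $p\in\partial\A(\Gamma)_{A_1}$ as a nested sequence of atoms $A_1=C_0\supseteq C_1\supseteq C_2\supseteq\cdots$ with $C_j\in\A_{n_1+j}(\Gamma)$, and trace $p$ along both sides of the square. Along the top-then-right route, $\varphi_*(p)$ is encoded by the nested sequence $\bigl((g^{-1}hg)C_j\bigr)_{j\geq 0}$ inside $A_2$, and applying $L_g$ yields the sequence whose $j$-th term is the unique atom of $\A_{n_2-k+j}(\Gamma)$ containing $g(g^{-1}hg)C_j = hgC_j$. Along the left-then-bottom route, $L_g(p)$ is encoded by $(D_j)_{j\geq 0}$, where $D_j$ is the unique atom of $\A_{n_1-k+j}(\Gamma)$ containing $gC_j$, and then $\psi_*$ sends this to $(hD_j)_{j\geq 0}$. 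Since $gC_j\subseteq D_j$ implies $hgC_j\subseteq hD_j$, and $hD_j$ is itself an atom of $\A_{n_2-k+j}(\Gamma)$, the uniqueness clause of Proposition~\ref{prop:LipschitzAction} forces the two sequences to coincide termwise.

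Thus the commutativity of the boundary square reduces to the single group identity $g\cdot(g^{-1}hg)=h\cdot g$, combined with uniqueness of the containing atom at each level. The main obstacle is simply keeping the bookkeeping straight: four distinct depths $n_1,n_2,n_1-k,n_2-k$ are in play, $L_g$ shifts depth by $k$, and the morphisms $\varphi$ and $\psi$ shift depth by $n_2-n_1$, so one has to match up indices carefully when comparing the two routes. Once the encodings are laid out, however, no further geometric input from hyperbolicity, cone types, or the visibility/proximity machinery of earlier sections is needed; the statement is formal, extracting the required commutative square from the two morphism hypotheses.
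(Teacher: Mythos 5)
Your proposal is correct and follows essentially the same route as the paper: both use the pair of morphisms $\varphi$ (induced by $g^{-1}hg$) and $\psi$ (induced by $h$) and reduce the commutativity of the boundary square to the identity $g(g^{-1}hg)=hg$. The paper states this more tersely by observing that $\varphi_*$ is a restriction of $L_{g^{-1}hg}$ and $\psi_*$ is a restriction of $L_h$, so that $L_g\circ L_{g^{-1}hg}=L_h\circ L_g$ immediately gives commutativity; your nested-atom-sequence bookkeeping is a correct verification of the same facts.
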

\begin{proof}Let $\psi\in\mathrm{Mor}(A_1',A_2')$ and $\varphi\in\mathrm{Mor}(A_1,A_2)$ be the induced morphisms, and note that $\psi_*\colon \partial A_1'\to\partial A_2'$ is a restriction of $L_h$ and $\varphi_*\colon A_1\to A_2$ is a restriction of $L_{g^{-1}hg}$.  Then the diagram
\[
\xymatrix{
\partial A_1 \ar_{L_g}[d] \ar^{\varphi_*}[r] & 
\partial A_2 \ar^{L_g}[d] \\ 
\partial A_1'\ar_{\psi_*}[r] & \partial A_2'}
\]
commutes since $L_gL_{g^{-1}hg} = L_hL_g$, so $L_g$ has equivalent restrictions at $A_1$ and~$A_2$.
\end{proof}

\begin{proof}[Proof of Theorem~\ref{thm:ActionIsRational}]Fix an element $g\in G$ with $|g|=k$.  Given a mapping triple $(g,A,A')$ with $A\in\A_n(\Gamma)$, define the \newword{signature} of $(g,A,A')$ to be the following information:
\begin{enumerate}
\item The sets $g\,P(A,S_n)$ and $P(A',S_{n-k})$,
\item The functions $g\,\od_{A}$ on~$g\,P(A,S_n)$ and $\od_{A'}$ on~$P(A',S_{n-k})$,
\item The set $g\,C(g^{-1}p)$ for each $p\in g\,P(A,S_n)$, and the cone $C(q)$ for each $q\in P(A',S_{n-k})$.
\end{enumerate}
We say that two mapping triples $(g,A_1,A_1')$ and $(g,A_2,A_2')$ with $A_1\in\A_m(\Gamma)$ and $A_2\in\A_n(\Gamma)$ have \newword{equivalent signatures} if there exists an $h\in G$ so that
\begin{enumerate}
\item $hg\,P(A_1,S_m) = g\,P(A_2,S_n)$ and $h\,P(A_1',S_{m-k}) = P(A_2',S_{n-k})$,
\item $hg\,\od_{A_1}$ agrees with $g\,\od_{A_2}$ on~$g\,P(A_2,S_n)$, and $h\,\od_{A_1'}$ agrees with $\od_{A_2'}$ on~$P(A_2',S_{n-k})$, and
\item $hg\,C(g^{-1}p) = g\,C(g^{-1}hp)$ for all $p\in g\,P(A_1,S_m)$ and $h\,C(q) = C(hq)$ for all $q\in P(A_1',S_{m-k})$.
\end{enumerate}
By Propositions~\ref{prop:MainMorphismTest} and~\ref{prop:EquivalentRestrictions}, if $(g,A_1,A_1')$ and $(g,A_2,A_2')$ have equivalent signatures, then $L_g$ has equivalent restrictions at $A_1$ and~$A_2$.  In particular, $h$~clearly induces a morphism from $A_1'$ to~$A_2'$ by Proposition~\ref{prop:MainMorphismTest}, and it is easy to show using Proposition~\ref{prop:MainMorphismTest} that $g^{-1}hg$ induces a morphism from $A_1$ to~$A_2$.

Finally, it is not hard to see that there are only finitely many equivalence classes of signatures for a given $g\in G$.  In particular, each of the sets $g\,P(A,S_n)$ and $P(A',S_{n-k})$ has diameter at most $8\delta+4$, so by Proposition~\ref{prop:DisksAreNear} the union $g\,P(A,S_n)\cup P(A',S_{n-k})$ has diameter at most $16\delta+8+2k$.  Thus there are only finitely many possible pairs $\bigl(g\,P(A,S_n),P(A',S_{n-k})\bigr)$ up to the action of~$G$.  Once such a pair is chosen, there are only finitely many possible choices for parts (2) and~(3) of the signature.  We conclude that $L_g$ has only finitely many restrictions, so $L_g$ is rational.
\end{proof}

\section{An Example}
\label{sec:AnExample}

In this section we work out a specific example of a tree of atoms and the corresponding rational homeomorphisms.  Let $\Gamma$ be the $1$-skeleton of order five square tiling of the hyperbolic plane (see~Example~\ref{ex:SquareTiling}), and fix a vertex $x_0$ of $\Gamma$.  Let~$G$ be the group of orientation-preserving isometries of~$\Gamma$.  We will demonstrate a rational action of~$G$.

\subsection{The Atoms}
\begin{figure}
\centering
\includegraphics{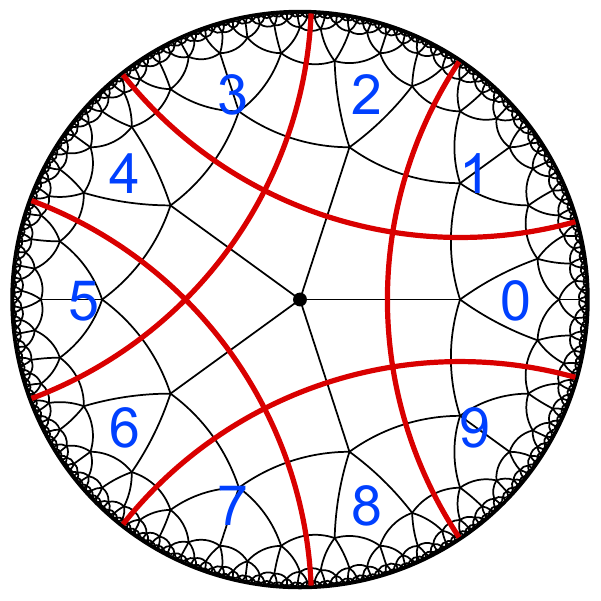}
\caption{The atoms of~$\A(B_1)$.}
\label{fig:SquareTilingAtoms1}
\end{figure}%
We begin by classifying the atoms in~$\Gamma$.  As in any graph, the first atom is the whole graph~$\Gamma$, which is the only atom of $\A_0(\Gamma)$, and is the root of the tree of atoms.  We will refer to this atom as having \newword{type~A}.

Figure~\ref{fig:SquareTilingAtoms1} shows the atoms of $\A(B_1)$.  There is one finite atom, namely the singleton set $\{x_0\}$, as well as ten infinite atoms, which we have labeled with the digits $0,\ldots,9$.  As suggested by the shapes, there are two types of infinite atoms here:
\begin{itemize}
\item Atoms $0$, $2$, $4$, $6$, and $8$ are bounded by two geodesic rays and a geodesic segment.  These atoms all have the same type, which we will refer to as \newword{type~B}.
\item Atoms $1$, $3$, $5$, $7$, and $9$ are bounded by two geodesic rays. These atoms all have the same type, which we will refer to as \newword{type~C}.
\end{itemize}
Thus the root of the tree of atoms has type~A, with ten children in~$\A_1(\Gamma)$ of types B and~C:
\[
\newcommand{\tA}{\mathrm{A}}
\newcommand{\tB}{\mathrm{B}}
\newcommand{\tC}{\mathrm{C}}
\xymatrix@C=0in@R=0in@M=0in@W=0.2in@H=0pt{
&&&&&&&&& \tA &&&&&&&&& \\
&&&&&&&&&
\ar@{-}[ddlllllllll]
\ar@{-}[ddlllllll]
\ar@{-}[ddlllll]
\ar@{-}[ddlll]
\ar@{-}[ddl]
\ar@{-}[ddr]
\ar@{-}[ddrrr]
\ar@{-}[ddrrrrr]
\ar@{-}[ddrrrrrrr]
\ar@{-}[ddrrrrrrrrr]
&&&&&&&&& \\ 
\rule{0pt}{36pt} &&&&&&&&&&&&&&&&&& \\
&&&&&&&&&&&&&&&&&& \\
\rule{0pt}{15pt}\tB && \tC && \tB && \tC && \tB && \tC && \tB && \tC && \tB && \tC
}
\]
\begin{figure}
\centering
$\underset{\textstyle \rule{0pt}{12pt}\{x_0\} }{\includegraphics{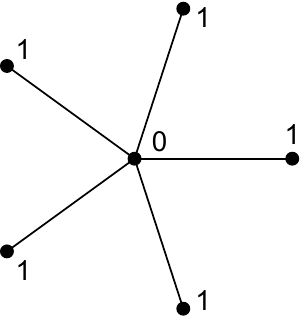}}$
\hfill
$\underset{\textstyle \rule{0pt}{12pt}\text{Atom~0} }{\includegraphics{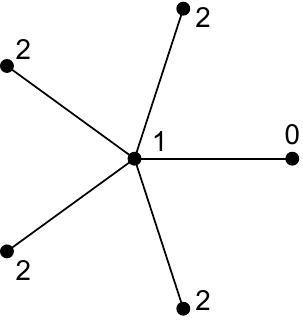}}$
\hfill
$\underset{\textstyle \rule{0pt}{12pt}\text{Atom~5} }{\includegraphics{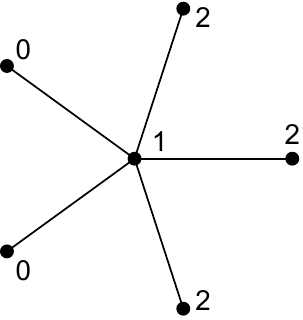}}$
\caption{Distance functions $\od_A$ on $B_1$ for three different atoms from~$\A(B_1)$.}
\label{fig:DistanceFunctions}
\end{figure}%
Figure~\ref{fig:DistanceFunctions} shows the distance functions $\od_A$ associated with the singleton atom $\{x_0\}$ and the atoms $0$ and $5$.  In each case, the additive constant has been chosen so that $\od_A$ has a minimum value of $0$ on~$B_1$.

\begin{figure}
\centering
\includegraphics{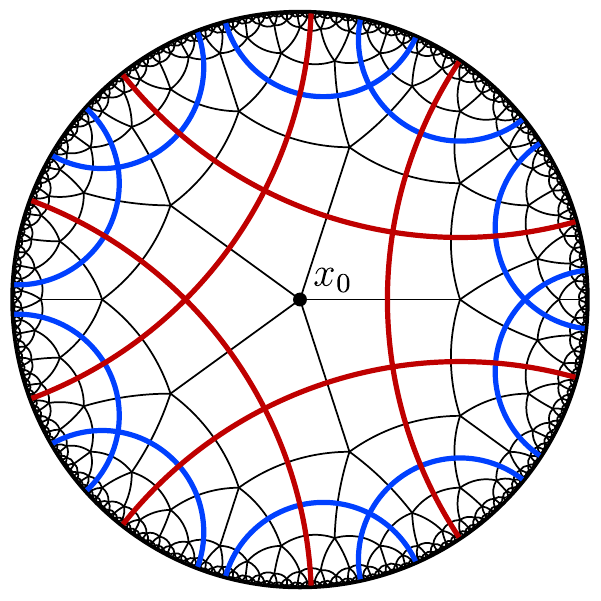}
\caption{The atoms of~$\A(B_2)$.}
\label{fig:SquareTilingAtoms2}
\end{figure}%
Of course, the atoms of $\A(B_1)$ are subdivided further in~$\A(B_2)$.
Figure~\ref{fig:SquareTilingAtoms2} shows the atoms of $\A(B_2)$, with the new subdivisions indicated in blue.  As you can see, each atom of type~B from $\A(B_1)$ has been subdivided into four atoms in $\A(B_2)$, and each atom of type~C from $\A(B_1)$ has been subdivided into three atoms in~$\A(B_2)$.  Thus $\A(B_2)$ has a total of $36$ atoms.  Of these, only $30$ are infinite, and therefore $\A_2(\Gamma)$ has $30$ elements.

\begin{figure}
\centering
\includegraphics{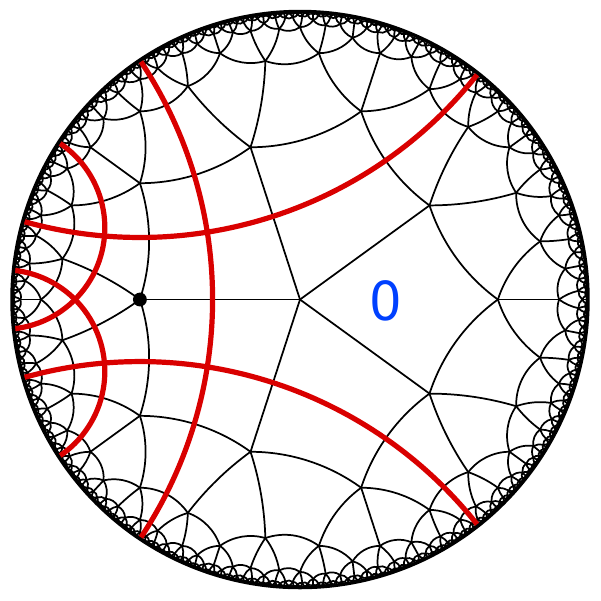}
\hfill
\includegraphics{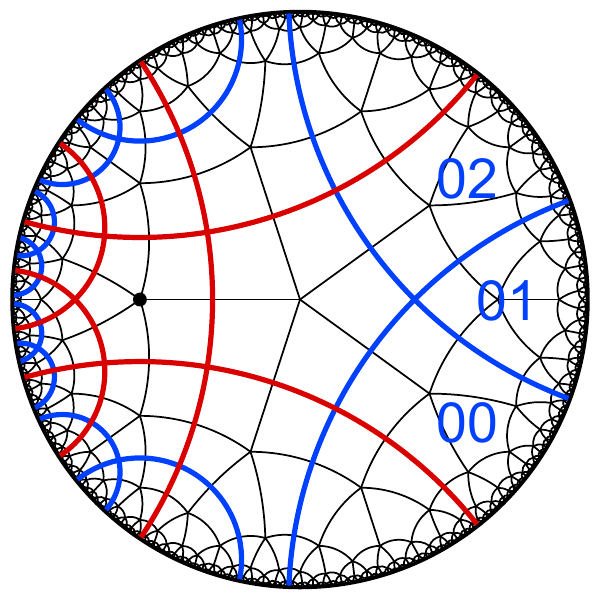}
\caption{Subdividing an atom of type~B.}
\label{fig:SquareTilingAtomB}
\end{figure}%
Figure~\ref{fig:SquareTilingAtomB} shows a close-up of atom~0 from $\A(B_1)$, which has type~B, as well as the subdivision of this atom in~$\A(B_2)$.  As the figure suggests, a type~B atom is subdivided into one singleton atom, two atoms of type~B, and one atom of type~C.  Thus every type~B node in the tree of atoms has three children of types B, C, and B:
\[
\newcommand{\tB}{\mathrm{B}}
\newcommand{\tC}{\mathrm{C}}
\xymatrix@C=0in@R=0in@M=0in@W=0.2in@H=0pt{
&& \tB && \\
&&
\ar@{-}[ddll]
\ar@{-}[dd]
\ar@{-}[ddrr]
&& \\ 
\rule{0pt}{24pt} &&&& \\
&&&& \\
\rule{0pt}{15pt}\tB && \tC && \tB
}
\]

\begin{figure}
\centering
\includegraphics{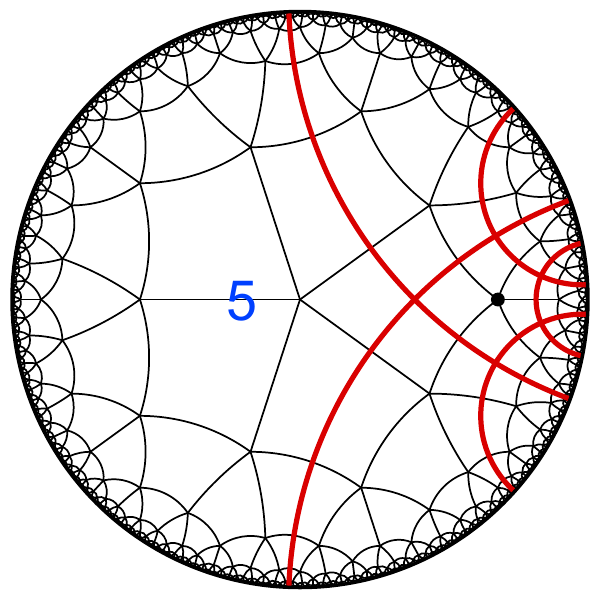}
\hfill
\includegraphics{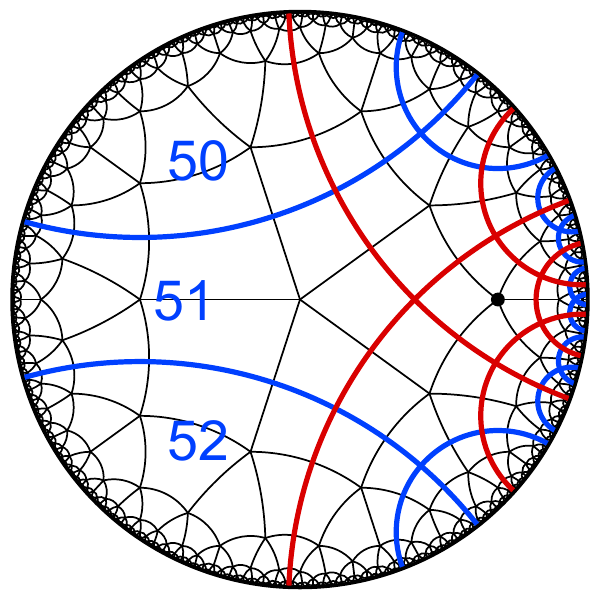}
\caption{Subdividing an atom of type~C.}
\label{fig:SquareTilingAtomC}
\end{figure}%
Figure~\ref{fig:SquareTilingAtomC} shows a close-up of the type~C atom in $\A(B_1)$ immediately to the left of the base vertex, as well as its subdivision in~$\A(B_2)$.  As the figure suggests, a type~C atom is subdivided into two atoms of type~C and one atom of a new type, which we refer to as \newword{type~D}.  Type~D atoms have a ``pentagon'' shape, and are bounded by two geodesic rays and two geodesic segments in the hyperbolic plane.  Thus every type~C node in the tree of atoms has three children of types C, D, and C:
\[
\newcommand{\tC}{\mathrm{C}}
\newcommand{\tD}{\mathrm{D}}
\xymatrix@C=0in@R=0in@M=0in@W=0.2in@H=0pt{
&& \tC && \\
&&
\ar@{-}[ddll]
\ar@{-}[dd]
\ar@{-}[ddrr]
&& \\ 
\rule{0pt}{24pt} &&&& \\
&&&& \\
\rule{0pt}{15pt}\tC && \tD && \tC
}
\]
Figure~\ref{fig:DistanceFunctionsLevel2} shows the distance functions $\od_A$ for the atoms 01 and~51.
\begin{figure}
\centering
$\underset{\textstyle\text{(a)}}{\includegraphics{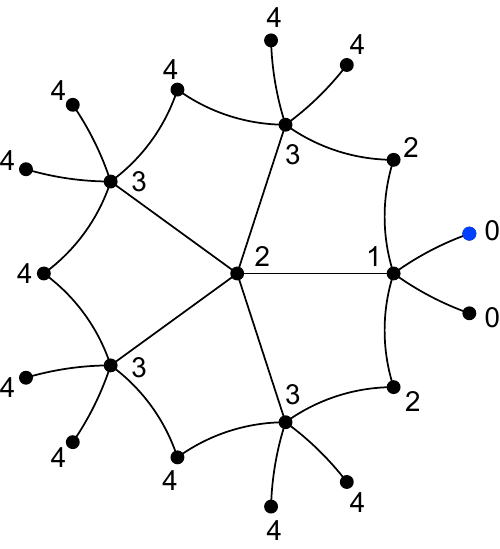}}$
\qquad\quad
$\underset{\textstyle\text{(b)}}{\includegraphics{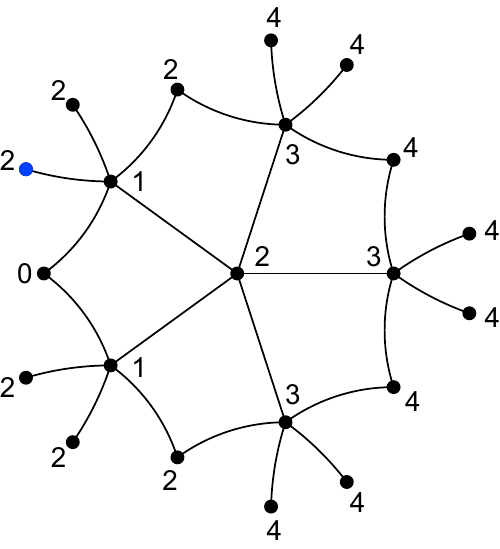}}$
\caption{(a) The distance function $\od_A$ for atom~01.  The one for atom~00 is the same except that the value for the blue vertex changes to~2. (b) The distance function $\od_A$ for atom~51.  The one for atom~50 is the same except that the value for the blue vertex changes to~0.}
\label{fig:DistanceFunctionsLevel2}
\end{figure}%

\begin{figure}
\centering
\includegraphics{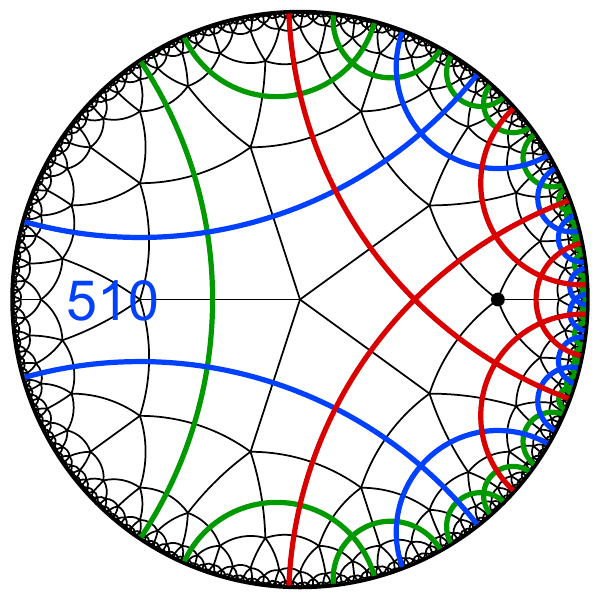}
\caption{Subdividing an atom of type~D.}
\label{fig:SquareTilingAtomD}
\end{figure}%
Finally, Figure~\ref{fig:SquareTilingAtomD} shows the subdivision in $\A(B_3)$ of this same atom of type~C.  As the picture suggests, the type~D child atom is subdivided in the next level into a singleton atom and an atom of type~B.  Thus every type~D node in the tree of atoms has exactly one child of type~B:
\[
\newcommand{\tB}{\mathrm{B}}
\newcommand{\tD}{\mathrm{D}}
\xymatrix@C=0in@R=0in@M=0in@W=0.2in@H=0pt{
\tD \\
\ar@{-}[dd]
\\ 
\rule{0pt}{24pt} \\
 \\
\rule{0pt}{15pt} \tB
}
\]
The type graph for the full tree of atoms is shown in Figure~\ref{fig:TransitionGraph}.  By Proposition~\ref{prop:IsomorphicToPathLanguage}, the tree $\A(\Gamma)$ is isomorphic to the set of all finite directed paths in this graph starting at~A, and $\hb\Gamma$ is naturally homeomorphic to the space of all infinite directed paths in this graph starting at~A.
\begin{figure}
\[
\entrymodifiers={++[o][F-]}
\xymatrix@C=0.7in@R=0.7in{
\text{A}\ar[r]^{0,2,4,6,8}\ar[d]_{1,3,5,7,9} & \text{B}\ar@(r,u)[]_{0,2}\ar[dl]_{1} \\
\text{C}\ar@(l,d)[]_{0,2}\ar[r]_{1} & \text{D}\ar[u]_{0}
}
\]
\caption{The type graph for the tree of atoms. Directed edges with multiple labels represent multiple edges.}
\label{fig:TransitionGraph}
\end{figure}
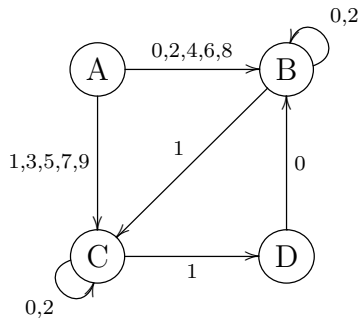

\begin{note}\label{note:NotGeometricallyEquivalent}Not every pair of atoms of the same type in $\A(\Gamma)$ are geometrically equivalent in the sense of Definition~\ref{def:GeometricEquivalence}.  In particular, since $\Gamma$ is not a tree the hyperbolic constant $\delta$ is at least~$1$, so $2\delta+1 \geq 3$.  Then it follows easily from the definition of proximal points (Definition~\ref{def:Proximal}) that $P(x,S_n)=S_n$ whenever $n\leq 3$, and in particular $P(A,S_1) = S_1$ for all $A\in\A_1(\Gamma)$ and $P(A,S_2) = S_2$ for all $A\in\A_2(\Gamma)$.  Since $S_1$ has five vertices and $S_2$ has fifteen vertices, it follows that no atom in $\A_1(\Gamma)$ is geometrically equivalent to an atom in~$\A_2(\Gamma)$, even though both $\A_1(\Gamma)$ and $\A_2(\Gamma)$ contain atoms of types~B and~C.\qed
\end{note}

\subsection{The Group}Let $G$ be the group of orientation-preserving isometries of~$\Gamma$.  Because we are restricting to orientation-preserving isometries, there is a unique morphism between any two atoms of the same type, so the tree of atoms is rigid.

The group $G$ has presentation
\[
\big\langle r,s \;\bigl|\; r^5,s^2,(rs)^4\big\rangle
\]
where
\begin{enumerate}
\item $r$ is a counterclockwise rotation by $2\pi/5$ at the base vertex~$x_0$, and 
\item $s$ is a rotation by $\pi$ at the point~$p$ shown in Figure~\ref{fig:SquareTilingGroupGenerators}.
\end{enumerate}
\begin{figure}
\centering
\includegraphics{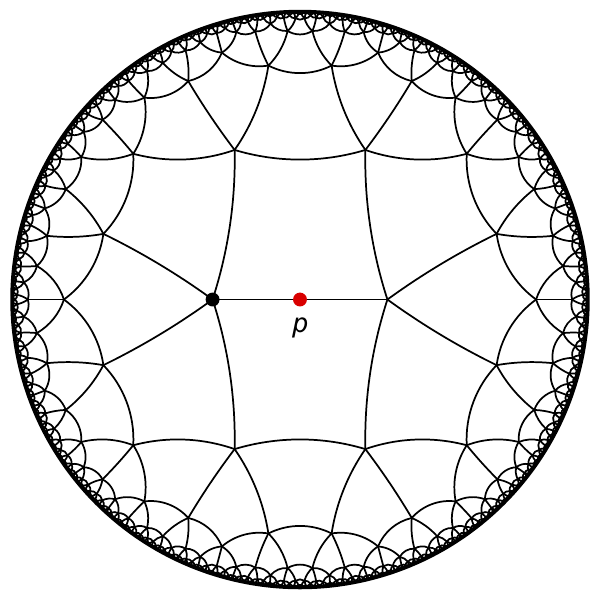}
\caption{The group $G$ is generated by the stabilizer of $x_0$ and a $180^\circ$ rotation at the point~$p$.}
\label{fig:SquareTilingGroupGenerators}
\end{figure}%
The rational homeomorphism for $r$ is given by the formulas
\[
\begin{array}{lllll}
r(0\beta) = 2\beta,&
r(1\gamma) = 3\gamma,&
r(2\beta) = 4\beta,&
r(3\gamma) = 5\gamma,&
r(4\beta) = 6\beta,\\[6pt]
r(5\gamma) = 7\gamma,&
r(6\beta) = 8\beta,&
r(7\gamma) = 9\gamma,&
r(8\beta) = 0\beta,&
r(9\gamma) = 1\gamma,
\end{array}
\]
where $\beta$ can be any infinite path in the type graph starting at~B, and $\gamma$ can be any valid infinite path in the type graph starting at~C.

\begin{figure}
\centering
\subfloat[\label{subfig:SquareTilingGenerator2}]{\includegraphics{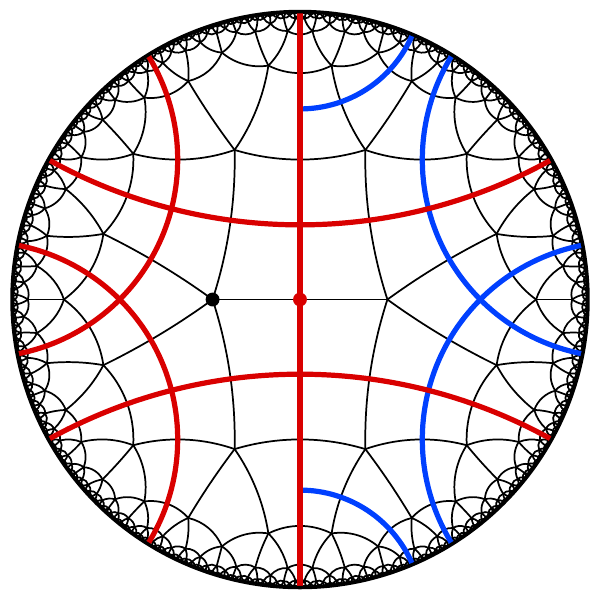}}
\hfill
\subfloat[\label{subfig:SquareTilingGenerator3}]{\includegraphics{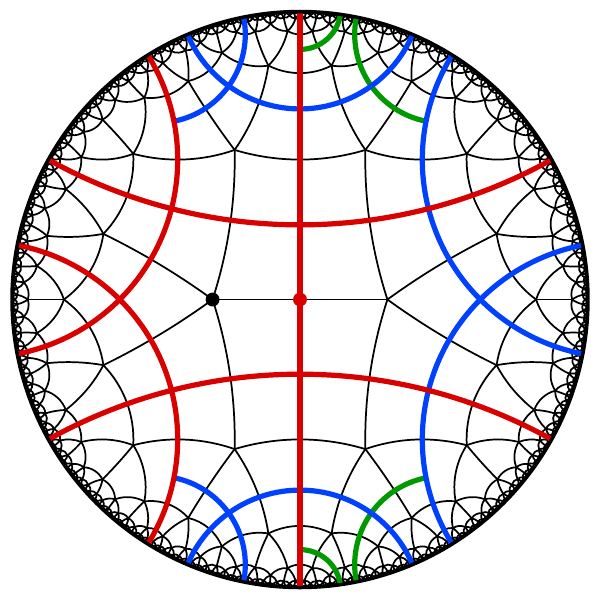}}
\caption{Two subdivisions of $\hb \Gamma$ into atoms.}
\end{figure}%
The formula for $s$ is more complicated. Figure~\subref*{subfig:SquareTilingGenerator2} shows a subdivision of~$\Gamma$ into certain atoms of $\A(B_1)$ and $\A(B_2)$.  From this subdivision, we can see that
\[
\begin{array}{l@{\quad\quad}l@{\quad\quad}l@{\quad\quad}l}
s(0\beta) = s_0(\beta)&
s(1\gamma) = s_1(\gamma)&
s(2\beta) = 9f(\beta) &
s(3\gamma) = 92\gamma \\[6pt]
s(4\beta) = 00\beta &
s(5\gamma) = 01\gamma &
s(6\beta) = 02\beta &
s(7\gamma) = 10\gamma \\[6pt]
s(8\beta) = 1\of(\beta) &
s(9\gamma) = s_9(\gamma)
\end{array}
\]
where $\beta$, $\gamma$, and $\delta$ represent any infinite paths in the type graph starting at B, C, and D, respectively, and
\[
\begin{array}{l@{\quad\quad}l@{\quad\quad}l}
s_0(0\beta) = 4\beta&
s_0(1\gamma) = 5\gamma&
s_0(2\beta) = 6\beta \\[6pt]
s_1(0\gamma) = 7\gamma &
s_1(1\delta)= 8h(\delta) &
s_1(2\gamma) = 8g(\gamma) \\[6pt] 
s_9(0\gamma) = 2\og(\gamma) &
s_9(1\delta) = 2\oh(\delta) &
s_9(2\gamma) = 3\gamma \\[6pt]
\end{array}
\]

Figure~\subref*{subfig:SquareTilingGenerator3} shows a refinement of this subdivision that can be used to determine $f$, $\of$, $g$, $\og$, $h$, and~$\oh$:
\[
\begin{array}{l@{\quad\quad}l@{\quad\quad}l}
f(0\beta) = 0f(\beta) &
f(1\gamma) = 02\gamma &
f(2\beta) = 10\beta \\[6pt]
\of(0\beta) = 10\beta &
\of(1\gamma) = 20\gamma &
\of(2\beta) = 2\of(\beta) \\[6pt]
g(0\gamma) = 1\gamma &
g(1\delta) = 2h(\delta) &
g(2\gamma) = 2g(\gamma) \\[6pt]
\og(0\gamma) = 0\og(\gamma) &
\og(1\delta) = 0\oh(\delta) &
\og(2\gamma) = 1\gamma \\[6pt]
h(0\beta) = 0\beta &
\oh(0\beta) = 2\beta
\end{array}
\]
Each of the letters $s$, $s_0$, $s_1$, $s_9$, $f$, $\of$, $g$, $\og$, $h$, and~$\oh$ represents an equivalence class of atoms on which~$s$ has equivalent restrictions, and there are also classes for the identity functions $\mathrm{id}_B$, $\mathrm{id}_C$, and $\mathrm{id}_D$ corresponding to types B, C, and~D, respectively.  The atoms corresponding to each restriction type are as follows:
\begin{itemize}
\item $s$: the root atom only
\item $s_0$: atom $0$ only
\item $s_1$: atom $1$ only
\item $s_9$: atom $9$ only
\item $f$: $20^n$ for any $n\geq 0$
\item $\of$: $12^n$ for any $n\geq 0$
\item $g$: $12^n$ for any $n\geq 0$
\item $\og$: $90^n$ for any $n\geq 0$
\item $h$: $12^n1$ for any $n\geq 0$
\item $\oh$: $90^n1$ for any $n\geq 0$ 
\item $\mathrm{id}_B$: all other type B atoms 
\item $\mathrm{id}_C$: all other type C atoms 
\item $\mathrm{id}_D$: all other type D atoms
\end{itemize}

\bibliography{bibHypEmbedding}{}
\bibliographystyle{amsplain}

\end{document}